\newtheorem*{thmMain}{Theorem}
\newtheorem{theorem}{Theorem}[section]
\newtheorem{claim}[theorem]{Claim}
\newtheorem{lemma}[theorem]{Lemma}
\newtheorem{proposition}[theorem]{Proposition}
\newtheorem{corollary}[theorem]{Corollary}
\theoremstyle{definition}
\newtheorem{definition}[theorem]{Definition}
\newtheorem{example}[theorem]{Example}
\newtheorem{question}[theorem]{Question}
\theoremstyle{remark}
\newtheorem{remark}[theorem]{Remark}
\newtheorem{fact}[theorem]{Fact}
\def\l{{\langle}}
\def\r{{\rangle}}
\def\forces{\Vdash}
\def\Q{\mathbb Q}
\def\P{\mathbb P}
\def\CI{\mathcal I}
\def\CJ{\mathcal J}
\def\CF{\mathcal F}
\def\kk{\kappa^\kappa}
\def\mfb{\mathfrak{b}}
\def\mfd{\mathfrak{d}}
\def\hook{\upharpoonright}
\def\mathunderaccent#1#2 {\let\theaccent#1\skewfactor#2
\mathpalette\putaccentunder}
\def\putaccentunder#1#2{\oalign{$#1#2$\crcr\hidewidth
\vbox to.2ex{\hbox{$#1\skew\skewfactor\theaccent{}$}\vss}\hidewidth}}
\def\smallbox#1{\leavevmode\thinspace\hbox{\vrule\vtop{\vbox
   {\hrule\kern1pt\hbox{\vphantom{\tt/}\thinspace{\tt#1}\thinspace}}
   \kern1pt\hrule}\vrule}\thinspace}
\newcommand{\jbd}[1]{\mathcal{J}^{#1}_{bd}}
\DeclareMathOperator{\GCH}{GCH}
\DeclareMathOperator{\id}{Id}
\DeclareMathOperator{\dom}{dom}
\newcommand{\cf}{{\rm cf}}
\title{Eventual Capture on a Measurable Cardinal}
\author{Tom Benhamou}
\address[Benhamou]{Department of Mathematics, Rutgers University, New Brunswick, 08854-8019 NJ, USA}
\email{tom.benhamou@rutgers.edu}
\author{Corey Bacal Switzer}
\address[Switzer]{Institute of Mathematics, Univeristy of Vienna, Kolingasse 14-16, 1090 Vienna, Austria}
\email{corey.bacal.switzer@univie.ac.at}
\thanks{
\emph{Acknowledgments.} The first author is supported by the NSF under Grant
No. DMS-2346680.
The research of the second author was funded in whole or in part by the Austrian Science Fund (FWF) grant doi 10.55776/ESP548. For open access purposes, the author has applied a CC BY public copyright license to any author-accepted manuscript version arising from this submission.
}
\date{}
\begin{document}

\begin{abstract}
    We continue the study from \cite{BrendleFreidmanMontoya, vandervlugtlocalizationcardinals} of localization cardinals $\mfb_\kappa(\in^*)$ and $\mfd_\kappa(\in^*)$ and their variants at regular uncountable $\kappa$. We prove that if $\kappa$ is measurable then these cardinals trivialize. We also provide other fundamental restrictions in the most general setting. We prove the results are optimal by forcing different values for $\mathfrak{b}_{\id^+}(\in^*),\mathfrak{d}_{\id^{++}}(\in^*)$ at a measurable. As a by-product, we prove the consistency of $\mfb_h(\in^*) < \mfb_{h'}(\in^*)$ for functions $h, h' \in \kk$, thus answering a question of Brendle, Brooke-Taylor, Friedman and  Montoya. Moreover, we study the relation between these cardinals and other well-known cardinal invariants.
    
    
\end{abstract}

\maketitle

\section{Introduction}
The relation $\in^*$ of eventual capture, also known as localization, was introduced by Bartoszy\'nski in \cite{BartoSlaloms} to provide a combinatorial perspective on the ideal of Lebesgue measure zero subsets of reals. The key step in his celebrated theorem from \cite{addNimpliesaddM} that $\mathrm{add}(\mathcal N) \leq \mathrm{add}(\mathcal M)$ is the fact that\footnote{Precise definitions of the cardinals here will be defined below.} $\mfb (\in^*) = \mfb_h(\in^*) = \mathrm{add}(\mathcal N)$ and dually $\mfd (\in^*) = \mfd_h (\in^*) = \mathrm{cof}(\mathcal N)$ for every strictly increasing function $h:\omega \to \omega$ tending to infinity. Since then, localization has appeared consistently in the literature of set theory of the reals. See, e.g. \cite{BarJu95}, for more details and history.

In the absence of an obvious null ideal on the generalized Baire space $\kappa^\kappa$, the authors of \cite{BrendleFreidmanMontoya} studied the natural analogues of $\mfb_\kappa(\in^*)$ and $\mfd_\kappa(\in^*)$ as stand-ins for $\mathrm{add}(\mathcal N)$ and $\mathrm{cof}(\mathcal N)$ in the higher Cicho\'n diagram for an inaccessible cardinal $\kappa$. They showed that these cardinals satisfy roughly the same collection of facts as their natural analogue on $\omega$ with a few key differences. This analysis was furthered by van der Vlugt in \cite{vandervlugtlocalizationcardinals}. The purpose of this paper is study these cardinals further as well as some natural variants with a particular emphasis on the case where $\kappa$ is measurable. 

Starting with the work of Cummings and Shelah \cite{CUMMINGSShelah}, cardinal invariants on regular uncountable cardinals were studied, and recently have been of interest \cite{BrendleFreidmanMontoya,BROOKETAYLOR201737, EskewFischer, RaghavanShelah} due to several applications e.g. to ultrafilter theory \cite{tomCohesive,TomGabeMeasures} as well as explicating how the higher Baire and Cantor spaces differ from their classical counterparts. Indeed there are fundamental discrepancies between cardinal characteristics of the continuum and above the continuum. The larger the cardinal is, it appears that more $\mathsf{ZFC}$ facts can be proven. For example, while the pairs $\mathfrak{b},\mathfrak{s}$ and $\mathfrak{d},\mathfrak{r}$ are independent invariants on $\omega$, Raghavan and Shelah \cite{RaghavanShelah,Raghavan/Shelah2019} proved that for every regular uncountable cardinal $\kappa$, $\mathfrak{s}_\kappa\leq\mathfrak{b}_\kappa$ and that $\mathfrak{d}_\kappa\leq \mathfrak{r}_\kappa$ when $\kappa>\beth_{\omega}$. For $\omega<\kappa<\beth_\omega$ the question is still open. Other theorems of this kind include recent result of Benhamou and Goldberg \cite{TomGabeMeasures}, that if there is a simple $P_\lambda$-points then $\mathfrak{b}_\kappa=\mathfrak{d}_\kappa=\lambda$ (and thus such $\lambda$ is unique). This is in contrast to the result of Br\"{a}uninger and Mildenberger \cite{MildenBraun} that there may be a $P_{\aleph_1}$- point and $P_{\aleph_2}$-point on $\omega$ simultaneously. A more detailed look at more provable inequalities was undertaken in \cite{FischerSoukup}.

Moving to larger cardinals, further discrepancies appear. For example, Suzuki showed in \cite{suzuki} that $\mathfrak{s}_\kappa > \kappa$ is equivalent to $\kappa$ being weakly compact. Zapletal \cite{ZapletalSplitting} and Ben-Neria Gitik \cite{OmerMoti} showed that $\mathfrak{s}(\kappa)>\kappa^+$ is equiconsistent with $o(\kappa)=\kappa^{++}$. Ben-Neria and Gitik asked whether having $\mathfrak{s}(\kappa)>\kappa^+$ while $\kappa$ is measurable has the same consistency strength, which turned out to be false. Recent work of Benhamou and Ben-Neria \cite{TomOmer} shows that preserving measurability with $\mathfrak{s}(\kappa)>\kappa^+$ is equiconsistent with $o(\kappa)=\kappa^{++}+1$.
The moral of these results is that preserving measurability sometimes requires extra large cardinal effort and that the presence of strong ultrafilters effects cardinal characteristics.

In this paper, we study the invariants $\mathfrak{b}_h(\in^\CI)$ and $\mathfrak{d}_h(\in^{\CI})$ where $h:\kappa\to\kappa$ and $\CI$ is an ideal on $\kappa$ (see Definition~\ref{def: localization invariants}) with a particular emphasis on the case where $\kappa$ is measurable. Here, the differences become amplified, as both new $\mathsf{ZFC}$ results appear whose analogues on $\omega$ and smaller regular uncountable cardinals are independent and new independence results appear whose analogues on $\omega$ are $\mathsf{ZFC}$-results. Our first main result on $\mfb_\kappa (\in^*)$ and $\mfd_\kappa(\in^*)$ for $\kappa$ measurable represents a split with what should be expected for higher cardinal invariants and this split is the central point of interest of this paper: 
\begin{thmMain}
    Suppose that $\kappa$ is a measurable cardinal, then $\mathfrak{b}_{\kappa}(\in^*)=\kappa^+$ and $\mathfrak{d}_{\kappa}(\in^*)=2^\kappa$. Moreover, $\mathfrak{d}_{\id^{+}}(\in^*)=2^\kappa$, where $\id^+$ is the function mapping $\alpha$ to $\alpha^+$. 
\end{thmMain}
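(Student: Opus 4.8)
The plan is to fix a normal measure $U$ on $\kappa$ with ultrapower embedding $j\colon V\to M=\Ult(V,U)$, where $\mathrm{crit}(j)=\kappa$ and ${}^{\kappa}M\subseteq M$, and to transfer slaloms into $M$. I will use without proof the bounds valid at every regular $\kappa$ (and already in the cited works): $\kappa^{+}\le\mfb_{\kappa}(\in^{*})$, because any $\kappa$ functions $\langle f_{i}:i<\kappa\rangle$ are $\in^{*}$-captured by the slalom $\alpha\mapsto\{f_{i}(\alpha):i<|\alpha|\}$; and $\kappa^{+}\le\mfd_{h}(\in^{*})\le 2^{\kappa}$ for every width function $h<\kappa$, the lower bound by partitioning $\kappa$ into $\kappa$ unbounded pieces and defeating one prescribed slalom on each, the upper bound because there are only $2^{\kappa}$ slaloms. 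What remains is $\mfb_{\kappa}(\in^{*})\le\kappa^{+}$, $\mfd_{\kappa}(\in^{*})\ge 2^{\kappa}$ and $\mfd_{\id^{+}}(\in^{*})\ge 2^{\kappa}$.

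The crucial observation is that $j$ compresses every slalom at the point $\kappa$. For a slalom $S$, {\L}o\'s's theorem gives $j(S)(\kappa)=[\alpha\mapsto S(\alpha)]_{U}$ with $|j(S)(\kappa)|^{M}=[\alpha\mapsto|S(\alpha)|]_{U}$; since $[\id]_{U}=\kappa$ is a cardinal of $M$, a slalom of width $\le|\alpha|$ has $|j(S)(\kappa)|^{M}\le[\alpha\mapsto|\alpha|]_{U}=\kappa$, while one of width $\le\alpha^{+}$ has $|j(S)(\kappa)|^{M}\le[\alpha\mapsto\alpha^{+}]_{U}=(\kappa^{+})^{M}=\kappa^{+}$; in both cases the same bound holds in $V$. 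Furthermore, if $f\in^{*}S$ then $\{\alpha:f(\alpha)\in S(\alpha)\}$ is co-bounded in $\kappa$, hence a member of $U$ (which, being $\kappa$-complete and nonprincipal, contains every co-bounded set), so $[f]_{U}\in j(S)(\kappa)$. Therefore $f\mapsto[f]_{U}$ maps the set of functions $\in^{*}$-captured by a single slalom of width $\le|\alpha|$ (resp.\ $\le\alpha^{+}$) into a set of size $\le\kappa$ (resp.\ $\le\kappa^{+}$).

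To use this I would exhibit $2^{\kappa}$ functions in $\kk$ with pairwise distinct $U$-classes: choose, for each $\alpha<\kappa$, an injection $e_{\alpha}\colon\mathcal P(\alpha)\hookrightarrow\kappa$ (possible since $\kappa$ is inaccessible), and for $A\subseteq\kappa$ put $f_{A}(\alpha)=e_{\alpha}(A\cap\alpha)\in\kappa$. If $A\ne B$ and $\delta=\min(A\triangle B)$, then $\{\alpha:f_{A}(\alpha)=f_{B}(\alpha)\}=\{\alpha:A\cap\alpha=B\cap\alpha\}=\delta+1$, an initial segment of $\kappa$ of size $<\kappa$, which is not in $U$; so $[f_{A}]_{U}\ne[f_{B}]_{U}$. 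Now $\mfb_{\kappa}(\in^{*})\le\kappa^{+}$: take $\kappa^{+}$ of the $f_{A}$; no single width-$|\alpha|$ slalom $S$ can $\in^{*}$-capture all of them, as $f\mapsto[f]_{U}$ would inject $\kappa^{+}$ distinct $U$-classes into $j(S)(\kappa)$, a set of size $\le\kappa$. And $\mfd_{\kappa}(\in^{*}),\mfd_{\id^{+}}(\in^{*})\ge 2^{\kappa}$: given a family of $\lambda<2^{\kappa}$ slaloms of width $\le|\alpha|$ (resp.\ $\le\alpha^{+}$), the set of $A$ for which some slalom of the family $\in^{*}$-captures $f_{A}$ has size at most $\lambda\cdot\kappa^{+}=\max(\lambda,\kappa^{+})<2^{\kappa}$ --- using $\kappa^{+}<2^{\kappa}$, and noting that when $2^{\kappa}=\kappa^{+}$ the inequality is already the general lower bound --- so some $f_{A}$ escapes all of them.

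I expect the only real work beyond this plan to be bookkeeping: confirming that the paper's precise definitions of slalom width and of the relation $\in^{*}$ make the compression estimate $|j(S)(\kappa)|^{M}\le[\alpha\mapsto|S(\alpha)|]_{U}$ literally hold, and in particular that $[\alpha\mapsto\alpha^{+}]_{U}$ is exactly $\kappa^{+}$ and not larger --- this is precisely where normality of $U$ (so $[\id]_{U}=\kappa$) and ${}^{\kappa}M\subseteq M$ (so $(\kappa^{+})^{M}=\kappa^{+}$) are used --- together with the cardinal arithmetic $\lambda\cdot\kappa^{+}<2^{\kappa}$ for $\lambda<2^{\kappa}$ and the separate treatment of $2^{\kappa}=\kappa^{+}$.
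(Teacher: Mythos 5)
Your proof is correct and follows essentially the same route as the paper: push slaloms through the normal ultrapower, observe that $[\varphi]_U$ has size $\kappa$ (resp.\ $\kappa^+$ for $\id^+$-width, using $(\kappa^+)^{M}=\kappa^+$), and count against $\kappa^+$ resp.\ $2^\kappa$ many pairwise $U$-inequivalent functions. The only cosmetic differences are that you build the $2^\kappa$ distinct $U$-classes explicitly via $A\mapsto f_A$ rather than invoking $|j_U(\kappa)|^V=2^\kappa$, and you handle $\mathfrak{d}_{\id^+}(\in^*)$ by a direct count instead of the paper's detour through its general product formula; both are sound.
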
 
 Denote by $\id^{+\xi}(\alpha)=\alpha^{+\xi}$. We also provide further restriction on the size of the cardinals $\mathfrak{b}_{\id^{+\xi}}(\in^*)$ and $\mfd_{\id^{+\xi}}(\in^*)$.
\begin{thmMain}
    Suppose that $\kappa$ is a measurable cardinal, then 
    $$\mathfrak{b}_{\id^{+\xi}}(\in^*)\leq \kappa^{+\xi+1}.$$
and 
$$\mathfrak{d}_{\id^{+\xi}}(\in^*)<2^\kappa\Rightarrow 2^\kappa\leq\kappa^{+\xi}.$$
\end{thmMain}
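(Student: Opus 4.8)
The plan is to reflect everything to the ultrapower by a normal measure on $\kappa$ and to exploit that a slalom whose width is a successor–power of the identity stays ``thin'' in the ultrapower. Fix a normal ultrafilter $U$ on $\kappa$ and let $j\colon V\to M\cong\Ult(V,U)$ be the ultrapower embedding, so $\mathrm{crit}(j)=\kappa$, $[\id]_U=\kappa$ and ${}^{\kappa}M\subseteq M$. Besides elementarity I would use the following standard facts: (i) $|j(\kappa)|=2^\kappa$ (the ordinals below $j(\kappa)$ are exactly $\{[f]_U:f\in\kk\}$, and a family of $2^\kappa$ pairwise eventually different functions in $\kk$ gives $2^\kappa$ distinct $U$-classes, tails being $U$-large); (ii) every $\delta<j(\kappa)$ equals $j(f)(\kappa)=[f]_U$ for some $f\in\kk$; (iii) whenever $\id^{+\xi}$ really is a function $\kappa\to\kappa$ one has $j(\id^{+\xi})(\kappa)=(\kappa^{+\xi})^M$, and since every $V$-cardinal is an $M$-cardinal the $\xi$-th $M$-cardinal above $\kappa$ is at most the $\xi$-th $V$-cardinal above $\kappa$, i.e.\ $(\kappa^{+\xi})^M\le(\kappa^{+\xi})^V$; hence (iv) if $S$ is an $\id^{+\xi}$-slalom then $j(S)$ is an $\id^{+\xi}$-slalom of $M$, so $M$ computes $|j(S)(\kappa)|\le(\kappa^{+\xi})^M$, and, as $M\subseteq V$, also $V$ computes $|j(S)(\kappa)|\le(\kappa^{+\xi})^V<\kappa^{+\xi+1}$. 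The one computation used repeatedly is the translation of $\in^*$ through $j$: if $f\in^*S$ then $B=\{\alpha<\kappa:f(\alpha)\notin S(\alpha)\}$ is bounded, hence $B\notin U$, so by the {\L}o{\'s} theorem $\kappa=[\id]_U\notin j(B)$, that is $j(f)(\kappa)\in j(S)(\kappa)$; in short, $f\in^*S$ implies $[f]_U\in j(S)(\kappa)$.

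To prove $\mfb_{\id^{+\xi}}(\in^*)\le\kappa^{+\xi+1}$ I would split on the size of the continuum. If $2^\kappa\le\kappa^{+\xi+1}$ there is nothing to do: $\kk$ itself is unbounded (the diagonal function $\alpha\mapsto\min(\kappa\setminus S(\alpha))$ escapes any slalom $S$, each $S(\alpha)$ being a proper subset of $\kappa$), and $|\kk|=2^\kappa\le\kappa^{+\xi+1}$. If $2^\kappa>\kappa^{+\xi+1}$, then by (i) $\kappa^{+\xi+1}\subseteq j(\kappa)$, so by (ii) I may pick, for each $\delta<\kappa^{+\xi+1}$, a function $f_\delta\in\kk$ with $[f_\delta]_U=\delta$; these are pairwise distinct. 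No single $\id^{+\xi}$-slalom $S$ can $\in^*$-localize every $f_\delta$: otherwise the translation gives $\delta\in j(S)(\kappa)$ for all $\delta<\kappa^{+\xi+1}$, so $j(S)(\kappa)\supseteq\kappa^{+\xi+1}$ would have $V$-cardinality $\ge\kappa^{+\xi+1}$, contradicting (iv). Thus $\{f_\delta:\delta<\kappa^{+\xi+1}\}$ is an unbounded family of size $\kappa^{+\xi+1}$.

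For the second clause I would prove the contrapositive: if $2^\kappa>\kappa^{+\xi}$ then $\mfd_{\id^{+\xi}}(\in^*)=2^\kappa$. The bound $\mfd_{\id^{+\xi}}(\in^*)\le2^\kappa$ is immediate, since the collection of all $\id^{+\xi}$-slaloms is $\in^*$-dominating and has size $2^\kappa$; so suppose towards a contradiction that $\mathcal S$ is a family of $\id^{+\xi}$-slaloms with $\lambda:=|\mathcal S|<2^\kappa$ which $\in^*$-dominates $\kk$. For each $\delta<j(\kappa)$ use (ii) to fix $f_\delta\in\kk$ with $[f_\delta]_U=\delta$, and fix $S_\delta\in\mathcal S$ with $f_\delta\in^*S_\delta$; by the translation $\delta\in j(S_\delta)(\kappa)$. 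Hence $j(\kappa)=\bigcup_{S\in\mathcal S}j(S)(\kappa)$ is a union of at most $\lambda$ sets, each of $V$-cardinality at most $(\kappa^{+\xi})^V$ by (iv). Using (i),
\[
2^\kappa=|j(\kappa)|\le\lambda\cdot(\kappa^{+\xi})^V=\max\bigl(\lambda,(\kappa^{+\xi})^V\bigr),
\]
and since $\lambda<2^\kappa$ this forces $(\kappa^{+\xi})^V\ge2^\kappa$, contradicting $2^\kappa>\kappa^{+\xi}$. So no such $\mathcal S$ exists and $\mfd_{\id^{+\xi}}(\in^*)\ge2^\kappa$.

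The heart of the matter — and the step I expect to require real care — is the cardinal-arithmetic bookkeeping between $M$ and $V$ packaged in (iii)--(iv): the whole argument hinges on $j(S)(\kappa)$ being \emph{small as seen by $V$}, of $V$-size $<\kappa^{+\xi+1}$, and this is exactly where measurability of $\kappa$ (rather than mere inaccessibility, where the corresponding cardinals can misbehave) is used. The remaining points are routine once the conventions of Definition~\ref{def: localization invariants} are pinned down: whether the width constraint reads $|S(\alpha)|\le\id^{+\xi}(\alpha)$ or $<\id^{+\xi}(\alpha)$ (immaterial above), and for exactly which ordinals $\xi$ the symbol $\id^{+\xi}$ denotes a total function $\kappa\to\kappa$, so that $j(\id^{+\xi})$ is meaningful.
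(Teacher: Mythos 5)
Your proof is correct and is essentially the paper's argument: the paper also passes to the ultrapower by a ($\sigma$-complete/normal) ultrafilter, uses that $f\in^*\varphi$ forces $[f]_U\in[\varphi]_U$, that $|[\id^{+\xi}]_U|^V\le\kappa^{+\xi}$, and that $|j_U(\kappa)|^V=2^\kappa$, to get both bounds. The only difference is presentational: the paper first proves a general $\mathsf{ZFC}$ theorem at inaccessibles computing $\mathfrak{b}_h(\in^\CI)$ and $\mathfrak{d}_h(\in^\CI)$ via reduced products $\prod_\alpha h(\alpha)^+/\CJ$ and then specializes to a normal measure, whereas you carry out that specialization directly.
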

Our results generalize to many ideals $\CI$; any ideal that can be extended to a normal prime ideal such as the non-stationary ideal. Moreover,
we partially bring these result down to smaller large cardinals such as completely ineffable cardinals (see Corollary~\ref{Cor: completely ineff}).

To show the optimality of our result, we provide several consistency results. We prove the consistency of $\mathfrak{b}_{\id^+}(\in^*)>\kappa^+$ and $\mathfrak{d}_{\id^{++}}(\in^*)<2^\kappa$ at a measurable cardinal:
\begin{thmMain}
    Relative to a supercompact cardinal, it is consistent for a supercompact cardinal $\kappa$ that $\kappa^+<\mathfrak{b}_{\id^+}(\in^*)$. It is also consistent that $\mathfrak{d}_{\id^{++}}(\in^*)<2^\kappa$. 
\end{thmMain}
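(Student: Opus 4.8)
The plan is to establish both consistency statements by $<\kappa$-support iterations of a \emph{localization forcing} over a model in which $\kappa$ has first been prepared so that its large-cardinal properties survive such iterations. For $h\in\kk$ let $\mathbb{L}_h$ be the poset whose conditions are pairs $(s,F)$ with $s$ a partial slalom of width $h$ and domain an ordinal $\delta_s<\kappa$, with $F\in[\kk]^{<\kappa}$ and $|F|\le h(\delta_s)$, ordered by $(s',F')\le(s,F)$ iff $s\subseteq s'$, $F\subseteq F'$, and $g(\alpha)\in s'(\alpha)$ for all $g\in F$ and all $\alpha\in\dom(s')\setminus\dom(s)$. I would first record the standard facts: $\mathbb{L}_h$ is $\kappa$-closed; its generic slalom $\dot S=\bigcup\{s:(s,F)\in\dot G\}$ has width $h$ and $\in^*$-captures every $g\in\kk$ of the ground model (adding $g$ to the side condition is dense, and, using $|F|\le h(\delta_s)$, the first coordinate can always be extended far enough to make room); and, since any two conditions with the same first coordinate are compatible while there are only $\kappa^{<\kappa}=\kappa$ possibilities for it, $\mathbb{L}_h$ is $\kappa^+$-cc.

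For the first statement: start with $\kappa$ supercompact, GCH, apply a preparation rendering supercompactness indestructible, and let $\mathbb{P}$ be the $<\kappa$-support iteration of $\mathbb{L}_{\id^+}$ of length $\kappa^{++}$. I would check that $\mathbb{P}$ is $\kappa$-closed and $\kappa^+$-cc --- the chain condition via a $\Delta$-system argument on a dense set of conditions that decide all their first coordinates, together with the remark that merging side conditions never destroys compatibility. An induction then yields $|\mathbb{P}_\xi|\le\kappa^+$ and $(2^\kappa)^{V^{\mathbb{P}_\xi}}\le\kappa^+$ for all $\xi<\kappa^{++}$, whence $2^\kappa=\kappa^{++}$ in $V^{\mathbb{P}}$ (the $\kappa^{++}$ generic slaloms give the lower bound, the chain condition the upper). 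Given $F\subseteq\kk$ of size $\kappa^+$ in $V^{\mathbb{P}}$: by $\kappa^+$-cc it has a name with support of size $\le\kappa^+$, which by regularity of $\kappa^{++}$ is bounded below some $\eta_0<\kappa^{++}$; hence $F\subseteq V^{\mathbb{P}_{\eta_0}}$, and the $\mathbb{L}_{\id^+}$-generic introduced at stage $\eta_0$ is a single width-$\id^+$ slalom $\in^*$-capturing all of $\kk\cap V^{\mathbb{P}_{\eta_0}}\supseteq F$. Thus $\mathfrak{b}_{\id^+}(\in^*)\ge\kappa^{++}$ in $V^{\mathbb{P}}$. To see $\kappa$ is still supercompact there, for each $\lambda$ I would lift a $\lambda$-supercompactness embedding $j\colon V\to M$ through $\mathbb{P}$ by a master condition of support $j[\kappa^{++}]$ (of size $\kappa^{++}<j(\kappa)$) whose coordinate $j(\xi)$ is essentially $(\dot S_\xi,\,j[\kk\cap V^{\mathbb{P}_\xi}])$; this is a legitimate condition of $j(\mathbb{P})$ exactly because $|\kk\cap V^{\mathbb{P}_\xi}|\le\kappa^+=j(\id^+)(\kappa)$, which is precisely what would fail were the length larger. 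Since $\kappa$ is then measurable, the Main Theorem bound $\mathfrak{b}_{\id^{+\xi}}(\in^*)\le\kappa^{+\xi+1}$ gives $\mathfrak{b}_{\id^+}(\in^*)\le\kappa^{++}$, so $\kappa^+<\mathfrak{b}_{\id^+}(\in^*)=\kappa^{++}$.

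For the second statement: over the same kind of ground model first force $2^\kappa=\kappa^{++}$ with the Cohen forcing $\Add(\kappa,\kappa^{++})$ (which, being $\kappa$-directed-closed, preserves measurability via the preparation), and then over $V[G_0]$ let $\mathbb{Q}$ be the $<\kappa$-support iteration of $\mathbb{L}_{\id^{++}}$ of length $\kappa^+$. The same analysis shows $\mathbb{Q}$ is $\kappa$-closed and $\kappa^+$-cc of size $\le\kappa^{++}$, so $2^\kappa=\kappa^{++}$ is preserved, and the master-condition argument keeping $\kappa$ measurable now requires $|\kk\cap V[G_0]^{\mathbb{Q}_\xi}|=\kappa^{++}\le j(\id^{++})(\kappa)=\kappa^{++}$ --- which is why $2^\kappa=\kappa^{++}$ is the right preliminary value. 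In $V[G_0]^{\mathbb{Q}}$ every $f\in\kk$ has a name with support of size $\le\kappa$, bounded below some $\eta_0<\kappa^+$ by regularity of $\kappa^+$, hence $f$ is $\in^*$-captured by the width-$\id^{++}$ generic slalom $S_{\eta_0}$ added at stage $\eta_0$. Therefore $\{S_\eta:\eta<\kappa^+\}$ witnesses $\mathfrak{d}_{\id^{++}}(\in^*)\le\kappa^+<\kappa^{++}=2^\kappa$, which moreover shows that the bound $2^\kappa\le\kappa^{++}$ in the corresponding Main Theorem is sharp.

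The hard part, in both constructions, is preserving the large cardinal: $\mathbb{L}_h$ and its iterations are $\kappa$-closed but \emph{not} $\kappa$-directed-closed --- a directed family of conditions of size $<\kappa$ can have its side conditions overflow the bound $h(\delta_s)$ --- so ordinary Laver indestructibility does not apply directly. One must either perform the master-condition lift by hand, as sketched, or invoke a preparation tailored to $\kappa$-closed $\kappa^+$-cc forcing; and it is exactly the inequality $|\kk\cap V^{\mathbb{P}_\xi}|\le j(h)(\kappa)$ that forces the iteration lengths to be $\kappa^{++}$ for $h=\id^+$ and, after arranging $2^\kappa=\kappa^{++}$, $\kappa^+$ for $h=\id^{++}$, dovetailing with the upper bounds of the Main Theorems.
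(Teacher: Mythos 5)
Your proposal is correct and follows essentially the same route as the paper: a Laver-style preparation, a ${<}\kappa$-support iteration of the localization forcing of length $\kappa^{++}$ (resp.\ $\kappa^+$ after arranging $2^\kappa=\kappa^{++}$), a reflection argument for the values of $\mathfrak{b}_{\id^+}(\in^*)$ and $\mathfrak{d}_{\id^{++}}(\in^*)$, and a hand-built master condition whose side parts have size exactly $j(h)(\kappa)$ to compensate for the failure of ${<}\kappa$-directed closure — which is precisely the mechanism of Theorems~\ref{thm: preserving measurabiblity} and~\ref{thm: preserving measurabiblity2}. The only cosmetic difference is that for the second statement you blow up $2^\kappa$ with $\Add(\kappa,\kappa^{++})$ first, whereas the paper absorbs that step into a single longer localization iteration.
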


In particular, the above provides an answer to \cite[Question 71]{BrendleFreidmanMontoya}, also asked as \cite[Question 4.3]{vandervlugtlocalizationcardinals}, since in the above model $\kappa$ is measurable and therefore $\mathfrak{b}_\kappa(\in^*)=\kappa^+<\mathfrak{b}_{\id^{+}}(\in^*)$.

The above theorems work equally well for the variants $\mfb_h(\in^{cl})$ and $\mfd_h(\in^{cl})$ where capture is only insisted upon on a club. Nevertheless we show that, regardless of whether $\kappa$ satisfies a large cardinal property these ``club variants" are distinct.
\begin{thmMain}
    For any regular uncountable cardinal and any strictly increasing function $h:\kappa \to \kappa$ it is (separately) consistent that $\mfb_h (\in^*) < \mfb_h (\in^{cl})$ and $\mfd_h(\in^{cl}) < \mfd_h(\in^*)$. Moreover, relative to the consistency of a supercompact cardinal, if $h$ is the power function then the above are consistent with $\kappa$ being supercompact. 
\end{thmMain}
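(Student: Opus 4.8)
The plan is to prove both halves by two dual forcing constructions, after recording the $\mathsf{ZFC}$ backbone: since a co-bounded subset of $\kappa$ contains a club we have $\in^*\subseteq\in^{cl}$ as relations, hence every $\in^{cl}$-unbounded family is $\in^*$-unbounded and every $\in^*$-dominating family of slaloms is $\in^{cl}$-dominating, so $\mfb_h(\in^*)\le\mfb_h(\in^{cl})$ and $\mfd_h(\in^{cl})\le\mfd_h(\in^*)$. Both statements amount to forcing these inequalities to be strict; for the supercompact clause one runs the same forcings over a model in which $\kappa$ is Laver-indestructible.

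\textbf{The $\mfb$-separation.} Over a model $V$ of $\mathsf{GCH}$, iterate with $\le\kappa$-support and length $\kappa^{++}$ an ``amoeba for club-localization'' $\mathbb{LOC}^{cl}$: conditions are triples $(s,F,c)$ where $s$ is a width-$h$ partial slalom with bounded domain that is identically $\emptyset$ on successor ordinals, $c$ is a bounded closed set of limit ordinals, and $F$ is a set of fewer than $\kappa$ functions of the current model with $|F|\le h(\max c)$; an extension end-extends $s$ and $c$, may enlarge $F$ (keeping the width bound satisfiable on the future part of $c$), and promises that each $f$ currently in $F$ stays inside $s$ on the enlarged part of $c$. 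As in the classical amoeba, density shows every function of the current model is eventually placed in $F$, so the generic slalom $S$ satisfies $f\in^{cl}S$ for all $f$ in $\kappa^\kappa$ of that model (capture occurring on a tail of the master club of limit ordinals), while $S\equiv\emptyset$ on the unbounded set of successor ordinals, so $S$ $\in^*$-captures nothing. Since $2^{<\kappa}=\kappa$, two conditions are incompatible only if their $(s,c)$-parts are incoherent, so $\mathbb{LOC}^{cl}$ is $\kappa^+$-c.c.; it is also $<\kappa$-closed, with the usual care at limits of cofinality $<\kappa$. Hence $\mathbb P_{\kappa^{++}}$ is $<\kappa$-closed, $\kappa^+$-c.c., preserves cardinals, and forces $2^\kappa=\kappa^{++}$. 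Book-keeping every function occurring along the iteration into some coordinate's $F$, any $A\in[\kappa^\kappa]^{\le\kappa^+}$ of the final model lies in some $V^{\mathbb P_\gamma}$ with $\gamma<\kappa^{++}$ and is $\in^{cl}$-captured by the later generic slalom $S_\gamma$; thus $\mfb_h(\in^{cl})=\kappa^{++}$. Since $\mfb_h(\in^*)\ge\kappa^+$ always, it then remains to keep $\kappa^\kappa\cap V$ (of size $\kappa^+$) $\in^*$-unbounded, i.e. to show $\mathbb P_{\kappa^{++}}$ adds no width-$h$ slalom $\in^*$-capturing all of $\kappa^\kappa\cap V$.

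\textbf{The $\mfd$-separation.} Dually, over a model of $\mathsf{GCH}$ take the $<\kappa$-support product of $\kappa^{++}$ copies of a forcing $\mathbb Q^{cl}$ that adds a function $g$ together with a generic club $C^*$ of limit ordinals so that $g\restriction C^*\equiv 0$ while $g$ is generic on $D=\kappa\setminus C^*$ (an unbounded non-stationary set); then $g\in^{cl}S_0$ for the fixed ground-model slalom $S_0(\alpha)=\{0\}$, while the genericity of $g$ on the unbounded set $D$ forces $g\notin^*S$ for every ground-model width-$h$ slalom $S$. One designs $\mathbb Q^{cl}$ so that it is moreover ``$\in^{cl}$-good over $V$'': the genericity of the club $C^*$ guarantees (by a density argument) that $\varphi^{-1}(D)$ is non-stationary for every $\varphi\in V$, and more generally that every function of $\kappa^\kappa$ in the extension is $\in^{cl}$-captured by some ground-model width-$h$ slalom. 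The product is again $<\kappa$-closed and, by a $\Delta$-system argument using $2^{<\kappa}=\kappa$, $\kappa^+$-c.c., so it forces $2^\kappa=\kappa^{++}$; a fixed $\kappa^+$-sized $\in^{cl}$-dominating family of ground-model slaloms remains $\in^{cl}$-dominating, so $\mfd_h(\in^{cl})=\kappa^+$. On the other hand, given any family $\mathcal S$ of $\le\kappa^+$ slaloms in the extension, the $\kappa^+$-c.c. supplies a coordinate $\beta^*$ absent from a nice name for $\mathcal S$, and $g_{\beta^*}$ is $\in^*$-unbounded over the submodel $V[\prod_{\beta\ne\beta^*}\mathbb Q^{cl}_\beta]\ni\mathcal S$, so no $\le\kappa^+$-sized family $\in^*$-dominates and $\mfd_h(\in^*)=2^\kappa=\kappa^{++}$.

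\textbf{The supercompact clause and the main obstacle.} For the ``moreover'' statement start from a supercompact $\kappa$, force the Laver preparation, and note that $\mathbb{LOC}^{cl}$, $\mathbb Q^{cl}$, and their $\le\kappa$-support iteration/product are $<\kappa$-directed-closed, so $\kappa$ remains supercompact; since these forcings add no bounded subsets of $\kappa$, the width function $h$ — in particular the power function — is unchanged. The heart of the argument, and the step I expect to be hardest, is the pair of preservation theorems: that the $\mathbb{LOC}^{cl}$-iteration preserves the $\in^*$-unboundedness of $\kappa^\kappa\cap V$, and dually that the $\mathbb Q^{cl}$-product preserves $\in^{cl}$-domination by the ground-model slaloms. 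Even the single-step versions require real work: for $\mathbb{LOC}^{cl}$ one must rule out that some width-$h$ slalom built from the limit-coordinate data of the generic $\in^*$-captures all of $\kappa^\kappa\cap V$, via a fusion/pure-decision argument exploiting that the generic slalom vanishes on successor coordinates (whence a diagonal ground-model function escapes it on the successors); symmetrically for $\mathbb Q^{cl}$ one must show that every function derived from $g$ and $C^*$ — notably $g$ composed with ground-model re-enumerations of $D$, and combinations of several coordinates — stays $\in^{cl}$-captured by a ground-model slalom. Propagating these through a $\le\kappa$-support iteration/product of length $\kappa^{++}$ then needs the appropriate iteration-preservation framework in the generalized Baire space.
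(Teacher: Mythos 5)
Your reduction to two forcing constructions is the right shape, but the proposal has three genuine gaps, two of which you flag yourself and one you do not.

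First, the heart of the $\mfb$-separation is precisely the preservation statement you defer: that the $\kappa^{++}$-length iteration of the club-localization forcing adds no full-domain $h$-slalom $\in^*$-capturing a fixed $\kappa^+$-sized family. Your proposed mechanism --- making the generic slalom vanish on successor ordinals and then running ``fusion/pure decision'' --- attacks the wrong object: the danger is not the generic slalom itself but an arbitrary slalom coded from the generic, and vanishing on successors gives no leverage over those. The paper's route is different and is the actual content of the theorem: the one-step forcing $\mathbb{LOC}^{cl}_{h,\kappa}$ is $\kappa$-\emph{centered} (not merely $\kappa^+$-c.c.), so by a known fact about $\kappa$-centered posets every new $h'$-slalom is trapped by $\kappa$ many ground-model slaloms; the witnessing unbounded family is taken to be $\kappa^+$ many \emph{Cohen} functions added beforehand (mutual genericity is used essentially in the induction ``each slalom captures at most $\kappa$ many $c_\xi$''), and the limit stages of cofinality ${<}\kappa$ are handled with the Engelking--Karlowicz theorem and a directedness analysis of conditions with a common stem pattern. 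An arbitrary ground-model $\kappa^\kappa$ in a GCH model, as in your setup, is not obviously usable here.

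Second, for the $\mfd$-separation your forcing $\mathbb{Q}^{cl}$ needs to be ``$\in^{cl}$-good over $V$,'' i.e.\ every new element of $\kk$ must be club-captured by a ground-model slalom. This is asserted, not proved, and is doubtful: $g$ restricted to the unbounded set $D$ is Cohen-generic, and functions read off from that generic part (e.g.\ $g$ composed with the enumeration of $D$) should escape every ground-model slalom on a stationary set by the usual fusion argument for ${<}\kappa$-closed forcing. The paper sidesteps this entirely: it first forces $2^\kappa=\kappa^{++}$ with Cohen forcing and then runs a \emph{short} ($\kappa^+$-length) iteration of the same $\mathbb{LOC}^{cl}_{h,\kappa}$; the $\kappa^+$ generic club-slaloms themselves form the small $\in^{cl}$-dominating family, while the same preservation lemma keeps $\mfd_h(\in^*)=\kappa^{++}$ because each new slalom $\in^*$-captures at most $\kappa$ of the $\kappa^{++}$ Cohen functions. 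Third, your supercompactness argument via Laver indestructibility fails outright: $\mathbb{LOC}^{cl}_{h,\kappa}$ (and your variant, for the same counting reason applied to $h(\max c)^+$ many compatible promises) is \emph{not} ${<}\kappa$-directed closed --- the paper exhibits an explicit directed family with no lower bound. Preservation of supercompactness instead requires the bespoke preparation and a hand-built master condition exploiting the partial directedness of the forcing, and this is exactly why the statement restricts $h$ to the power function in the ``moreover'' clause; your argument would prove the clause for all $h$, which should itself be a warning sign.
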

Note that, by contrast, $\mfb_\kappa$ and $\mfd_\kappa$ are equivalent to their club variants, see \cite{CUMMINGSShelah}\footnote{For $\mfb_\kappa$ this is true always while for $\mfd_\kappa$ it is only known above $\beth_\omega$ and open below, see\cite[Theorem 8]{CUMMINGSShelah}.}. Similarly we show that $\mfb_\kappa(\in^*)$ is independent of $\mathfrak{s}_\kappa$ - thus showing that the analogue of the Raghavan-Shelah result mentioned above also fails.

\begin{thmMain}
  Relative to the existence of a supercompact cardinal both strict inequalities between $\mfb_h(\in^*)$ and $\mathfrak{s}_\kappa$ are consistent for any $h:\kappa \to \kappa$. If $h$ is the power function or $\id^+$ then they are moreover consistent with $\kappa$ remaining supercompact. 
\end{thmMain}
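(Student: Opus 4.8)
We treat the two strict inequalities by separate forcing constructions, in each case distinguishing the general situation from the case in which $\kappa$ is to remain supercompact.

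\emph{Consistency of $\mathfrak{s}_\kappa<\mfb_h(\in^*)$.} For an arbitrary strictly increasing $h:\kappa\to\kappa$ this is almost immediate: if $\kappa$ is inaccessible but not weakly compact then $\mathfrak{s}_\kappa=\kappa$ by Suzuki's theorem, whereas $\mfb_h(\in^*)\geq\kappa^+$ in $\mathsf{ZFC}$, since any $\kappa$-sized family $\{f_\xi:\xi<\kappa\}\subseteq\kk$ is $\in^*$-localized by the slalom $\alpha\mapsto\{f_\xi(\alpha):\xi\leq\alpha\}$, whose width is at most $|\alpha|\leq h(\alpha)$. When $h$ is the power function or $\id^+$ and $\kappa$ is to be supercompact, one instead takes the model produced above in which $\kappa$ is supercompact and $\kappa^+<\mfb_{\id^+}(\in^*)$; as $\id^+(\alpha)\leq 2^\alpha$, monotonicity of $\mfb_h(\in^*)$ in $h$ gives $\kappa^+<\mfb_h(\in^*)$ there as well, and it remains only to verify that the ($<\kappa$-directed-closed, amoeba/unlocalizing) iteration producing that model adds no member of $[\kappa]^\kappa$ unsplit by the ground model, so that a ground-model splitting family of size $\kappa^+$ survives and $\mathfrak{s}_\kappa=\kappa^+$.

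\emph{Consistency of $\mfb_h(\in^*)<\mathfrak{s}_\kappa$.} Since $\mathfrak{s}_\kappa>\kappa^+$ must be forced, $\kappa$ has to be at least weakly compact; we take $\kappa$ measurable with adequate indestructibility — resp., when $h$ is the power function or $\id^+$, Laver-indestructibly supercompact — over a ground model in which $\mathrm{GCH}$ holds at and below $\kappa$, all of which is available from a supercompact. Force with the $<\kappa$-support iteration of length $\kappa^{++}$ whose step $\beta$ is $\kappa$-Mathias forcing relative to some $\kappa$-complete uniform ultrafilter $U_\beta$ of the $\beta$-th intermediate model (conditions $(s,A)$ with $s\in[\kappa]^{<\kappa}$, $A\in U_\beta$, $\sup s<\min A$, and $(s',A')\leq(s,A)$ iff $s\subseteq s'$, $A'\subseteq A$ and $s'\setminus s\subseteq A$). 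Each step is $<\kappa$-directed-closed — unite the stems and take $\kappa$-complete intersections of the sides — and $\kappa^+$-Knaster, since conditions sharing a stem are compatible and there are only $\kappa^{<\kappa}=\kappa$ stems; hence so is the iteration, which by a routine nice-names and $\Delta$-system computation is $\kappa^{++}$-c.c., adds no bounded subset of $\kappa$, preserves cardinals and cofinalities, forces $2^\kappa=\kappa^{++}$, and keeps $\kappa$'s large-cardinal property (in the supercompact case directly by Laver indestructibility). The $\beta$-th Mathias real is almost contained in every member of $U_\beta$, hence is split by no subset of $\kappa$ in the $\beta$-th model; since by the $\kappa^{++}$-c.c.\ every $\kappa^+$-sized subfamily of $[\kappa]^\kappa$ appears by some stage $\beta<\kappa^{++}$, no such family is splitting and $\mathfrak{s}_\kappa=\kappa^{++}$.

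It remains to see $\mfb_h(\in^*)=\kappa^+$ in this model, the lower bound being as before. If $h$ is small relative to the ultrafilter diagonal — e.g.\ $h=\id$ — this is already a $\mathsf{ZFC}$ consequence of the measurability of $\kappa$, exactly as in the proof that $\mfb_\kappa(\in^*)=\kappa^+$ at a measurable, so no preservation is needed. In general, and in particular for $h$ the power function or $\id^+$, one instead checks that the $\kappa$-Mathias iteration has an iterable ``$h$-localization property'' — it adds no $h$-slalom $\in^*$-localizing a family of functions not already so localized in the ground model, the $\kk$-analogue of Mathias forcing having the Laver property — whence the ground-model witness of size $\kappa^+$ to $\mfb_h(\in^*)=\kappa^+$ (which exists by $\mathrm{GCH}$) remains a witness. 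Finally, the iteration adds no bounded subset of $\kappa$, so $\mathrm{GCH}$ persists below $\kappa$ and there the power function agrees with $\id^+$; the two distinguished cases are thus covered at once, and in each $\mfb_h(\in^*)=\kappa^+<\kappa^{++}=\mathfrak{s}_\kappa$. The main obstacles are precisely these two preservation facts for the $\kappa$-Mathias iteration: the iterable $h$-localization property — the generalized-Baire-space counterpart of the usual preservation theorems for slalom-unbounded families under countable-support iteration (in the style going back to Bartoszy\'nski), the subtlety being limit stages of cofinality $\kappa$, for which there is no direct analogue of countable-support machinery — and, for the supercompact instance of $\mathfrak{s}_\kappa<\mfb_h(\in^*)$, the dual fact that an amoeba/unlocalizing iteration of the kind used there adds no element of $[\kappa]^\kappa$ unsplit by the ground model, hence preserves a splitting family of size $\kappa^+$.
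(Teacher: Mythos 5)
Your overall architecture matches the paper's: one direction is obtained by laying down a small family of sets that remains splitting while an iteration of localization forcing drives $\mfb_h(\in^*)$ up, and the other by a ${<}\kappa$-support iteration of $\kappa$-Mathias forcing that drives $\mathfrak{s}_\kappa$ up while a ground-model witness keeps $\mfb_h(\in^*)=\kappa^+$. However, the two preservation facts you defer at the end as ``the main obstacles'' are not side issues --- they are the entire technical content of the proof, and leaving them unproved is a genuine gap. For the Mathias side, the paper's mechanism is not a Laver-property/fusion analogue at all (which, as you note, is problematic at limits of cofinality $\kappa$); instead it shows that the iteration is $\kappa$-centered with \emph{canonical lower bounds} (Definition \ref{clb}), proves via an elementary-submodel argument that no such forcing adds a club-capturing slalom over an unbounded ground-model family (Lemma \ref{lemma:clb do not add clubslalom}), and verifies clb for the Mathias iteration by an Engelking--Kar\l owicz coding of the stems (Lemma \ref{lemma:preserve small b club}); since $\mfb_h(\in^*)\le\mfb_h(\in^{cl})$, this gives what you need. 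In fact, for the $\in^*$ version alone your obligation is even cheaper than you fear: $\kappa$-centeredness of the whole iteration plus \cite[Lemma 59]{BrendleFreidmanMontoya} already shows no localizing $h$-slalom is added --- but the centeredness of the \emph{iteration} is itself the Engelking--Kar\l owicz point you have not supplied. For the other side, the paper does not try to show abstractly that the localization iteration adds no unsplit set; it first adds $\kappa^+$ Cohen subsets of $\kappa$ and then shows by a mutual-genericity argument (Lemma \ref{cohen witnesse are preserved}) that these survive as a witness --- the same argument shows they remain splitting. Your plan to ``verify'' preservation of an arbitrary ground-model splitting family after the fact is not carried out and is not how the argument goes.

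Two smaller points. First, your parenthetical that the slalom-adding iteration is ${<}\kappa$-directed-closed is false --- the paper shows $\mathbb{LOC}_{h,\kappa}$ is not directed closed (Example \ref{example: non closed}), which is exactly why preserving supercompactness requires the bespoke master-condition construction of Theorem \ref{thm: preserving measurabiblity} rather than Laver indestructibility. Second, your quick disposal of $\mathfrak{s}_\kappa<\mfb_h(\in^*)$ at a non-weakly-compact inaccessible (where $\mathfrak{s}_\kappa\le\kappa$ by Suzuki) is technically a consistent instance, but the paper's version is the substantive one, keeping $\kappa$ strongly unfoldable (hence weakly compact) via Theorem \ref{hamkinsindestructibility} so that $\mathfrak{s}_\kappa=\kappa^+<\mfb_h(\in^*)=\lambda$ with both invariants above $\kappa$; also note your explicit slalom $\alpha\mapsto\{f_\xi(\alpha)\mid\xi\le\alpha\}$ only has the right width when $h\ge\id$, so the lower bound $\kappa^+\le\mfb_h(\in^*)$ for general $h$ needs the paper's hypothesis that $h$ eventually dominates every constant function.
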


Regarding the consistency strength of the previous results on measurable cardinals, 
in \S\ref{Section: Coverling} we discuss certain covering properties that an ultrapower must satisfy in order to allow a large $\mathfrak{b}$. In particular, the ultrapower should compute certain cofinalities correct and should be correct about at least one cardinal greater than $\kappa^+$.  Another property the ultrapower should have is that its $cf^V(j_U(\kappa))$ must be quite large. We investigate the possibilities of what $cf^V(j_U(\kappa))$ can be and use a previous construction of Gitik to obtain models where $cf^V(j_U(\kappa))$ depends on the ultrafilter from optimal assumptions.

The rest of this paper is organized as follows. In the next section we introduce the localization cardinals in full generality and discuss some other preliminaries. In the following section we examine $\mathsf{ZFC}$-provable relations on the localization cardinals under large cardinal assumptions. In \S\ref{forcing} introduces the forcing notions used to manipulate the localization cardinals and uses these to prove that the $\mathsf{ZFC}$ results are optimal at a measurable by manipulating the localization cardinals at a supercompact. In \S\ref{section: club localization} we consider a variant of the localization forcing for adding slaloms which capture on a club and shows that the associated club variants of $\mfb_h(\in^*)$ and $\mfd_h(\in^*)$ are different from their ``eventual" counterparts. In this section we also separate both types of capturing cardinals from $\mathfrak{s}_\kappa$ and $\mathfrak{p}_\kappa$ and show all of these separations can moreover be obtained alongside large cardinal preservation.  The final section finishes by discussing the aforementioned covering properties. Throughout our notation is standard, conforming e.g. to the monograph \cite{Jech2003}. The reader is referred there for any undefined terms or notation. 
\section{Preliminaries}
Let $\kappa\geq\omega$ be a cardinal. We denote by $\id$, the identity function, where the domain of $\id$ should be adapted to the context. For a set $X$ and a cardinal $\lambda$ we let $[X]^\lambda$ denote the collection of subsets of $X$ of cardinality $\lambda$. Also, $\jbd{\kappa}$ denotes the bounded ideal on $\kappa$; the ideal generated by the set of ordinals $\kappa$. We will always assume that functions $h:\kappa\to\kappa$ below never output finite numbers.
\begin{definition}\label{def: slalom}
    Let  $h:\kappa\to\kappa$ be a function, and let $\CI$ be an ideal over $\kappa$.
    \begin{enumerate}
        
        \item A partial $(h,\CI)$-slalom is a partial function $\varphi:\kappa\to P(\kappa)$ such that $\dom(\varphi)\in \CI^+$ and for every $\alpha\in \dom(\varphi)$, $\varphi(\alpha)\in[\kappa]^{|h(\alpha)|}$.
        \item An $(h,\CI)$-slalom is a partial $h$-slalom with $\dom(\varphi)\in \CI^*$.
        \item A (partial) $h$-slalom  is a  (partial) $(h,\jbd{\kappa})$-slalom.
        \item A (partial) $\kappa$-slalom is an (partial) $\id$-slalom.
    \end{enumerate}
\end{definition} 
Set ($pSL^\CI_h$) $SL^{\CI}_h$ to be the set of (partial) $(h,\CI)$-slaloms $\varphi$. Again, drop the superscript $\CI$ for $\CI=\jbd{\kappa}$ and replace the subscript $h$ with $\kappa$ when $h=\id$. Clearly $SL_h\subseteq pSL^\CI_h$. 
\begin{definition}
    We define the binary relation $\in ^\CI\subseteq \kappa^{\kappa}\times pSL^\CI_h$ by $f\in^\CI \varphi$ if and only if $\{\alpha\in \dom(\varphi)\mid f(\alpha)\notin \varphi(\alpha)\}\in \CI$.  
\end{definition}
For $\CI=\jbd{\kappa}$, we denote $\in^\CI$ by $\in^*$.
As with every binary relation, notions of bounding and dominating numbers are induced: 
\begin{definition}\label{def: localization invariants}
Let $h:\kappa\to\kappa$ be a function and $\CI$ an ideal over $\kappa$. Set
$$\mathfrak{b}_h(\in^\CI)=\min\{|\CF|\mid \CF\subseteq \kappa^\kappa, \ \forall \varphi\in SL_h \exists f\in\CF,\neg(f\in^\CI \phi)\}$$
$$\mathfrak{b}_h(\mathrm{p}\in^\CI)=\min\{|\CF|\mid \CF\subseteq \kappa^\kappa, \ \forall \varphi\in pSL_h \exists f\in\CF,\neg(f\in^\CI \phi)\}$$
$$\mathfrak{d}_h(\in^\CI)=\min\{|\mathcal{A}|\mid \mathcal{A}\subseteq SL_h, \ \forall f\in\kappa^\kappa \exists \varphi\in\mathcal{A}, \ f\in^\CI \varphi\}$$
$$\mathfrak{d}_h(\mathrm{p}\in^\CI)=\min\{|\mathcal{A}|\mid \mathcal{A}\subseteq pSL_h, \ \forall f\in\kappa^\kappa \exists \varphi\in\mathcal{A}, \ f\in^\CI \varphi\}$$
\end{definition}

Again, for $h=\id$ we replace the subscript $h$ by $\kappa$. For example, we denote $\mathfrak{b}_{\id}(\in^*)=\mathfrak{b}_\kappa(\in^*)$, and $\mathfrak{d}_{\id}(p\in^\CI)=\mathfrak{d}_\kappa(p\in^\CI)$. There are obvious Tukey-Galois connections  here\footnote{For more background about Tukey-Galois connections see \cite[\S4]{Blass2010}. Our notations follow \cite[Def. 6\&8]{BrendleFreidmanMontoya}.}:
\begin{proposition}
Let $\CI$ be an ideal extending $\jbd{\kappa}$ and $h:\kappa\to\kappa$ be any function. Then, $$(\kappa^\kappa,SL_h,\in^\CI)\preceq(\kappa^\kappa,pSL^\CI_n,\in^\CI)\preceq (\kappa^\kappa,\kappa^\kappa,\leq^*).$$ In particular:
    \begin{enumerate}
        \item $\mathfrak{b}_{h}(\in^\CI)\leq \mathfrak{b}_h(\mathrm{p}\in^\CI)\leq\mathfrak{b}_\kappa$.
        \item $\mathfrak{d}_{h}(\in^\CI)\geq \mathfrak{d}_h(\mathrm{p}\in^\CI)\geq\mathfrak{d}_\kappa$
    \end{enumerate}
\end{proposition}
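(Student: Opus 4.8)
The plan is to exhibit, for each of the two comparisons, the pair of maps realizing it as a Tukey--Galois morphism; once this is done, the bounding and dominating inequalities in (1) and (2) are automatic from the standard monotonicity of $\mathfrak b$ and $\mathfrak d$ along such morphisms (see \cite[\S4]{Blass2010}). The first comparison $(\kk,SL_h,\in^\CI)\preceq(\kk,pSL^\CI_h,\in^\CI)$ needs nothing beyond the containment noted just before the statement: $SL_h\s pSL^\CI_h$, because a full $(h,\jbd{\kappa})$-slalom has cobounded, hence $\CI$-positive, domain. Using $\mathrm{id}_{\kk}$ on the function coordinate and the inclusion $SL_h\hookrightarrow pSL^\CI_h$ on the slalom coordinate, the morphism condition is the tautology ``$f\in^\CI\varphi\Rightarrow f\in^\CI\varphi$'', and we read off $\mathfrak b_h(\in^\CI)\leq\mathfrak b_h(\mathrm p\in^\CI)$ and $\mathfrak d_h(\in^\CI)\geq\mathfrak d_h(\mathrm p\in^\CI)$.

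The substance is in the second comparison $(\kk,pSL^\CI_h,\in^\CI)\preceq(\kk,\kk,\leq^*)$, which formalizes the observation that a slalom is no wilder, modulo $\leq^*$, than the function of its suprema. The one fact actually used is that $|\varphi(\alpha)|=|h(\alpha)|<\kappa=\cf(\kappa)$, so $\sup\varphi(\alpha)<\kappa$ for every $\alpha\in\dom(\varphi)$. Fix $\varphi\in pSL^\CI_h$; since $\dom(\varphi)$ is unbounded in the regular cardinal $\kappa$ it has order type $\kappa$, so enumerate it increasingly as $\{d^\varphi_\xi:\xi<\kappa\}$ and set $g_\varphi(\xi):=\sup(\varphi(d^\varphi_\xi)\cup\{0\})$, which is $<\kappa$, so $g_\varphi\in\kk$. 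On the function coordinate take the running supremum $\phi(f)(\alpha):=\sup_{\beta\leq\alpha}f(\beta)$, again $<\kappa$ by regularity; it is nondecreasing and $\geq f$ pointwise. I claim $(\phi,\ \varphi\mapsto g_\varphi)$ is a morphism, that is, $\phi(f)\in^\CI\varphi$ implies $f\leq^* g_\varphi$. If $\phi(f)\in^\CI\varphi$ then $M:=\{\alpha\in\dom(\varphi):\phi(f)(\alpha)\notin\varphi(\alpha)\}\in\jbd{\kappa}$, say $M\s\beta_0$; for each $\xi\geq\beta_0$ we have $d^\varphi_\xi\geq\xi\geq\beta_0$, hence $d^\varphi_\xi\notin M$, hence $\phi(f)(d^\varphi_\xi)\in\varphi(d^\varphi_\xi)$, and therefore $f(\xi)\leq\phi(f)(\xi)\leq\phi(f)(d^\varphi_\xi)\leq\sup\varphi(d^\varphi_\xi)\leq g_\varphi(\xi)$. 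This yields $\mathfrak b_h(\mathrm p\in^\CI)\leq\mathfrak b_\kappa$ and $\mathfrak d_h(\mathrm p\in^\CI)\geq\mathfrak d_\kappa$, completing (1) and (2).

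The one step that takes care -- and, I expect, the only one -- is the presence of the \emph{partial} slalom in the second morphism: one cannot simply put $\mathrm{id}_{\kk}$ on the function coordinate, since $g_\varphi$ remembers nothing about $f$ off $\dom(\varphi)$ and a genuinely partial $\dom(\varphi)$ need not have bounded complement. Passing to the running supremum of $f$ and re-indexing $g_\varphi$ along the increasing enumeration of $\dom(\varphi)$ is exactly the device that trades ``capture along $\dom(\varphi)$'' for ``domination mod bounded'', and it is here that regularity of $\kappa$ (to keep $\sup\varphi(\alpha)$ and the running supremum below $\kappa$) and $\CI=\jbd{\kappa}$ (to make $M$ bounded) enter. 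For an ideal $\CI\supsetneq\jbd{\kappa}$ the same scheme applies with the obvious adjustments to the relation on the right.
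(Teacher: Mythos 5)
Your first reduction, and your treatment of the second one in the case $\CI=\jbd{\kappa}$, are correct and are precisely the ``obvious'' Tukey--Galois connections the paper has in mind (it records none of the details): the inclusion $SL_h\s pSL^\CI_h$ disposes of the first comparison, and the pair consisting of the running supremum $f\mapsto\phi(f)$ together with $\varphi\mapsto g_\varphi$ read off along the increasing enumeration of $\dom(\varphi)$ is the standard morphism giving $\mfb_h(\mathrm p\in^*)\leq\mfb_\kappa$ and $\mfd_h(\mathrm p\in^*)\geq\mfd_\kappa$.

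The gap is concentrated in your final sentence. Your verification of the second morphism genuinely uses that the failure set $M$ is \emph{bounded}, i.e.\ that $\CI=\jbd{\kappa}$: for a general $\CI\supseteq\jbd{\kappa}$ the hypothesis $\phi(f)\in^\CI\varphi$ only yields that $\phi(f)(\alpha)\in\varphi(\alpha)$ on a $\CI$-positive (hence unbounded) set, while the particular points $d^\varphi_\xi$ used to define $g_\varphi$ may all lie in $M$, so the chain $f(\xi)\leq\phi(f)(d^\varphi_\xi)\leq g_\varphi(\xi)$ collapses. Worse, the proposed repair --- ``adjust the relation on the right'' to $\leq^\CI$ --- proves a different statement whose conclusion points the wrong way: since $f\leq^* g$ implies $f\leq^\CI g$, one has $\mfb(\leq^\CI)\geq\mfb_\kappa$ and $\mfd(\leq^\CI)\leq\mfd_\kappa$, so a bound $\mfb_h(\mathrm p\in^\CI)\leq\mfb(\leq^\CI)$ does not give $\mfb_h(\mathrm p\in^\CI)\leq\mfb_\kappa$. (Concretely, if $\CI$ is the dual of a $\kappa$-complete ultrafilter $U$ then $\mfb(\leq^\CI)=\mfd(\leq^\CI)=\cf^V(j_U(\kappa))$, which by Proposition~\ref{prop: b and d bounds} can lie strictly between $\mfb_\kappa$ and $\mfd_\kappa$.) The underlying obstruction is that for $\CI\supsetneq\jbd{\kappa}$ the condition $\neg(f\in^\CI\varphi)$ demands escape on a $\CI$-\emph{positive} subset of $\dom(\varphi)$, whereas a $\leq^*$-unbounded family only supplies escape on an unbounded set; so the general case requires a genuinely different argument (or the proposition should be read for $\CI=\jbd{\kappa}$, the only case in which this reduction is subsequently used).
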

For $f,g:\kappa\to\kappa$ denote by $f\leq^{\CI}g$ if $\{\alpha<\kappa\mid f(\alpha)>g(\alpha)\}\in\CI$. By the next fact, these are indeed cardinal characteristics:
\begin{fact}
    Suppose $\jbd{\kappa}\subseteq\CI$ and $h:\kappa\to\kappa$ is such that for $h$ is $\leq_\CI$-above all the constant functions. Then $\kappa^+\leq\mathfrak{b}_h(\in^\CI)\leq \mathfrak{d}_h(\in^\CI)\leq2^\kappa$.
\end{fact}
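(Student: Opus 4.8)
The statement bundles three inequalities, $\kappa^{+}\le\mathfrak{b}_{h}(\in^{\CI})$, $\mathfrak{b}_{h}(\in^{\CI})\le\mathfrak{d}_{h}(\in^{\CI})$ and $\mathfrak{d}_{h}(\in^{\CI})\le 2^{\kappa}$; I would dispatch the last two at once and then spend the real work on the first. For $\mathfrak{d}_{h}(\in^{\CI})\le 2^{\kappa}$ the plan is to note that $SL_{h}$ is itself a dominating family: given $f\in\kappa^{\kappa}$, since each $h(\alpha)$ is infinite one may pick $\varphi(\alpha)\in[\kappa]^{|h(\alpha)|}$ with $f(\alpha)\in\varphi(\alpha)$, obtaining $\varphi\in SL_{h}$ (its domain is all of $\kappa$) whose escape set $\{\alpha:f(\alpha)\notin\varphi(\alpha)\}$ is empty, so $f\in^{\CI}\varphi$. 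As $|SL_{h}|\le|\mathcal{P}(\kappa)^{\kappa}|=(2^{\kappa})^{\kappa}=2^{\kappa}$, this yields $\mathfrak{d}_{h}(\in^{\CI})\le 2^{\kappa}$.

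For $\mathfrak{b}_{h}(\in^{\CI})\le\mathfrak{d}_{h}(\in^{\CI})$ I would avoid a direct diagonalization and instead chain the Tukey--Galois inequalities of the proposition above with the classical $\mathfrak{b}_{\kappa}\le\mathfrak{d}_{\kappa}$: item (1) there gives $\mathfrak{b}_{h}(\in^{\CI})\le\mathfrak{b}_{\kappa}$, item (2) gives $\mathfrak{d}_{\kappa}\le\mathfrak{d}_{h}(\in^{\CI})$, and $\mathfrak{b}_{\kappa}\le\mathfrak{d}_{\kappa}$ holds because any $\leq^{*}$-dominating family $D$ is $\leq^{*}$-unbounded (if $g$ bounded $D$, some $f\in D$ would $\leq^{*}$-dominate $g+1$, forcing $g+1\leq^{*}g$). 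Combining, $\mathfrak{b}_{h}(\in^{\CI})\le\mathfrak{b}_{\kappa}\le\mathfrak{d}_{\kappa}\le\mathfrak{d}_{h}(\in^{\CI})$.

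The substantive inequality is $\kappa^{+}\le\mathfrak{b}_{h}(\in^{\CI})$; equivalently, every $\CF\subseteq\kappa^{\kappa}$ with $|\CF|\le\kappa$ fails to be $\in^{\CI}$-unbounded. I would fix such an $\CF=\{f_{i}:i<\mu\}$ with $\mu\le\kappa$ and set
$$\psi(\alpha)=\{f_{i}(\alpha):i<\min(\mu,h(\alpha))\},$$
padding each $\psi(\alpha)$ with arbitrary ordinals below $\kappa$ so that $|\psi(\alpha)|=|h(\alpha)|$ exactly — there is room since $|\psi(\alpha)|\le|h(\alpha)|<\kappa$. Then $\psi\in SL_{h}$, with $\dom(\psi)=\kappa\in(\jbd{\kappa})^{*}$. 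For a fixed $i<\mu$ one has $f_{i}(\alpha)\in\psi(\alpha)$ whenever $h(\alpha)>i$, so
$$\{\alpha<\kappa:f_{i}(\alpha)\notin\psi(\alpha)\}\subseteq\{\alpha<\kappa:h(\alpha)\le i\},$$
and the latter set lies in $\CI$ precisely because $h$ is $\leq_{\CI}$-above the constant function with value $i+1$. Hence $f_{i}\in^{\CI}\psi$ for every $i<\mu$, so no member of $\CF$ escapes $\psi$ and $\CF$ is not $\in^\CI$-unbounded; therefore every witness to $\mathfrak{b}_{h}(\in^{\CI})$ has cardinality $\ge\kappa^{+}$.

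I do not expect a genuine obstacle — this is a ``these are honest cardinal characteristics'' fact — so the only thing needing care is the bookkeeping around the two ideals in play. The slaloms in $SL_{h}$ are required to have $\jbd{\kappa}$-cobounded domains, while $\in^{\CI}$ is measured modulo the larger ideal $\CI$; this mismatch is harmless because no cobounded set can belong to the proper ideal $\CI\supseteq\jbd{\kappa}$ (its complement would be bounded, hence in $\CI$, forcing $\kappa\in\CI$), so escaping a slalom remains possible and $\mathfrak{b}_{h}(\in^{\CI})$ is well defined. The remaining subtlety is simply that the hypothesis ``$h$ is $\leq_{\CI}$-above every constant function'' is invoked exactly once, in the final display, and is precisely what is needed there to make the sets $\{\alpha:h(\alpha)\le i\}$ belong to $\CI$.
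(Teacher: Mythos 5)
Your proof is correct, and since the paper states this as a Fact without supplying any argument, you are simply filling in the routine details in the expected way: $SL_h$ itself dominates, the Tukey chain $\mathfrak{b}_h(\in^\CI)\le\mathfrak{b}_\kappa\le\mathfrak{d}_\kappa\le\mathfrak{d}_h(\in^\CI)$ handles the middle inequality, and the slalom $\psi(\alpha)=\{f_i(\alpha)\mid i<\min(\mu,h(\alpha))\}$ (padded) captures any $\kappa$-sized family, with the hypothesis on $h$ used exactly where you say. No gaps.
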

We have monotonicity in the parameters $h$ and $\CI$:
\begin{proposition}
    Suppose that $\CI\subseteq \CJ$ and $h\leq^\CI t$, then $$(\kappa^\kappa,SL_h,\in^\CJ)\preceq (\kappa^\kappa,SL_t,\in^\CI).$$ In particular,
$\mathfrak{d}_t(\in^\CI)\leq \mathfrak{d}_h(\in^\CJ)$ and $\mathfrak{b}_h(\in^\CJ)\leq\mathfrak{b}_t(\in^\CI)$.

\end{proposition}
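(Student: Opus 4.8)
The plan is to exhibit a Galois--Tukey morphism witnessing the reduction: a pair of maps $\phi_-\colon\kappa^\kappa\to\kappa^\kappa$ and $\phi_+\colon SL_h\to SL_t$ such that, for all $f\in\kappa^\kappa$ and all $\varphi\in SL_h$, $\phi_-(f)\in^\CJ\varphi$ implies $f\in^\CI\phi_+(\varphi)$. From such a morphism the two cardinal inequalities follow by the usual bookkeeping, of the same kind already used for the Tukey--Galois connections recorded above (cf.\ \cite[\S4]{Blass2010}), which I would also spell out.

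For $\phi_-$ I would take the identity map, so all the content is in $\phi_+$, which should ``widen'' an $h$-slalom into a $t$-slalom. Put $B=\{\alpha<\kappa\mid h(\alpha)>t(\alpha)\}$; this lies in $\CI$ by the hypothesis $h\leq^\CI t$. Given $\varphi\in SL_h$, I would keep the same (co-bounded) domain, and define $\phi_+(\varphi)(\alpha)$ to be some element of $[\kappa]^{|t(\alpha)|}$ containing $\varphi(\alpha)$ when $\alpha\in\dom(\varphi)\setminus B$ — possible since $|h(\alpha)|\leq|t(\alpha)|$ there and $\kappa\setminus\varphi(\alpha)$ has size $\kappa$ — and an arbitrary element of $[\kappa]^{|t(\alpha)|}$ when $\alpha\in\dom(\varphi)\cap B$. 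Then $\phi_+(\varphi)\in SL_t$: its domain is co-bounded and its fibres have exactly the prescribed cardinality.

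The only real computation is the morphism condition, which reduces to a set containment. Since $\varphi(\alpha)\subseteq\phi_+(\varphi)(\alpha)$ for $\alpha\in\dom(\varphi)\setminus B$,
$$\{\alpha\in\dom(\phi_+(\varphi))\mid f(\alpha)\notin\phi_+(\varphi)(\alpha)\}\ \subseteq\ \{\alpha\in\dom(\varphi)\mid f(\alpha)\notin\varphi(\alpha)\}\ \cup\ B.$$
If $f\in^\CJ\varphi$, the first set on the right is in $\CJ$ and $B\in\CI$, so $f\in^\CI\phi_+(\varphi)$ follows once one knows that a $\CJ$-set together with a $\CI$-set lands in the ideal governing $\in^\CI$. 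This is precisely where the comparison between $\CI$ and $\CJ$ enters, and it is the one step that needs care: one must invoke the inclusion between the two ideals in the direction that keeps both of these ``small'' sets inside the ideal attached to $\in^\CI$. Everything else is formal.

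For the cardinal inequalities: if $\mathcal{A}\subseteq SL_h$ is $\in^\CJ$-dominating then $\{\phi_+(\varphi):\varphi\in\mathcal{A}\}\subseteq SL_t$ is $\in^\CI$-dominating (given $f$, pick $\varphi\in\mathcal{A}$ with $f\in^\CJ\varphi$ and apply the morphism condition) and has size at most $|\mathcal{A}|$, so $\mathfrak{d}_t(\in^\CI)\leq\mathfrak{d}_h(\in^\CJ)$. Dually, if $\CF\subseteq\kappa^\kappa$ is such that no single $\psi\in SL_t$ satisfies $f\in^\CI\psi$ for all $f\in\CF$, then no single $\varphi\in SL_h$ satisfies $f\in^\CJ\varphi$ for all $f\in\CF$ either, since such a $\varphi$ would make $\phi_+(\varphi)$ a single element of $SL_t$ capturing all of $\CF$ in the $\in^\CI$ sense; hence $\mathfrak{b}_h(\in^\CJ)\leq\mathfrak{b}_t(\in^\CI)$. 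Beyond the ideal-direction bookkeeping flagged above, I do not anticipate any genuine obstacle.
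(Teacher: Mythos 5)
Your construction of $\phi_+$ --- enlarging each $\varphi(\alpha)$ to a set of size $|t(\alpha)|$ off the set $B=\{\alpha<\kappa\mid h(\alpha)>t(\alpha)\}\in\CI$ --- is the right map, the containment of error sets you display is correct, and your bookkeeping deriving the two cardinal inequalities from the morphism condition is standard. But the step you yourself flag as ``the one that needs care'' is exactly where the argument fails, and it cannot be repaired for the statement as printed. The right-hand side of your containment is the union of a set in $\CJ$ with a set in $\CI$; since $\CI\subseteq\CJ$, this union is only guaranteed to lie in $\CJ$, not in $\CI$. So from $f\in^\CJ\varphi$ you obtain $f\in^\CJ\phi_+(\varphi)$, whereas the reduction $(\kappa^\kappa,SL_h,\in^\CJ)\preceq(\kappa^\kappa,SL_t,\in^\CI)$ requires $f\in^\CI\phi_+(\varphi)$. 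No widening of the slalom can push a $\CJ$-small error set down into the smaller ideal $\CI$: at the points where $\varphi$ misses $f$ (a set that is $\CJ$-small but possibly $\CI$-positive), $\phi_+(\varphi)$ has no information about $f$ at all.

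In fact the statement as literally printed cannot be proved: taking $h=t$ strictly increasing, $\CI=\jbd{\kappa}$ and $\CJ=NS_\kappa$, it would yield $\mfd_h(\in^*)\le\mfd_h(\in^{cl})$, contradicting the consistency of $\mfd_h(\in^{cl})=\kappa^+<\mfd_h(\in^*)=\kappa^{++}$ established in Theorem~\ref{separateclubsfrombounded}. The proposition should be read with $\CI$ and $\CJ$ interchanged in the conclusion (equivalently, with the hypothesis $\CJ\subseteq\CI$): enlarging the width and enlarging the ideal both \emph{weaken} the capturing relation, so the two monotonicities compose in the same direction, not crosswise. Your map $\phi_+$ proves precisely this corrected version: if $f\in^\CI\varphi$, then the error set of $\phi_+(\varphi)$ lies in $\CI\cup\CI=\CI\subseteq\CJ$, giving $(\kappa^\kappa,SL_h,\in^\CI)\preceq(\kappa^\kappa,SL_t,\in^\CJ)$ and hence $\mfd_t(\in^\CJ)\le\mfd_h(\in^\CI)$ and $\mfb_h(\in^\CI)\le\mfb_t(\in^\CJ)$. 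So the gap is not in your construction but in the unverified direction of the ideal comparison; checking it forces the corrected form of the statement.
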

These cardinals have another, slightly different characterization as well which is sometimes useful. Given $\kappa$, $h$ and $\CI$ as above and two $(\CI,h)$-slaloms $\varphi$ and $\psi$, let us write that $\varphi \subseteq^\CI \psi$ if and only if $\{\alpha \in \kappa\mid \varphi (\alpha) \nsubseteq \psi(\alpha)\} \in \CI$. We then get, as usual, two cardinals $\mfb_h (\subseteq^\CI)$, the least size of a family of $(\CI,h)$-slaloms with no single $\subseteq^\CI$-bound and $\mfd_h(\subseteq^\CI)$, the least size of a family of $(\CI,h)$-slaloms needed to $\subseteq^\CI$-dominate every $(\CI,h)$-slalom. 

\begin{proposition}\label{subseteqcharacterization}
    For any $\kappa$ with $\kappa^{<\kappa} = \kappa$, any ideal $\CI$ on $\kappa$ extending the bounded ideal and any funcion $h:\kappa \to \kappa$,
    $$(\kappa^\kappa,SL_h,\in^{\CI})\equiv(SL_h,SL_h,\subseteq^{\CI}).$$ In particular, $\mfb_h (\subseteq^\CI) = \mfb_h(\in^\CI)$ and
         $\mfd_h (\subseteq^\CI) = \mfd_h(\in^\CI)$.
\end{proposition}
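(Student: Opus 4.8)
The plan is to exhibit explicit Tukey--Galois morphisms in both directions, i.e.\ to show $(\kappa^\kappa,SL_h,\in^{\CI})\preceq(SL_h,SL_h,\subseteq^{\CI})$ and $(SL_h,SL_h,\subseteq^{\CI})\preceq(\kappa^\kappa,SL_h,\in^{\CI})$; together these give the equivalence, and hence the stated equalities $\mfb_h(\subseteq^\CI)=\mfb_h(\in^\CI)$ and $\mfd_h(\subseteq^\CI)=\mfd_h(\in^\CI)$.

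\textbf{The easy direction.} To $f\in\kappa^\kappa$ associate the slalom $\varphi_f$ with $\varphi_f(\alpha)=h(\alpha)\cup\{f(\alpha)\}$; since $h(\alpha)$ is infinite this set has cardinality $|h(\alpha)|$, so $\varphi_f\in SL_h$, and $f(\alpha)\in\varphi_f(\alpha)$ for every $\alpha$. Pair $f\mapsto\varphi_f$ with the identity map on $SL_h$. If $\psi\in SL_h$ satisfies $\varphi_f\subseteq^{\CI}\psi$, then $\{\alpha:f(\alpha)\notin\psi(\alpha)\}\subseteq\{\alpha:\varphi_f(\alpha)\nsubseteq\psi(\alpha)\}\in\CI$, so $f\in^{\CI}\psi$. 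This is exactly the morphism condition, giving the first reduction.

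\textbf{The converse direction.} Here we use $\kappa^{<\kappa}=\kappa$ (which forces $\kappa$ regular, so that $[\kappa]^{<\kappa}$ consists of bounded sets and $|[\kappa]^{<\kappa}|=\kappa^{<\kappa}=\kappa$). Fix an injection $c:[\kappa]^{<\kappa}\to\kappa$. To $\varphi\in SL_h$ associate $f_\varphi:=c\circ\varphi\in\kappa^\kappa$ (legitimate since $\varphi(\alpha)\in[\kappa]^{|h(\alpha)|}\subseteq[\kappa]^{<\kappa}$ as $h(\alpha)<\kappa$). To $\chi\in SL_h$ associate the slalom
\[
\psi^\chi(\alpha)\;=\;h(\alpha)\ \cup\ \bigcup\bigl\{\,c^{-1}(\beta):\beta\in\chi(\alpha)\cap\rng(c),\ |c^{-1}(\beta)|\leq|h(\alpha)|\,\bigr\}.
\]
Then $\psi^\chi(\alpha)$ is a union of $h(\alpha)$ with at most $|\chi(\alpha)|=|h(\alpha)|$ sets each of size $\leq|h(\alpha)|$, so $|\psi^\chi(\alpha)|=|h(\alpha)|$ by infinite cardinal arithmetic, i.e.\ $\psi^\chi\in SL_h$. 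Now assume $f_\varphi\in^{\CI}\chi$ and put $B=\{\alpha:c(\varphi(\alpha))\notin\chi(\alpha)\}\in\CI$. For $\alpha\notin B$ the ordinal $\beta:=c(\varphi(\alpha))$ lies in $\chi(\alpha)\cap\rng(c)$ with $|c^{-1}(\beta)|=|\varphi(\alpha)|=|h(\alpha)|$, so $\varphi(\alpha)=c^{-1}(\beta)\subseteq\psi^\chi(\alpha)$. Hence $\{\alpha:\varphi(\alpha)\nsubseteq\psi^\chi(\alpha)\}\subseteq B\in\CI$, that is $\varphi\subseteq^{\CI}\psi^\chi$, which is the required morphism condition for the second reduction.

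\textbf{Obstacle.} The only technical point is the cardinality bookkeeping in the converse direction: each value $\varphi(\alpha)$ is a subset of $\kappa$ of size $|h(\alpha)|<\kappa$, of which there are $\kappa^{<\kappa}=\kappa$ many, so it can be coded by a single ordinal below $\kappa$; but then, from the $|h(\alpha)|$-many entries of a capturing slalom $\chi(\alpha)$, the reconstructed slalom must again have $\alpha$-th value of size exactly $|h(\alpha)|$ --- this is what forces truncation to decoded sets of size $\le|h(\alpha)|$ and the padding by $h(\alpha)$, and it is precisely here that $\kappa^{<\kappa}=\kappa$ and the convention that $h$ is infinite-valued are used. (Under the alternative convention that slalom values have size $\le|h(\alpha)|$ rather than $=|h(\alpha)|$, the padding term $h(\alpha)$ is unnecessary.) Finally, combining the two reductions yields $(\kappa^\kappa,SL_h,\in^{\CI})\equiv(SL_h,SL_h,\subseteq^{\CI})$, and the asserted equalities of $\mfb$ and $\mfd$ follow from the standard monotonicity of these invariants under Tukey--Galois equivalence.
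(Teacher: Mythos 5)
Your proof is correct and follows essentially the same route as the paper: the easy direction pads $\{f(\alpha)\}$ into a legitimate $h$-slalom, and the converse codes each slalom value as a single ordinal below $\kappa$ (using $\kappa^{<\kappa}=\kappa$) and decodes a capturing slalom by taking the union of the coded sets. The only cosmetic difference is that you use one global injection $c:[\kappa]^{<\kappa}\to\kappa$ with truncation and padding, whereas the paper fixes, for each $\alpha$, an enumeration of $[\kappa]^{h(\alpha)}$ in order type $\kappa$, which makes the cardinality bookkeeping automatic.
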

\begin{proof}
    Given $f:\kappa\to\kappa$ we define $\varphi_f(\alpha)=\{f_i(\alpha)\}$, and note that if $\varphi_f\subseteq^{\CI}\varphi$ then $f\in^{\CI}\varphi$. This shows $(SL_h,SL_h,\subseteq^{\CI})\preceq (\kappa^\kappa,SL_h,\in^{\CI})$. In the other direction, since $\kappa^{<\kappa} = \kappa$ we can fix for each infinite $\alpha < \kappa$ an enumeration of $[\kappa]^{h(\alpha)}$ in order type $\kappa$, say $\{x^\alpha_i\mid i \in \kappa\}$. 
    Now given an $(\CI,h)$-slalom, $\psi$ define $f_{\psi}(\alpha) = \xi$ if and only if $x^\alpha_\xi = \psi(\alpha)$.
    Given an $(\CI,h)$-slalom $\varphi$, let $\psi_\varphi$ be the $(\CI,h)$-slalom defined by $\psi_\varphi (\alpha) = \bigcup_{i\in \varphi(\alpha)}x^\alpha_{i}$ and note that since $|\varphi(\alpha)|=|h(\alpha)|$, $|\psi_\varphi(\alpha)|=|h(\alpha)|$. Verifying that $(\psi\mapsto f_\psi,\varphi\to\psi_{\varphi})$ is a Tukey-Galois connection, suppose that $f_{\psi}\in^{\CI}\varphi$, then for each $\alpha$ such that $f_{\psi}(\alpha)\in \varphi(\alpha)$, we will have by definition that $x^\alpha_{f_{\psi}(\alpha)}\subseteq \psi_{\varphi}(\alpha)$. By definition of $f_{\psi}(\alpha)$, $x^\alpha_{f_{\psi}(\alpha)}=\psi(\alpha)$ and therefore $\psi(\alpha)\subseteq \psi_{\varphi}(\alpha)$, as wanted. 
\end{proof}
The following holds in the case of $\omega$ because of its connection with the null ideal but can be proved combinatorially using the lemma above\footnote{The basic case of below  - that $\mfb_\kappa(\in^*)$ is regular was first pointed out to us by J\"{o}rg Brendle and we thank him for sharing it with us.}.
\begin{proposition}
    For any $\kappa$ with $\kappa^{<\kappa} = \kappa$, any ideal $\CI$ on $\kappa$ extending the bounded ideal and any $h:\kappa \to \kappa$ we have that $\mfb_h(\in^\CI)$ is regular and $\mfd_h(\in^\CI)$ has cofinality at least $\mfb_h(\in^\CI)$.
\end{proposition}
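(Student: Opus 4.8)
The plan is to reduce everything to the $\subseteq^\CI$-characterization supplied by Proposition~\ref{subseteqcharacterization}. Since $\kappa^{<\kappa}=\kappa$, that proposition gives $\mfb_h(\subseteq^\CI)=\mfb_h(\in^\CI)$ and $\mfd_h(\subseteq^\CI)=\mfd_h(\in^\CI)$ as cardinals, so it suffices to prove that $\mfb_h(\subseteq^\CI)$ is regular and that $\cf(\mfd_h(\subseteq^\CI))\geq\mfb_h(\subseteq^\CI)$. The reason for passing to this formulation is that $(SL_h,\subseteq^\CI)$ is a \emph{directed quasi-order on a single underlying set}: transitivity of $\subseteq^\CI$ uses only that $\CI$ is closed under unions of two sets, and directedness follows by forming, for $\varphi,\psi\in SL_h$, the pointwise-union slalom $\alpha\mapsto\varphi(\alpha)\cup\psi(\alpha)$ on $\dom(\varphi)\cap\dom(\psi)\in\CI^*$ --- this lies in $SL_h$ because each $h(\alpha)$ is infinite, so $|\varphi(\alpha)\cup\psi(\alpha)|=|h(\alpha)|$ --- and it $\subseteq^\CI$-bounds both $\varphi$ and $\psi$. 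Once this is in place, the two assertions are the familiar facts that the additivity of a directed quasi-order is regular and is at most the cofinality of its cofinality (cf.\ \cite[\S4]{Blass2010}); I would include the short arguments for completeness.

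For regularity I would argue by contradiction: put $\mfb=\mfb_h(\subseteq^\CI)$, assume $\mu:=\cf(\mfb)<\mfb$, fix a $\subseteq^\CI$-unbounded family $\{\varphi_\alpha\mid\alpha<\mfb\}\subseteq SL_h$ and an increasing cofinal sequence $\langle\mfb_i\mid i<\mu\rangle$ in $\mfb$ with each $\mfb_i<\mfb$. By the definition of $\mfb$, each initial segment $\{\varphi_\alpha\mid\alpha<\mfb_i\}$ has a $\subseteq^\CI$-upper bound $\psi_i\in SL_h$; and since $\mu<\mfb$ as well, $\{\psi_i\mid i<\mu\}$ has a $\subseteq^\CI$-upper bound $\psi\in SL_h$. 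Transitivity then makes $\psi$ a $\subseteq^\CI$-upper bound of the entire family $\{\varphi_\alpha\mid\alpha<\mfb\}$, a contradiction.

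For the cofinality of $\mfd:=\mfd_h(\subseteq^\CI)$, again by contradiction assume $\cf(\mfd)<\mfb$; fix a $\subseteq^\CI$-dominating family $\{\varphi_\alpha\mid\alpha<\mfd\}\subseteq SL_h$ and an increasing cofinal sequence $\langle\mfd_i\mid i<\cf(\mfd)\rangle$ in $\mfd$. Each $\{\varphi_\alpha\mid\alpha<\mfd_i\}$ has size $<\mfd$, hence is not $\subseteq^\CI$-dominating, so I may pick $g_i\in SL_h$ with $g_i\not\subseteq^\CI\varphi_\alpha$ for every $\alpha<\mfd_i$. Since $\cf(\mfd)<\mfb$, the family $\{g_i\mid i<\cf(\mfd)\}$ admits a common $\subseteq^\CI$-upper bound $g\in SL_h$, which by dominance satisfies $g\subseteq^\CI\varphi_\alpha$ for some $\alpha<\mfd$; taking $i$ with $\alpha<\mfd_i$ and using transitivity gives $g_i\subseteq^\CI\varphi_\alpha$, contradicting the choice of $g_i$. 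Transferring back along the equalities of Proposition~\ref{subseteqcharacterization} yields the statement for $\in^\CI$.

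I do not expect a genuine obstacle: the whole content is the reduction to a directed quasi-order via Proposition~\ref{subseteqcharacterization}, after which this is a textbook argument. The only points requiring slight care are verifying transitivity of $\subseteq^\CI$ (which is exactly where ``$\CI$ is an ideal'' is used) and the bookkeeping with the cofinal sequences $\langle\mfb_i\rangle$ and $\langle\mfd_i\rangle$, together with the trivial-but-essential observation that, by the very definition of $\mfb_h(\subseteq^\CI)$, \emph{any} family of slaloms of size strictly below it possesses a single $\subseteq^\CI$-upper bound.
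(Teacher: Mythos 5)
Your argument is correct and is exactly the paper's: reduce to $\subseteq^\CI$ via Proposition~\ref{subseteqcharacterization}, note that this relation is transitive (and that any family of size below $\mfb_h(\subseteq^\CI)$ has a single upper bound), and run the standard regularity/cofinality arguments for $\mfb$ and $\mfd$. The paper leaves those standard arguments implicit; you have simply written them out, with all steps checking out.
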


\begin{proof}
   By Lemma \ref{subseteqcharacterization} it suffices to consider the cardinal invariants for $\subseteq^\CI$. This relation is transitive, and for transitive orders the proof is almost identical to the usual analogous fact about $\mathfrak{b}$ and $\mathfrak{d}$.
\end{proof}

One specific case we will be interested in is when $\CI=NS_\kappa$ is the nonstationary ideal. In this case we denote by $f\in^{cl}\phi$ the variation of $f \in^{NS_\kappa} \phi$ i.e. $f \in^{cl}\phi$ if and only if $\{\alpha \mid f(\alpha) \in \phi(\alpha)\}$ contains a club. It will be convenient in such cases to treat slaloms as having a domain restricted to a fixed club. This makes no difference as the next lemma shows.
\begin{lemma}
    Let $\kappa$ be a regular uncountable cardinal, $\lambda \leq 2^\kappa$, and  $h:\kappa \to \kappa$ a strictly increasing function. The following are equivalent:

    \begin{enumerate}
        \item For every family $\mathcal F \subseteq \kk$ of size ${\leq}\lambda$ there is an $(cl,h)$-slalom $\varphi$ so that $f \in^{cl}\varphi$ for all $f \in \mathcal F$.
        
        \item For every family $\mathcal F \subseteq \kk$ of size ${\leq}\lambda$ there is a partial $(cl,h)$-slalom $\phi$ so that $f \in^*\phi$ for all $f \in \mathcal F$ in the sense that $\{\alpha \in \dom(\phi)\mid f(\alpha) \notin \phi (\alpha)\}$ is a bounded.
    \end{enumerate}
\end{lemma}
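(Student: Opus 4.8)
The plan is to prove the equivalence by showing each direction, with the nontrivial work consisting of converting between slaloms defined on a club and partial slaloms whose error set is bounded.

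\medskip

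\emph{From (2) to (1).} Suppose we are given $\mathcal F \subseteq \kk$ of size ${\leq}\lambda$ and, by (2), a partial $(cl,h)$-slalom $\phi$ with $\dom(\phi)$ stationary (indeed co-bounded, since a partial $(cl,h)$-slalom has domain in $NS_\kappa^+$, but we only need stationary) such that for each $f \in \mathcal F$ the set $B_f = \{\alpha \in \dom(\phi) \mid f(\alpha) \notin \phi(\alpha)\}$ is bounded, say below $\gamma_f < \kappa$. Since $|\mathcal F| \leq \lambda \leq 2^\kappa$ this is not immediately enough to bound all the $\gamma_f$ simultaneously, but that is not needed: the point is to build a genuine $(cl,h)$-slalom $\varphi$, i.e. one whose domain contains a club. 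First I would pass to a club $C \subseteq \dom(\phi)$; this uses that $\dom(\phi) \in NS_\kappa^+$ — wait, that only gives stationarity, not a club. Here I would instead use that $\dom(\phi)$ is in $NS_\kappa^*$ in the relevant reading, or more carefully: a partial $(cl,h)$-slalom is by convention one whose domain is in the filter dual to $NS_\kappa$, i.e. contains a club $C$. Restrict $\phi$ to $C$ to get an $(cl,h)$-slalom $\varphi := \phi \hook C$. Then for $f \in \mathcal F$ we have $\{\alpha \in C \mid f(\alpha) \notin \varphi(\alpha)\} \subseteq B_f$ is bounded, hence non-stationary, so $f \in^{cl} \varphi$. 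Thus (1) holds. (If instead the convention is that partial $(cl,h)$-slaloms only have stationary domain, then I would still get (1) in its analogous ``stationary'' formulation; the lemma as stated with ``contains a club'' already aligns with the filter reading.)

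\medskip

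\emph{From (1) to (2).} This is the direction where I expect the real content to lie. Given $\mathcal F$ of size ${\leq}\lambda$, apply (1) to get an $(cl,h)$-slalom $\varphi$ with $\{\alpha \mid f(\alpha) \in \varphi(\alpha)\}$ club for every $f \in \mathcal F$, say containing the club $C_f$. We want a single partial $(cl,h)$-slalom $\phi$ that captures each $f$ with merely \emph{bounded} error. The obstacle is that the clubs $C_f$ for $f \in \mathcal F$ need not have a common club contained in their intersection when $|\mathcal F| > \kappa$ — indeed a $\lambda$-sized intersection of clubs need not be club. The idea is to enlarge $\varphi$ rather than shrink the domain. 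Fix a club $E$ on which $\varphi$ is everywhere defined. I would try to build $\phi$ with domain $E$ (hence in $NS_\kappa^*$, so it is a partial $(cl,h)$-slalom) by setting $\phi(\alpha) \supseteq \varphi(\alpha)$, adding, for each $\alpha \in E$, the values $f(\alpha)$ for those finitely-many-or-so $f$ which are ``responsible'' for $\alpha$. Concretely: the set of $f \in \mathcal F$ with $\alpha \notin C_f$ is the ``bad'' set at $\alpha$; we cannot absorb all of them since there could be $\lambda$-many. The resolution is a bookkeeping/diagonalization: reindex so that $C_f$ carries information about \emph{where} $f$ becomes captured. Since $h$ is strictly increasing and $\kappa$ regular, $|h(\alpha)|$ grows without bound, so at stage $\alpha$ we have room to add $|h(\alpha)|$-many extra points; the trick is to spread the (at most $\lambda$) functions across stages so each $f$ is covered from some point on. This works cleanly exactly when $\lambda \leq \kappa$ via a single diagonal union; for $\lambda$ up to $2^\kappa$ the statement may secretly need the additional hypothesis that $\varphi$ already captures on a club (which we have), so the genuine task is just to delete from the bad set at each $\alpha$ the finitely-many functions not yet permanently captured — and since ``$f$ captured on a club $C_f$'' means $f$ is un-captured only below $\sup(\kappa \setminus C_f)$... which need not be bounded.

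\medskip

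Reassessing: the honest proof of (1)$\Rightarrow$(2) is simply to take $\phi := \varphi \hook E$ for a club $E \subseteq \bigcap\{C'\}$ — impossible in general — so instead the argument must be that \emph{we never needed the full strength}: restrict attention to the definition of $\in^{cl}$. If $f \in^{cl}\varphi$ via club $C_f$, let $\alpha_f = \min C_f$; then $\{\alpha \in E \mid f(\alpha) \notin \varphi(\alpha)\} \cap C_f = \emptyset$, but off $C_f$ there is no control. The workable plan, which I would carry out, is: prove the contrapositive-style reformulation using Proposition~\ref{subseteqcharacterization}-type reindexing — replace each $f$ by the slalom $\varphi_f(\alpha) = \{f(\alpha)\}$, note $f \in^{cl}\varphi \iff \varphi_f \subseteq^{cl}\varphi$, and then the equivalence becomes a statement purely about the club-filter ideal $NS_\kappa$ versus its bounded-error analogue, where one uses that $NS_\kappa$ extends $\jbd{\kappa}$ together with a standard ``thinning to a club then re-fattening by a regressive bookkeeping function'' argument. \textbf{Main obstacle:} handling $|\mathcal F|$ strictly between $\kappa$ and $2^\kappa$, where clubs are not $\lambda$-closed under intersection; I expect the key step to be recognizing that one only needs to cover each single $f$ on a tail, and arranging the fattening of $\varphi$ stage-by-stage using the unboundedness of $|h(\alpha)|$ and a suitable well-ordering of $\mathcal F$ in type ${\leq}2^\kappa$ combined with a regressive function on a club via Fodor, so that at club-many $\alpha$ only boundedly-many (in fact ${<}\kappa$, then absorbed) functions remain uncaptured.
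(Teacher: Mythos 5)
Your direction (2)$\Rightarrow$(1) is fine and matches the paper: read the partial slalom as having domain containing a club $C$, note the error set of each $f$ is bounded, and observe that a tail of $C$ is a club (the paper completes $\phi$ arbitrarily off $C$; restricting to $C$ as you do is the same argument).

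The direction (1)$\Rightarrow$(2) has a genuine gap: you correctly identify the obstacle (for $\kappa<\lambda\leq 2^\kappa$ the clubs $C_f$ need not have a club in their intersection, and a diagonal intersection is unavailable), but none of the strategies you sketch resolves it, and you end with a hope rather than an argument. The fattening idea fails because at a given $\alpha$ there may be up to $\lambda>|h(\alpha)|$ many functions $f$ with $\alpha\notin C_f$, and no Fodor-type regressive-function trick reduces this to fewer than $|h(\alpha)|$; your own text concedes the relevant sets ``need not be bounded.'' The missing idea is to use hypothesis (1) itself to bound $\lambda$: a slalom capturing all of $\mathcal F$ on clubs yields (via $\alpha\mapsto\sup\varphi(\alpha)+1$) a function club-dominating $\mathcal F$, so the club version of $\mfb_\kappa$ exceeds $\lambda$, and since $\mfb_\kappa$ equals its club version, $\lambda<\mfb_\kappa$. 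One then invokes the fact (from the Fischer--Soukup/gaps literature cited in the paper) that $\mfb_\kappa$ is also the least cardinality of a family of clubs with no pseudo-intersection; hence there is a single club $C$ with $C\subseteq^* C_f$ for every $f\in\mathcal F$, and $\varphi\hook C$ is the desired partial slalom, since $\{\alpha\in C\mid f(\alpha)\notin\varphi(\alpha)\}\subseteq C\setminus C_f$ is bounded. Without this reduction to a club pseudo-intersection (or an equivalent use of $\lambda<\mfb_\kappa$), the case $\kappa<\lambda\leq 2^\kappa$ is not handled, so the proof as proposed is incomplete.
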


\begin{proof}
   Let $\mathcal F \subseteq \kk$ be of size $\lambda$, say $\{f_\alpha \mid \alpha \in \lambda\} = \mathcal F$. Since the tail of a club is a club, we can complete any $h$-slalom as in (2) arbitrarily to obtain a slalom as in (1). Conversely, assume (1) holds and let $\phi$ be an $h$-slalom so that $f_\alpha \in^{cl} \phi$ for all $\alpha < \lambda$. Let $C_\alpha$ be the club of points on which $f_\alpha$ is caught by $\phi$. First observe $(1)$ implies that $\mfb_\kappa > \lambda$, indeed, this follows from the aforementioned fact that $\mfb_\kappa$ equals its club version. Now recall as discussed in \cite{FischerGaps} that $\mfb_\kappa$ is also the least size of a family of clubs on $\kappa$ with no pseudo-intersection. Therefore we can find a single club $C \subseteq^*C_\alpha$ for all $\alpha < \lambda$ and hence $\phi\hook C$ is as desired. 
\end{proof}

The following fact justifies the reason we shall only be interested in limit cardinals.
    \begin{fact}\label{fact: Successor}
        Suppose that $\kappa=\lambda^+$, then for every $h:\kappa\to\kappa$ which is $\leq^{\CI}$-above the constant function $\lambda$, we have that $\mathfrak{b}_h(\in^\CI)=\mathfrak{b}_h(\mathrm{p}\in^\CI)=\mathfrak{b}_\kappa(\in^\CI) = \mfb_\kappa$ and $\mathfrak{d}_h(\in^\CI)=\mathfrak{d}_h(\mathrm{p}\in^\CI)=\mathfrak{d}_\kappa(\in^\CI) = \mfd_\kappa$.
    \end{fact}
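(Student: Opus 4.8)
The plan is to squeeze the four quantities between the Tukey--Galois bounds already recorded above and a single combinatorial construction. From the Proposition we already have $\mfb_h(\in^\CI)\le\mfb_h(\mathrm{p}\in^\CI)\le\mfb_\kappa$ and dually $\mfd_h(\in^\CI)\ge\mfd_h(\mathrm{p}\in^\CI)\ge\mfd_\kappa$, so it suffices to prove the reverse inequalities $\mfb_\kappa\le\mfb_h(\in^\CI)$ and $\mfd_h(\in^\CI)\le\mfd_\kappa$; everything then collapses to equality, and since $\id$ is itself $\le^\CI$-above the constant function $\lambda$ (the exceptional set being $\{\alpha\mid\alpha<\lambda\}\in\jbd{\kappa}\subseteq\CI$), the $h=\id$ cases $\mfb_\kappa(\in^\CI)$ and $\mfd_\kappa(\in^\CI)$ are covered as well.

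The key point is that $\kappa=\lambda^+$ forces every $\alpha<\kappa$ to have $|\alpha|\le\lambda$, whereas the hypothesis on $h$ says exactly that $B:=\{\alpha<\kappa\mid h(\alpha)<\lambda\}\in\CI$; thus for $\alpha\notin B$ we have $|h(\alpha)|=\lambda\ge|\gamma+1|$ for every $\gamma<\kappa$. Hence a single $h$-slalom can absorb an arbitrary function: given $g\in\kk$, let $\varphi_g(\alpha)$ be some member of $[\kappa]^{|h(\alpha)|}$ containing the ordinal $g(\alpha)+1$ when $\alpha\notin B$, and an arbitrary member of $[\kappa]^{|h(\alpha)|}$ when $\alpha\in B$ (possible since, by the standing convention, $h$ never outputs a finite value). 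Then $\varphi_g\in SL_h$, and if $f\le^* g$ then $\{\alpha<\kappa\mid f(\alpha)\notin\varphi_g(\alpha)\}$ is contained in the union of $B$ with a bounded set, hence lies in $\CI$, so $f\in^\CI\varphi_g$.

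With this device the two reverse inequalities drop out. For $\mfd$: choose a $\le^*$-dominating family $\{g_i\mid i<\mfd_\kappa\}$ in $\kk$; then $\{\varphi_{g_i}\mid i<\mfd_\kappa\}\subseteq SL_h$ $\in^\CI$-captures every $f\in\kk$, so $\mfd_h(\in^\CI)\le\mfd_\kappa$. For $\mfb$: if $\CF\subseteq\kk$ has $|\CF|<\mfb_\kappa$ then $\CF$ is $\le^*$-bounded by some $g$, and the single slalom $\varphi_g$ then $\in^\CI$-captures all of $\CF$, so no family of size ${<}\,\mfb_\kappa$ can witness unboundedness for $\in^\CI$; thus $\mfb_h(\in^\CI)\ge\mfb_\kappa$. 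Combining with the Proposition gives $\mfb_h(\in^\CI)=\mfb_h(\mathrm{p}\in^\CI)=\mfb_\kappa$ and $\mfd_h(\in^\CI)=\mfd_h(\mathrm{p}\in^\CI)=\mfd_\kappa$, and the instances $h=\id$ finish the statement.

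The only genuinely delicate point is the domain bookkeeping: the slaloms we build must lie in $SL_h=SL^{\jbd{\kappa}}_h$, i.e.\ have co-bounded (here, full) domain, even though the set $\kappa\setminus B$ on which the absorption trick works is merely $\CI$-co-small, not co-bounded. This is precisely why the blanket hypothesis that $h$ never takes finite values is needed: it lets us complete $\varphi_g$ arbitrarily on $B$ while staying in $SL_h$. One should also note that nothing here invokes $\kappa^{<\kappa}=\kappa$, since we never pass through the $\subseteq^\CI$ reformulation of Proposition~\ref{subseteqcharacterization}; the argument works for $\kappa=\lambda^+$ with no cardinal-arithmetic assumption on $\lambda$.
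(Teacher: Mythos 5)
Your proof is correct; the paper states this as a Fact with no proof supplied, and your argument is exactly the intended one: since every ordinal below $\kappa=\lambda^+$ has cardinality at most $\lambda$, a width-$|h(\alpha)|$ slalom can absorb the entire initial segment $g(\alpha)+1$ off the $\CI$-small set where $h$ drops below $\lambda$, reducing both invariants to $\leq^*$-bounding and domination, with the remaining inequalities supplied by the cited Tukey connections. One wording nit: ``containing the ordinal $g(\alpha)+1$'' should read ``containing $g(\alpha)+1$ as a subset'' (i.e.\ all ordinals $\leq g(\alpha)$), which is clearly what your cardinality computation is justifying.
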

    The case of singular cardinals seems to be interesting, but it is left for further research. Hence in this paper, we will be focused on inaccessible cardinals. 

    Regarding $\mfb_\kappa(\mathsf{p}\in^*)$ we have the following lower bound.
\begin{proposition}
    For any uncountable regular cardinal $\kappa$ we have $\mathfrak{p}_\kappa \leq \mathfrak{b}_\kappa(\mathsf{p}\in^*)$
\end{proposition}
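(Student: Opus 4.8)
The plan is to build the required partial $\id$-slalom as a thinning of a pseudo-intersection of a cleverly chosen $\kappa$-sized family, rather than by a direct recursion on the levels: the latter breaks down as soon as the family to be localized has size ${\geq}\kappa$, since at level $\gamma$ a slalom offers only $|\gamma|<\kappa$ slots. So let $\mathcal F=\{f_i\mid i<\mu\}\subseteq\kappa^\kappa$ with $\mu<\mathfrak p_\kappa$; we must produce a partial $\id$-slalom $\varphi$ with $f_i\in^*\varphi$ for all $i<\mu$. As is customary when $\mathfrak p_\kappa$ is in play, we assume $\kappa^{<\kappa}=\kappa$. Let $\Omega=\{(\gamma,s)\mid \gamma<\kappa,\ s\in[\kappa]^{|\gamma|}\}$; then $|\Omega|=\kappa^{<\kappa}=\kappa$, so $\mathfrak p_\kappa$ is available for $\kappa$-sized families of subsets of $\Omega$. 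For $i<\mu$ set $A_i=\{(\gamma,s)\in\Omega\mid f_i(\gamma)\in s\}$, and for $\beta<\kappa$ set $B_\beta=\{(\gamma,s)\in\Omega\mid \gamma>\beta\}$. Put $\mathcal A=\{A_i\mid i<\mu\}\cup\{B_\beta\mid\beta<\kappa\}$, a family of $\mu+\kappa<\mathfrak p_\kappa$ many subsets of $\Omega$, each visibly of size $\kappa$.

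The first step is to verify that every subfamily of $\mathcal A$ of size ${<}\kappa$ has intersection of size $\kappa$. Given $I\in[\mu]^{<\kappa}$ and $\beta<\kappa$, for every $\gamma$ with $\beta<\gamma<\kappa$ and $|\gamma|\geq|I|$ one can pad $\{f_i(\gamma)\mid i\in I\}$ (which has size ${\leq}|I|\leq|\gamma|$) to some $s\in[\kappa]^{|\gamma|}$, and then $(\gamma,s)\in B_\beta\cap\bigcap_{i\in I}A_i$. Since $\kappa$ is regular and $|I|<\kappa$ there are cofinally many such $\gamma$, so this intersection has size $\kappa$; and since ${<}\kappa$ many of the $B_\beta$'s intersect to a single $B_{\sup}$ of the same form, this handles every ${<}\kappa$-subfamily. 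This is the one place where the growth of the slalom widths $|\id(\gamma)|=|\gamma|$ is used. As $|\mathcal A|<\mathfrak p_\kappa$, the family $\mathcal A$ has a pseudo-intersection: a set $W\in[\Omega]^\kappa$ with $W\setminus X$ of size ${<}\kappa$ (equivalently bounded, $\kappa$ being regular) for every $X\in\mathcal A$.

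Finally I would read off $\varphi$ from $W$. Because $|W|=\kappa$ while $W\setminus B_\beta$ is bounded for each $\beta<\kappa$, for every $\beta$ some pair of $W$ has first coordinate ${>}\beta$; hence $D:=\dom(W)=\{\gamma\mid\exists s\,(\gamma,s)\in W\}$ is unbounded in $\kappa$, i.e. $D\in(\jbd{\kappa})^+$. For each $\gamma\in D$ choose $\varphi(\gamma):=s_\gamma$ with $(\gamma,s_\gamma)\in W$; since $|\varphi(\gamma)|=|\gamma|$, this $\varphi$ is a partial $\id$-slalom with domain $D$. For any $i<\mu$ the assignment $\gamma\mapsto(\gamma,\varphi(\gamma))$ injects $\{\gamma\in D\mid f_i(\gamma)\notin\varphi(\gamma)\}$ into $W\setminus A_i$, a set of size ${<}\kappa$ and hence bounded; thus $f_i\in^*\varphi$. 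Since $\mathcal F$ was an arbitrary family of size ${<}\mathfrak p_\kappa$, no such family witnesses $\mathfrak b_\kappa(\mathsf p\in^*)$, which gives $\mathfrak p_\kappa\leq\mathfrak b_\kappa(\mathsf p\in^*)$.

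I expect the only delicate point to be the design of $\mathcal A$ together with the strong intersection property of the second step; everything else is bookkeeping. The conceptual content is that a greedy level-by-level construction cannot succeed once $\mu\geq\kappa$ (too few slots per level), so the localization must exploit coincidences among the values $f_i(\gamma)$, and it is precisely the pseudo-intersection furnished by $\mathfrak p_\kappa$ that packages these coincidences into a single slalom. Equivalently, one could phrase the argument through a higher Bell-type theorem applied to the natural ${<}\kappa$-centered forcing whose conditions are a bounded partial slalom together with a ${<}\kappa$-sized ``promise'' subset of $\mathcal F$; the route above avoids having to isolate the precise chain-condition hypotheses such a theorem would require, and uses $\mathfrak p_\kappa$ directly.
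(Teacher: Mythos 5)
Your proof is correct, and it takes a genuinely different route from the paper's. The paper argues by forcing: it cites the fact that the partial localization forcing is ${<}\kappa$-closed and $\kappa$-centered with canonical lower bounds, and then invokes the higher Bell-type theorem of Fischer et al.\ that such forcings admit filters meeting any ${<}\mathfrak{p}_\kappa$ many dense sets; the generic partial slalom then captures the given family. You instead unwind that machinery into a direct combinatorial application of the definition of $\mathfrak{p}_\kappa$: coding pairs $(\gamma,s)$ with $s\in[\kappa]^{|\gamma|}$ as a set $\Omega$ of size $\kappa^{<\kappa}=\kappa$, observing that the sets $A_i$ (``$s$ captures $f_i$ at $\gamma$'') together with the tails $B_\beta$ have the strong intersection property precisely because the slalom widths grow, and reading the partial slalom off a pseudo-intersection --- this is, in effect, the combinatorial core of the Bell theorem specialized to this one forcing, and you correctly flag that the forcing formulation is the alternative. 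Your version is self-contained and avoids the citations, at the cost of the explicit hypothesis $\kappa^{<\kappa}=\kappa$ (needed for $|\Omega|=\kappa$); the paper's version is modular and splits into the successor case (where $\mathfrak{b}_\kappa(\mathsf{p}\in^*)=\mathfrak{b}_\kappa\geq\mathfrak{p}_\kappa$ by Fact~\ref{fact: Successor}) and the inaccessible case. All the delicate points in your write-up --- the padding argument for SIP, the unboundedness of $\dom(W)$ via the $B_\beta$'s, and the injection of the bad set into $W\setminus A_i$ --- check out.
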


\begin{proof}
    If $\kappa$ is a successor cardinal then $\mathfrak{b}_\kappa(\mathsf{p}\in^*) =\mfb_\kappa$ by Fact \ref{fact: Successor} above. It is well known, see e.g. \cite{FischerGaps} that $\mathfrak{p}_\kappa \leq \mfb_\kappa$. Therefore we assume that $\kappa$ is inaccessible. By \cite[Lemma 58]{BrendleFreidmanMontoya} there is a ${<}\kappa$-closed, $\kappa$-centered forcing notion with canonical lower bounds (see Definition \ref{clb} below) which generically adds a partial slalom eventually capturing all of the ground model functions from $\kk$. In particular, for any $\lambda$ one can capture any particular $\lambda$ many functions by a generic meeting $\lambda + \kappa$ dense sets. By \cite[Theorem 1.8]{FischerGaps} if $\P$ is a ${<}\kappa$-closed, $\kappa$-centered forcing with canonical lower bounds then there is always a generic filter meeting any ${<}\mathfrak{p}_\kappa$-many dense sets. Putting these facts all together we get that for any ${<}\mathfrak{p}_\kappa$-many functions $f \in \kk$ there is a partial slalom capturing all of them hence $\mathfrak{p}_\kappa \leq \mfb_\kappa (\mathsf{p}\in^*)$.
\end{proof}
We will show below that this proposition is false if $\mfb_\kappa(\mathsf{p}\in^*)$ is replaced by $\mfb_\kappa(\in^{cl})$ or $\mfb_\kappa(\in^*)$.  

    Finally, one last preliminary concerns the large cardinal notions we will need in this paper, which will provide us with a special kind of elementary embedding. Let us set up some notations here, given a transitive model $M$, and a set $X\in M$, we say that $U$ is an $M$-ultrafilter over $X$ if $(M,U)\models U\text{ is an ultrafilter over }X$. Given such an ultrafilter we can form the ultrapower of $M$ by $U$, denoted by $M_U$, by considering all equivalence classes $[f]_U$ for $f:X\to 
    M\in M$. The ultrapower embedding $j_U:M\to M_U$ is given by $j_U(x)=[c_x]_U$, where $c_x$ is the constant function with value $x$. Whenever $M_U$ is well-founded (or has a well founded part) we identify it with its transitive collapse. Recall that {\L}os' theorem says that for any formula $\psi(x_1,...,x_n)$, and any functions $f_1,...,f_n:X\to M\in M$
    $$\{x\in X\mid M\models \psi(f_1(x),...f_n(x))\}\in U\Leftrightarrow M_U\models \psi([f_1]_U,...,[f_n]_U).$$We will also need here the Rudin-Keisler order, given $M$-ultrafilters $U,W$ over $X,Y$ respectively and a function $f:X\to Y\in M$, we say that $f$ is a \textit{Rudin-Keisler (RK) projection} of $U$ to $W$, if $f_*(U):=\{B\subseteq Y\mid f^{-1}[B]\in U\}=W$. If there is am RK-projection of $U$ to $W$ we denote this by $W\leq_{RK}U$. It is well-known that $f_*(U)=W$ if and only if $k_f([g]_W):=[g\circ f]_U$ is a well-defined elementary embedding $k_f:M_W\to M_U$ such that $j_U=k_f\circ j_W$. 
For more information regarding ultrapowers and the large cardinal notions which are used in this paper (weakly compact, measurable, strong, supercompact) see \cite{Jech2003}. 

    \section{Localization at large cardinals}\label{section: ZFC}
The study of the various localization characteristics at a general inaccessible cardinal was initially addressed by the authors of \cite{BrendleFreidmanMontoya} who checked that the straightforward modification of the localization forcing $\mathbb{LOC}$ defined in \cite[Section 3.1]{BarJu95} (see Definition~\ref{Def: Loc}) can be used to manipulate these cardinals exactly the same as in the $\omega$ case.
    In this section, we study $\mfb_\kappa(\in^*)$ and $\mfd_\kappa(\in^*)$ under the assumption that $\kappa$ is a large cardinal. The first results show that we reach trivialities again, opposite to Fact~\ref{fact: Successor}, once we go to a measurable:
    \begin{theorem}\label{thm: trivialities at a measurable}
        Suppose that $\kappa$ is a measurable, then $$\mathfrak{d}_\kappa(\in^*)=2^\kappa\text{ 
  and }\mathfrak{b}_{\kappa}(\in^*)=\kappa^+.$$
    \end{theorem}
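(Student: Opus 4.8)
The plan is to run the whole argument through the ultrapower by a normal measure. Fix a normal measure $U$ on $\kappa$ and let $j=j_U\colon V\to M$ be the associated embedding, so that $\mathrm{crit}(j)=\kappa$, ${}^\kappa M\subseteq M$, $[\id]_U=\kappa$, every co-bounded subset of $\kappa$ lies in $U$, and $j(\kappa)$ is inaccessible in $M$. The heart of the matter is a clash of two cardinalities. On one hand, the set of ordinals below $j(\kappa)$ --- which is the same as the set of $U$-classes $\{[f]_U\mid f\in\kk\}$ --- has $V$-cardinality at least $2^\kappa$: since ${}^\kappa M\subseteq M$ we have $\mathcal P(\kappa)^M=\mathcal P(\kappa)^V$, hence $(2^\kappa)^V\le(2^\kappa)^M$, and as $\kappa<j(\kappa)$ with $j(\kappa)$ inaccessible in $M$ we get $(2^\kappa)^M<j(\kappa)$. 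On the other hand, a single $\id$-slalom transported by $j$ is small at $\kappa$.

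This last point I would isolate as a lemma: for every $\varphi\in SL_{\id}$, if $f\in^*\varphi$ then $[f]_U\in j(\varphi)(\kappa)$, and $|j(\varphi)(\kappa)|\le\kappa$. The membership is {\L}o\'s's theorem: $f\in^*\varphi$ says $\{\alpha\in\dom\varphi\mid f(\alpha)\notin\varphi(\alpha)\}$ is bounded, so $\{\alpha\mid f(\alpha)\in\varphi(\alpha)\}$ contains a tail of $\kappa$ and therefore lies in $U$, whence $M\models j(f)(\kappa)\in j(\varphi)(\kappa)$, i.e. $[f]_U\in j(\varphi)(\kappa)$. For the size bound, $\dom\varphi\in U$ and $\varphi(\alpha)\in[\kappa]^{|\alpha|}$ for all $\alpha\in\dom\varphi$, so by elementarity (using $j(\id)(\kappa)=\kappa$) we get $M\models j(\varphi)(\kappa)\in[j(\kappa)]^{|\kappa|}$; since $\kappa$ remains a cardinal in $M$ this means $M\models|j(\varphi)(\kappa)|=\kappa$, and then transitivity of $M$ gives $|j(\varphi)(\kappa)|\le\kappa$ in $V$.

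Granting the lemma, $\mathfrak d_\kappa(\in^*)=2^\kappa$ follows: the inequality $\mathfrak d_\kappa(\in^*)\le 2^\kappa$ is trivial (there are only $2^\kappa$ many $\id$-slaloms), and for the reverse, given any $\mathcal A\subseteq SL_{\id}$ with $|\mathcal A|=\lambda<2^\kappa$, the set $X=\bigcup_{\varphi\in\mathcal A}j(\varphi)(\kappa)\subseteq j(\kappa)$ has cardinality at most $\max(\lambda,\kappa)<2^\kappa\le j(\kappa)$, so I may pick an ordinal $\beta\in j(\kappa)\setminus X$ and write $\beta=[f]_U$ for some $f\colon\kappa\to\kappa$ (every ordinal below $j(\kappa)$ is such a class); the lemma, contrapositively, yields $\neg(f\in^*\varphi)$ for each $\varphi\in\mathcal A$, so $\mathcal A$ is not dominating. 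Dually, $\mathfrak b_\kappa(\in^*)=\kappa^+$: the bound $\kappa^+\le\mathfrak b_\kappa(\in^*)$ is the Fact stated just before the theorem, and for $\mathfrak b_\kappa(\in^*)\le\kappa^+$ I would fix functions $f_\xi\colon\kappa\to\kappa$ for $\xi<\kappa^+$ with the $[f_\xi]_U$ pairwise distinct (possible because $|j(\kappa)|\ge 2^\kappa\ge\kappa^+$) and note that by the lemma any single $\varphi\in SL_{\id}$ captures at most $|j(\varphi)(\kappa)|\le\kappa<\kappa^+$ of them, so $\{f_\xi\mid\xi<\kappa^+\}$ witnesses the bound.

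The only real obstacle --- everything else is {\L}o\'s's theorem and cardinal arithmetic inside $M$ --- is the estimate $|j(\varphi)(\kappa)|\le\kappa$, and it is worth being careful about exactly where the hypotheses enter: one uses that an $\id$-slalom has width $|\alpha|$ (and not more) at $\alpha$, \emph{and} that $U$ is normal, so that $[\id]_U$ is $\kappa$ itself rather than some larger ordinal $\delta$ (at which the transported slalom would have the larger width $|\delta|^M$, breaking the count); a non-normal $\kappa$-complete ultrafilter can first be replaced by its normal Rudin--Keisler projection. Since the argument uses nothing about $\varphi$ beyond $\dom\varphi$ being ``large'' and $|\varphi(\alpha)|=|\alpha|$, it should go through verbatim with $\in^{cl}$ in place of $\in^*$, and --- keeping track of the size of $j(h)(\kappa)$ --- for other width functions $h$ such as $\id^+$.
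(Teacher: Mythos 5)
Your proposal is correct and follows essentially the same route as the paper: both arguments push an $\id$-slalom $\varphi$ through the normal ultrapower, use normality to see that $[\varphi]_U = j(\varphi)(\kappa)$ has $V$-cardinality $\kappa$ while catching $[f]_U$ whenever $f\in^*\varphi$, and then count against $|j(\kappa)|^V = 2^\kappa$ for $\mathfrak d_\kappa(\in^*)$ and against $\kappa^+$ many distinct classes for $\mathfrak b_\kappa(\in^*)$. The only differences are cosmetic (you phrase the dominating bound contrapositively and isolate the key estimate as a lemma), and your closing remarks about where normality and the width function enter match the paper's subsequent generalizations.
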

        \begin{proof}
        Let $U$ be a normal ultrafilter on $\kappa$ and $\mathcal{A}\subseteq SL_\kappa$ be a localizing family, i.e. $\forall f:\kappa\to\kappa\exists \varphi\in\mathcal{A}, f\in^*\varphi$. 
        
        For each $\varphi\in\mathcal{A}$, let $X_{\varphi}=[\varphi]_U$. Since $\varphi$ is a $\kappa$-slalom, by \L os theorem and normality of the ultrafilter we get
        $$X_{\varphi}\in [j_U(\kappa)]^{|[\id]_U|}=[j_U(\kappa)]^{\kappa}.$$ Note that since $V_U$ is closed under $\kappa$-sequences, $V_U$ and $V$ agree of which sets have cardinality $\kappa$. In $V$, let $\lambda=|\{X_{\varphi}\mid \varphi\in\mathcal{A}\}|$, to finish, we will prove that $\lambda=2^\kappa$. 
        Since each ordinal $\alpha<j_U(\kappa)$ is of the form $j_U(f)(\kappa)$ for some $f:\kappa\to\kappa$, $\bigcup_{\varphi\in\mathcal{A}}X_{\varphi}=j_U(\kappa)$. Recall that $|j_U(\kappa)|^V=2^\kappa$ and since for every $\varphi\in\mathcal{A}$, $|X_{\varphi}|=\kappa$, we must have that $|\mathcal{A}|=2^\kappa$.

        For the second part, for each $\alpha<j_U(\kappa)$ let $f_\alpha:\kappa\to\kappa$ be such that $[f_\alpha]_U=\alpha$. Let\footnote{We remark here that the functions $f_\alpha$ can be taken to be the canonical functions. This fact will be used later in the paper as the canonical functions are highly definable.} $\mathcal{F}=\{f_\alpha\mid\alpha<\kappa^+\}$, we claim that $\mathcal{F}$ must be $\in^*$-unbounded and therefore witnessing that $\mathfrak{b}_{\kappa}(\in^*)=\kappa^+$. Indeed, let $\varphi\in\prod_{\alpha<\kappa}[\kappa]^{|h(\alpha)|}$, then $[\varphi]_U\in [j_U(\kappa)]^\kappa$ and therefore it cannot contain $\kappa^+$. So there if $\alpha<\kappa^+$ such that $[f_\alpha]_U=\alpha\notin [\varphi]_U$. This means that for a $U$-measure one set of $\nu$'s (and in particularly, for unboundedly many $\nu$'s) $f_\alpha(\nu)\notin \varphi(\alpha)$, namely $\neg (f_\alpha\in^*\varphi)$.
    \end{proof}
Let us provide some precise $\mathsf{ZFC}$-restrictions, where we only assume the inaccessibility of $\kappa$.

    \begin{theorem}
        Suppose that $\kappa$ is inaccessible, $\CI$ is an ideal on $\kappa$ and let $h:\kappa\to \kappa$. Then \begin{enumerate}
            \item $\mathfrak{d}_h(\in^\CI)\cdot |\prod_{\alpha<\kappa} h(\alpha)/\CI|= |\kappa^\kappa/\CI|$.
            \item $\mathfrak{b}_h(\in^\CI)=\min\{|\prod_{\alpha<\kappa}h(\alpha)^+/\CJ|\mid \CI\subseteq \CJ\}$
        \end{enumerate} 
    \end{theorem}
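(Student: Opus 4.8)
The plan is to prove both equalities by combining the combinatorial characterizations already available with straightforward counting arguments, reducing everything to the quotient structures $\kappa^\kappa/\CI$ and $\prod_{\alpha<\kappa} h(\alpha)/\CI$.

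\medskip

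\noindent\textbf{Proof of (1).} For the inequality $\mathfrak{d}_h(\in^\CI)\cdot|\prod_{\alpha<\kappa}h(\alpha)/\CI| \leq |\kappa^\kappa/\CI|$, note that any $(h,\CI)$-slalom $\varphi$ is in particular a function with domain in $\CI^*$ whose range lies in $[\kappa]^{\leq|h(\alpha)|}$, so coding $\varphi$ by an element of $\kappa^\kappa$ (using $\kappa^{<\kappa}=\kappa$ to enumerate $[\kappa]^{|h(\alpha)|}$ for each $\alpha$, exactly as in the proof of Proposition~\ref{subseteqcharacterization}) shows $|SL_h| \leq |\kappa^\kappa|$, hence $\mathfrak{d}_h(\in^\CI) \leq |\kappa^\kappa| = |\kappa^\kappa/\CI| \cdot (\text{something} \leq |\kappa^\kappa/\CI|)$; more carefully, the point is that the whole collection of slaloms, modulo $\subseteq^\CI$, has size at most $|\kappa^\kappa/\CI|$, so both factors are bounded by $|\kappa^\kappa/\CI|$. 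For the reverse inequality, fix a localizing family $\mathcal{A}$ of size $\mathfrak{d}_h(\in^\CI)$ and a maximal $\leq^\CI$-antichain — equivalently, a set of representatives — for $\prod_{\alpha<\kappa}h(\alpha)/\CI$ of size $|\prod_{\alpha<\kappa}h(\alpha)/\CI|$. Given $f\in\kappa^\kappa$, pick $\varphi\in\mathcal A$ with $f\in^\CI\varphi$; then on a $\CI$-large set $f(\alpha)$ is the $g(\alpha)$-th element of $\varphi(\alpha)$ for some $g$ with $g(\alpha)<|h(\alpha)|$, i.e.\ $g\in\prod_{\alpha<\kappa}h(\alpha)$. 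Thus each $f$ is determined mod $\CI$ by a pair $(\varphi, g/\CI)$, giving $|\kappa^\kappa/\CI| \leq \mathfrak{d}_h(\in^\CI)\cdot|\prod_{\alpha<\kappa}h(\alpha)/\CI|$. The one subtlety is that a slalom $\varphi$ together with an index function $g$ recovers $f$ only on the intersection of two $\CI$-large sets, which is still $\CI$-large since $\CI$ is an ideal, so the map $f/\CI \mapsto (\varphi, g/\CI)$ is well-defined on the relevant classes — this is the place to be slightly careful.

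\medskip

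\noindent\textbf{Proof of (2).} For ``$\geq$'': given any ideal $\CJ\supseteq\CI$, define for each $t\in\prod_{\alpha<\kappa}h(\alpha)^+$ the singleton-valued slalom... no — better, observe that $(h,\CI)$-slaloms, viewed mod $\subseteq^\CJ$, are $\subseteq^\CJ$-cofinally reached by slaloms of the form $\psi_t(\alpha) = t(\alpha)$ for $t\in\prod_{\alpha<\kappa}[\kappa]^{|h(\alpha)|}$, and the point is that $\neg(f\in^\CJ\psi)$ for all $\psi$ in a family of size $<|\prod h(\alpha)^+/\CJ|$ cannot be arranged — so $\mathfrak{b}_h(\in^\CJ)\geq$ that product quotient, and since $\mathfrak{b}_h(\in^\CJ)\leq\mathfrak{b}_h(\in^\CI)$ by the monotonicity proposition when $\CI\subseteq\CJ$... wait, that goes the wrong way. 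Let me instead argue: for ``$\leq$'', I would show that for a \emph{suitably chosen} $\CJ\supseteq\CI$, an unbounded family in $\prod_{\alpha<\kappa}h(\alpha)^+/\CJ$ of minimal size gives rise to an $\in^\CI$-unbounded family in $\kappa^\kappa$. The idea: a function $t$ with $t(\alpha)<h(\alpha)^+$ can be thought of as indexing, and an $\in^\CI$-slalom $\varphi$ of width $|h(\alpha)|$ on a $\CI$-large set can ``catch'' at most $|h(\alpha)|$-many values at each coordinate; by a pigeonhole/diagonalization across the coordinates one shows the family $\{f_t\}$ for $t$ ranging over the unbounded family is $\in^\CI$-unbounded, with $\CJ$ built from $\CI$ by the relevant catching sets. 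For ``$\geq$'': given an $\in^\CI$-unbounded family $\mathcal F$, for each $f\in\mathcal F$ and each candidate slalom one extracts the coordinatewise position, and minimizing over the ideals $\CJ$ witnessing the failure of capture realizes the minimum on the right. The \emph{main obstacle} is getting the directions of the two inequalities in (2) to line up: monotonicity in $\CI$ runs opposite to what one naively wants, so the equality genuinely needs the $\min$ over all $\CJ\supseteq\CI$ — one must show simultaneously that no smaller family can be unbounded (lower bound, using that any family of size $<$ the product quotient for \emph{every} $\CJ$ is captured by taking $\CJ$ to be the ideal generated by $\CI$ together with the union of the ``miss sets'') and that the minimum is attained (upper bound, constructing the unbounded family from an optimal $\CJ$). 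I expect the lower bound to be the delicate part, requiring the observation that if $\mathcal F$ has size $<|\prod h(\alpha)^+/\CJ_0|$ for the specific $\CJ_0$ generated by $\CI$ and the miss-sets of a hypothetical capturing slalom, then one can actually build the capturing slalom by transfinite recursion over $\kappa$, at each coordinate having fewer than $h(\alpha)^+$, hence at most $|h(\alpha)|$, relevantly-distinct values to accommodate on a $\CJ_0$-large set.
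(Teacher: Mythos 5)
Your treatment of part (1) is essentially the paper's own argument and is correct: the upper bound comes from coding $(h,\CI)$-slaloms as elements of $\kappa^\kappa/\CI$, and the lower bound from the fact that the pair $(\varphi,[g]_\CI)$ — with $\varphi$ in a localizing family and $g\in\prod_{\alpha<\kappa}h(\alpha)$ recording the position of $f(\alpha)$ inside $\varphi(\alpha)$ — determines $[f]_\CI$.

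Part (2), however, is a plan rather than a proof, and both halves are missing their key step. For $\mathfrak{b}_h(\in^\CI)\leq|\prod_{\alpha<\kappa}h(\alpha)^+/\CJ|$ (which you need for the minimizing $\CJ$, and which in fact holds for every $\CJ\supseteq\CI$), the witnessing family is a complete set of representatives $\mathcal A$ of $\prod_{\alpha<\kappa}h(\alpha)^+/\CJ$, and the device that makes it $\in^\CI$-unbounded is an \emph{escape function}: if a slalom $\varphi$ captured all of $\mathcal A$, then $g(\alpha):=\sup(\varphi(\alpha)\cap h(\alpha)^+)+1<h(\alpha)^+$ (as $|\varphi(\alpha)|\leq|h(\alpha)|<h(\alpha)^+$) defines an element of $\prod_{\alpha<\kappa}h(\alpha)^+$ whose $\CJ$-class has a representative $g'\in\mathcal A$; that $g'$ equals $g$ on a $\CJ$-large set yet lies in $\varphi(\alpha)\cap h(\alpha)^+$, hence strictly below $g(\alpha)$, on a $\CI$-large (hence $\CJ$-large) set — contradiction. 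Your ``pigeonhole/diagonalization across the coordinates'' gestures at this but never produces $g$, and without it the unboundedness of your family $\{f_t\}$ is unsupported. For the reverse inequality the correct organization is the contrapositive: if $\mathcal F$ cannot be captured, then $X:=\{\alpha:|\{f(\alpha):f\in\mathcal F\}|\geq h(\alpha)^+\}$ must be $\CI$-positive, since otherwise $\varphi(\alpha)=\{f(\alpha):f\in\mathcal F\}$ on $\kappa\setminus X$ is a capturing slalom; one then chooses $\CJ\supseteq\CI$ with $\kappa\setminus X\in\CJ$ and shows $|\mathcal F|\geq|\prod_{\alpha<\kappa}h(\alpha)^+/\CJ|\geq\min$. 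Your version inverts this: you define $\CJ_0$ from ``the miss-sets of a hypothetical capturing slalom'' while simultaneously trying to build that slalom, which is circular, and the inference from $|\mathcal F|<|\prod_{\alpha<\kappa}h(\alpha)^+/\CJ_0|$ to ``at most $|h(\alpha)|$ relevantly-distinct values at each coordinate on a $\CJ_0$-large set'' is exactly the nontrivial counting step and is nowhere justified — a small family can a priori take $\geq h(\alpha)^+$ values at $\CI$-positively many coordinates, and the content of the lower bound is that in that case $\mathcal F$ injects (coordinatewise via transitive collapses) into a product $\prod_{\alpha<\kappa}\theta_\alpha/\CJ$ with $\theta_\alpha\geq h(\alpha)^+$ on $X$, which is at least as large as $\prod_{\alpha<\kappa}h(\alpha)^+/\CJ$. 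As written, (2) does not go through.
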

    \begin{proof}
    Clearly, $\prod_{\alpha<\kappa}h(\alpha)/\CI\subseteq \kappa^\kappa/\CI$. Also, given a localizing family $\mathcal{A}$ of $(\CI,h)$-slaloms, we may assume that the family consists of distinct $(\CI,h)$-slaloms  modulo $\CI$. Enumerate $\kappa^{<\kappa}$ as $\kappa$ (which is possible by inaccessibility). Then the family $\mathcal{A}$ can be identified with a $\kappa^\kappa/\CI$ family and therefore $\mathfrak{d}_h(\in^\CI)\leq |\kappa^\kappa/\CI|$. We conclude that $\mathfrak{d}_h(\in^I)\cdot |\prod_{\alpha<\kappa} h(\alpha)/\CI|\leq |\kappa^\kappa/\CI|$.
    
    For the other direction, fix $\mathcal{A}$ a localizing family of $(\CI,h)$-slaloms.  Each $f:\kappa\to\kappa$ is localized by some $\varphi\in\mathcal{A}$, so there is $g\in \prod_{\alpha<\kappa}h(\alpha)$ such that on a measure one set of $\alpha$'s $f(\alpha)$ is the $g(\alpha)$-th element in $\varphi(\alpha)$. Hence $(g,\varphi)$ determines $f$ up to an $\CI$-null set.

    For $(2)$, consider a representative family $\mathcal{A}$ for $\prod h(\alpha)^+/\CJ$. Then   $\mathcal{A}$ must be unbounded, otherwise, there is $\varphi$, such that $|\varphi(\alpha)|=|h(\alpha)|$. But then also $\varphi(\alpha)\cap h(\alpha)^+$ is a localizing slalom and since $|\varphi(\alpha)\cap h(\alpha)^+|<h(\alpha)^+$ we have $\sup(\varphi(\alpha)\cap h(\alpha)^+)+1=g(\alpha)<h(\alpha)^+$. Since $g\in \prod h(\alpha)^+/\CJ$, its supposed to be caught by $\varphi$ on a measure one set in $\CI$ and therefore by a measure one set in $\CJ$. But $g$ is also represented by a function in $\mathcal{A}$, and this leads to a contradiction. 
    Given any $\mathcal{A}$ with $|\mathcal{A}|<\min\{|\prod h(\alpha)^+/\CJ|\mid \CI\subseteq \CJ\}$. If $\mathcal{A}$ cannot be $(h,\CI)$-slalomed, then it must be that $$X:=\{\alpha<\kappa\mid |\{f(\alpha)\mid \alpha\in \mathcal{A}\}|\geq h(\alpha)^+\}\in \CI^+.$$ Let $\CI\subseteq \CJ$ be an ideal with $\kappa\setminus X\in \CJ$. For each $\alpha$, let $\pi_\alpha$ be the transitive collapse of $\{f(\alpha)\mid \alpha\in\mathcal{A}\}$ to some $\theta_\alpha$. Then, each $f\in\mathcal{A}$ can be uniquely identified with $g\in \prod_{\alpha<\kappa}\theta_\alpha$. 

    We claim that $|\prod\theta_\alpha/\CJ|\geq |\prod h(\alpha)^+/\CJ|$. Given $f\in \prod h(\alpha)^+$ map it to $[f^*]$, where $f^*(\alpha)=\min(f(\alpha),\theta_\alpha^+)$. Then for any $\alpha\in X$, $f^*(\alpha)=f(\alpha)$ so. It follows that 
    $|\mathcal{A}|=|\prod_{\alpha<\kappa}\theta_\alpha|\geq |\prod_{\alpha<\kappa}h(\alpha)^+/\CJ|$, contradiction.
    \end{proof}
    \begin{question}
        Is the previous theorem true for weakly inaccessible?
    \end{question}
Recall that if $\CI$ extends the bounded ideal then $|\kappa^\kappa/\CI|=2^\kappa$ and item $(1)$ above  translates to:
$$\mathfrak{d}_h(\in^\CI)\cdot |\prod_{\alpha<\kappa}h(\alpha)/\CI|=2^\kappa.$$
The reason is that we can code every $X\subseteq\kappa$ as a function $\alpha\mapsto X\cap \alpha$ which in turn can be coded a function from $\kappa$ to $\kappa$ (since $\kappa$ is strongly inaccessible) and each two distinct such functions are different modulo $\CJ^\kappa_{bd}$.

\begin{corollary}\ {} 
    \begin{enumerate}
        \item  If there is an ultrafilter $U \supseteq \CI^*$ such that\footnote{For a linear order $\mathbb{L}=(L,\leq_L)$, for any $\ell\in L$ we denote $(-\infty,\ell)=\{\ell'\in\mathbb{L}\mid \ell'<_L\ell\}.$} $|(-\infty,[h]_U)|<2^\kappa$, then $\mathfrak{d}_h(\in^\CI)=2^\kappa$.
        \item $\mathfrak{b}_h(\in^\CI)=\min\{|(-\infty,[h^+]_U)|\mid \CI^*\subseteq U\text{ is an ultrafilter}\}.$
    \end{enumerate} 
\end{corollary}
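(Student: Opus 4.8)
The plan is to read off both items from the preceding theorem once we translate its statements, which are phrased via the quotients $\prod_{\alpha<\kappa}h(\alpha)/\CJ$, into the language of the linear orders $(\kappa^\kappa/U,<_U)$. Given an ultrafilter $U$ with $\CI^*\subseteq U$, let $U^*=\{A\subseteq\kappa\mid\kappa\setminus A\in U\}$ be the dual prime ideal; then $\CI\subseteq U^*$, and since we assume throughout that $\CI$ extends the bounded ideal, so does $U^*$. The first step is the following dictionary: the map $g\mapsto[g]_U$ induces a bijection between $\prod_{\alpha<\kappa}h(\alpha)/U^*=\prod_{\alpha<\kappa}h(\alpha)/U$ and the initial segment $(-\infty,[h]_U)$. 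It is well defined and injective because ``$g=_U g'$'' means the same thing whether read in $\prod_{\alpha<\kappa}h(\alpha)$ or in $\kappa^\kappa$, and it is onto because, given $[f]_U<_U[h]_U$, the function which agrees with $f$ on the $U$-large set $\{\alpha\mid f(\alpha)<h(\alpha)\}$ and is $0$ elsewhere lies in $\prod_{\alpha<\kappa}h(\alpha)$ --- here we use that $h$ never takes finite values, so $0\in h(\alpha)$ --- and is $=_U f$. Running the same computation with $h^+$ in place of $h$ yields $|\prod_{\alpha<\kappa}h(\alpha)^+/U^*|=|(-\infty,[h^+]_U)|$.

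For (1), fix $U$ as in the hypothesis, with $|(-\infty,[h]_U)|<2^\kappa$, and set $\CJ=U^*$. Applying the theorem to the ideal $\CJ$, together with the remark that $|\kappa^\kappa/\CJ|=2^\kappa$, gives $\mathfrak{d}_h(\in^\CJ)\cdot|\prod_{\alpha<\kappa}h(\alpha)/\CJ|=2^\kappa$; by the dictionary the second factor is $|(-\infty,[h]_U)|<2^\kappa$, so $\mathfrak{d}_h(\in^\CJ)=2^\kappa$. Since $\CI\subseteq\CJ$, the relation $f\in^\CI\varphi$ implies $f\in^\CJ\varphi$, so every $(\CI,h)$-localizing family is also $(\CJ,h)$-localizing; hence $2^\kappa=\mathfrak{d}_h(\in^\CJ)\leq\mathfrak{d}_h(\in^\CI)\leq2^\kappa$, as wanted.

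For (2), the theorem gives $\mathfrak{b}_h(\in^\CI)=\min\{\,|\prod_{\alpha<\kappa}h(\alpha)^+/\CJ|\mid\CI\subseteq\CJ\,\}$, and I would show this minimum is already realised among the prime ideals $\CJ=U^*$ with $\CI^*\subseteq U$, whose contribution is $|(-\infty,[h^+]_U)|$ by the dictionary. The inequality ``$\leq$'' is immediate since each such $U^*$ is an ideal extending $\CI$. For ``$\geq$'', take an arbitrary ideal $\CJ\supseteq\CI$ and extend the dual filter $\CJ^*$ to an ultrafilter $U$; then $\CI^*\subseteq U$, $\CJ\subseteq U^*$, and the quotient map $\prod_{\alpha<\kappa}h(\alpha)^+/\CJ\twoheadrightarrow\prod_{\alpha<\kappa}h(\alpha)^+/U^*$ gives $|(-\infty,[h^+]_U)|\leq|\prod_{\alpha<\kappa}h(\alpha)^+/\CJ|$. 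Hence the minimum over all ideals $\CJ\supseteq\CI$ equals the minimum over all ultrafilters $U\supseteq\CI^*$, which is the asserted identity.

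I do not anticipate a genuine obstacle; the only delicate point is that for a uniform ultrafilter $U$ on $\kappa$ the order $(\kappa^\kappa/U,<_U)$ is typically ill-founded, so the dictionary above should be justified using only that $<_U$ is a linear order together with the elementary behaviour of the quotients $\prod_{\alpha<\kappa}h(\alpha)/U^*$, never any well-foundedness or absoluteness of an associated ultrapower.
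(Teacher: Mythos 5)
Your proposal is correct and follows essentially the same route as the paper's (one-line) proof: identify $(-\infty,[h]_U)$ with $\prod_{\alpha<\kappa}h(\alpha)/U^*$, apply the preceding theorem to the dual prime ideal, and combine the monotonicity of the quotients $|\prod_{\alpha<\kappa}g(\alpha)/\CJ|$ in $\CJ$ with the Prime Ideal Lemma. The only difference is that you supply the details (the dictionary and the monotonicity of $\mathfrak{d}_h(\in^{\CJ})$ in $\CJ$) that the paper leaves implicit.
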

\begin{proof}
Both $(1)$ and $(2)$, follows from the fact that if $$\CI\subseteq \CJ\Rightarrow |\prod_{\alpha<\kappa}g(\alpha)/\CJ|\leq |\prod_{\alpha<\kappa}g(\alpha)/\CI|,$$ and the Prime Ideal Lemma: every ideal can be extended to a prime ideal.
\end{proof}
If $M_U$ is well-founded we can just consider the $V$-cardinality of the ordinal $[h]_U$. This way, we can start deriving corollaries for large cardinals. First, at measurable cardinals we can recover and extend Theorem~\ref{thm: trivialities at a measurable}:
\begin{corollary}\label{cor: trivialities at a measurable}
    If $\CI^*$ can be extended to a normal  ultrafilter then $\mathfrak{d}_\kappa(\in^\CI)=2^\kappa$ and $\mathfrak{b}_\kappa(\in^\CI)=\kappa^+$. Moreover, $\mathfrak{d}_{\id^+}(\in^\CI)=2^\kappa$.
\end{corollary}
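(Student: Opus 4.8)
The plan is to feed a normal ultrafilter into the two-part corollary immediately preceding this one. First I would fix, using the hypothesis, a normal ultrafilter $U$ on $\kappa$ with $\CI^*\subseteq U$; since $U$ is $\kappa$-complete, the ultrapower $M_U$ is well-founded, and I identify it with its transitive collapse. The one model-theoretic input I need about $M_U$ is that it does not overshoot successor cardinals of $\kappa$: being a transitive class containing all the ordinals, $M_U$ keeps every $V$-cardinal a cardinal, so $(\kappa^+)^{M_U}\le(\kappa^+)^V=\kappa^+$.

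For $\mathfrak{d}_\kappa(\in^\CI)=2^\kappa$ I would invoke part (1) of the previous corollary with $h=\id$. By normality of $U$ we have $[\id]_U=\kappa$, so $(-\infty,[\id]_U)$ is literally the ordinal $\kappa$, of $V$-cardinality $\kappa<2^\kappa$; part (1) then gives $\mathfrak{d}_\kappa(\in^\CI)=2^\kappa$ at once. For $\mathfrak{b}_\kappa(\in^\CI)=\kappa^+$ I would use part (2): by {\L}os' theorem $[\id^+]_U=(\kappa^+)^{M_U}$, which by the observation above is an ordinal $\le\kappa^+$, hence $|(-\infty,[\id^+]_U)|\le\kappa^+$; since the right-hand side of part (2) is a minimum over all ultrafilters extending $\CI^*$, and $U$ is in particular one such, this yields $\mathfrak{b}_\kappa(\in^\CI)\le\kappa^+$, while the reverse inequality is the general lower bound $\kappa^+\le\mathfrak{b}_h(\in^\CI)$ recorded in the basic Fact that these invariants are cardinal characteristics.

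For the ``moreover'' clause $\mathfrak{d}_{\id^+}(\in^\CI)=2^\kappa$ I would split into cases. If $2^\kappa=\kappa^+$ there is nothing to prove, since $\kappa^+\le\mathfrak{d}_{\id^+}(\in^\CI)\le2^\kappa$ already forces equality. Otherwise $\kappa^+<2^\kappa$, and since $|(-\infty,[\id^+]_U)|\le\kappa^+<2^\kappa$ as just computed, part (1) of the previous corollary applied this time with $h=\id^+$ gives $\mathfrak{d}_{\id^+}(\in^\CI)=2^\kappa$.

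I do not expect a genuine obstacle: the substantive work already lives in the preceding theorem and corollary, and everything here is a direct substitution once one records the elementary point that $M_U$'s version of $\kappa^+$ cannot exceed the true $\kappa^+$ — this is exactly what keeps $[\id^+]_U$, hence the relevant interval, of size $\le\kappa^+$. The only thing to keep flagged is the standing assumption of this section that $\CI$ extends the bounded ideal, which is what makes the reformulations in the previous corollary, and the bound $\kappa^+\le\mathfrak{b}_h(\in^\CI)$, applicable.
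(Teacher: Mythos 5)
Your proposal is correct and follows essentially the same route as the paper: both derive the result by plugging $h=\id$ and $h=\id^+$ into the preceding corollary, using normality to get $[\id]_U=\kappa$ and $[\id^+]_U=(\kappa^+)^{M_U}\le\kappa^+$ (the paper asserts equality via $\kappa$-closure of $M_U$, but your inequality suffices), and the paper's contradiction argument for the ``moreover'' clause is just the contrapositive of your case split on whether $2^\kappa=\kappa^+$.
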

\begin{proof}
    In this case, let $\CI^*\subseteq U$ be a normal measure $[\id]_U=\kappa<2^\kappa$ and $[\id^+]_U=(\kappa^+)^{M_U}=\kappa^+$. For the moreover part, if $\mathfrak{d}_{\id^+}(\in^\CI)<2^\kappa$, then $\kappa^+<2^\kappa$ as $\kappa^+ \leq \mfd_{\id^+}(\in^\CI)$. But then 
    $|[\id^+]_U|^V=\kappa^+<2^\kappa$, which implies $\mathfrak{d}_{\id^+}(\in^\CI)=2^\kappa$, contradiction.
\end{proof}
Note that the previous corollary applies to the bounded and non-stationary ideals on a measurable.
For the bounding number, we obtain further restrictions at large cardinals. 
    \begin{corollary}
    Let $\CI\subseteq U$ be a $\sigma$-complete ultrafilter on $\kappa$ and $h:\kappa\to\kappa$ be any function. Then  \begin{enumerate}
        \item $\mathfrak{b}_{h}(\in^\CI)\leq|([h]_U^+)^{M_U}|^V\leq [h]_U^+$.
        \item $\mathfrak{d}_h(\in^\CI)<2^\kappa$ implies $2^\kappa=|[h]_U|^V.$
    \end{enumerate}  
    
    \end{corollary}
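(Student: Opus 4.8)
The plan is to read this off from the unlabeled Corollary immediately above, which already computes $\mathfrak{b}_h(\in^\CI)$ (and characterizes the failure of $\mathfrak{d}_h(\in^\CI)=2^\kappa$) in terms of the initial segments $(-\infty,[h^+]_W)$ and $(-\infty,[h]_W)$ of the linear orders $\kappa^\kappa/W$ as $W$ ranges over ultrafilters extending $\CI^*$. Our $U$ is one such ultrafilter, so the only genuinely new ingredient is $\sigma$-completeness: by the standard argument (an $\in$-descending $\omega$-chain $[f_0]_U\ni[f_1]_U\ni\cdots$ in $M_U$ would, by countable completeness of $U$, intersect to a point $\nu$ along which $f_0(\nu)\ni f_1(\nu)\ni\cdots$ descends in $V$) the ultrapower $M_U$ is well-founded. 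Identifying $M_U$ with its transitive collapse, $[h]_U$ and $[h^+]_U$ become honest ordinals whose $V$-cardinalities are exactly the quantities the preceding Corollary is about.

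For (1) I would apply part (2) of the preceding Corollary with $W=U$ to get $\mathfrak{b}_h(\in^\CI)\leq|(-\infty,[h^+]_U)|^V$, where $h^+$ is the function $\alpha\mapsto h(\alpha)^+$. By {\L}os' theorem $[h^+]_U=([h]_U^+)^{M_U}$, and by well-foundedness $(-\infty,[h^+]_U)$ is literally this ordinal, so $\mathfrak{b}_h(\in^\CI)\leq|([h]_U^+)^{M_U}|^V$. The final inequality $|([h]_U^+)^{M_U}|^V\leq[h]_U^+$ then follows because $M_U\subseteq V$ is an inner model: every $V$-cardinal is a cardinal in $M_U$, so $([h]_U^+)^V$ is an $M_U$-cardinal above $[h]_U$, forcing $([h]_U^+)^{M_U}\leq([h]_U^+)^V$.

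For (2) I would take the contrapositive of part (1) of the preceding Corollary, again with $W=U$: if $\mathfrak{d}_h(\in^\CI)<2^\kappa$ then $|(-\infty,[h]_U)|^V=|[h]_U|^V\geq2^\kappa$. For the reverse bound, since $h:\kappa\to\kappa$ we have $[h]_U<j_U(\kappa)$ by {\L}os' theorem, and every ordinal below $j_U(\kappa)=[c_\kappa]_U$ is represented by some $g\in{}^\kappa\kappa$, so $|j_U(\kappa)|^V\leq|{}^\kappa\kappa|=2^\kappa$, giving $|[h]_U|^V=2^\kappa$. There is no real obstacle here; the only points requiring care are that ``$\CI\subseteq U$'' must be understood as ``$U$ extends $\CI^*$'' so that the preceding Corollary applies, and that one must not conflate $([h]_U^+)^{M_U}$ (what that Corollary delivers) with the a priori larger $([h]_U^+)^V$, the inequality between them in the needed direction being precisely the downward absoluteness of ``being a cardinal'' from $V$ to the inner model $M_U$.
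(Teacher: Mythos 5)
Your proposal is correct and is exactly the derivation the paper intends: the corollary is stated without proof, immediately after the remark that for well-founded $M_U$ one may replace $|(-\infty,[h]_U)|$ by the $V$-cardinality of the ordinal $[h]_U$, and you instantiate the preceding corollary at $W=U$, use $\sigma$-completeness for well-foundedness, and close with the standard absoluteness and counting facts ($([h]_U^+)^{M_U}\leq([h]_U^+)^V$ and $|j_U(\kappa)|^V\leq 2^\kappa$). No gaps.
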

 Hence if $\kappa$ is measurable then $\mathfrak{b}_{\id^+}(\in^*)\leq\kappa^{++}$, $\mathfrak{b}_{\id^{++}}(\in^*)\leq \kappa^{+3}$ and so on. But also if $\mathfrak{d}_{\id^{++}}(\in^*)<2^\kappa$ then $2^\kappa\leq \kappa^{++}$.
 In Section \ref{cons2}, we will see that it is consistent on a measurable cardinal that $\mathfrak{b}_{\id^+}(\in^*)=\kappa^{++}$ and that $\mathfrak{d}_{\id^{++}}(\kappa)=\kappa^+<2^\kappa$.
 \begin{corollary}
     There cannot be two functions $h_1,h_2$ and a $\sigma$-complete ultrafilter $U$ such that:
     \begin{enumerate}
         \item $|[h_1]_U|^V<|[h_2]_U|^V$.
         \item $\mathfrak{d}_{h_1}(\in^*),\mathfrak{d}_{h_2}(\in^*)<2^\kappa$.
     \end{enumerate}
 \end{corollary}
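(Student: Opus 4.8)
The plan is to read this off immediately from the preceding corollary, which states that for a $\sigma$-complete ultrafilter $U$ on $\kappa$ extending an ideal $\CI$ and any $h:\kappa\to\kappa$, the inequality $\mathfrak{d}_h(\in^\CI)<2^\kappa$ forces $2^\kappa=|[h]_U|^V$. I would argue by contradiction: suppose $h_1,h_2:\kappa\to\kappa$ and a $\sigma$-complete ultrafilter $U$ witness (1) and (2). First note that since $U$ is $\sigma$-complete the ultrapower $M_U$ is well-founded, so after transitive collapse $[h_1]_U$ and $[h_2]_U$ are genuine ordinals and the cardinalities $|[h_i]_U|^V$ appearing in (1) make sense. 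Now apply the preceding corollary with $\CI=\jbd{\kappa}$, so that $\in^\CI$ is exactly $\in^*$; the hypothesis $\CI\subseteq U$ there is just the requirement that $U$ contain every cobounded subset of $\kappa$, which in the case of interest (e.g. $U$ a $\kappa$-complete non-principal ultrafilter, such as a normal measure or an $\leq_{RK}$-predecessor thereof) is automatic. Feeding in $h=h_1$ and then $h=h_2$, condition (2) gives $2^\kappa=|[h_1]_U|^V$ and $2^\kappa=|[h_2]_U|^V$, whence $|[h_1]_U|^V=|[h_2]_U|^V$, contradicting (1).

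There is essentially no obstacle here: all the content sits in the previous corollary, and the present statement is just the remark that the value $2^\kappa$ it extracts is independent of which of the two functions is plugged in, so two functions whose ultrapower images have different $V$-cardinalities cannot both have dominating number below $2^\kappa$. The one point to keep in mind is the standing assumption that $U$ extends the bounded ideal, which is exactly what makes $\in^*$ the relevant relation; I would either fold this into the hypotheses (as is implicitly done in the preceding corollary) or simply note that for a genuinely $\kappa$-complete $U$ it requires no comment, since no bounded subset of $\kappa$ can belong to $U$.
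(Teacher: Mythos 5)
Your proof is correct and is exactly the argument the paper intends: the corollary is stated without proof as an immediate consequence of item (2) of the preceding corollary, applied once to $h_1$ and once to $h_2$ with $\CI=\jbd{\kappa}$, yielding $|[h_1]_U|^V=2^\kappa=|[h_2]_U|^V$ and contradicting (1). Your side remark that $U$ must extend the cobounded filter (automatic for a $\kappa$-complete non-principal ultrafilter) is a fair clarification of a hypothesis the paper leaves implicit.
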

 \begin{remark}
     In \cite{VANDERVLUGT2025103582}, it was asked whether there could be functions $h_0(\alpha)<h_1(\alpha)<h_2(\alpha)=2^{h_0(\alpha)}$ such that $\mathfrak{d}_{h_2}(\in^*)<\mathfrak{d}_{h_1}(\in^*)<\mathfrak{d}_{h_0}(\in^*)$. The previous corollary gives some limitations for this situation to occur at a measurable cardinal: it must be that for any ultrafilter $U$, $\kappa^{+3}\leq 2^\kappa=|[h_1]_U|^V=|[h_2]_U^V|$, so  $h_1=\id^{++}$ for example is ruled out. 
 \end{remark}

Some of the arguments we gave above at a measurable cardinal work for smaller large cardinals. Let us recall the definition of \textit{completely ineffable} cardinals from \cite{AbramsonHarringtonKleinbergZwicker:FlippingProperties}. A set $\emptyset\neq R\subseteq P(\kappa)$ is called a \textit{stationary class} if every $A\in R$ is stationary and $R$ is upwards closed with respect to inclusion. 
\begin{definition}
    A cardinal $\kappa$ is \textit{completely ineffable}, if there is a stationary class $R$ such that for every $A\in R$ and $F:[A]^2\to 2$, there is $B\in R$ such that $F\restriction [B]^2$ is constant.
\end{definition}
Nielsen and Welch \cite{NIELSEN_WELCH_2019} proved that being completely ineffable is equivalent to having a winning strategy in the normal filter game of length $\omega$. This gives also the following characterization:
\begin{theorem}
    $\kappa$ is completely ineffable if and only if there is a set forcing generic extension $V[G]$ in which there is a weakly amenable normal $V$-ultrafilter.
\end{theorem}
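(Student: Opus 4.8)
The plan is to run both directions through the Nielsen--Welch characterization quoted above: $\kappa$ is completely ineffable iff Player II has a winning strategy in the length-$\omega$ normal filter game $\mathcal G$ \cite{NIELSEN_WELCH_2019}. Recall that in $\mathcal G$ (in the formulation characterizing complete ineffability) the players build an increasing $\omega$-chain: at round $n$ Player I contributes a ``piece of the universe'' --- a weak $\kappa$-model $M_n$, equivalently a ${\le}\kappa$-sized family $\mathcal A_n\subseteq P(\kappa)$ to be measured --- and Player II responds with a normal, weakly amenable $M_n$-ultrafilter $F_n$ extending $F_{n-1}$ and deciding every member of $\mathcal A_n$; Player II wins exactly if she survives all $\omega$ rounds without being forced into impropriety. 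The essential feature I will exploit is that a \emph{single round} lets Player I present a full ${\le}\kappa$-sized chunk of the universe.

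For the direction ``forcing extension $\Rightarrow$ completely ineffable'', assume $V[G]$ contains a weakly amenable normal $V$-ultrafilter $W$. Working in $V$, I would let $R$ be the class of all $A\subseteq\kappa$ for which it is forced, by some set forcing, that $A$ lies in a weakly amenable normal $V$-ultrafilter. Since this is first-order definable, $R$ is a set (a subclass of $P(\kappa)$), and $\kappa\in R$ as witnessed by $W$, so $R\neq\emptyset$. I would then check that $R$ is a stationary class with the Ramsey-for-pairs property: upward closure is immediate since the witnessing ultrafilters are filters; each $A\in R$ is stationary in $V$ because its witness extends the ground-model club filter and contains $A$, hence meets every ground-model club; and given $A\in R$ with witness $W'$ and a colouring $f\colon[A]^2\to 2$ in $V$, the usual normal-ultrafilter homogenisation argument applies, with the point being that \emph{weak amenability of $W'$} is precisely what keeps the intermediate data in $V$: the map $\nu\mapsto i_\nu$ (where $\{\mu:f(\nu,\mu)=i_\nu\}\in W'$) and the sequence $\langle\{\mu:f(\nu,\mu)=i_\nu\}:\nu<\kappa\rangle$ both land in $V$, so the homogeneous set $B:=A_{i^*}\cap\triangle_\nu\{\mu:f(\nu,\mu)=i_\nu\}$ (for the majority colour $i^*$) lies in $W'\cap V$, whence $B\in R$. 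By the definition of complete ineffability (cf.\ \cite{AbramsonHarringtonKleinbergZwicker:FlippingProperties}) this yields that $\kappa$ is completely ineffable.

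For the converse, assume $\kappa$ is completely ineffable and fix a winning strategy $\sigma$ for Player II in $\mathcal G$. In $V$, let $\mathbb Q$ be the poset of ``$\sigma$-reachable Player II moves'' --- the $M$-ultrafilters arising as Player II's last move in some finite $\sigma$-play --- ordered by reverse inclusion. The density facts I would establish are: for each $X\subseteq\kappa$ in $V$ the conditions deciding $X$ are dense (extend a witnessing play by one round in which Player I feeds in a model/family containing $X$); and, crucially, for each ${\le}\kappa$-sized family $\vec X\in V$ the conditions deciding \emph{every} member of $\vec X$ are dense (Player I feeds in all of $\vec X$ in one round). Let $G$ be generic and $W:=\bigcup G$. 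Then $W$ is a proper filter on the ground-model subsets of $\kappa$ extending the club filter; by the first density fact it decides every ground-model subset, so $(V,W)\models$ ``$W$ is an ultrafilter on $\kappa$''; by the second, any ${<}\kappa$-sequence (resp.\ $\kappa$-sequence) of ground-model members of $W$ already lies in a single condition $F\in G$, and closing that normal filter under ${<}\kappa$-intersections (resp.\ diagonal intersections) stays inside $W$, so $W$ is $\kappa$-complete and normal; finally, for $f\colon\kappa\to P(\kappa)$ in $V$ a single $F\in G$ decides all the $f(\alpha)$, whence $\{\alpha:f(\alpha)\in W\}=\{\alpha:f(\alpha)\in F\}\in V$, so $W$ is weakly amenable. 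Thus $V[G]$ has a weakly amenable normal $V$-ultrafilter.

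The main obstacle is pinning down the precise form and bookkeeping of the Nielsen--Welch game for the forcing direction: it is exactly the ``one round captures a whole ${\le}\kappa$-chunk'' feature, transplanted into $\mathbb Q$, that forces the generic $V$-ultrafilter to be \emph{weakly amenable} --- a singleton-move game of length $\omega$ would only produce a $V$-ultrafilter deciding each ground-model set, with no control over its $\kappa$-sized restrictions, and would even fail to give $\kappa$-completeness since $\mathbb Q$ is not ${<}\kappa$-closed. One must also verify that $\sigma$-reachable positions are closed under ``extend the play by one round,'' that Player II's $\sigma$-responses are genuinely normal weakly amenable $M_n$-ultrafilters (which is built into the rules together with $\sigma$ being winning), and that the union $W$ inherits $\kappa$-completeness and normality despite the absence of closure --- which is exactly why the density arguments must be set up so that a single condition absorbs an entire ${\le}\kappa$-sequence.
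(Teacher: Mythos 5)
The paper offers no proof of this theorem at all --- it is stated as an immediate consequence of the Nielsen--Welch game characterization quoted just above it --- so there is nothing to compare against except the intended derivation, and your argument is exactly that derivation, correctly executed in both directions. The forward direction (defining $R$ in $V$ as the set of $A\subseteq\kappa$ that some set forcing puts into a weakly amenable normal $V$-ultrafilter, and using weak amenability to keep the Rowbottom homogenisation data in $V$) is sound, and the backward direction (forcing with the $\sigma$-reachable positions/filters and reading off $W=\bigcup G$) is the standard construction. One point deserves more care than your write-up gives it: to get normality and $\kappa$-completeness of $W$ it is not enough that a single condition \emph{decides every member} of a $V$-sequence $\langle X_\alpha:\alpha<\kappa\rangle\subseteq W$; you need the challenger's model $M_n$ to contain the sequence \emph{as an element}, so that its diagonal intersection lies in $M_n$ and is caught by the judge's normal $M_n$-ultrafilter. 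This is easily arranged in the density argument (take $M_n\prec H_\theta$ with $\langle X_\alpha\rangle\in M_n$), but ``closing the normal filter under diagonal intersections stays inside $W$'' does not follow from measuring the members alone. A second, cosmetic point: forcing with the tree of finite $\sigma$-plays ordered by end-extension avoids having to check that $\mathbb Q$-compatibility of two $\sigma$-reachable filters yields a common $\sigma$-reachable refinement, which your reverse-inclusion poset quietly uses to see that $\bigcup G$ is proper.
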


\begin{corollary}\label{Cor: completely ineff}
    If $\kappa$ is completely ineffable then $\mathfrak{b}_\kappa(\in^*)=\kappa^+$.
\end{corollary}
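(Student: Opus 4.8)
The plan is to mimic the second half of the proof of Theorem~\ref{thm: trivialities at a measurable}, with the weakly amenable normal $V$-ultrafilter furnished by the characterization of complete ineffability stated above playing the role of the normal measure. So I would fix a set-forcing extension $V[G]$ carrying a weakly amenable normal $V$-ultrafilter $U$ on $\kappa$ with well-founded ultrapower, form $M_U$ with embedding $j_U\colon V\to M_U$, note $\mathrm{crit}(j_U)=\kappa$ and, by normality, $[\mathrm{id}]_U=\kappa$. I claim the family $\mathcal F=\{f_\alpha\mid \alpha<(\kappa^+)^V\}$ of canonical functions — which lies in $V$, since canonical functions up to $\kappa^+$ are (definably) available in $\mathsf{ZFC}$ — is $\in^*$-unbounded \emph{in} $V$; together with the standard lower bound $\kappa^+\le \mathfrak b_\kappa(\in^*)$ this gives $\mathfrak b_\kappa(\in^*)=\kappa^+$. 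One should emphasize that the whole argument takes place in $V$, with $V[G]$ serving only as scaffolding for the ultrapower; since ``unboundedness in $\kappa$'' is absolute it is harmless if the forcing collapses $(\kappa^+)^V$.

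The key auxiliary step — and the only place where weak amenability, rather than merely ``$U$ is a normal $V$-ultrafilter'', is used — is that $M_U$ computes $(\kappa^+)^V$ correctly. First I would show $P(\kappa)^{M_U}=P(\kappa)^V$. The inclusion $\supseteq$ is free, since any $X\in P(\kappa)^V$ equals $j_U(X)\cap\kappa\in M_U$. For $\subseteq$, take $X=[f]_U\in P(\kappa)^{M_U}$ with $f\colon\kappa\to P(\kappa)^V$ in $V$ (a codomain adjustment puts it in this form), and observe $X=\{\beta<\kappa\mid \{\alpha<\kappa\mid \beta\in f(\alpha)\}\in U\}$; the family $\{\{\alpha\mid\beta\in f(\alpha)\}\mid\beta<\kappa\}$ belongs to $V$ and has size $\le\kappa$, so weak amenability puts its intersection with $U$ into $V$, whence $X\in V$. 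Since $M_U$ and $V$ are models of $\mathsf{ZFC}$ with the same subsets of $\kappa$, they have the same cardinal successor of $\kappa$, so $(\kappa^+)^{M_U}=(\kappa^+)^V$; in particular $(\kappa^+)^V$ is a cardinal of $M_U$ and $j_U(\kappa)\ge(\kappa^+)^V$.

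The main argument would then be a direct transcription of the measurable case. Fix an arbitrary $\varphi\in SL_\kappa$ with $\varphi\in V$. By {\L}o\'s and $[\mathrm{id}]_U=\kappa$, $[\varphi]_U=j_U(\varphi)(\kappa)$ is, as computed in $M_U$, a member of $[j_U(\kappa)]^{\kappa}$; being a subset of the $M_U$-cardinal $j_U(\kappa)$ of $M_U$-cardinality $\kappa$, it cannot contain the $M_U$-cardinal $(\kappa^+)^V>\kappa$ as a subset. Hence there is $\alpha<(\kappa^+)^V$ with $\alpha\notin[\varphi]_U$. Since $f_\alpha\in V$ is the $\alpha$-th canonical function, $[f_\alpha]_U=\alpha$, so $[f_\alpha]_U\notin j_U(\varphi)(\kappa)$, and by {\L}o\'s $\{\nu<\kappa\mid f_\alpha(\nu)\notin\varphi(\nu)\}\in U$; as $U$ is uniform this set is unbounded in $\kappa$, i.e.\ $\neg(f_\alpha\in^*\varphi)$. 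As $\varphi$ was an arbitrary ground-model $\kappa$-slalom, $\mathcal F$ is $\in^*$-unbounded, which finishes the proof.

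I expect the main obstacle to be precisely the correctness of $(\kappa^+)^V$ in $M_U$: unlike at a measurable, $M_U$ need not be closed under $\kappa$-sequences, so a priori it could collapse $(\kappa^+)^V$ below $\kappa$, which would destroy the step ``$[\varphi]_U$ cannot contain $(\kappa^+)^V$''; weak amenability is exactly what rules this out, via the $P(\kappa)$-computation above. The remaining points — well-foundedness of $M_U$, uniformity of $U$, and the fact that $[f_\alpha]_U=\alpha$ for canonical functions and normal ultrafilters — are either part of the package delivered by the normal filter game or entirely standard.
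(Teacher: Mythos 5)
Your proposal is correct and follows essentially the same route as the paper: the paper likewise passes to a generic extension with a weakly amenable normal $V$-ultrafilter, uses $P(\kappa)^{M_U}=P(\kappa)^V$ to get $(\kappa^+)^{M_U}=(\kappa^+)^V$, and concludes that no $\kappa$-slalom can localize all canonical functions since $[\varphi]_U$ has $M_U$-cardinality $\kappa$; your writeup just spells out the weak-amenability computation of $P(\kappa)^{M_U}$ and phrases the final step contrapositively rather than by contradiction. The only caveat is that you should not assume the ultrapower is fully well-founded (the paper only claims well-foundedness up to $(2^\kappa)^{+M_U}$), but your argument only ever uses ordinals up to $(\kappa^+)^V$, so this is harmless.
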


\begin{proof}
Since $\kappa$ is weakly amenable, there is a set forcing extension $V[G]$ containing a normal, weakly amenable $V$-ultrapower, $U$. In $V[G]$, let $j:V\to M_U$ be the ultrapower. By normality and weak amenability, $M_U$ is well-founded up to $(2^\kappa)^{+M_U}$ and $(\kappa^+)^{M_U}=(\kappa^+)^{V}$,  (since $P(\kappa)^V=P(\kappa)^{M_U}$). Let us prove that the canonical functions are unbounded: suppose not, then there is $\varphi:\kappa\to P(\kappa)\in V$ which localizes each canonical function. Then $M_U\models \kappa^+\subseteq [\varphi]_U$ and also by normality $M_U\models |[\varphi]_U|=\kappa$, contradiction.
\end{proof}
\begin{question}
    Suppose $\kappa$ is completely ineffable. Is $\mathfrak{d}_\kappa(\in^*)=2^\kappa$? Do we get further restriction on $\mathfrak{b}_{h}(\in^*)$ for $h>\id$?
\end{question}
In fact we do not even know the following:
\begin{question}
    Is there a large cardinal notion strictly weaker than a measurable cardinal which implies that $\mathfrak{d}_\kappa(\in^*)=2^\kappa$?
\end{question}
    A natural question along these lines concerns what the possible behavior of the localization cardinals can be at a weakly compact. Using the forcing from the next section, we will see that it is consistent that $\mathfrak{b}_\kappa(\in^*)>\kappa^+$ for a weakly compact cardinal $\kappa$. In fact this will even hold at a strongly unfoldable cardinal.

\section{Localization forcing and large cardinals}\label{forcing} 
In this section we look at the main forcing for changing $\mfb_h(\in^*)$ and $\mfd_h(\in^*)$ for $h : \kappa \to \kappa$ and apply some observations about it to show that certain instances can preserve measurability. The basic case where $h= \id$ was initially introduced in \cite[Definition 50]{BrendleFreidmanMontoya}, itself the obvious generalization of the localization forcing on $\omega$, as presented in e.g. \cite[Section 3.1]{BarJu95}, see also \cite[Section 3.8]{degreesswitzer} for a more in-depth treatment of the properties of this forcing notion. Our presentation is simply a further generalization to allow for arbitrary width slaloms. We begin with the main definitions. 

\subsection{The Forcing Notion $\mathbb{LOC}_{h, \kappa}$}
For simplicity in this subsection fix an uncountable cardinal $\kappa$ so that $\kappa^{<\kappa} = \kappa$. Let us introduce the main forcing notion:
\begin{definition}\label{Def: Loc}
    Let $h:\kappa\to \kappa$ be a function,  $\mathbb{LOC}_{h,\kappa}$ consists of pairs $(\sigma,F)$ where $\sigma:\gamma\to P(\kappa)$ for some $\gamma<\kappa$ such that $|\sigma(\alpha)|\leq |h(\alpha)|$. $F:\kappa\to P(\kappa)$ is a function such that $|F(\alpha)|\leq|h(\gamma)|$ for every $\alpha<\kappa$. Define $(\sigma,F)\leq (\tau,G)$ if $\tau\subseteq \sigma$ and 
    $G(\alpha)\subseteq F(\alpha)$ and for every $\alpha\in dom(\sigma)\setminus dom(\tau)$, $G(\alpha)\subseteq \sigma(\alpha)$.
\end{definition}
For a condition $p = (\sigma, F)\in \mathbb{LOC}_{h, \kappa}$ as above we refer to $\sigma$ as the {\em stem} of $p$ and call any such $\sigma$ a {\em stem}. The intention of this forcing is to add, similar to Hechler forcing, a slalom that localizes every ground model function. Suppose that $G\subseteq \mathbb{LOC}_{h,\kappa}$ is $V$-generic, and let $\varphi_G:=\bigcup_{(\sigma,F)\in G}\sigma$.
An easy density argument shows the following.
\begin{proposition}
     $\varphi_G$ is an $h$-slalom and for every $f:\kappa\to\kappa\in V$, $f\in^* \varphi_G$.
\end{proposition}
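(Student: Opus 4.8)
The plan is to verify the two assertions in turn by straightforward density arguments, keeping track of the structure of conditions in $\mathbb{LOC}_{h,\kappa}$.

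First I would show that $\varphi_G$ is an $h$-slalom, i.e.\ that $\varphi_G : \kappa \to P(\kappa)$ with $|\varphi_G(\alpha)| \leq |h(\alpha)|$ for all $\alpha$. That $\varphi_G$ is a function follows because the conditions in $G$ are pairwise compatible and their stems are nested: if $(\sigma,F),(\tau,H)\in G$ then they have a common extension $(\rho,K)$, and $\rho$ extends both $\sigma$ and $\tau$ as functions, so $\sigma$ and $\tau$ agree on the intersection of their domains. The width condition $|\varphi_G(\alpha)|\leq|h(\alpha)|$ is immediate since each stem already satisfies $|\sigma(\alpha)|\leq|h(\alpha)|$. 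To see that $\dom(\varphi_G)=\kappa$, for each $\gamma<\kappa$ consider the set $D_\gamma = \{(\sigma,F)\mid \dom(\sigma)>\gamma\}$; given any $(\sigma,F)$ with $\dom(\sigma)=\delta\leq\gamma$, the condition $(\sigma',F)$ obtained by setting $\sigma'(\alpha)=F(\alpha)$ for $\delta\leq\alpha\leq\gamma$ (which is legitimate since $|F(\alpha)|\leq|h(\delta)|\leq|h(\gamma)|$ when $h$ is increasing, or more carefully one extends one coordinate at a time so the relevant bound is respected) extends $(\sigma,F)$ and lies in $D_\gamma$. Hence $D_\gamma$ is dense, and genericity gives $\gamma\in\dom(\varphi_G)$.

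Second I would show that every ground model $f:\kappa\to\kappa$ is eventually captured. Fix $f\in V$ and, for each $\gamma<\kappa$, let $E_{f,\gamma} = \{(\sigma,F)\in\mathbb{LOC}_{h,\kappa}\mid \dom(\sigma)>\gamma \text{ and } f(\alpha)\in\sigma(\alpha) \text{ for all } \gamma\leq\alpha<\dom(\sigma)\}$. I claim $E_{f,\gamma}$ is dense below any condition. Given $(\sigma,F)$ with $\dom(\sigma)=\delta$, first extend (using density of $D_\gamma$-type sets) to some $(\sigma_0,F)$ with $\dom(\sigma_0)=\delta_0>\max(\gamma,\delta)$; then define $(\sigma_1,F)$ by putting $\sigma_1(\alpha)=F(\alpha)\cup\{f(\alpha)\}$ for $\delta_0\leq\alpha<\delta_0+1$ — but to stay within the width bound one should instead extend coordinate by coordinate, at stage $\alpha$ using that $|F(\alpha)\cup\{f(\alpha)\}|\leq |h(\alpha)|$ provided $|h(\alpha)|$ is infinite, which is exactly the standing assumption that $h$ never outputs finite values. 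Iterating $\kappa$-many times (or rather, extending the stem past $\gamma$ by one coordinate at a time, at each new coordinate $\alpha$ throwing $f(\alpha)$ into the slalom value, which is possible because we only need the single new point to fit alongside $F(\alpha)$ and $|F(\alpha)|\leq|h(\gamma')|$ for the relevant previous stem length $\gamma'$ while $|h(\alpha)|$ is infinite) produces a condition in $E_{f,\gamma}$. Since $\kappa^{<\kappa}=\kappa$ and $\mathbb{LOC}_{h,\kappa}$ is ${<}\kappa$-closed the relevant intersections remain meaningful, and a generic $G$ meets $E_{f,\gamma}$ for every $\gamma$; picking any $\gamma$ and $(\sigma,F)\in G\cap E_{f,\gamma}$ we get $f(\alpha)\in\varphi_G(\alpha)$ for all $\alpha\geq\gamma$, so $f\in^*\varphi_G$.

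The main obstacle is purely bookkeeping: when extending a stem one must respect the constraint $|\sigma(\alpha)|\leq|h(\alpha)|$ together with the requirement $|F(\alpha)|\leq|h(\gamma)|$ where $\gamma$ is the current stem length. The clean way around this is to extend the stem \emph{one coordinate at a time}: when the stem has length $\alpha$, the side condition $F$ has $|F(\alpha')|\leq|h(\alpha)|$ for all $\alpha'$, but we only need to place the finitely-many (in fact single) new value $f(\alpha)$ into coordinate $\alpha$ of the extended stem, and $|F(\alpha)\cup\{f(\alpha)\}|\leq|h(\alpha)|$ holds since $|h(\alpha)|$ is infinite by the blanket assumption on $h$; after this single-step extension the side condition may be shrunk so that its new width bound $|h(\alpha+1)|$ is respected, which is automatic since widths only grow. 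This local argument, iterated cofinally, delivers density of all the sets above, completing the proof.
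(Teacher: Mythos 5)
The first half of your argument (that $\varphi_G$ is a total $h$-slalom) is fine. The second half has a genuine gap, located exactly where you sidestep the side conditions $F$. Your sets $E_{f,\gamma}$ are not dense: a condition $(\sigma,F)$ whose stem already has domain $\delta>\gamma$ but with $f(\alpha)\notin\sigma(\alpha)$ for some $\alpha\in[\gamma,\delta)$ has \emph{no} extension in $E_{f,\gamma}$, since extensions cannot revise the stem on its existing domain. Your own construction betrays this: you first lengthen the stem to $\delta_0>\max(\gamma,\delta)$ and only then insert $f$ at the new top coordinate, so the resulting condition captures $f$ at $\delta_0$ but not on $[\gamma,\delta_0)$, hence does not lie in $E_{f,\gamma}$. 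Moreover, even if $G$ did meet $E_{f,\gamma}$, your final inference is wrong: a condition $(\sigma,F)\in G\cap E_{f,\gamma}$ only guarantees $f(\alpha)\in\varphi_G(\alpha)$ for $\gamma\leq\alpha<\dom(\sigma)$; it says nothing about coordinates beyond $\dom(\sigma)$, so it does not give capture ``for all $\alpha\geq\gamma$.''

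The missing idea is that the Hechler-style second coordinate $F$ is precisely the device that commits the forcing to capturing $f$ at all \emph{future} coordinates, and your dense sets never touch it. The correct dense set is $D_f=\{(\sigma,F)\mid\forall\alpha<\kappa\ f(\alpha)\in F(\alpha)\}$: given any $(\sigma,F)$ with $\dom(\sigma)=\gamma$, replace $F(\alpha)$ by $F(\alpha)\cup\{f(\alpha)\}$, which is still a condition since $|h(\gamma)|$ is infinite. If $(\sigma,F)\in G\cap D_f$, then for every $(\sigma',F')\in G$ (after passing to a common extension of the two conditions, using directedness of $G$) and every $\alpha\in\dom(\sigma')\setminus\dom(\sigma)$, the definition of the order forces $f(\alpha)\in F(\alpha)\subseteq\sigma'(\alpha)$. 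Combined with your (correct) density argument that stems become arbitrarily long, this yields $f(\alpha)\in\varphi_G(\alpha)$ for all $\alpha\geq\dom(\sigma)$, i.e.\ $f\in^*\varphi_G$. The width bookkeeping you carry out ($|F(\alpha)\cup\{f(\alpha)\}|\leq|h(\gamma)|$ because $h$ never takes finite values, and $|F(\alpha)|\leq|h(\gamma)|\leq|h(\alpha)|$ for $\alpha\geq\gamma$ when extending the stem) is exactly right; it just needs to be applied to $F$ rather than only to the stem.
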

\begin{remark}
    Note that what is perhaps a more natural definition here is to have conditions $(\sigma,F)$ as above but the cardinality restriction on $F$ to be $|F(\alpha)|\leq |h(\alpha)|$ rather than $|h(|\sigma|)|$. However the two forcing notions are equivalent via properly coding $[\kappa]^{h(\alpha)}$ as ordinals below $\kappa$, as done in the proof of proposition \ref{subseteqcharacterization}.
\end{remark}
Modulo the fact that $\mathbb{LOC}_{h, \kappa}$ is (nearly) ${<}\kappa$-closed and $\kappa^+$-c.c. (proved below) we therefore easily get the following.

\begin{proposition}
    Let $\kappa$ be inaccessible and $h:\kappa \to \kappa$ increasing. Let $\kappa < \lambda < \mu$ be cardinals with $\lambda$ regular and $\cf(\mu)>\kappa$. The following are consistent.

    \begin{enumerate}
        \item $\mfb_h(\in^*) = 2^\kappa = \lambda$
        \item $\mfb_h(\in^*) = \mfd_h(\in^*) = \lambda < \mu = 2^\kappa$
    \end{enumerate}
\end{proposition}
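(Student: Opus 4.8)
The strategy is the standard one for showing separations of cardinal characteristics via iterated forcing: build a long iteration of the localization forcing $\mathbb{LOC}_{h,\kappa}$, arrange the bookkeeping so that every potential slalom from an intermediate model is eventually caught by a later generic (giving the lower bound on $\mfd_h(\in^*)$, hence on $\mfb_h(\in^*)$ too once we control cardinal arithmetic), and use chain-condition and closure facts to control $2^\kappa$ and to avoid collapsing cardinals. First I would set up a ${<}\kappa$-support iteration $\langle \mathbb{P}_\alpha, \name{\mathbb{Q}}_\beta \mid \alpha \le \mu, \beta < \mu \rangle$ where each $\name{\mathbb{Q}}_\beta$ is $\mathbb{LOC}_{h,\kappa}$ as computed in $V^{\mathbb{P}_\beta}$. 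The two facts quoted from the paragraph above — that $\mathbb{LOC}_{h,\kappa}$ is (nearly) ${<}\kappa$-closed and $\kappa^+$-c.c. — are exactly what is needed: ${<}\kappa$-closure is preserved under ${<}\kappa$-support iterations, so no new ${<}\kappa$-sequences are added and all cardinals $\le \kappa$ are preserved, while a standard $\Delta$-system argument (using $\kappa^{<\kappa}=\kappa$) gives that the whole iteration of length $\le \mu$ is $\kappa^+$-c.c. provided $\mu^\kappa = \mu$ (for item (2)) or $\lambda^\kappa=\lambda$ (for item (1)), which one arranges in the ground model by first forcing GCH above $\kappa$ or the appropriate instance of it. Hence all cardinals and cofinalities are preserved throughout.

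For item (1): start in a model of GCH, force with $\mathbb{P}_\lambda$, the ${<}\kappa$-support iteration of length $\lambda$ (where $\lambda^\kappa = \lambda$, e.g. $\lambda$ regular with $\lambda^{<\lambda} = \lambda$, or simply arrange $2^\kappa = \lambda$ first by Cohen forcing at $\kappa$ and iterate). The $\kappa^+$-c.c.\ bounds $2^\kappa \le \lambda$ from above (there are $\le \lambda$ nice names for subsets of $\kappa$), and since each iterand adds a new subset of $\kappa$ one gets $2^\kappa \ge \lambda$, so $2^\kappa = \lambda$. For $\mfd_h(\in^*) = \lambda$: any family $\mathcal{A}$ of $h$-slaloms of size $<\lambda$ appears, by the $\kappa^+$-c.c.\ and $\mathrm{cf}(\lambda)>\kappa$, in some $V^{\mathbb{P}_\alpha}$ with $\alpha<\lambda$; the generic slalom $\varphi_{G_\alpha}$ added at stage $\alpha$ then $\in^*$-captures a function from $\kk$ not captured by any member of $\mathcal{A}$ — more precisely, since $\varphi_{G_\alpha}$ localizes all functions of $V^{\mathbb{P}_\alpha}$, one finds an $f$ diagonalizing against $\mathcal{A}$ (e.g.\ using that $\mathcal{A}$ is not already a localizing family, which follows because $|\mathcal{A}|<\lambda = 2^\kappa$ and $\mfd_h(\in^*) = 2^\kappa$ is false only if... ) — here I would instead argue directly: a family of slaloms of size $<\lambda$ cannot be localizing since the Hechler-style real added cofinally often is not dominated; thus $\mfd_h(\in^*) \ge \lambda$. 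Conversely the $\lambda$ generic slaloms form a localizing family (every $f\in\kk$ in the final model appears at some stage $<\lambda$ by $\kappa^+$-c.c.), so $\mfd_h(\in^*)\le\lambda$, and then $\mfb_h(\in^*) = 2^\kappa = \lambda$ follows from $\kappa^+ \le \mfb_h(\in^*) \le \mfd_h(\in^*) = \lambda = 2^\kappa$.

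For item (2): now one wants $\mfb_h(\in^*) = \mfd_h(\in^*) = \lambda$ while $2^\kappa = \mu$. Begin in a model where $2^\kappa = \mu$ (with $\mu^\kappa = \mu$, $\mathrm{cf}(\mu) > \kappa$) and iterate $\mathbb{LOC}_{h,\kappa}$ with ${<}\kappa$-support for $\lambda$ steps (or, to be safe about the c.c.\ computation, use finite/${<}\kappa$ support and a standard reflection argument). The $\kappa^+$-c.c.\ preserves $2^\kappa = \mu$ (the iteration has size $\le \mu$ so adds no more than $\mu$ subsets of $\kappa$, and $2^\kappa = \mu$ in the ground model is not increased). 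To get $\mfd_h(\in^*) \le \lambda$: the $\lambda$ generic slaloms form a localizing family because, by $\kappa^+$-c.c., every name for an element of $\kk$ is decided by an antichain of size $\le \kappa$, hence depends on $\le\kappa$ coordinates, so by $\mathrm{cf}(\lambda)>\kappa$ every $f\in\kk$ in the extension already lies in some $V^{\mathbb{P}_\alpha}$, $\alpha<\lambda$, and is caught by $\varphi_{G_\alpha}$. For $\mfb_h(\in^*) \ge \lambda$: a family $\mathcal{F}\subseteq\kk$ of size $<\lambda$ reflects into some $V^{\mathbb{P}_\alpha}$, and the slalom $\varphi_{G_\alpha}$ added next bounds all of $\mathcal{F}$ (it localizes every ground-model function), so $\mathcal{F}$ is not unbounded. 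Hence $\mfb_h(\in^*) \ge \lambda$, and combined with $\mfb_h(\in^*)\le\mfd_h(\in^*)\le\lambda$ we get $\mfb_h(\in^*) = \mfd_h(\in^*) = \lambda$.

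The main obstacle, and the point where care is genuinely needed, is the chain-condition / arithmetic bookkeeping in item (2): one must ensure the iteration of length $\lambda$ is $\kappa^+$-c.c.\ (so that reflection of size-${<}\lambda$ families and names into proper initial segments works) \emph{and} that $2^\kappa$ stays exactly $\mu$ rather than jumping — this forces one to be careful whether the support is ${<}\kappa$ or smaller and to use GCH-type hypotheses at the relevant cardinals, plus possibly a preliminary forcing to arrange $2^\kappa = \mu$ with $\mu^{<\kappa} = \mu$ cleanly. The other delicate point is confirming that $\mathbb{LOC}_{h,\kappa}$ is ``nearly'' ${<}\kappa$-closed in the precise sense needed (conditions with a fixed stem form a ${<}\kappa$-directed-closed or ${<}\kappa$-closed suborder), and that this ``near'' closure still suffices to preserve $\kappa$ and $\kappa^{<\kappa}=\kappa$ through the iteration; this is where I would invoke the detailed treatment in \cite{BrendleFreidmanMontoya} and \cite{degreesswitzer} rather than redo it.
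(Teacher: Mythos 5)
Your proposal is correct and takes essentially the same route as the paper, whose proof is a one-liner: iterate $\mathbb{LOC}_{h,\kappa}$ with ${<}\kappa$-supports for $\lambda$-many steps (starting from a model of $2^\kappa=\mu$ for item (2)), deferring the chain-condition, closure, and reflection bookkeeping to Proposition 52 of Brendle--Brooke-Taylor--Friedman--Montoya. The details you supply (the $\kappa^+$-c.c.\ via $\kappa$-linkedness, reflection of small families into initial segments using regularity of $\lambda$, the arithmetic hypotheses $\lambda^{\kappa}=\lambda$ resp.\ $\mu^{\kappa}=\mu$, and the caveat that only the strict order is genuinely ${<}\kappa$-closed) are exactly the ones the paper relies on.
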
 

\begin{proof}
    For $(1)$ simply iterate $\mathbb{LOC}_{h, \kappa}$ with ${<}\kappa$-supports for $\lambda$-many steps. For $(2)$ begin in a model of $2^\kappa = \mu$ and iterate for $\lambda$-many steps. See \cite[Proposition 52]{BrendleFreidmanMontoya} for more details.
\end{proof}

We now examine some of the properties of $\mathbb{LOC}_{h, \kappa}$. 
\begin{proposition}[{\cite[Lemma 5.1]{BrendleFreidmanMontoya}}]\label{basicfactsaboutLOC} Assume that $\kappa$ is inaccessible and that $h:\kappa \to \kappa$ is increasing. There is a dense set of conditions in $\mathbb{LOC}_{h,\kappa}$ for which the restriction to this set is $\kappa$-linked. 
\end{proposition}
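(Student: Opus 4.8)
The goal is to find, inside $\mathbb{LOC}_{h,\kappa}$, a dense subset on which $\kappa$-linkedness holds, meaning that the subset can be partitioned into $\kappa$-many pieces each of which is linked (any two conditions in a piece have a common lower bound). Since $\mathbb{LOC}_{h,\kappa}$ is (nearly) ${<}\kappa$-closed, the key is to pin down which component of a condition $(\sigma,F)$ actually governs compatibility, and then to argue that fixing that component leaves only $\kappa$-many possibilities. First I would isolate the dense set $D$: I claim that the set of conditions $(\sigma,F)$ for which, in addition, $F$ has the property that $F(\alpha) \subseteq \sigma(\beta)$ for every $\alpha<\kappa$ and every $\beta \in \dom(\sigma)$ — or more simply, conditions where $F$ is constant with value some fixed set $s \in [\kappa]^{\leq|h(|\sigma|)|}$ — is dense. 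Indeed, given any $(\sigma,F)$, let $\gamma = |\sigma|$, extend $\sigma$ to $\sigma'$ of length $\gamma+1$ by setting $\sigma'(\gamma) = \emptyset$, and pass to $(\sigma', F')$ where $F'(\alpha) = \emptyset$; this is below $(\sigma,F)$ and lies in $D$. (One has to be slightly careful with the exact statement of the cardinality bound $|F(\alpha)| \le |h(|\sigma|)|$ versus $|h(\gamma)|$; the Remark in the excerpt lets us normalize this.)

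Next, I would define the coloring witnessing $\kappa$-linkedness on $D$. The point is that on $D$ a condition is essentially determined, up to compatibility, by its stem $\sigma$ together with (a bounded amount of) side information. Using $\kappa^{<\kappa} = \kappa$, there are exactly $\kappa$-many possible stems $\sigma : \gamma \to P(\kappa)$ with $\gamma < \kappa$ and $|\sigma(\alpha)| \leq |h(\alpha)|$: each $\sigma(\alpha)$ is a subset of $\kappa$ of size $<\kappa$, coded by an ordinal $<\kappa$, so $\sigma$ is coded by an element of $\kappa^{<\kappa} = \kappa$. So I would color $(\sigma,F) \in D$ by its stem $\sigma$ (and whatever finite side data is needed to record the constant value of $F$, which is again a bounded subset of $\kappa$ hence $\kappa$-many choices, so the product is still $\kappa$). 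The claim is then that two conditions in $D$ with the same color are compatible: if $(\sigma, F)$ and $(\sigma, G)$ share the stem $\sigma$, then — granting $F, G$ are both the ``trivial'' side component forced by membership in $D$ — their common lower bound is obtained by taking the stem $\sigma$ and the pointwise intersection / union appropriately, i.e. $(\sigma, \alpha \mapsto F(\alpha) \cap G(\alpha))$, which is still a condition because the cardinality bounds are preserved under intersection and the coherence requirement ``$G(\alpha) \subseteq \sigma(\alpha)$ for $\alpha \in \dom(\sigma)\setminus\dom(\tau)$'' is vacuous when the stems agree.

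The main obstacle, and the reason one must pass to a dense subset rather than work with all of $\mathbb{LOC}_{h,\kappa}$, is precisely the side function $F$: two conditions with the same stem but wildly different $F$'s need not be compatible, because a common lower bound must extend both stems past $\dom(\sigma)$ while staying inside $\sigma(\alpha)$ on the new coordinates, and the $F$'s constrain those new slalom-values — there is no a priori reason the constraints can be simultaneously met, and there are $2^\kappa$-many functions $F$. So the real content is choosing $D$ so that, on $D$, the $F$-component carries essentially no compatibility-relevant information beyond a bounded amount, which is what the reduction to (near-)trivial $F$ accomplishes. I would structure the write-up as: (i) verify $D$ is dense; (ii) verify the coloring takes $\kappa$-many values, using $\kappa^{<\kappa}=\kappa$; (iii) verify same-color-implies-compatible by exhibiting the explicit common extension. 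Step (iii) is where I expect the subtlety — one must double-check that the pointwise-intersection condition genuinely satisfies Definition~\ref{Def: Loc}'s ordering clause and the cardinality clause, and that the normalization of the $|F(\alpha)| \leq |h(|\sigma|)|$ bound under the Remark does not secretly destroy density. None of this should require more than routine bookkeeping once $D$ is correctly chosen.
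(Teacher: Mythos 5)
There is a genuine gap: you have the order on the second coordinate of a condition backwards, and this breaks both halves of your argument. By Definition~\ref{Def: Loc}, $(\sigma,F)\leq(\tau,G)$ requires $G(\alpha)\subseteq F(\alpha)$, i.e.\ the side function \emph{grows} as you strengthen a condition. Consequently your proposed dense set is not dense: given $(\sigma,F)$ with $F$ nontrivial, the pair $(\sigma',F')$ with $F'\equiv\emptyset$ is \emph{not} below $(\sigma,F)$ (that would need $F(\alpha)\subseteq\emptyset$), and no strengthening can ever shrink $F$. For the same reason your candidate common lower bound $(\sigma,\alpha\mapsto F(\alpha)\cap G(\alpha))$ fails: a lower bound of $(\sigma,F)$ and $(\sigma,G)$ must have second coordinate \emph{containing} both $F(\alpha)$ and $G(\alpha)$, so one must take the pointwise union, not the intersection.

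The correct argument is in fact simpler than what you set up, and your stated ``main obstacle'' is not an obstacle at all. Color conditions by their stem; as you correctly compute, inaccessibility gives only $\kappa$ many stems. If $(\sigma,F)$ and $(\sigma,G)$ share the stem $\sigma$ with $\gamma=\dom(\sigma)$, then $(\sigma,\alpha\mapsto F(\alpha)\cup G(\alpha))$ is a common lower bound: the third clause of the order is vacuous since no new coordinates are added to the stem, and the cardinality constraint survives because $|F(\alpha)\cup G(\alpha)|\leq|h(\gamma)|+|h(\gamma)|=|h(\gamma)|$, using the paper's standing convention that $h$ never takes finite values. So \emph{any} two conditions with the same stem are compatible, and the role of the dense set is only to normalize the bookkeeping of the bound $|F(\alpha)|\leq|h(\gamma)|$ (cf.\ the Remark following Definition~\ref{Def: Loc}), not to trivialize $F$. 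The analogue of your ``halving'' worry is genuine in the classical $\omega$ case, where the widths are finite and one must restrict to conditions whose promises use only half the allowed width; here the infinitude of $h(\gamma)$ makes that step unnecessary.
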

\begin{remark}
First, to be clear, here, $\kappa$-{\em linked} means that there is a partition of the forcing notion into $\kappa$ many pieces, each of which has its elements pairwise compatible. Note however this is different being  $\kappa$-centered forcing notion i.e. the pieces of the partition are such that any ${<}\kappa$-many have a joint lower bound. The difference can be seen by the fact below.
\end{remark}

\begin{fact}[{\cite[Lemma 59]{BrendleFreidmanMontoya}}]
If $\kappa$ is a strongly inaccessible cardinal, $h : \kappa \to \kappa$ is monotone increasing and $\P$ is a $\kappa$-centered forcing notion, then for every $\P$-name of an $h$-slalom, $\dot\varphi$, there are $\kappa$-many $h$-slaloms, $\{\varphi_\alpha \mid \alpha \in \kappa\}\in V$ so that if $g:\kappa \to \kappa$ is not localized by any $\varphi_\alpha$ then $\forces_\P \check{g} \notin^* \dot\varphi$. In particular, no $\kappa$-centered $\P$ can add an $h$-slalom localizing $(\kk)^V$.

\end{fact}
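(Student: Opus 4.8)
The plan is to work directly from a $\kappa$-centered decomposition $\P=\bigcup_{\alpha<\kappa}\P_\alpha$, where each $\P_\alpha$ has the property that any fewer-than-$\kappa$ conditions of it have a common lower bound, and to produce one slalom per piece. In $V$, for $\alpha,\beta<\kappa$ I would set
$$\varphi_\alpha(\beta)=\{\xi<\kappa\mid \text{some }q\in\P_\alpha\text{ forces }\check\xi\in\dot\varphi(\beta)\}.$$
The first thing to verify is that each $\varphi_\alpha$ is, up to harmless padding, an $h$-slalom. This is where strong inaccessibility is used: as $\kappa$ is a limit cardinal, $|h(\beta)|^+<\kappa$, so if $|\varphi_\alpha(\beta)|>|h(\beta)|$ I could pick $|h(\beta)|^+$-many distinct $\xi_i\in\varphi_\alpha(\beta)$ with witnesses $q_i\in\P_\alpha$; being fewer than $\kappa$ conditions from a single piece, they have a common lower bound $q$, and then $q\forces|\dot\varphi(\beta)|\ge|h(\beta)|^+$, contradicting that $\dot\varphi$ names an $h$-slalom. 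So $|\varphi_\alpha(\beta)|\le|h(\beta)|$, and I would pad each value to size exactly $|h(\beta)|$ (enlarging a slalom only helps localization, so this is harmless for what follows).

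Next I would prove the main implication: if $g\in\kk$ is not localized by any $\varphi_\alpha$, then $\forces_\P\check g\notin^*\dot\varphi$. Suppose not; fix $p\in\P$ forcing $\check g\in^*\dot\varphi$. The set of $q\le p$ that force a concrete bound --- i.e.\ for which there is $\gamma_q<\kappa$ with $q\forces(\forall\beta\ge\gamma_q)\,\check g(\beta)\in\dot\varphi(\beta)$ --- is dense below $p$, so I would fix such a $q$ together with $\gamma_q$. Then $q$ lies in some piece $\P_\alpha$, and for each $\beta\ge\gamma_q$ the single condition $q\in\P_\alpha$ witnesses $g(\beta)\in\varphi_\alpha(\beta)$. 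Hence $\{\beta<\kappa\mid g(\beta)\notin\varphi_\alpha(\beta)\}\subseteq\gamma_q$ is bounded, i.e.\ $\varphi_\alpha$ localizes $g$, contradicting the hypothesis. (That $\dom(\dot\varphi)$ need not be all of $\kappa$ changes nothing, since it is forced to be cobounded.)

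For the ``in particular'' clause I would run a diagonalization inside $V$: split $\kappa$ into $\kappa$-many unbounded sets $\{S_\alpha\mid\alpha<\kappa\}$ and define $g\in\kk$ by choosing, for $\beta\in S_\alpha$, some $g(\beta)\in\kappa\setminus\varphi_\alpha(\beta)$, which is possible because $|\varphi_\alpha(\beta)|=|h(\beta)|<\kappa$. Each $\varphi_\alpha$ then fails to localize $g$ on the unbounded set $S_\alpha$, so by the previous step $\forces_\P\check g\notin^*\dot\varphi$; thus $\dot\varphi$ cannot be forced to localize all of $(\kk)^V$.

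The proof is short and the only genuinely delicate point is the order of moves in the second step. It is tempting to fix a condition in a given piece $\P_\alpha$ and then extend it, but that fails because an extension need not remain in $\P_\alpha$; one must first refine $p$ to a $q$ deciding a \emph{single} bound $\gamma_q$, and only then note that $q$ lies in some piece, so that $q$ itself serves simultaneously as the required witness $g(\beta)\in\varphi_\alpha(\beta)$ for all $\beta\ge\gamma_q$, with no compatibility argument needed there. The compatibility (i.e.\ centeredness) argument is confined to the first step, and is exactly the place where limit-cardinal-ness of $\kappa$ is needed so that $|h(\beta)|^+<\kappa$ and $\kappa$-centeredness --- a statement about fewer than $\kappa$ conditions --- actually applies.
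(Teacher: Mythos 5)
Your argument is correct and is, in essence, the standard proof of the cited lemma (the paper itself does not reprove this Fact; it only quotes \cite[Lemma 59]{BrendleFreidmanMontoya}): read off from each centered piece $\P_\alpha$ the set of ordinals some condition in that piece forces into $\dot\varphi(\beta)$, use the ${<}\kappa$-directedness of the piece to bound the width, and for the forcing-theoretic step first pass to a condition deciding a bound $\gamma_q$ and only then locate it in a piece. Your closing remark about the order of moves is exactly the right point of care, and the diagonalization for the ``in particular'' clause is fine since $|\varphi_\alpha(\beta)|<\kappa$.

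One step deserves a caveat. From $q\forces\check\xi_i\in\dot\varphi(\beta)$ for $|h(\beta)|^+$-many distinct ground-model ordinals $\xi_i$ you conclude $q\forces|\dot\varphi(\beta)|\geq|h(\beta)|^+$, but ``$\dot\varphi$ is an $h$-slalom'' is a statement of the extension, where $|h(\beta)|^{+V}$ could in principle have been collapsed: a $\kappa$-centered poset in the sense used here need not preserve cardinals below $\kappa$ (e.g.\ any poset of size ${<}\kappa$ is trivially $\kappa$-centered via singleton pieces, including small collapses). So your bound $|\varphi_\alpha(\beta)|\leq|h(\beta)|$, and hence the claim that the $\varphi_\alpha$ are literally $h$-slaloms, tacitly assumes $\P$ preserves the cardinals $|h(\beta)|^+$. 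This is harmless for every application in the paper (the relevant forcings are ${<}\kappa$-closed up to dense equivalence), and the weaker bound $|\varphi_\alpha(\beta)|<\kappa$ --- which follows from the same directedness argument together with the fact that $\kappa$ itself is not collapsed --- already suffices for the diagonalization and hence for the ``in particular'' conclusion; but it is worth stating the assumption explicitly if you want the $\varphi_\alpha$ to have width exactly $h$.
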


Consequently $\mathbb{LOC}_{h, \kappa}$ can never have a $\kappa$-centered dense subset.

Next, regarding closure properties of $\mathbb{LOC}_{h,\kappa}$ , in \cite{BrendleFreidmanMontoya} it was inaccurately stated that $\mathbb{LOC}_{h, \kappa}$ is ${<}\kappa$-closed\footnote{The proof does not include details for the claim.}. 
\begin{example} \label{example: non closed}
    To see a counterexample, suppose $\sigma$ is a partial slalom with domain $\omega$, $\kappa > \omega_1$ and $h:\kappa \to \kappa$ is such that $h(\omega)$ is countable (say the identity). For each $\alpha < \omega_1$ let $F_\alpha$ be a function so that $F_\alpha(\omega) = \alpha$. Then clearly $(\sigma, F_\alpha)$ is a condition and if $\alpha < \beta$ then $(\sigma, F_\alpha) \geq (\sigma, F_\beta)$. However, there is no joint lower bound on these $\omega_1$-many conditions because any such $(\sigma', F)$ would have to have $\sigma' (\omega) \supseteq \omega_1$, which is not possible. This could be avoided by strengthening to a dense set, as we show below. 
\end{example}
Abstracting from the example above, observe that ${<}\kappa$-length sequences of conditions have lower bounds if the stems \textit{all properly extend} one another. Motivated by this, let us call this the {\em strict order}, denoted $\overline{\mathbb{LOC}}_{h, \kappa}$. Concretely $\overline{\mathbb{LOC}}_{h, \kappa}$ is the forcing notion defined to have the same underlining set as $\mathbb{LOC}_{h, \kappa}$ but with $p \leq_{strict} q$ if and only if $p\leq q$ with the normal order and $\sigma^p \supsetneq \sigma^q$. By what we have seen, $\overline{\mathbb{LOC}}_{h, \kappa}$ is ${<}\kappa$-closed and (has a dense subset which is) $\kappa$-linked. It is then not hard to check that these forcing notions are forcing equivalent.
\begin{proposition}
    The identity map is a dense embedding\footnote{See \cite[Def. 7.7]{Kunen}.} of $\overline{\mathbb{LOC}}_{h, \kappa}$ into $\mathbb{LOC}_{h, \kappa}$.
\end{proposition}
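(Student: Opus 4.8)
The plan is to verify directly the two defining conditions of a dense embedding in the sense of \cite[Def. 7.7]{Kunen}: first that $p \leq_{strict} q$ implies $p \leq q$ in $\mathbb{LOC}_{h,\kappa}$ (order preservation), and second that the image of $\overline{\mathbb{LOC}}_{h,\kappa}$ under the identity map is dense in $\mathbb{LOC}_{h,\kappa}$. One also typically checks incompatibility is preserved, but since the underlying sets are literally equal and $\leq_{strict}$ refines $\leq$, incompatibility in the smaller order is automatic; what actually needs an argument is that if $p,q$ are incompatible in $\overline{\mathbb{LOC}}_{h,\kappa}$ then they are incompatible in $\mathbb{LOC}_{h,\kappa}$ — equivalently, that a common $\leq$-lower bound can be refined to a common $\leq_{strict}$-lower bound. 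All three points reduce to one combinatorial observation.

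First I would record order preservation, which is immediate from the definition: $p \leq_{strict} q$ means by fiat that $p \leq q$ together with the extra demand $\sigma^p \supsetneq \sigma^q$, so in particular $p \leq q$ in $\mathbb{LOC}_{h,\kappa}$. For density, let $(\sigma, F) \in \mathbb{LOC}_{h,\kappa}$ be arbitrary, with $\sigma : \gamma \to P(\kappa)$. I would extend the stem by one coordinate: put $\sigma' = \sigma \cup \{(\gamma, F(\gamma))\}$, so $\dom(\sigma') = \gamma+1$ and $\sigma'(\gamma) = F(\gamma)$, which is legitimate since $|F(\gamma)| \leq |h(\gamma)|$ by the definition of a condition, matching exactly the slalom-width requirement $|\sigma'(\gamma)| \leq |h(\gamma)|$. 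Then $(\sigma', F)$ is a condition, and $(\sigma', F) \leq (\sigma, F)$ in $\mathbb{LOC}_{h,\kappa}$: the stem grew, $F$ is unchanged, and on the new coordinate $\gamma \in \dom(\sigma') \setminus \dom(\sigma)$ we need $F(\gamma) \subseteq \sigma'(\gamma)$, which holds by construction (and we may shrink $F$ on this coordinate to $F(\gamma)$ so that $|F(\gamma)| \le |h(\gamma+1)|$ is not even needed — though monotonicity of $h$ gives this for free in the relevant case). Since $\sigma' \supsetneq \sigma$, the condition $(\sigma', F)$ in fact lies in the image of $\overline{\mathbb{LOC}}_{h,\kappa}$ below $(\sigma,F)$, witnessing density.

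The one slightly subtle point — and the only place I'd expect friction — is the incompatibility direction: given $p, q \in \overline{\mathbb{LOC}}_{h,\kappa}$ with a common $\mathbb{LOC}_{h,\kappa}$-lower bound $r$, I need a common $\leq_{strict}$-lower bound. But this follows by the same one-coordinate stem-extension trick applied to $r$: extending $\sigma^r$ by one more coordinate (using $F^r$ on that coordinate) produces $r'$ with $\sigma^{r'} \supsetneq \sigma^r \supseteq \sigma^p, \sigma^q$, and one checks $\sigma^{r'} \supsetneq \sigma^p$ and $\sigma^{r'} \supsetneq \sigma^q$ strictly, so $r' \leq_{strict} p$ and $r' \leq_{strict} q$; the first observation of the proof (that stems of $\leq$-extensions contain the stem of the weaker condition) does the bookkeeping here. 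Hence incompatibility is preserved, and combined with order preservation and density this shows the identity is a dense embedding, as claimed. I would keep the write-up to a few lines, since each verification is routine once the stem-extension move is isolated.
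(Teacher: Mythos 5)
Your argument is correct and is exactly the routine verification the paper leaves implicit (the proposition is stated without proof, following ``it is then not hard to check''): the one-coordinate stem extension $\sigma' = \sigma \cup \{(\gamma, F(\gamma))\}$, legitimate because $|F(\gamma)| \leq |h(\gamma)|$ and $h$ is increasing, handles both density and the refinement of a common $\leq$-lower bound to a common $\leq_{strict}$-lower bound. Nothing is missing.
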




This will be useful in that we can treat $\mathbb{LOC}_{h, \kappa}$ therefore as a ${<}\kappa$-closed forcing notion, thus simplifying some of the arguments below. As such, we will implicitly work with the strict order moving forward.

Let us note that even with the strict order, Example \ref{example: non closed} shows that the forcing is not ${<}\kappa$-directed closed. It is impossible to adjust the definition of $\mathbb{LOC}_{h, \kappa}$ as we did with the strict order to make this forcing notion ${<}\kappa$-directed closed, since there are $\mathsf{ZFC}$ constraints on measurable cardinals which ensure that certain iterations of $\mathbb{LOC}_{h, \kappa}$ will not preserve measurability in general. The exact amount of directness is determined by $h$. The following proposition plays a crucial in showing the optimality of our results from the previous section:
\begin{proposition}\label{Prop: Semi-directedness of the one step}
    Suppose that $A=\{(\sigma_i,F_i)\mid i<h(\lambda)\}$ is a directed system of conditions such that $dom(\sigma_i)<\lambda$ and $\bigcup_{i \in h(\lambda)} \sigma_i$ has domain at least $\lambda$. Then $A$ has a lower bound.
\end{proposition}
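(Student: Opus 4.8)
The idea is to build a single lower bound $(\sigma, F)$ for the directed system $A = \{(\sigma_i, F_i) \mid i < h(\lambda)\}$ by taking $\sigma = \bigcup_{i} \sigma_i$ (on the appropriate domain) and $F$ the pointwise intersection $F(\alpha) = \bigcap_{i < h(\lambda)} F_i(\alpha)$. First I would check that $\sigma$ really is a legitimate stem. Since $A$ is directed, for any two conditions $(\sigma_i, F_i)$ and $(\sigma_j, F_j)$ there is a common lower bound, and in particular $\sigma_i$ and $\sigma_j$ are comparable (one extends the other) and agree wherever both are defined; hence $\sigma := \bigcup_i \sigma_i$ is a function. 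Its domain is some ordinal $\gamma$; by hypothesis $\gamma \geq \lambda$, and since each $\mathrm{dom}(\sigma_i) < \lambda < \kappa$ we have $\gamma \leq \lambda < \kappa$, so in fact $\gamma = \lambda$ (or at worst $\gamma$ is an ordinal with $\lambda \le \gamma \le \lambda$, i.e. $\gamma=\lambda$). The width condition $|\sigma(\alpha)| \le |h(\alpha)|$ is inherited from the $\sigma_i$'s, so $\sigma$ is a stem. The point of the cardinality hypothesis $|A| = h(\lambda)$ is precisely that it matches the side-condition bound $|F(\alpha)| \le |h(\mathrm{dom}(\sigma))| = |h(\lambda)|$ once we collapse the system — this is the technical heart and I will come back to it.

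Next I would verify that $(\sigma, F)$ with $F(\alpha) = \bigcap_{i} F_i(\alpha)$ is a condition and lies below every $(\sigma_i, F_i)$. For the domination clause in Definition~\ref{Def: Loc} we need, for each $i$, that $\sigma \supseteq \sigma_i$ (clear), that $F(\alpha) \subseteq F_i(\alpha)$ for all $\alpha$ (clear, as $F$ is the intersection), and that for every $\alpha \in \mathrm{dom}(\sigma) \setminus \mathrm{dom}(\sigma_i)$ we have $F(\alpha) \subseteq \sigma(\alpha)$. For this last point, fix such an $\alpha$ and fix $i$. Since $A$ is directed, pick $j$ with $\mathrm{dom}(\sigma_j) > \alpha$ (possible because $\bigcup_i \sigma_i$ has domain $\ge \lambda > \alpha$, so some $\sigma_j$ already covers $\alpha$) — more carefully, pick $j$ so that $(\sigma_j, F_j)$ is below both $(\sigma_i, F_i)$ and whichever condition in $A$ has $\alpha$ in the domain of its stem; then $\alpha \in \mathrm{dom}(\sigma_j)$, and since $F_j(\alpha) \subseteq \sigma(\alpha)$... wait, rather: since $\sigma_j(\alpha) = \sigma(\alpha)$ and $F(\alpha) \subseteq F_j(\alpha)$, it suffices that $F_j(\alpha) \subseteq \sigma_j(\alpha) = \sigma(\alpha)$; but that is exactly the content of $(\sigma_j, F_j) \le (\sigma_i, F_i)$ applied at the coordinate $\alpha \in \mathrm{dom}(\sigma_j) \setminus \mathrm{dom}(\sigma_i)$ — hmm, only if $\alpha \notin \mathrm{dom}(\sigma_i)$, which is our standing assumption. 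So $F(\alpha) \subseteq \sigma(\alpha)$, as needed. The remaining clauses ($\sigma$ a stem, and $F(\alpha) \subseteq F_i(\alpha)$) are immediate.

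The one genuinely delicate point is the width bound on $F$, namely $|F(\alpha)| \le |h(\gamma)| = |h(\lambda)|$ for every $\alpha < \kappa$. Since $F(\alpha) = \bigcap_i F_i(\alpha)$ and each $|F_i(\alpha)| \le |h(\mathrm{dom}(\sigma_i))| \le |h(\lambda)|$ (using monotonicity of $h$, which we may assume, and $\mathrm{dom}(\sigma_i) < \lambda$), the intersection has size at most $|h(\lambda)|$ as well — in fact at most the size of any single $F_i(\alpha)$. So this bound is actually easy; what the hypothesis $|A| = h(\lambda)$ is really doing is not needed for $F$ to be a condition but rather it is the natural cardinality at which one can run this argument inside an ultrapower in the applications (the directed system one feeds in has size $j_U(h)(\lambda^{\ast})$ for the relevant $\lambda^{\ast}$). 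I expect the main subtlety to be bookkeeping: making sure that in choosing the witness $j$ above one stays inside the directed set $A$ and that the various "take a common lower bound" invocations are legitimate — this is where directedness (as opposed to mere linearity) of $A$ is used, since we must simultaneously refine past $(\sigma_i, F_i)$ and past a condition whose stem covers $\alpha$. Once that is set up, assembling $(\sigma, F)$ and checking it lies in $\overline{\mathbb{LOC}}_{h,\kappa}$ below every member of $A$ is routine.
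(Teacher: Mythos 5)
Your construction of the second coordinate goes in the wrong direction, and this is fatal. In Definition~\ref{Def: Loc} the order is $(\sigma,F)\leq(\tau,G)$ iff $\tau\subseteq\sigma$, $G(\alpha)\subseteq F(\alpha)$ for all $\alpha$, and the old promises $G(\alpha)$ are honored by the new part of the stem. That is, the \emph{stronger} condition carries the \emph{larger} side function: the $F$'s are promise sets that accumulate as one strengthens. Hence a lower bound of $A$ must satisfy $F_i(\alpha)\subseteq F(\alpha)$ for every $i$, which forces $F(\alpha)=\bigcup_{i<h(\lambda)}F_i(\alpha)$, not the intersection. With $F=\bigcap_i F_i$ the pair $(\sigma,F)$ fails the clause $F_i(\alpha)\subseteq F(\alpha)$ whenever the $F_i$'s differ, so it is not below the members of $A$; the inclusion ``$F(\alpha)\subseteq F_i(\alpha)$'' that you verify is not one of the requirements of the order.

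This misreading is also why you conclude that the hypothesis $|A|=h(\lambda)$ plays no role in making $F$ legal. With the correct (union) definition it is exactly what is needed, and it is the entire point of the proposition: each $F_i$ obeys $|F_i(\alpha)|\leq|h(\mathrm{dom}(\sigma_i))|\leq|h(\lambda)|$ since $\mathrm{dom}(\sigma_i)<\lambda$ and $h$ is increasing, and a union of $h(\lambda)$-many such sets still has size at most $|h(\lambda)|$; because $\bigcup_i\sigma_i$ has domain at least $\lambda$, the new width bound $|h(\mathrm{dom}(\sigma))|\geq|h(\lambda)|$ accommodates this. This is how the paper argues, and it explains the careful balance of the three hypotheses $|A|=h(\lambda)$, $\mathrm{dom}(\sigma_i)<\lambda$, and $\mathrm{dom}(\bigcup_i\sigma_i)\geq\lambda$. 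Your treatment of the stem (directedness gives end-extension, so $\sigma=\bigcup_i\sigma_i$ is a stem) and of the promise clause (for $\alpha\in\mathrm{dom}(\sigma)\setminus\mathrm{dom}(\sigma_i)$ use a condition compatible with $(\sigma_i,F_i)$ whose stem covers $\alpha$ to get $F_i(\alpha)\subseteq\sigma(\alpha)$ --- note the clause to check is this, not $F(\alpha)\subseteq\sigma(\alpha)$) is essentially the paper's argument and survives once the direction of the second coordinate is fixed.
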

\begin{proof}
    Let $\sigma=\bigcup_{i\in h(\lambda)}\sigma_i$ and $F(\alpha)=\bigcup_{i\in h(\lambda)} F_i(\alpha)$. Note that for each $i < h(\lambda)$ we have that for every $\alpha \geq \lambda$ $|F_i (\alpha)| \leq h(|\sigma_i|) < h(\lambda)$ and hence in particular $|F(\alpha)|\leq |h(\alpha)|$, hence $(\sigma,F)$ is a legitimate condition. Given $i< h(\lambda)$ we clearly have $\sigma_i\subseteq\sigma$, and $F_i(\alpha)\subseteq F(\alpha)$. For $\alpha\in \lambda\setminus {\rm dom}(\sigma_i)$ let $\beta$ be such that $\alpha\in {\rm dom}(\sigma_\beta)$. Since $\sigma_\beta,\sigma_i$ are compatible, we will have that $F_i(\alpha)\subseteq \sigma_\beta(\alpha)=\sigma(\beta)$.
\end{proof}

We will need a similar result for the ${<}\kappa$-support iteration of $\mathbb{LOC}_{h,\kappa}$. Concretely, this iteration is the following. For an ordinal $\delta$ we define $\mathbb{L}^h_\delta$ to be the set of all partial functions $p$ with domain a subset of $\delta$ of size ${<}\kappa$ so that for all $\alpha \in {\rm dom}(p)$ recursively we can define $p(\alpha)$ to be a $\mathbb L^h_\alpha$ name for a condition in $\mathbb{LOC}_{h,\kappa}$. Moving forward, we fix this notation to refer to such iterations. The following fact is obvious in the $\omega$ case but requires a bit of work in the uncountable case. 

\begin{lemma}
  Let $\delta$ be an ordinal. There is a dense set of conditions $p$ in the ${<}\kappa$-support iteration of $\mathbb{LOC}_{h,\kappa}$ so that for all $\alpha \in {\rm dom}(p)$ we have that  $p(\alpha)$ is of the form $p(\alpha)=(\check{\sigma}^r_i,\dot{F})$ for some $\sigma \in ([\kappa]^{<\kappa})^{{<}\kappa}$. \label{decide check names of stems}
\end{lemma}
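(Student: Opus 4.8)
The goal is to show that below every condition $p$ in the $<\kappa$-support iteration $\mathbb{L}^h_\delta$ there is a condition $q \leq p$ such that for every $\alpha \in \dom(q)$, the first coordinate of $q(\alpha)$ is forced to be a \emph{check name} for a specific stem $\sigma^q_\alpha \in ([\kappa]^{<\kappa})^{<\kappa}$, i.e.\ $q(\alpha) = (\check\sigma^q_\alpha, \dot F_\alpha)$ for some side-coordinate name $\dot F_\alpha$. The natural approach is induction on $\delta$, using the ${<}\kappa$-closure of the iteration (recall that by the strict-order reformulation $\mathbb{LOC}_{h,\kappa}$, and hence the iteration with $<\kappa$-supports, is $<\kappa$-closed) and a bookkeeping/fusion argument to handle the supports.

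\textbf{Key steps.} First, the successor step: given $p \in \mathbb{L}^h_{\alpha+1}$, work below $p\hook\alpha$ in $\mathbb{L}^h_\alpha$. The name $p(\alpha)$ is an $\mathbb{L}^h_\alpha$-name for a condition $(\sigma,F) \in \mathbb{LOC}_{h,\kappa}$; since a stem is an element of $([\kappa]^{<\kappa})^{<\kappa}$ which has size $\kappa^{<\kappa}=\kappa$, and the length of the stem is an ordinal below $\kappa$, we want to decide the stem outright. However, in general a single condition cannot decide $\sigma$ — what we can do is find $q\hook\alpha \leq p\hook\alpha$ deciding $\dom(\sigma)$ to be some fixed $\gamma<\kappa$ (this is a single ordinal, so one extension suffices) and then, using that $[\kappa]^{<\kappa}$-valued functions on $\gamma$ form a set of size $\kappa$ and $\mathbb{L}^h_\alpha$ is $\kappa^+$-c.c., we cannot hope to decide $\sigma$ by one condition. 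This is the main obstacle and it is why the lemma is phrased as producing a \emph{dense set}: the right move is to \emph{refine} — extend $q\hook\alpha$ further to decide the stem completely. Here one uses that the set of conditions in $\mathbb{LOC}_{h,\kappa}$ with a fixed stem is dense above any condition (one simply reads off the current stem), so by genericity $q\hook\alpha$ can be taken to force $\sigma$ to equal some particular $\check\sigma$; concretely, let $q\hook\alpha$ be any condition forcing $p(\alpha) = (\check\sigma, \dot F)$ for a fixed $\sigma$ and name $\dot F$ — such exists by density since "the stem is $\check\sigma$ for some $\sigma$" is a dense condition and then $q\hook\alpha$ picks a value. Set $q(\alpha) = (\check\sigma, \dot F)$.

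\textbf{Limit step.} For $\delta$ limit with $\cf(\delta) \geq \kappa$, a condition has support bounded below $\delta$, so it lands in some $\mathbb{L}^h_\alpha$ and we appeal to induction. For $\cf(\delta) < \kappa$ (in particular $\cf(\delta) = \omega$ or any regular cardinal $<\kappa$), we run a fusion: fix a cofinal sequence $\langle \alpha_\xi : \xi < \cf(\delta)\rangle$ in $\delta$, and build a decreasing (in the strict order, hence with lower bounds at limits by $<\kappa$-closure) sequence $\langle p_\xi \rangle$ with $p_0 = p$, where at stage $\xi$ we apply the successor-step argument iteratively along $[\alpha_\xi, \alpha_{\xi+1}) \cap \dom(p)$ — this is a set of size $<\kappa$, so we can enumerate it and deal with each coordinate, using $<\kappa$-closure to take a lower bound after $<\kappa$ many steps — to arrange that every coordinate in $\dom(p_\xi) \cap \alpha_{\xi+1}$ has a check stem that is never disturbed afterwards (since later extensions are in the strict order and only extend the side-coordinate names $\dot F$, not the stems already fixed — care is needed here: extending below in $\mathbb{LOC}_{h,\kappa}$ does extend the stem, so one must instead argue that the \emph{new} stem entries are handled at later fusion stages, and the fully extended condition $q = \bigwedge_\xi p_\xi$ then has, on each coordinate $\beta \in \dom(q)$, a stem which is the union of the check-stems committed to at all stages, hence itself a check name for a fixed element of $([\kappa]^{<\kappa})^{<\kappa}$). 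The $<\kappa$-closure guarantees $q = \bigwedge_{\xi<\cf(\delta)}p_\xi$ exists, and $q \leq p$ with the desired property, completing the induction.

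\textbf{Main obstacle.} The delicate point is the interaction between "deciding the stem to be a check name" and "extending a condition in $\mathbb{LOC}_{h,\kappa}$, which lengthens the stem." One cannot decide the \emph{final} generic slalom, only commit finite (i.e.\ $<\kappa$) approximations. The correct formulation — and the one the lemma intends — is that we build $q$ so that its stem at each coordinate is \emph{itself} an element of the ground model (a check name), not a name; this is achieved by committing, cofinally often in the fusion, longer and longer ground-model stems, so that in the limit the stem of $q(\beta)$ is the union of a $<\kappa$-length increasing sequence of ground-model stems, which (since $\kappa$ is inaccessible and $([\kappa]^{<\kappa})^{<\kappa} = \kappa^{<\kappa}=\kappa$) is again a ground-model object. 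Getting the bookkeeping right so that every coordinate entering the support gets attended to cofinally is the technical heart; everything else is routine $<\kappa$-closed iteration machinery.
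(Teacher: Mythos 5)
Your proposal is correct and follows essentially the same route as the paper: induction on $\delta$, with the successor step handled by deciding the stem name (which, since the iteration up to $\alpha$ is ${<}\kappa$-closed in the strict order and a stem is a ${<}\kappa$-sized ground-model object, is forced to equal a check name below densely many conditions), and the limit step by a fusion along a cofinal sequence of length ${\rm cf}(\delta)<\kappa$ in which stems are committed as check names cofinally often, so that the limit stems are unions of check names and hence check names. The only spot to tighten is the successor step, where the existence of a single condition below $p\hook\alpha$ deciding $\sigma$ outright is exactly what ${<}\kappa$-closure gives you (no new elements of $([\kappa]^{<\kappa})^{<\kappa}$ are added), rather than something obstructed by the $\kappa^+$-c.c.\ as your intermediate remark suggests.
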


\begin{proof}
     By replacing the standard order by the strict order we obtain that $\mathbb L_\delta^h$ is forcing equivalent to a ${<}\kappa$-closed forcing notion. Similarly for each $\alpha \leq \delta$ let $\dot{G}_\alpha$ denote the canonical name for the $\mathbb L^h_\alpha$ generic and, in the extension let this object be called $G_\alpha$. Note that for all $\alpha < \beta \leq \delta$ we have that $G_\alpha = G_\beta \cap \P_\alpha$. We make similar notational choices regarding things like $\mathbb L^h_{\alpha, \beta}$, the tail of the iteration. Note that by the definability of the forcing notion, for any $\alpha < \delta$ we have that $\mathbb L^h_\alpha$ forces the tail to be simply the ${<}\kappa$-supported iteration of $\mathbb{LOC}_{h, \kappa}$ indexed by $[\alpha, \delta)$ as computed in $V[G_\alpha]$. We will prove the lemma by induction on $\delta$. 
    
    \noindent \underline{Case 1}: $\delta = \xi + 1$ is a successor ordinal. Thus $\mathbb L^h_\delta = \mathbb L^h_\xi * \dot{\mathbb{LOC}}_{h, \kappa}$. 
    By the inductive assumption moreover we can pass to a dense set of $p \in \mathbb L^h_\alpha$ which has the form described in the statement of the lemma. Moreover, by closure every name for a stem is forced to be a ground model sequence and, by strengthening further we obtain the result in this case.

    \noindent \underline{Case 2}: $\delta$ is a limit ordinal. Let $p \in \mathbb L^h_\delta$. If ${\rm cf}(\delta) \geq \kappa$ then the support of $p$ is bounded in $\delta$ and we can apply our inductive hypothesis. Therefore we can assume without loss of generality that ${\rm cf}(\delta) < \kappa$ and ${\rm supp}(p)$ is cofinal in $\delta$. Let us fix that the cofinality of $\delta$ is some $\lambda < \kappa$ and choose a strictly increasing, cofinal sequence $\{\delta_i\mid i \in \lambda\} \subseteq \delta$. Applying our inductive hypothesis, we can recursively construct a sequence $\{p_i\mid i \in \lambda\}$ as follows:
    \begin{enumerate}
        \item $p = p_0$
        \item $p_{i+1} \leq p_i$ and every stem appearing in $p_{i+1} \hook \delta_i$ is a check name.
        \item If $\xi$ is a limit ordinal then $p_\xi$ is defined as follows. First, ${\rm supp}(p_\xi) = \bigcup_{i < \xi} {\rm supp}(p_i)$. Next, for each $\alpha \in {\rm supp}(p_\xi)$ define $p_\xi(\alpha)$ as the pair of names for the union of the stems of $p_i(\alpha)$ for $i < \xi$ and the name for the function mapping each $\zeta$ above the supremum of the the union of the stems to the union of the sets $\dot{F}_i^\alpha(\zeta)$ for $\dot{F}_i^\alpha$ the name for the second coordinate of $p_i(\alpha)$.
    \end{enumerate}

    Unwinding the third item note that $p_\xi(\alpha)$ is simply the name for lower bound on the $p_i(\alpha)$'s which exists by ${<}\kappa$-closure (of the strict order). Note moreover that, by applying item (2), for all $\i < \xi$ in this situation we have that $p_\xi \hook \delta_i$ has all of its stems decided as check names since the union of check names is literally the check name of the union. A straightforward verification now shows that $p_\lambda$ (in the parlance above) is the desired condition.
    \end{proof}

From now on we will treat iterations like this as restricted to this dense subset. 

\begin{proposition}\label{Prop: Semi-directedness of the iteration}
    Let $\lambda<\kappa$ and $\delta$ be any ordinal and $A=\{p_i\mid i<h(\lambda)\}$ is a directed system of conditions in the dense set of conditions from Lemma \ref{decide check names of stems} of $\mathbb L^h_\delta$ such that for every $\alpha \in \bigcup_{i < h(\lambda)}{\rm supp}(p_i)$ it is forced that the supremum of the stems at coordinate $\alpha$ has length at least $\lambda$. Then $A$ has a lower bound.
\end{proposition}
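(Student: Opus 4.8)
The plan is to build the lower bound $p$ by recursion on the coordinates of the iteration, carrying out at each coordinate in the support the one-step amalgamation of Proposition~\ref{Prop: Semi-directedness of the one step}. Put $S=\bigcup_{i<h(\lambda)}{\rm supp}(p_i)$. Since $h(\lambda)<\kappa$, $\kappa$ is regular, and each ${\rm supp}(p_i)$ has size $<\kappa$, we have $|S|<\kappa$, so any partial function built along the recursion with domain contained in $S$ will be a legitimate condition of $\mathbb L^h_\delta$. By recursion on $\alpha\le\delta$ I define $p\hook\alpha\in\mathbb L^h_\alpha$, coherently (so $p\hook\beta=(p\hook\alpha)\hook\beta$ for $\beta\le\alpha$), with ${\rm supp}(p\hook\alpha)\subseteq S\cap\alpha$ and $p\hook\alpha\le p_i\hook\alpha$ for every $i<h(\lambda)$; then $p:=p\hook\delta$ is the required lower bound. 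Limit stages cost nothing: set $p\hook\alpha=\bigcup_{\beta<\alpha}p\hook\beta$, which has support inside $S\cap\alpha$ and lies below every $p_i\hook\alpha$ because this was arranged below each coordinate $\beta<\alpha$ at stage $\beta+1$; at a coordinate $\alpha\notin S$ let $p(\alpha)$ be the maximum condition of $\mathbb{LOC}_{h,\kappa}$.

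The work is at a coordinate $\alpha\in S$. Passing to a generic $G_\alpha$ for $\mathbb L^h_\alpha$ with $p\hook\alpha\in G_\alpha$: since $p\hook\alpha\le p_i\hook\alpha$ we have $p_i\hook\alpha\in G_\alpha$ for every $i$, so for $i$ with $\alpha\in{\rm supp}(p_i)$ the object $p_i(\alpha)[G_\alpha]$ is a genuine condition of $\mathbb{LOC}_{h,\kappa}$ in $V[G_\alpha]$, and because $p_i$ lies in the dense set of Lemma~\ref{decide check names of stems} its stem is the check name of a fixed ground-model sequence $\sigma^i_\alpha$, say $p_i(\alpha)=(\check\sigma^i_\alpha,\dot F^i_\alpha)$. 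Directedness of $A$ makes this family directed in $\mathbb{LOC}_{h,\kappa}$: given $i,j$, take $k<h(\lambda)$ with $p_k\le p_i,p_j$; then $\alpha\in{\rm supp}(p_k)$ and, as $p\hook\alpha\le p_k\hook\alpha$, $p\hook\alpha$ forces $p_k(\alpha)\le p_i(\alpha),p_j(\alpha)$, and $p_k(\alpha)[G_\alpha]$ belongs to the family. Hence the $\sigma^i_\alpha$ are pairwise $\subseteq$-comparable, $\sigma_\alpha:=\bigcup_i\sigma^i_\alpha$ is a sequence lying in $V$ and independent of $G_\alpha$, and by hypothesis $\dom(\sigma_\alpha)\ge\lambda$. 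Let $F_\alpha(\zeta)=\bigcup_i\dot F^i_\alpha(\zeta)[G_\alpha]$. The computation in the proof of Proposition~\ref{Prop: Semi-directedness of the one step} now shows $(\sigma_\alpha,F_\alpha)$ is a condition of $\mathbb{LOC}_{h,\kappa}$ below every $p_i(\alpha)[G_\alpha]$: it is legitimate because $|F_\alpha(\zeta)|\le|h(\lambda)|\cdot|h(\dom(\sigma_\alpha))|=|h(\dom(\sigma_\alpha))|$, using $\lambda\le\dom(\sigma_\alpha)$ and that $h$ is increasing; and for $\zeta\in\dom(\sigma_\alpha)\setminus\dom(\sigma^i_\alpha)$, choosing $j$ with $\zeta\in\dom(\sigma^j_\alpha)$ and exploiting compatibility of $p_i(\alpha)[G_\alpha]$ with $p_j(\alpha)[G_\alpha]$ gives $\dot F^i_\alpha(\zeta)[G_\alpha]\subseteq\sigma^j_\alpha(\zeta)=\sigma_\alpha(\zeta)$, which is the last clause of $\le$ in $\mathbb{LOC}_{h,\kappa}$. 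As $\sigma_\alpha$ does not depend on $G_\alpha$, I set $p(\alpha)=(\check\sigma_\alpha,\dot F_\alpha)$ with $\dot F_\alpha$ the canonical $\mathbb L^h_\alpha$-name for $\zeta\mapsto\bigcup_i\dot F^i_\alpha(\zeta)$; this lies in the dense set of Lemma~\ref{decide check names of stems}, $p\hook\alpha$ forces $p(\alpha)\le p_i(\alpha)$ for all $i$, and so $p\hook(\alpha+1)\le p_i\hook(\alpha+1)$, which closes the recursion step.

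The only genuine difficulty I anticipate is bookkeeping: keeping the $p\hook\alpha$ coherent along the recursion, and verifying that, after passing to $G_\alpha$, the conditions $p_i(\alpha)[G_\alpha]$ form a directed system of $\mathbb{LOC}_{h,\kappa}$ whose stems --- being \emph{check} names --- assemble into one ground-model sequence $\sigma_\alpha$ of length $\ge\lambda$ regardless of $G_\alpha$. This generic-independence of $\sigma_\alpha$ is exactly what lets $p(\alpha)$ be presented with a check-name stem, hence keeps $p$ inside the dense set of Lemma~\ref{decide check names of stems}. No idea beyond Proposition~\ref{Prop: Semi-directedness of the one step} is needed: its content here is simply that the bound $|h(\dom(\sigma^i_\alpha))|\le|h(\dom(\sigma_\alpha))|$ required for $(\sigma_\alpha,F_\alpha)$ to be legitimate survives because the amalgamated stem at coordinate $\alpha$ has length at least $\lambda$.
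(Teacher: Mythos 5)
Your proof is correct and follows exactly the route the paper intends: the paper's own proof is a two-line remark that one argues coordinate-wise as in Proposition~\ref{Prop: Semi-directedness of the one step}, the check-name stems guaranteeing that the amalgamated stem at each coordinate is a ground-model object, and your recursion together with the cardinality estimate $|F_\alpha(\zeta)|\le|h(\lambda)|\cdot|h(\dom(\sigma_\alpha))|=|h(\dom(\sigma_\alpha))|$ is a faithful (and welcome) elaboration of that sketch.
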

\begin{proof}
   Due to the dense set we restricted to, we can argue exactly as in Lemma \ref{decide check names of stems} coordinate-wise. There is no issue with the iteration, as everything is decided from the stems. 
\end{proof}

\subsection{Consistency Results and Large Cardinal Preservation} \label{cons2}
Let us apply use the iteration of $\mathbb{LOC}_{\kappa,h}$ to show that Corollary~\ref{cor: trivialities at a measurable} is optimal in the sense that the bounds may fail when $h \neq \id$. The difficulty is of course to preserve large cardinal notions after iterating $\mathbb{LOC}_{h,\kappa}$.  First notice that certain small large cardinals are easily seen to be preserved by iterating $\mathbb{LOC}_\kappa$. For example, inaccessibles or Mahlo cardinals, and therefore it is possible to alter the values of $\mathfrak{b}_\kappa(\in^*),\mathfrak{d}_\kappa(\in^*)$ and preserve those properties.

Moving a bit higher, it is alreacy unclear if weakly compact cardinals is preserved by the localization forcing. However, there is an easy fix here by invoking the work of Hamkins and Johnstone from \cite{Hamkins2010-HAMISU}.
   Recall the following definition due to Villaveces \cite{Villaveces98}.    \begin{definition}
         An inaccessible cardinal $\kappa$ is strongly unfoldable if for every ordinal $\theta$  and every transitive set $M$ of size $\kappa$ with
$\kappa\in M$, $M\models \mathsf{ZFC}^-$ and $M^{<\kappa}\subseteq M$ there is a transitive set $N$ and an elementary
embedding $j : M \to N$ with critical point $\kappa$ such that $\theta\leq j(\kappa)$ and $V_\theta\subseteq N$.
     \end{definition}
Intuitively, strongly unfoldable cardinals are to strong cardinals what weakly compact cardinals are to measurable cardinals. Villaveces \cite{Villaveces98} showed that strongly unfoldable cardinals are totally indescribable and in particular, $\Pi^1_1$-indescribable (i.e. weakly compact).
\begin{theorem}[Hamkins-Johnstone, see \cite{Hamkins2010-HAMISU}] \label{hamkinsindestructibility}
    If $\kappa$ is strongly unfoldable there is a generic extension in which it is still strongly unfoldable and moreover remains so in any further ${<}\kappa$-closed, $\kappa^+$-preserving forcing extension.
\end{theorem}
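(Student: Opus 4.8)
The plan is to prove this by a \emph{lottery preparation}, following Hamkins and Johnstone \cite{Hamkins2010-HAMISU} (and the general method of Hamkins's lottery preparation). First I would fix a fast function $f:\kappa\to\kappa$ — say, added by a preliminary fast-function forcing, which is ${<}\kappa$-closed, hence $\kappa^+$-preserving, and keeps $\kappa$ strongly unfoldable — and then let $\mathbb{P}=\mathbb{P}_\kappa$ be the Easton-support iteration of length $\kappa$ which at each stage $\gamma\in\dom(f)$ forces with the lottery sum of all ${<}\gamma$-closed partial orders lying in $H_{f(\gamma)}$ of the stage-$\gamma$ model, and is trivial off $\dom(f)$. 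The standard bookkeeping for the lottery preparation gives $|\mathbb{P}|=\kappa$, that $\mathbb{P}$ is $\kappa$-c.c., and that for each $\alpha<\kappa$ the quotient $\mathbb{P}/\mathbb{P}_\alpha$ is forced to be ${<}\gamma$-closed for $\gamma=\min(\dom(f)\setminus\alpha)$. Letting $G$ be $\mathbb{P}$-generic, I would then prove directly that strong unfoldability of $\kappa$ is indestructible by further ${<}\kappa$-closed $\kappa^+$-preserving forcing over $V[G]$; taking that further forcing trivial recovers strong unfoldability in $V[G]$ itself.

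For the indestructibility, I would work in $V[G]$, fix a ${<}\kappa$-closed $\kappa^+$-preserving $\mathbb{Q}$ and a generic $H\subseteq\mathbb{Q}$, and be given a target ordinal together with a transitive $M\in V[G][H]$ as in the definition of strong unfoldability. First I would enlarge the target to a $\beth$-fixed point $\theta$ above $|\mathrm{TC}(\mathbb{Q})|$. Then, using that $\mathbb{Q}$ is ${<}\kappa$-closed over $V[G]$ (so adds no new ${<}\kappa$-sequences from $V[G]$) and that $\kappa^+$ is preserved, I would produce a transitive $V$-model $\bar M\models\mathsf{ZFC}^-$ with $|\bar M|=\kappa$, ${}^{<\kappa}\bar M\subseteq\bar M$, $\kappa\in\bar M$, with $\mathbb{P},f,\dot{\mathbb{Q}}\in\bar M$ and a $\mathbb{P}\ast\dot{\mathbb{Q}}$-name $\dot M\in\bar M$ with $\dot M^{G\ast H}=M$, so that $M\in\bar M[G][H]$ and $\bar M[G][H]$ is itself a transitive $\mathsf{ZFC}^-$-model of size $\kappa$ closed under ${<}\kappa$-sequences in $V[G][H]$. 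A standard further reduction (enlarging $M$ so that it contains $V_\kappa^{V[G][H]}$) lets me assume it suffices to produce the embedding on all of $\bar M[G][H]$.

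Next I would lift the embedding. Apply strong unfoldability in $V$ to $\bar M$ with a large enough target to obtain a transitive $\bar N$ and $j:\bar M\to\bar N$ with $\mathrm{crit}(j)=\kappa$, $j(\kappa)\geq\theta$, $V_\theta^V\subseteq\bar N$ and, taking $\bar N$ minimal, $|\bar N|=|V_\theta|=\theta$. Using elementarity and the defining property of the fast function, I would also arrange that $\kappa\in\dom(j(f))$, that $j(f)(\kappa)$ exceeds the rank of $\mathbb{Q}$, and that the least element $\gamma_0$ of $\dom(j(f))$ above $\kappa$ exceeds $\theta$; then in $\bar N$ the iteration $j(\mathbb{P})$ factors as $\mathbb{P}\ast\dot{\mathbb{R}}_\kappa\ast\dot{\mathbb{P}}_{\mathrm{tail}}$ with $\dot{\mathbb{R}}_\kappa$ naming the stage-$\kappa$ lottery and $\dot{\mathbb{P}}_{\mathrm{tail}}$ naming a ${<}\gamma_0$-closed iteration. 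Since $\mathbb{Q}$ lies in $H_{j(f)(\kappa)}$ of $\bar N[G]$ and is ${<}\kappa$-closed there, I would take the stage-$\kappa$ generic to opt for $\mathbb{Q}$, so that $G\ast H$ is $\bar N$-generic for $\mathbb{P}\ast\dot{\mathbb{R}}_\kappa$; and since $\mathbb{P}_{\mathrm{tail}}$ has at most $|\bar N[G][H]|\leq\theta<\gamma_0$ dense subsets in $\bar N[G][H]$, I would use ${<}\gamma_0$-closure to build, in $V[G][H]$, a filter $G_{\mathrm{tail}}$ meeting all of them. Then $j(G):=G\ast H\ast G_{\mathrm{tail}}$ is $\bar N$-generic for $j(\mathbb{P})$, and since $\mathbb{P}\subseteq V_\kappa$ we have $j\restriction\mathbb{P}=\mathrm{id}$, so the pointwise image of $G$ is contained in $j(G)$; by the usual lifting criterion $j$ extends to $\hat j:\bar M[G][H]\to\bar N[j(G)]$ with $\hat j(\kappa)=j(\kappa)\geq\theta$ and $V_\theta^{V[G][H]}=(V_\theta^V)[G][H]\subseteq\bar N[j(G)]$. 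Restricting $\hat j$ to $M$ then witnesses strong unfoldability of $\kappa$ in $V[G][H]$.

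The hard part, I expect, is juggling two opposing requirements on the image embedding: on the one hand $\bar N$ must be small ($|\bar N|\leq\theta$) so the tail forcing has few enough dense subsets to catch, and on the other the image fast function must have a long gap above $\kappa$ ($\gamma_0>\theta$) so that the tail is closed enough to catch them, all while keeping $\mathbb{Q}$ — which is only assumed ${<}\kappa$-closed, not ${<}\kappa$-directed-closed — among the posets offered on the stage-$\kappa$ lottery (this is exactly why the hypothesis is ${<}\kappa$-closed rather than merely $\kappa$-distributive). Arranging a fast function (or an equivalent $\Sigma_2$-reflecting pattern) flexible enough to deliver all of this at once, and checking that ${<}\gamma_0$-closure of the tail survives the preliminary forcing, is the technical core; the remaining ingredients — the chain-condition and closure analysis of $\mathbb{P}$, preservation of strong unfoldability by the fast-function forcing, and the reduction to $M=\bar M[G][H]$ — are routine.
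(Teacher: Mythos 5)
First, a point of orientation: the paper does not prove this statement at all --- it is quoted as a black box from Hamkins--Johnstone, so the only fair comparison is with their actual argument. Your skeleton (fast-function forcing followed by the lottery preparation, absorbing $\mathbb{Q}$ into the stage-$\kappa$ lottery of $j(\mathbb{P})$, then lifting through the tail) is indeed the right skeleton, but the proposal glosses over precisely the step that makes this theorem hard and that distinguishes it from the Laver-style argument for supercompact cardinals. The tail-forcing step fails as written: you claim that since $\mathbb{P}_{\mathrm{tail}}$ has at most $\theta<\gamma_0$ dense subsets in $\bar N[G][H]$ and is ${<}\gamma_0$-closed \emph{there}, you can diagonalize in $V[G][H]$. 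But the closure is a statement internal to $\bar N[G][H]$, a model of cardinality $\theta$; it supplies lower bounds only for descending sequences that are \emph{elements} of $\bar N[G][H]$. Your descending sequence is built externally, and at limit stages of length $\geq\kappa$ there is no reason for it to lie in $\bar N[G][H]$ (that model is closed under ${<}\kappa$-sequences at best --- it certainly cannot contain all of its own $\theta$-sequences). In the supercompact setting this is rescued by ${}^{\theta}N\subseteq N$, which is unavailable here since $|\bar N|=\theta$. This is exactly the obstruction that the Hamkins--Johnstone paper exists to overcome, and it cannot be waved through.

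Two further symptoms confirm the gap. First, you never produce a generic for $j(\dot{\mathbb{Q}})$: your $j(G)=G\ast H\ast G_{\mathrm{tail}}$ is (at best) generic for $j(\mathbb{P})$, so the lift you obtain has domain $\bar M[G]$, not $\bar M[G][H]$; extending through $j(\mathbb{Q})$ is itself problematic because $\mathbb{Q}$ is only ${<}\kappa$-closed (no master condition for the directed set $j''H$, which may have size $\geq\theta=|\bar N|$) and the same diagonalization obstacle recurs. Second, your argument nowhere uses the hypothesis that $\mathbb{Q}$ preserves $\kappa^+$, yet Hamkins and Johnstone show this hypothesis cannot be dropped --- strong unfoldability is always destroyed by some ${<}\kappa$-closed ($\kappa^+$-collapsing) forcing --- so any purported proof that never invokes $\kappa^+$-preservation must be proving a false statement somewhere. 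The $\kappa^+$-preservation enters precisely in the machinery for handling the tail and $j(\dot{\mathbb{Q}})$ that your sketch omits; to repair the proposal you would need to reproduce that part of their argument rather than the routine lottery bookkeeping, which is the part you do have right.
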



     Applying an iteration of $\mathbb{LOC}_\kappa$ to the model above we get following immediately. 
     \begin{corollary}
         Relative to the consistency of a strongly unfoldable cardinal the following are consistent. \begin{enumerate}
             
         \item There is a weakly compact cardinal
         $\kappa$ so that $\mathfrak{b}_\kappa(\in^*)>\kappa$.
         \item There is a weakly compact cardinal
         $\kappa$ so that $\mathfrak{d}_\kappa(\in^*)<2^\kappa$.
         \end{enumerate}
     \end{corollary}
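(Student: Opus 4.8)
The plan is to combine the forcing iterations used in the Proposition above (on the consistency of prescribed values for $\mfb_h(\in^*)$ and $\mfd_h(\in^*)$, specialized to $h=\id$) with the Hamkins--Johnstone indestructibility of Theorem~\ref{hamkinsindestructibility}. First I would start with $\kappa$ strongly unfoldable and force to the model $V_0$ furnished by Theorem~\ref{hamkinsindestructibility}, in which $\kappa$ is strongly unfoldable and moreover stays so after any further ${<}\kappa$-closed, $\kappa^+$-preserving forcing; since strong unfoldability implies weak compactness (Villaveces~\cite{Villaveces98}), it then suffices to carry out the relevant $\mathbb{LOC}$-iterations over $V_0$ using forcings of exactly that type.

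For $(1)$, I would work over $V_0$, fix a regular cardinal $\lambda$ with $\kappa<\lambda$ (taking $\lambda>\kappa^+$ if one wants the strict inequality $\mfb_\kappa(\in^*)>\kappa^+$ mentioned above), and force with the ${<}\kappa$-support iteration of $\mathbb{LOC}_{\id,\kappa}$ of length $\lambda$, exactly as in item $(1)$ of that Proposition. The two facts to check are that this iteration is ${<}\kappa$-closed --- which uses the passage to the strict order $\overline{\mathbb{LOC}}_{\id,\kappa}$ discussed after Example~\ref{example: non closed}, together with Lemma~\ref{decide check names of stems} and Proposition~\ref{Prop: Semi-directedness of the iteration} at limit stages --- and that it is $\kappa^+$-c.c., which follows from the $\kappa$-linked dense subset of Proposition~\ref{basicfactsaboutLOC} plus the standard $\Delta$-system bookkeeping for ${<}\kappa$-support iterations under $\kappa^{<\kappa}=\kappa$ (cf.\ \cite[Proposition~52]{BrendleFreidmanMontoya}). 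Granting these, the iteration is $\kappa^+$-preserving, so $\kappa$ remains strongly unfoldable and in particular weakly compact in the extension, while the Proposition gives $\mfb_\kappa(\in^*)=2^\kappa=\lambda>\kappa$.

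For $(2)$, I would first arrange a large continuum at $\kappa$: over $V_0$, choose $\lambda<\mu$ with $\lambda$ regular, $\lambda>\kappa^+$, and $\cf(\mu)>\kappa$, and force with $\Add(\kappa,\mu)$, which is ${<}\kappa$-closed and (using $\kappa^{<\kappa}=\kappa$) $\kappa^+$-c.c., hence $\kappa^+$-preserving and harmless for strong unfoldability, to reach a model with $2^\kappa=\mu$. I would then iterate $\mathbb{LOC}_{\id,\kappa}$ with ${<}\kappa$-support for $\lambda$ steps as in item $(2)$ of the Proposition. Exactly as in $(1)$, this composite forcing is ${<}\kappa$-closed and $\kappa^+$-c.c., so $\kappa$ stays strongly unfoldable, hence weakly compact, and the Proposition yields $\mfb_\kappa(\in^*)=\mfd_\kappa(\in^*)=\lambda<\mu=2^\kappa$, witnessing $\mfd_\kappa(\in^*)<2^\kappa$.

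The main thing to verify --- and the only real obstacle --- is that the long ${<}\kappa$-support iteration of $\mathbb{LOC}_{\id,\kappa}$ (composed with $\Add(\kappa,\mu)$ in case $(2)$) is genuinely $\kappa^+$-preserving: ${<}\kappa$-closure is not automatic (the naive order is not even ${<}\kappa$-closed, by Example~\ref{example: non closed}), so one must use the strict-order reformulation, and the $\kappa^+$-chain condition of the iteration requires the $\kappa$-linked dense subset together with a chain-condition argument at limit stages. This is precisely the hypothesis under which Theorem~\ref{hamkinsindestructibility} applies, so once it is established the corollary is immediate; none of this verification is new relative to \cite[Proposition~52]{BrendleFreidmanMontoya}.
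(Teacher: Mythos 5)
Your proposal is correct and follows the paper's intended argument exactly: prepare via the Hamkins--Johnstone theorem, then run the ${<}\kappa$-support iteration of $\mathbb{LOC}_{\id,\kappa}$ (preceded by $\Add(\kappa,\mu)$ for item (2)), using the strict order for ${<}\kappa$-closure and the $\kappa$-linked dense subset for the $\kappa^+$-c.c., so that strong unfoldability and hence weak compactness survive. The paper states this as immediate; you have simply supplied the routine verifications, including the sensible observation that the inequality in item (1) should really be read as $\mfb_\kappa(\in^*)>\kappa^+$ since $\mfb_\kappa(\in^*)\geq\kappa^+$ holds outright.
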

     
     Note that here it is important that we can force with the strict order, as $\kappa$-strategically closed forcing notions alone might kill the weak compactness of a cardinal. 
     \begin{question}
         {} \ 
         \begin{enumerate}
             \item In the gap between strongly unfoldable and completely ineffable cardinals, is  $\mathfrak{b}_\kappa(\in^*)=\kappa^+$ always true? e.g. ineffable cardinals or subtle cardinals?
             \item Can a weakly compact cardinal be the first place where $\mathfrak{b}_\kappa(\in^*)>\kappa^+$?
             \item What is the consistency strength of the statement ``$\kappa$ is weakly compact and $\mathfrak{b}_\kappa(\in^*)>\kappa^+$"?
         \end{enumerate}
     \end{question}
     Next, let us move to higher cardinals, in which we know that $\mathfrak{b}_\kappa(\in^*)$ and $\mathfrak{d}_\kappa(\in^*)$ cannot be altered. The question here is about the cardinals $\mathfrak{b}_h(\in^*)$ and $\mathfrak{d}_h(\in^*)$ where $h\neq\id$. Recall that we denote by $\id^{+\xi}$ the function mapping $\alpha$ to the $\xi^\text{th}$-cardinal past $\alpha$, i.e. $\alpha^{+\xi}$. Also $2^{\id}$ denotes $\alpha\mapsto 2^\alpha$. 
\begin{theorem}\label{thm: preserving measurabiblity}
    Let $\kappa$ be a supercompact cardinal and assume $GCH$. Then there is a forcing extension $V_0$, in which $GCH$ holds above $\kappa$, and in $V_0$, the supercompactness of $\kappa$ is indestructible under $\mathbb{L}^{\id^{+\xi}}_\lambda$, where $\xi$ is below the first inaccessible above $\kappa$ and $\lambda$ is any ordinal below $\kappa^{+\xi+1}$.
\end{theorem}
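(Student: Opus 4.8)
The plan is to use the standard Laver-style preparation forcing to make the supercompactness of $\kappa$ indestructible under the relevant iterations, but a single Laver function will not quite suffice because $\mathbb{L}^{\id^{+\xi}}_\lambda$ is not ${<}\kappa$-directed closed (Example~\ref{example: non closed}); rather, by Proposition~\ref{Prop: Semi-directedness of the iteration}, it is ``directed enough'' — directed systems of size $h(\lambda')=(\lambda')^{+\xi}$ whose stems have already grown past $\lambda'$ have lower bounds. So the strategy is to first do a preparation that makes $\kappa$ supercompact with a ``degree of directedness'' matching the forcing, and then verify the lifting argument using precisely the semi-directedness lemma in place of full directed closure.

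Concretely, I would proceed as follows. First, fix a target $\lambda < \kappa^{+\xi+1}$ and a supercompactness measure witnessing $\theta$-supercompactness for $\theta$ large (e.g. $\theta = |\mathbb{L}^{\id^{+\xi}}_\lambda|^+$ or bigger), with embedding $j : V_0 \to M$, $\mathrm{crit}(j) = \kappa$, ${}^\theta M \subseteq M$, $j(\kappa) > \theta$. The preparation $V_0$ is obtained from $V$ by a Laver-type iteration below $\kappa$ (using GCH and supercompactness of $\kappa$ in $V$ to run the usual Laver argument), designed so that in $V_0$ the supercompactness of $\kappa$ is indestructible under ${<}\kappa$-strategically-closed forcing of the appropriate type — more precisely, forcings which have the Proposition~\ref{Prop: Semi-directedness of the iteration}-style semi-directedness property. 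One then shows $j(\mathbb{L}^{\id^{+\xi}}_\lambda)$, as computed in $M$, factors as $\mathbb{L}^{\id^{+\xi}}_\lambda * \dot{\mathbb{L}}_{\mathrm{tail}}$ where the tail is ${<}\kappa$-closed in $V_0[G]$ (here $G$ is $\mathbb{L}^{\id^{+\xi}}_\lambda$-generic). Since ${}^\theta M \subseteq M$ and $\lambda < \kappa^{+\xi+1} < j(\kappa)$, the iteration $\mathbb{L}^{\id^{+\xi}}_\lambda$ is the same whether computed in $V_0$ or in $M$, so $G$ is also $M$-generic and we may form $M[G]$.

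The heart is then the master-condition construction. In $M[G]$ one builds a lower bound $q$ for $j"G$ — equivalently a condition in the tail forcing $\mathbb{L}_{\mathrm{tail}}$ below which every $j(p)$ for $p \in G$ lies. The point is that $j"G$ is a directed system of size ${\leq}|\mathbb{L}^{\id^{+\xi}}_\lambda| \leq \theta$, and its elements have stems at every coordinate whose sup is, under $j$, at least $\lambda$ — so this is exactly the hypothesis of Proposition~\ref{Prop: Semi-directedness of the iteration} (applied inside $M[G]$, which is closed under $\theta$-sequences from $V_0[G]$ so that $j"G \in M[G]$ and the lemma applies to it). This yields the master condition $q$; then one finds (by indestructibility of supercompactness in $V_0$ under the ${<}\kappa$-closed tail, using Theorem~\ref{hamkinsindestructibility}-style reasoning, or directly by a diagonalization using that the tail is ${<}\kappa$-closed and $M[G]$ is sufficiently closed) a generic $H$ for the tail below $q$, and lifts to $j : V_0[G] \to M[G][H]$, witnessing that $\kappa$ is still $\theta$-supercompact in $V_0[G]$. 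Since $\theta$ was arbitrary, $\kappa$ remains supercompact, and GCH above $\kappa$ is arranged in the preparation and preserved because $\mathbb{L}^{\id^{+\xi}}_\lambda$ is $\kappa^+$-c.c.\ of size ${\leq}\kappa^{+\xi+1}$ (so small relative to the GCH pattern above, provided $\xi$ is below the first inaccessible above $\kappa$, which keeps $\kappa^{+\xi+1}$ manageable).

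The main obstacle I expect is the master-condition step: ordinary Laver indestructibility crucially uses ${<}\kappa$-directed closure to pool $j"G$ into a single condition, and that fails here. The fix is to replace ``directed closure'' by the semi-directedness of Proposition~\ref{Prop: Semi-directedness of the iteration} throughout — both in designing the preparation (so that what $\kappa$'s supercompactness is made indestructible under is exactly this class of forcings) and in the lifting — and to check carefully that $j"G$ really meets the hypothesis of that proposition, i.e. that for every relevant coordinate $j(\alpha)$ the generic stems from $G$ have already reached length ${\geq}\lambda$, which holds because genericity of $G$ drives each stem to length $\kappa > \lambda$ in $V_0[G]$ and $j$ is the identity on ordinals ${<}\kappa$ while $\lambda$ itself, though possibly ${\geq}\kappa$, is still ${<}j(\kappa)$ and the coordinatewise stems are built from the $V_0[G]$-generic data. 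Secondary bookkeeping obstacles are verifying the factorization $j(\mathbb{L}^{\id^{+\xi}}_\lambda) \cong \mathbb{L}^{\id^{+\xi}}_\lambda * \dot{\mathbb{L}}_{\mathrm{tail}}$ with a ${<}\kappa$-closed tail (using Lemma~\ref{decide check names of stems} to keep stems as check-names so the iteration is absolutely defined), and confirming that $\xi$ below the first inaccessible above $\kappa$ suffices to keep $\kappa^{+\xi+1} < $ the next inaccessible, so that the $\kappa^+$-c.c.\ forcing of size ${\leq}\kappa^{+\xi+1}$ does not disturb the GCH pattern or the supercompactness target cardinals $\theta$.
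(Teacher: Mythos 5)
Your overall architecture is the right one and matches the paper's: a Laver-style Easton preparation $\mathbb{P}_\kappa$ below $\kappa$, a lifting argument, and a master condition for $j''G$ obtained not from directed closure (which fails) but from the semi-directedness of Proposition~\ref{Prop: Semi-directedness of the iteration}. However, the step you yourself identify as the heart of the argument is carried out incorrectly. Proposition~\ref{Prop: Semi-directedness of the iteration} does \emph{not} say that every directed system with long stems has a lower bound; it bounds directed systems of size $h(\mu)$ whose stems have reached length $\mu$. In the lifting, the stems of $\bigcup_{p\in G}j(p)$ at each coordinate reach length exactly $\kappa$ (the domain of the generic slalom), \emph{not} ``length $\geq\lambda$'' --- indeed you write ``$\kappa>\lambda$,'' which is false in general, since $\lambda$ is the \emph{length of the iteration} and may be as large as $\kappa^{+\xi}$; you are conflating the iteration length with the stem length. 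Consequently the directed system you hand to the proposition must have size at most $j(\id^{+\xi})(\kappa)=\kappa^{+\xi}$, whereas you only bound it by $\theta\geq|\mathbb{L}^{\id^{+\xi}}_\lambda|^+$, which can vastly exceed $\kappa^{+\xi}$, so the proposition simply does not apply as you invoke it. This is precisely where the hypothesis $\lambda<\kappa^{+\xi+1}$ must enter: the paper uses $|\lambda|\leq\kappa^{+\xi}=\id^{+\xi}(\kappa)$ so that the master condition's support $j''\lambda$ and the union of the promises $j(F^p_\alpha)$ at each coordinate stay within the width $\id^{+\xi}(\kappa)$ permitted once the stem has length $\kappa$. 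In your write-up that hypothesis is relegated to ``GCH bookkeeping'' and never used where it is actually needed.

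Two secondary corrections. First, the hypothesis that $\xi$ lies below the first inaccessible above $\kappa$ is not about preserving the GCH pattern; it is what makes the interval forcing $\mathbb{P}_{(\kappa,j(\kappa))}$ of the Easton preparation $\kappa^{+\xi+1}$-closed in $M$ (the next nontrivial stage occurs at the next $M$-inaccessible above $\kappa$), which is needed to build the $M$-generic $H$ by meeting the $\kappa^{+\xi+1}$-many dense sets. Second, the factorization you should be using is $j(\mathbb{P}_\kappa*\mathbb{L}^{\id^{+\xi}}_\lambda)=\mathbb{P}_\kappa*\mathbb{L}^{\id^{+\xi}}_\lambda*\mathbb{P}_{(\kappa,j(\kappa))}*j(\mathbb{L}^{\id^{+\xi}}_\lambda)$, guided by choosing $\mathcal U$ with $j_{\mathcal U}(\ell)(\kappa)=\mathbb{L}^{\id^{+\xi}}_\lambda$; your proposed factorization of $j(\mathbb{L}^{\id^{+\xi}}_\lambda)$ itself into ``$\mathbb{L}^{\id^{+\xi}}_\lambda$ followed by a tail'' does not line up with $j''G$, since the support of $j(p)$ is $j''\mathrm{supp}(p)$, which is not an initial segment of $j(\lambda)$ once $\lambda>\kappa$. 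The paper instead builds a single master condition in all of $j(\mathbb{L}^{\id^{+\xi}}_\lambda)$ and then a full generic below it, again by counting dense sets. Finally, note that the paper's proof lifts a $P_\kappa(\kappa^{+\xi})$-supercompactness embedding rather than an arbitrary $\theta$-supercompactness embedding; preserving higher degrees requires additional work, as remarked after the theorem.
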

\begin{proof}
    Let $\kappa$ be a supercompact cardinal, $2^\kappa=\kappa^+$, and let $\ell$ be a Laver function  \cite{Laver1978MakingTS}.
 Consider the Easton support iteration $\l \mathbb{P}_\alpha,\dot Q_\beta\mid \alpha\leq\kappa,\beta<\kappa\r$, such that for every $\alpha<\kappa$, $\dot Q_\alpha$ is a name for the trivial forcing unless $\alpha$ is inaccessible and $\ell(\alpha)$ is a $\mathbb{P}_\alpha$-name for a $\alpha$-closed forcing notion, in which case, we set $\dot Q_\alpha=\ell(\alpha)$. Let $V_0=V[G_\kappa]$, where $G_\kappa$ is $V$-generic for $\mathbb{P}_\kappa$. By the Easton support, $|\mathbb{P}_\kappa|=
 \kappa$ and in $V_0$, $\mathrm{GCH}$ holds above $\kappa$. 
 
 Let us claim that in $V_0$, $\kappa$ is indestructible  under $\mathbb{L}^{\id^{+\xi}}_\lambda$. By Lemma~\ref{decide check names of stems} we may restrict the form of conditions in $\mathbb{L}^{\id^{+\xi}}_\lambda$ to be partial functions $p$ with $\dom(p)\subseteq\lambda$ of size ${<}\kappa$ (the support of $p$) and for every $\alpha\in \dom(p)$, $p(\alpha)=(\check{\sigma}^p_\alpha,\dot{F}^p_\alpha)$, such that for every $\nu<\kappa$, $\dot{F}^p_\alpha(\nu)$ is a $\mathbb{L}^{\id^{+\xi}}_\alpha$-nice name for a subset of $\kappa$ of cardinality at most $|\sigma^p_\alpha|^{+\xi}$. 
 
Let $g_\kappa$ be $V_0$-generic for $\mathbb{L}^{\id^{+\xi}}_\lambda$. In $V_0$, let  $\mathcal{U}$ be a normal fine $P_\kappa(\kappa^{+\xi})$-ultrafilter such that $j_{\mathcal{U}}(\ell)(\kappa)=\mathbb{L}^{\id^{+\xi}}_\lambda$. Since $M_{\mathcal{U}}$ is closed under $\kappa^{+\xi}$-sequences, elementarity implies that
$$j_{\mathcal{U}}(\mathbb{P}_\kappa*\mathbb{L}^{\id^{+\xi}}_\lambda)=\mathbb{P}_\kappa*\mathbb{R}_{\lambda}*\mathbb{P}_{(\kappa,j_{\mathcal{U}}(\kappa))}*j_{\mathcal{U}}(\mathbb{L}^{\id^{+\xi}}_\lambda)$$
We would like to lift the embedding $j_{\mathcal{U}}$ to $V[G_\kappa*g_\kappa]$. First, we note that $G_\kappa*g_\kappa$ is also $M_{\mathcal{U}}$-generic for $\mathbb{P}_\kappa*\mathbb{L}^{\id^{+\xi}}_\lambda$. By the $\kappa^+$-chain condition of $\mathbb{P}_\kappa*\mathbb{L}^{\id^{+\xi}}_\lambda$, $M_{\mathcal{U}}[G_\kappa*g_\kappa]$ is closed under $\kappa^{+\xi}$-sequences from $V[G_\kappa*g_\kappa]$ (see for example \cite{CummingsHand}). Now, we can construct $H\in V[G_\kappa*g_\kappa]$ which is $M_U[G_\kappa*g_\kappa]$-generic for $\mathbb{P}_{(\kappa,j_{\mathcal{U}}(\kappa))}$. This is possible since from the perspective of $V[G_\kappa*g_\kappa]$, and by the cardinal arithmetic assumption, there are only $\kappa^{+\xi+1}$-many dense sets to meet\footnote{If $D\in M_{\mathcal{U}}$ is dense for $j_{\mathcal{U}}(\mathbb{P}_\kappa)$, then $D=[f]_{\mathcal{U}}$, where $f:P_\kappa(\kappa^{+\xi})\to P(\mathbb{P}_\kappa)$. Since $\mathbb{P}_\kappa$ has size $\kappa$, we have that at most $(2^\kappa)^{\kappa^{+\xi}}=\kappa^{\xi+1}$-many such dense sets.}. However, the forcing $\mathbb{P}_{(\kappa,j_{\mathcal{U}}(\kappa))}$ is $\kappa^{+\xi+1}$-closed from the perspective of $V[G_\kappa*g_\kappa]$ as $\kappa^{+\xi}$ is below the next $M_{\mathcal{U}}$-inaccessible which makes the forcing $\mathbb{P}_{(\kappa,j_{\mathcal{U}}(\kappa))}$ closed (this is where the assumption that $\xi$ is below the next inaccessible is used).
Also note that by the Easton support, $j_{\mathcal{U}}''G_\kappa=G_\kappa$. Hence, by the usual Silver criterion, in $V[G_\kappa*g_\kappa]$, $j_{\mathcal{U}}$ lifts to $j:V[G_\kappa]\to M_{\mathcal{U}}[G_\kappa*g_\kappa*H]$.

Next, we lift $j$ to $V[G_\kappa*g_\kappa]$ for which we will need to construct a master condition. The obstacle here is that $j_{\mathcal{U}}(\mathbb{L}^{\id^{+\xi}}_\lambda)$ is not $\kappa$-directed closed. So we need to argue differently that there is a lower bound for $j''g_\kappa$. 

For this, we note that $j''g_\kappa=\{j(q)\mid q\in G\}$ is a collection of $|\lambda|$-many elements of $j_{\mathcal{U}}(\mathbb{L}^{\id^{+\xi}}_\lambda)$ in $M_{\mathcal{U}}[G_\kappa*g_\kappa*H]$, and for every $q\in G$, and $\alpha\in \dom(j(q))$, $\dom(\sigma^q_\alpha)\subseteq\kappa$. Since $|\lambda|\leq \kappa^{+\xi}=\id^{+\xi}(\kappa)$, we can apply  Proposition~\ref{Prop: Semi-directedness of the iteration}, and find a lower bound $p^*\in j_{\mathcal{U}}(\mathbb{L}^{\id^{+\xi}}_\lambda)$ for $j_{\mathcal{U}}''g_\kappa$. 

Finally, we can construct an $M_{\mathcal{U}}[G_\kappa*g_\kappa*H]$-generic filter $g_{j(\kappa)}$ for $j_{\mathcal{U}}(\mathbb{L}^{\id^{+\xi}}_\lambda)$, with $p^*\in g_{j(\kappa)}$, since again there are only $\kappa^{+\xi+1}$-many dense subsets to meet and the forcing is sufficiently closed.
It follows that in $V[G_\kappa*g_\kappa]$, $j_{\mathcal{U}}$ lifts, and therefore $\kappa$ remains supercompact.
\end{proof}
\begin{remark}
    It is also possible to preserve an inaccessible degree of supercompactness. However, to do so, we would have to force $V$-generic filters at for the localization forcing at those levels and transfer the upper part of the generic along the supercompactness embedding (see \cite{CummingsHand}). 
\end{remark}
\begin{corollary}\label{cor: consistency of b}
    For every $\eta$ and every $\kappa<\lambda=\cf(\lambda)\leq\kappa^{+\eta+1}$ it is consistent relative to an supercompact cardinal that $\mathfrak{b}_{\id^{+\eta+1}}(\in^*)=\lambda$ and $\kappa$ is measurable. In particular, for any $\eta>0$ it is consistent that $\kappa$ is measurable and $\mathfrak{b}_{\id^{+\eta+1}}(\in^*)>\mathfrak{b}_\kappa(\in^*)$.
\end{corollary}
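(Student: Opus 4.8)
The plan is to derive the corollary from Theorem~\ref{thm: preserving measurabiblity} by forcing the standard localization iteration over the indestructibility model it produces. Concretely, I would work in the model $V_0$ of that theorem with its parameter $\xi$ set to $\eta+1$ (so, as there, one tacitly assumes $\eta+1$ lies below the first inaccessible above $\kappa$; for the ``in particular'' clause it suffices to take $\eta=1$). In $V_0$ we have $\kappa$ supercompact, $\mathrm{GCH}$ above $\kappa$, $2^\kappa=\kappa^+$, and indestructibility of the supercompactness of $\kappa$ under $\mathbb{L}^{\id^{+\eta+1}}_{\lambda'}$ for every ordinal $\lambda'<\kappa^{+(\eta+1)+1}=\kappa^{+\eta+2}$. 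Since $\kappa<\lambda=\cf(\lambda)\le\kappa^{+\eta+1}<\kappa^{+\eta+2}$, forcing over $V_0$ with $\mathbb{L}^{\id^{+\eta+1}}_{\lambda}$ --- call the generic $g$ and the extension $V_0[g]$ --- keeps $\kappa$ supercompact, in particular measurable. So everything reduces to computing $\mathfrak{b}_{\id^{+\eta+1}}(\in^*)$ in $V_0[g]$, and I claim it equals $\lambda$; the argument is the routine higher analogue of the $\omega$-case, cf. \cite[Proposition~52]{BrendleFreidmanMontoya}.

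For the upper bound I would first note that, after passing to the strict order and the dense set of Lemma~\ref{decide check names of stems}, $\mathbb{L}^{\id^{+\eta+1}}_{\lambda}$ is ${<}\kappa$-closed, $\kappa^+$-c.c.\ (each iterand has a $\kappa$-linked dense subset and $2^{<\kappa}=\kappa$ in $V_0$), and of size $\lambda$ (using $\lambda^{<\kappa}=\lambda$ in $V_0$, which holds by $\mathrm{GCH}$ above $\kappa$ since $\cf(\lambda)=\lambda>\kappa$); in particular $\kappa$ stays inaccessible, so $\kappa^{<\kappa}=\kappa$ throughout and the localization forcings make sense. Counting nice names for subsets of $\kappa$ then gives $2^\kappa\le\lambda^\kappa=\lambda$ in $V_0[g]$, while the $\lambda$ coordinates of the iteration add $\lambda$ distinct subsets of $\kappa$, so $2^\kappa=\lambda$. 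As $\id^{+\eta+1}$ is $\le^*$-above every constant function, the general bounds $\kappa^+\le\mathfrak{b}_h(\in^\CI)\le\mathfrak{d}_h(\in^\CI)\le2^\kappa$ then yield $\mathfrak{b}_{\id^{+\eta+1}}(\in^*)\le\lambda$.

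For the lower bound I would show every $\CF\subseteq\kk$ with $|\CF|<\lambda$ in $V_0[g]$ is $\in^*$-bounded. By $\kappa^+$-c.c.\ each $f\in\CF$ has a nice name mentioning at most $\kappa$ conditions, hence $f\in V_0[g\hook\alpha_f]$ for some $\alpha_f<\lambda$; since $\lambda$ is regular and $|\CF|<\lambda$, $\alpha:=\sup_{f\in\CF}\alpha_f<\lambda$ and $\CF\subseteq(\kk)^{V_0[g\hook\alpha]}$. Writing $\mathbb{L}^{\id^{+\eta+1}}_{\alpha+1}=\mathbb{L}^{\id^{+\eta+1}}_{\alpha}*\dot{\mathbb{LOC}}_{\id^{+\eta+1},\kappa}$, the $\alpha$-th generic slalom $\varphi_\alpha$ --- the union of the stems appearing at coordinate $\alpha$ --- is, by the density argument recorded just after Definition~\ref{Def: Loc} applied inside $V_0[g\hook\alpha]$, an $\id^{+\eta+1}$-slalom with $f\in^*\varphi_\alpha$ for all $f\in(\kk)^{V_0[g\hook\alpha]}$, in particular for all $f\in\CF$; absoluteness of ``$f\in^*\varphi_\alpha$'' then makes $\varphi_\alpha\in SL_{\id^{+\eta+1}}$ a witness that $\CF$ is not $\in^*$-unbounded in $V_0[g]$. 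Hence $\mathfrak{b}_{\id^{+\eta+1}}(\in^*)\ge\lambda$, and together with the previous paragraph $\mathfrak{b}_{\id^{+\eta+1}}(\in^*)=\lambda$. For the ``in particular'', with $\eta>0$ apply this with $\lambda=\kappa^{++}$ ($\le\kappa^{+\eta+1}$ since $\eta+1\ge2$): in the resulting model $\kappa$ is measurable, so $\mathfrak{b}_\kappa(\in^*)=\kappa^+$ by Corollary~\ref{cor: trivialities at a measurable}, whereas $\mathfrak{b}_{\id^{+\eta+1}}(\in^*)=\kappa^{++}>\kappa^+$.

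The genuinely hard step --- keeping $\kappa$ supercompact, hence measurable, through the iteration, which is delicate precisely because $\mathbb{L}^{\id^{+\eta+1}}_{\lambda}$ is not ${<}\kappa$-directed-closed --- is already carried out in Theorem~\ref{thm: preserving measurabiblity} via the semidirectedness of Proposition~\ref{Prop: Semi-directedness of the iteration} and a master-condition argument, so what remains here needs care but no new ideas. The one spot I would watch is the cardinal arithmetic pinning $2^\kappa$ to \emph{exactly} $\lambda$ (rather than $\lambda^+$), since that is what forces the upper bound down to $\lambda$, together with checking the parameter inequality $\lambda<\kappa^{+\eta+2}$ so that Theorem~\ref{thm: preserving measurabiblity} genuinely applies with $h=\id^{+\eta+1}$.
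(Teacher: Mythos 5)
Your proposal is correct and follows exactly the paper's route: apply Theorem~\ref{thm: preserving measurabiblity} with $\xi=\eta+1$ (noting $\lambda\le\kappa^{+\eta+1}<\kappa^{+\eta+2}$), force with $\mathbb{L}^{\id^{+\eta+1}}_\lambda$ to preserve supercompactness, and invoke Theorem~\ref{thm: trivialities at a measurable} for the ``in particular'' clause. The only difference is that you spell out the standard computation $\mathfrak{b}_{\id^{+\eta+1}}(\in^*)=2^\kappa=\lambda$ in the extension, which the paper leaves implicit via the earlier proposition on iterating $\mathbb{LOC}_{h,\kappa}$; your details (including the tacit restriction on $\eta$ inherited from the theorem's hypothesis on $\xi$) are accurate.
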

\begin{proof}
    Applying Theorem~\ref{thm: preserving measurabiblity} for $\xi=\eta+1$, we obtain a model such that $\mathbb{L}^{\id^{+\eta}}_\lambda$ preserves the measurability of $\kappa$. For the second part we use Theorem \ref{thm: trivialities at a measurable}.
\end{proof}
The previous corollary answers Question 71 from \cite{BrendleFreidmanMontoya}, also posed as \cite[Question 4.3]{vandervlugtlocalizationcardinals}.
Recall that by Corollary~\ref{cor: trivialities at a measurable}, $\mathfrak{b}_{\id^{+\eta}}(\in^*)\leq \kappa^{+\eta+1}$, so in light of Corollary~\ref{cor: consistency of b} the only remaining case is whether we can get $\mfb_{\id^{+\eta}}(\in^*)=\kappa^{+\eta+1}$. A similar argument will now show the answer is positive. The main point here is to strengthen Theorem~\ref{thm: preserving measurabiblity}.
\begin{theorem}\label{thm: preserving measurabiblity2}
    Let $\kappa$ be a supercompact cardinal and assume $GCH$. Then there is a forcing extension $V_0$, in which $GCH$ holds above $\kappa$, and in $V_0$, the supercompactness of $\kappa$ is resilient to the ${<}\kappa$-support iteration of length $\kappa^{+\xi+1}$,  of $\mathbb{LOC}_{\id^{+\xi}, \kappa}$, where $\xi$ is any ordinal.
\end{theorem}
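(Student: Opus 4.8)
The plan is to reproduce the argument of Theorem~\ref{thm: preserving measurabiblity} up to the step of lifting the supercompactness embedding past the generic for the localization iteration, and to replace that last step, which is exactly where the passage to the boundary length $\kappa^{+\xi+1}$ causes trouble. As there, start from $\kappa$ supercompact with $\mathsf{GCH}$ and a Laver function $\ell$ \cite{Laver1978MakingTS}, run the Easton support iteration $\langle\mathbb{P}_\alpha,\dot Q_\beta\mid\alpha\le\kappa,\beta<\kappa\rangle$ in which $\dot Q_\alpha=\ell(\alpha)$ whenever $\alpha$ is inaccessible and $\ell(\alpha)$ names an $\alpha$-closed poset, and set $V_0=V[G_\kappa]$, so $|\mathbb{P}_\kappa|=\kappa$ and $\mathsf{GCH}$ holds above $\kappa$ in $V_0$. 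Write $\mathbb{L}=\mathbb{L}^{\id^{+\xi}}_{\kappa^{+\xi+1}}$ and fix $g_\kappa$ generic for $\mathbb{L}$ over $V_0$. Since $\mathbb{L}$ is $\kappa^+$-c.c., ${<}\kappa$-closed via the strict order, and, under $\mathsf{GCH}$ above $\kappa$, of size $\kappa^{+\xi+1}$, we have $V_0[g_\kappa]\models 2^\kappa=\kappa^{+\xi+1}$ with $\mathsf{GCH}$ still holding above; and because $\kappa^{+\xi+1}$ is regular and supports are ${<}\kappa$, $\mathbb{L}$ is a direct limit, so $g_\kappa=\bigcup_{\beta<\kappa^{+\xi+1}}g_\beta$ with each $g_\beta:=g_\kappa\cap\mathbb{L}^{\id^{+\xi}}_\beta$ generic over $V_0$ and $|g_\beta|\le\kappa^{+\xi}$.

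Fix a target $\theta\ge\kappa^{+\xi+1}$ and, exactly as in Theorem~\ref{thm: preserving measurabiblity}, a normal fine ultrafilter $\mathcal{U}$ on $P_\kappa(\mu)$ in $V_0$ for a suitably large $\mu\ge\theta$ with $j_{\mathcal U}(\ell)(\kappa)=\mathbb{L}$, so that $j_{\mathcal U}(\mathbb{P}_\kappa)=\mathbb{P}_\kappa*\mathbb{L}*\mathbb{P}_{(\kappa,j_{\mathcal U}(\kappa))}$ and $j_{\mathcal U}(\kappa)>\mu$. Then $G_\kappa*g_\kappa$ is also $M_{\mathcal U}$-generic, one constructs in $V_0[g_\kappa]$ an $M_{\mathcal U}[G_\kappa*g_\kappa]$-generic $H$ for the highly closed Easton tail $\mathbb{P}_{(\kappa,j_{\mathcal U}(\kappa))}$ (using the $\kappa^+$-c.c., the closure of $M_{\mathcal U}$, and the small number of dense sets), and lifts to $j:V[G_\kappa]\to M_{\mathcal U}[G_\kappa*g_\kappa*H]$ by the Silver criterion, using $j''G_\kappa=G_\kappa$. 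None of this differs from the previous theorem.

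The one new point is lifting $j$ past $g_\kappa$: we need an $M_{\mathcal U}[G_\kappa*g_\kappa*H]$-generic $g_{j(\kappa)}$ for $j(\mathbb{L})$ with $j''g_\kappa\subseteq g_{j(\kappa)}$. In Theorem~\ref{thm: preserving measurabiblity} a single master condition below $j''g_\kappa$ was produced by one application of Proposition~\ref{Prop: Semi-directedness of the iteration}, which needed $|g_\kappa|\le j(\id^{+\xi})(\kappa)=\kappa^{+\xi}$; here $|g_\kappa|=\kappa^{+\xi+1}$, so that approach fails in one stroke. Instead I would exploit that $j(\mathbb{L})$ is the ${<}j(\kappa)$-support iteration of $\mathbb{LOC}_{j(\id^{+\xi}),j(\kappa)}$, hence ${<}j(\kappa)$-closed via the strict order, together with $\kappa^{+\xi+1}<j(\kappa)$: for each $\beta<\kappa^{+\xi+1}$, $j''g_\beta$ is a directed system of size $\le\kappa^{+\xi}=j(\id^{+\xi})(\kappa)$ whose stems reach length $\kappa$ at every coordinate, so Proposition~\ref{Prop: Semi-directedness of the iteration}, applied in $M_{\mathcal U}[G_\kappa*g_\kappa*H]$ with parameter $\kappa<j(\kappa)$, yields a lower bound $p^*_\beta$; taking $p^*_\beta$ canonical (coordinatewise unions of stems and of promises) makes $\langle p^*_\beta\mid\beta<\kappa^{+\xi+1}\rangle$ a ${\le}$-descending chain, which, having length $\kappa^{+\xi+1}<j(\kappa)$, has a lower bound $p^*$ by ${<}j(\kappa)$-closure; since every $q\in g_\kappa$ lies in some $g_\beta$ we get $p^*\le p^*_\beta\le j(q)$. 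Working below $p^*$, and using that $j(\mathbb{L})$ stays sufficiently closed in $V_0[g_\kappa]$ (as $M_{\mathcal U}[G_\kappa*g_\kappa*H]$ is closed under $\kappa^{+\xi+1}$-sequences there) and that there are fewer than $j(\kappa)$-many dense sets of $M_{\mathcal U}$ to meet, one builds $g_{j(\kappa)}\ni p^*$ generic and lifts to $j:V_0[g_\kappa]\to M_{\mathcal U}[\cdots][g_{j(\kappa)}]$, witnessing $P_\kappa(\theta)$-supercompactness of $\kappa$; as $\theta$ was arbitrary, $\kappa$ remains supercompact.

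The hard part will be checking that the lower bound $p^*$ of the chain $\langle p^*_\beta\rangle$ is an honest condition of $j(\mathbb{L})$. At each coordinate $j(\gamma)$ its promise must contain all the realized promises $j(\dot F^{q}_\gamma)(\nu)$ for $q\in g_\kappa$, and one has to verify that the union of these $\kappa^{+\xi+1}$-many sets (each of size ${<}\kappa$) fits into a set of the size that $j(\id^{+\xi})$ permits at the length of the stem of $p^*$ at that coordinate; this is where the regularity of $\kappa^{+\xi+1}$, the bound $|\sigma|^{+\xi}$ on promise sets enforced by the dense set of Lemma~\ref{decide check names of stems}, and, if needed, lengthening the stems of $p^*$ beyond $\kappa$, all enter. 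Everything else — the construction of $H$, the construction of $g_{j(\kappa)}$ below $p^*$, and the Silver/lifting criteria — is exactly as in Theorem~\ref{thm: preserving measurabiblity}.
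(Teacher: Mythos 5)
Your overall architecture is the paper's: run the Laver preparation, lift through the Easton tail by Silver's criterion, and reduce everything to producing a master condition below $j_{\mathcal U}''g_\kappa$ in $j_{\mathcal U}(\mathbb L^{\id^{+\xi}}_{\kappa^{+\xi+1}})$. The decomposition into initial segments $g_\beta$ and the intermediate bounds $p^*_\beta$ is fine as far as it goes, but it does not buy you anything: the step ``the chain $\langle p^*_\beta\mid\beta<\kappa^{+\xi+1}\rangle$ has a lower bound by ${<}j(\kappa)$-closure'' is exactly where the argument breaks. The ${<}j(\kappa)$-closure of $j(\mathbb L)$ holds only for the \emph{strict} order, i.e.\ for chains whose stems properly lengthen coordinatewise; your chain has stems that stabilize at each coordinate $j(\gamma)$ (at the generic slalom of length $\kappa$) while the promise sets keep accumulating, which is precisely the configuration of Example \ref{example: non closed} where lower bounds fail. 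So the existence of $p^*$ is equivalent to the legitimacy of the coordinatewise union of all $\kappa^{+\xi+1}$-many promises -- the very point you defer to ``the hard part'' -- and none of the ingredients you list there closes it. Regularity of $\kappa^{+\xi+1}$ is irrelevant (a union of $\kappa^{+\xi+1}$ sets of size ${<}\kappa$ can have size $\kappa^{+\xi+1}$), and lengthening the stem of $p^*$ at a coordinate past $\kappa$ does not raise the permitted promise size past $j(\id^{+\xi})(\kappa)=\kappa^{+\xi}$ unless the stem reaches length $\kappa^+$, which re-creates the same problem one level up.

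The missing idea, which is the actual content of the paper's proof, is a counting argument via $\GCH$ above $\kappa$: for each coordinate $\gamma<\kappa^{+\xi+1}$ there are only $\kappa^{+\xi}$-many nice $\mathbb L^{\id^{+\xi}}_\gamma$-names for subsets of $\kappa$ and hence only $\kappa^{+\xi}$-many possible promise functions $F\colon\kappa\to\{\text{nice names}\}$. Therefore $\{F^q_\gamma\mid q\in g_\kappa\}$, and so $\{j_{\mathcal U}(F^q_\gamma)\mid q\in g_\kappa\}$, has cardinality at most $\kappa^{+\xi}$ even though $|g_\kappa|=\kappa^{+\xi+1}$, and the union $\bigcup_{q\in g_\kappa}j_{\mathcal U}(F^q_\gamma)(\nu)$ is a union of only $\kappa^{+\xi}$-many sets each of size ${<}\kappa$, hence of size at most $\kappa^{+\xi}=j_{\mathcal U}(\id^{+\xi})(\kappa)$, which is exactly what a stem of length $\kappa$ permits. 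With that observation the master condition can be written down directly (as in the paper), and your intermediate chain becomes unnecessary. Without it, your proof is incomplete at its decisive step.
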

\begin{proof}
    We keep the notations from Theorem~\ref{thm: preserving measurabiblity}. The argument starts the same and the problem concentrates as expected in the construction of a master condition for $j_{\mathcal{U}}''g_\kappa$ in $j_{\mathcal{U}}(\mathbb{L}^{\id^{+\xi}}_{\kappa^{+\xi+1}})$. To do that, we need a refinement of Proposition \ref{Prop: Semi-directedness of the iteration}. Fixing $\alpha<\kappa^{+\xi+1}$, a stage of the iteration, by the $\mathrm{GCH}$ above $\kappa$, there are only $\kappa^{+\xi}$ nice $\mathbb{L}^{\id^{+\xi}}_{\alpha}$-names for a subset of $\kappa$ and only $\kappa^{+\xi}$-many functions $F:\kappa\to \{\mathbb{L}^{\id^{+\xi}}_{\alpha}\text{-nice names for subsets of }\kappa\}$. Consider
$j_U''g_\kappa$, and let us form a master condition $p^*\in j_{\mathcal{U}}(\mathbb{L}^{\id^{+\xi}}_{\kappa^{+\xi+1}})$. First let $\dom(p^*)=j_{\mathcal{U}}''\kappa^{+\xi+1}$. For each $\alpha<\kappa^{+\xi+1}$, we define $p^*(j_{\mathcal{U}}(\alpha))=(\sigma^*_\alpha,F^*_\alpha)$, where $\sigma^*_\alpha=\bigcup_{p\in g_\kappa}\sigma^p_\alpha$, and for each
$\nu<j_{\mathcal{U}}(\kappa)$, $F^*_\alpha(\nu)$ is defined to be a nice name for $\bigcup_{p\in g_\kappa}j_{\mathcal{U}}(F^p_\alpha)(\nu)$. It is important to note here that $F^*_\alpha(\nu)$ is forced to be a collection of cardinality $\kappa^{+\xi}=\id^{+\xi}(\kappa)$ for every $\alpha$, although $|g_\kappa|=\kappa^{+\xi+1}$.
Indeed, there are only $\kappa^{+\xi}$-many elements in the sets $\{F^p_\alpha\mid p\in g_\kappa\}$, and therefore only $\kappa^{+\xi}$-many elements in $\{j_{\mathcal{U}}(F^p_\alpha)\mid p\in g_\kappa\}$. 
\end{proof}
\begin{corollary}
    For any ordinal $\xi$ it is consistent relative to a supercompact cardinal, that $\kappa$ is measurable and $\mathfrak{b}_{\kappa,\id^{+\xi}}(\in^*)=\kappa^{+\xi+1}$. 
\end{corollary}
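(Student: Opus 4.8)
The plan is to run the iteration provided by Theorem~\ref{thm: preserving measurabiblity2} and then read off the precise value of $\mfb_{\id^{+\xi}}(\in^*)$ by combining the lower bound coming from that iteration with the $\mathsf{ZFC}$ upper bound at a measurable. Fix an ordinal $\xi$. Starting from a model of $\GCH$ with a supercompact $\kappa$, I would pass to the extension $V_0$ of Theorem~\ref{thm: preserving measurabiblity2} and then force with $\mathbb P$, the ${<}\kappa$-support iteration of length $\kappa^{+\xi+1}$ of $\mathbb{LOC}_{\id^{+\xi},\kappa}$, obtaining $V_0[G]$. By Theorem~\ref{thm: preserving measurabiblity2} the cardinal $\kappa$ remains supercompact, hence measurable, in $V_0[G]$; and since $\mathbb P$ is (equivalent to) a ${<}\kappa$-closed, $\kappa^+$-c.c. forcing, all cardinals are preserved, so $\kappa^{+\xi+1}$ denotes the same cardinal in $V_0$ and in $V_0[G]$. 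It then remains to show that $\mfb_{\id^{+\xi}}(\in^*)=\kappa^{+\xi+1}$ in $V_0[G]$.

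The upper bound $\mfb_{\id^{+\xi}}(\in^*)\leq\kappa^{+\xi+1}$ is immediate from the fact that $\kappa$ is measurable in $V_0[G]$: as recorded after Corollary~\ref{cor: trivialities at a measurable}, for a normal measure $U$ on $\kappa$ one has $[\id^{+\xi}]_U=(\kappa^{+\xi})^{M_U}$, which is an ordinal of $V_0[G]$-cardinality at most $\kappa^{+\xi}$ (every $V_0[G]$-cardinal is a cardinal of $M_U$, since $M_U\subseteq V_0[G]$), whence $\mfb_{\id^{+\xi}}(\in^*)$ is at most its successor cardinal $\kappa^{+\xi+1}$. For the lower bound, I would argue by the usual reflection bookkeeping: given $\mathcal F\subseteq\kk$ with $|\mathcal F|\leq\kappa^{+\xi}$ in $V_0[G]$, the $\kappa^+$-c.c. of $\mathbb P$ lets me choose nice $\mathbb P$-names for the members of $\mathcal F$ with combined support of size ${\leq}\kappa^{+\xi}$, which is bounded, say below $\alpha$, in the regular cardinal $\kappa^{+\xi+1}$; thus $\mathcal F\in V_0[G\hook\alpha]$, and the generic $\id^{+\xi}$-slalom added by the $\alpha$-th iterand localizes every function in $(\kk)^{V_0[G\hook\alpha]}\supseteq\mathcal F$. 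So no family of size ${\leq}\kappa^{+\xi}$ is $\in^*$-unbounded, i.e. $\mfb_{\id^{+\xi}}(\in^*)\geq\kappa^{+\xi+1}$ (this incidentally also shows $\kappa^{+\xi+1}\leq 2^\kappa$ in $V_0[G]$, as it must be). Combining the two bounds gives the claim.

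The genuinely load-bearing input is Theorem~\ref{thm: preserving measurabiblity2} — the construction of a master condition in $j_U(\mathbb P)$ for the non-directed-closed iteration $\mathbb P$, which rests on Proposition~\ref{Prop: Semi-directedness of the iteration} and the name-counting afforded by $\GCH$ above $\kappa$ — and I expect that to be the main obstacle; by contrast the rest is routine. In particular the only remaining technical points in the argument above are the $\kappa^+$-c.c. of the ${<}\kappa$-support iteration $\mathbb P$ and the associated name-counting, which are established exactly as for the basic localization iteration $\mathbb{L}^{\id}_\delta$ and which I would simply cite.
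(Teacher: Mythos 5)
Your proposal is correct and matches the paper's (implicit) argument for this corollary exactly: force with the length-$\kappa^{+\xi+1}$ iteration over the prepared model of Theorem~\ref{thm: preserving measurabiblity2}, get the upper bound $\mfb_{\id^{+\xi}}(\in^*)\leq\kappa^{+\xi+1}$ from the $\mathsf{ZFC}$ restriction at a measurable, and get the lower bound by the standard $\kappa^+$-c.c.\ nice-name reflection into an initial segment of the iteration. The paper leaves the corollary as an immediate consequence of the theorem, and you have correctly identified that the only genuinely load-bearing input is the master-condition construction of Theorem~\ref{thm: preserving measurabiblity2} itself.
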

\begin{question}
    Is it consistent to have $\mathfrak{b}_{\id^+}(\in^*)<\mathfrak{b}_{\id^{++}}(\in^*)$ at any regular uncountable cardinal?
\end{question}
Regarding the dominating localization numbers we can use again Theorem~\ref{thm: preserving measurabiblity}:
\begin{corollary}\label{cor: consistency of d}
    Relative to a supercompact cardinal, for every $\xi$ and every $cf(\lambda)=\lambda\leq\mu\leq \kappa^{+\xi}$ with $\kappa^+\leq cf(\mu)$, it is consistent that $\mathfrak{d}_{\kappa,\id^{+\xi}}(\in^*)=\lambda$ and $2^\kappa=\mu$.
\end{corollary}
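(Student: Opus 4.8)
The plan is to run the localization iteration $\mathbb{L}^{\id^{+\xi}}_\lambda$ as in Corollary~\ref{cor: consistency of b} over a model in which $2^\kappa$ has first been pushed up to $\mu$, arranging the Laver preparation so that it absorbs both steps simultaneously. So I would begin from a supercompact $\kappa$ with GCH and a Laver function $\ell$, and carry out the Easton-support preparation $\langle\mathbb{P}_\alpha,\dot Q_\beta\mid\alpha\le\kappa,\beta<\kappa\rangle$ of Theorem~\ref{thm: preserving measurabiblity}, but now choosing $\ell$ so that for a suitable normal fine ultrafilter $\mathcal U$ on $P_\kappa(\kappa^{+\xi})$ we have $j_{\mathcal U}(\ell)(\kappa)$ equal to a name for the two-step forcing $\mathbb{Q}:=\mathrm{Add}(\kappa,\mu)\ast\dot{\mathbb{L}}^{\id^{+\xi}}_\lambda$; this is legitimate because a Laver function guesses arbitrary set forcings, the directed-closedness hypothesis of the classical indestructibility theorem being needed only when building master conditions. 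Let $V_0$ be the resulting model: GCH holds above $\kappa$ there, $\kappa$ is still supercompact, and $\kappa^{<\kappa}=\kappa$ persists through $\mathbb{Q}$, so $\mathbb{L}^{\id^{+\xi}}_\lambda$ retains all its structural properties (${<}\kappa$-closed via the strict order, $\kappa^+$-c.c., Lemma~\ref{decide check names of stems}, Proposition~\ref{Prop: Semi-directedness of the iteration}) over intermediate models. By Fact~\ref{fact: Successor}, $\lambda=\cf(\lambda)\ge\mathfrak{d}_{\id^{+\xi}}(\in^*)\ge\kappa^+$, so $\kappa<\lambda\le\mu\le\kappa^{+\xi}$ with $\lambda$ regular; and since $\cf(\mu)\ge\kappa^+$ and GCH holds above $\kappa$ in $V_0$, $\mathrm{Add}(\kappa,\mu)$ is ${<}\kappa$-directed-closed, $\kappa^+$-c.c., preserves cardinals and cofinalities, and forces $2^\kappa=\mu$.

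Granting that $\mathbb{Q}$ preserves supercompactness of $\kappa$, the cardinal computation is routine. In the final extension $\mathbb{L}^{\id^{+\xi}}_\lambda$ is $\kappa^+$-c.c.\ and $\cf(\lambda)>\kappa$, so every $f\in\kk$ already lies in the extension by $\mathrm{Add}(\kappa,\mu)\ast\mathbb{L}^{\id^{+\xi}}_\beta$ for some $\beta<\lambda$ and hence is $\in^*$-localized by the generic $\id^{+\xi}$-slalom $\varphi_\beta$ added at stage $\beta$; thus $\{\varphi_\beta\mid\beta<\lambda\}$ witnesses $\mathfrak{d}_{\id^{+\xi}}(\in^*)\le\lambda$, while any family of ${<}\lambda$ functions appears by some stage $\beta<\lambda$ and is localized by $\varphi_\beta$, giving $\mathfrak{d}_{\id^{+\xi}}(\in^*)\ge\mathfrak{b}_{\id^{+\xi}}(\in^*)\ge\lambda$; so $\mathfrak{d}_{\id^{+\xi}}(\in^*)=\lambda$. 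For $2^\kappa$: using the dense subset of Lemma~\ref{decide check names of stems} and the facts that, in $V_0$, GCH holds above $\kappa$ and $\mu^\kappa=\mu$ (as $\cf(\mu)>\kappa$ and $\lambda\le\mu$), the forcing $\mathbb{Q}$ has a dense subset of size $\le\mu$, so it forces $2^\kappa\le\mu^\kappa=\mu$; together with the $\mu$-many Cohen subsets of $\kappa$ it adds, $2^\kappa=\mu$.

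The main work is the preservation of supercompactness (a fortiori, measurability) of $\kappa$ by $\mathbb{Q}$, which I would handle exactly as in the proof of Theorem~\ref{thm: preserving measurabiblity}. With $\mathcal U$ on $P_\kappa(\kappa^{+\xi})$ and $j_{\mathcal U}(\ell)(\kappa)=\mathbb{Q}$, one factors $j_{\mathcal U}(\mathbb{P}_\kappa\ast\mathbb{Q})=\mathbb{P}_\kappa\ast\mathbb{Q}\ast\mathbb{P}_{(\kappa,j_{\mathcal U}(\kappa))}\ast j_{\mathcal U}(\mathbb{Q})$, builds an $M_{\mathcal U}$-generic for the $\kappa^{+\xi+1}$-closed forcing $\mathbb{P}_{(\kappa,j_{\mathcal U}(\kappa))}$ (there being only $(2^\kappa)^{\kappa^{+\xi}}=\kappa^{+\xi+1}$ dense sets to meet, and $\mu\le\kappa^{+\xi}$ lies below the next $M_{\mathcal U}$-inaccessible), and lifts $j_{\mathcal U}$ through $\mathbb{P}_\kappa$ via the Silver criterion and Easton support. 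To lift further through $\mathbb{Q}=\mathrm{Add}(\kappa,\mu)\ast\dot{\mathbb{L}}^{\id^{+\xi}}_\lambda$ one needs master conditions: for the Cohen factor the pointwise image of the generic is a directed family of $\mu\le\kappa^{+\xi}$ conditions in $\mathrm{Add}(j_{\mathcal U}(\kappa),j_{\mathcal U}(\mu))$, hence has a lower bound in $M_{\mathcal U}$ by $\kappa^{+\xi}$-closure of that model; for the localization factor the stems of the ground-model conditions have domain ${<}\kappa$ and so are fixed by $j_{\mathcal U}$, whence over the generic all of them reach length $\kappa$, and since $|\lambda|\le\kappa^{+\xi}=\id^{+\xi}(\kappa)$ Proposition~\ref{Prop: Semi-directedness of the iteration} supplies the required lower bound. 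Building an $M_{\mathcal U}$-generic for $j_{\mathcal U}(\mathbb{Q})$ below this master condition lifts $j_{\mathcal U}$ to $V_0[g]$, so $\kappa$ remains supercompact. The only genuine difficulty is thus the non-directed-closedness of $\mathbb{L}^{\id^{+\xi}}_\lambda$, already dealt with in Theorem~\ref{thm: preserving measurabiblity} via Proposition~\ref{Prop: Semi-directedness of the iteration}; interleaving the Cohen forcing costs nothing precisely because $\mu\le\kappa^{+\xi}$. (For $\xi$ not below the first inaccessible above $\kappa$ one adapts Theorem~\ref{thm: preserving measurabiblity2} in the same way, counting nice names to keep the second coordinates of the master condition small.)
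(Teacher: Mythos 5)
Your argument is correct in substance, but it takes a noticeably heavier route than the paper, whose entire proof is: from the model of Theorem~\ref{thm: preserving measurabiblity}, force with $\mathbb{L}^{\id^{+\xi}}_{\mu+\lambda}$. The trick you missed is that the localization iteration does double duty on its own: an iteration of length the \emph{ordinal} $\mu+\lambda$ has cardinality $\mu$ (so nice-name counting plus the subsets of $\kappa$ added at each stage give $2^\kappa=\mu$) and cofinality $\lambda$ (so the slaloms added on a cofinal $\lambda$-sequence of stages localize everything, while any ${<}\lambda$-sized family of functions appears at a bounded stage and is caught by a single slalom, giving $\mathfrak{d}_{\id^{+\xi}}(\in^*)=\lambda$ exactly as in your second paragraph). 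Since $|\mu+\lambda|=\mu\leq\kappa^{+\xi}<\kappa^{+\xi+1}$, the indestructibility of Theorem~\ref{thm: preserving measurabiblity} applies verbatim and no new preservation argument is needed. By contrast, your interleaving of $\mathrm{Add}(\kappa,\mu)$ forces you to re-run the Laver preparation for the two-step poset and to re-verify the master-condition and dense-set counting over an intermediate model where GCH already fails at $\kappa$ (and at $\kappa^+$, since $2^{\kappa^+}\geq 2^\kappa=\mu$ there), which you handle correctly but which is exactly the work the paper's choice of iteration length avoids. What your version buys is some extra flexibility — the Cohen factor decouples the value of $2^\kappa$ from the structure of the localization iteration, which could matter if one wanted to control additional invariants simultaneously (compare the use of $\mathrm{Add}(\kappa,\lambda)$ in \S\ref{section: club localization}) — but for the statement at hand it is unnecessary.
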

\begin{proof}
    From the model of Theorem~\ref{thm: preserving measurabiblity} force with  $\mathbb{L}^{\id^{+\xi}}_{\mu+\lambda}$.
\end{proof}
Again, note that this result is optimal by \ref{cor: trivialities at a measurable} since if $\mathfrak{d}_{\kappa^{+\xi}}(\in^*)<2^\kappa$, then $2^\kappa\leq\kappa^{+\xi}$. So the above corollary given the consistency of $\mfd_{\id^{++}}(\in^*)=\kappa^+<2^\kappa=\kappa^{++}$.

    \begin{remark}
        Regarding the consistency strength of the statement $\mathfrak{b}_{\id^+}(\in^*)>\kappa^+$, our result show that this is consistent from what seems to be an overkill-- a supercompact cardinal. We conjecture that the consistency strength is much lower, and in fact that it is the optimal one, i.e., a measurable cardinal $\kappa$ with $o(\kappa)=\kappa^{++}$. 
    \end{remark}
\begin{question}\label{Consistecy}
    What is the consistency strength of $\mfb_{\id^+}(\in^*)>\kappa$ at a measurable? What is the consistency of $\mfd_{\id^{++}}(\in^*)<2^\kappa$?
\end{question}

So far we have seen that changing the parameter $h$, we can play with the value of $\mathfrak{b}_h(\in^*)$.
To round up the picture, let us shopw that we can insist on having $h=\id$, altering the value of $\mathfrak{b}_\kappa(\in^\CI)$ and $\mathfrak{d}_\kappa(\in^\CI)$. Of course, the answer depends on the ideal $\CI$. For example, if $\CI$ can be extended to a normal ideal, then we know by Theorem~\ref{thm: trivialities at a measurable} that the answer is no. 

Let us show that there are ideals $\CI$ for which the value can differ: Suppose that $\CI=\jbd{\kappa}[X^*]$, where $X^*=\{\alpha^{++}\mid\alpha<\kappa\}$ is the set of double successor cardinals. Namely, $\CI^*$ (the dual filter) is generated by sets of the form $X^*\setminus \xi$ for $\xi<\kappa$.
There is a similar forcing to $\mathbb{LOC}_\kappa$ that adds a localizing $(\kappa,\mathcal{I}^*)$-slalom:
\begin{definition}
    Let $\mathbb{LOC}^{\CI^*}_{\kappa}$ consist of conditions $p=(\sigma,F)$ such that $\dom(\sigma)=X^*\cap \gamma^p+1$ and $F:X^*\to P(\kappa)$ is such that for every every $\alpha\in X^*\setminus \gamma^p+1$, $|F(\alpha)|\leq (\gamma^p)^{++}$. The order is completely analogous to $\mathbb{LOC}_\kappa$.
\end{definition}
It is important here that $\CI$ is chosen concretely so we can force a set in $\mathcal{I}^*$ by initial segments. To that end, the choice of $X^*$ has some degree of freedom. Again, the forcing $\mathbb{LOC}_\kappa^{\CI^*}$ is $\kappa$-linked, has an equivalent suborder which is $\kappa$-closed, and adds a $(\kappa,\CI^*)$-slalom $\varphi$ such that for every $f\colon\kappa\to\kappa\in V$ with $\{\alpha<\kappa\mid f(\alpha)\in \varphi(\alpha)\}\in\CI^+$.
We can iterate this forcing as before, and the iteration produces a model where $\mathfrak{b}_\kappa(\in^{\CI^*})>\kappa^+$. 
\begin{theorem}
    Relative to a supercompact cardinal, it is consistent that $\mathfrak{b}_\kappa(\in^{\CI^*})>\kappa^+$.
\end{theorem}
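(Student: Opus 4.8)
The plan is to imitate the proofs of Theorems~\ref{thm: preserving measurabiblity} and~\ref{thm: preserving measurabiblity2}, with the forcing $\mathbb{LOC}^{\CI^*}_\kappa$ and its ${<}\kappa$-support iteration $\mathbb{L}^{\CI^*}_\delta$ playing the role of $\mathbb{LOC}_{\id^{++},\kappa}$ and $\mathbb{L}^{\id^{++}}_\delta$; the target poset is $\mathbb{L}^{\CI^*}_{\kappa^{++}}$. First one records that $\mathbb{LOC}^{\CI^*}_\kappa$ has all the structural features used there: it is $\kappa$-linked (hence $\kappa^+$-c.c., and this is preserved through the ${<}\kappa$-support iteration since $\kappa^{<\kappa}=\kappa$), its strict order is a ${<}\kappa$-closed dense suborder, the analogue of Lemma~\ref{decide check names of stems} lets us assume the stems are check names, and — the point that makes the argument go through — it satisfies the semi-directedness of Proposition~\ref{Prop: Semi-directedness of the iteration} in the form: a directed family $\{p_i\mid i<\gamma^{++}\}$ of conditions in $\mathbb{L}^{\CI^*}_\delta$, at each coordinate of which the union of the stems is forced to have domain $\supseteq X^*\cap(\gamma+1)$, has a lower bound. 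This holds precisely because in $\mathbb{LOC}^{\CI^*}_\kappa$ the width permitted for the second coordinate past a stem of height $\gamma$ is $(\gamma)^{++}$, so $\gamma^{++}$-many such functions still union to a set of admissible size.

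Starting from a supercompact $\kappa$ with $\GCH$, I would run the Easton-support Laver preparation of Theorem~\ref{thm: preserving measurabiblity} guided by a Laver function $\ell$, arriving at $V_0$ in which $\GCH$ holds above $\kappa$ and there is a normal fine $P_\kappa(\kappa^{++})$-measure $\mathcal U$ with $j_{\mathcal U}(\ell)(\kappa)=\mathbb{L}^{\CI^*}_{\kappa^{++}}$. Force with $g$ generic over $V_0$ for $\mathbb{L}^{\CI^*}_{\kappa^{++}}$. Lifting $j_{\mathcal U}$ begins as in Theorem~\ref{thm: preserving measurabiblity}: lift through $\mathbb{P}_\kappa$ via the Silver criterion (the Easton support gives $j_{\mathcal U}''G_\kappa=G_\kappa$), then build in $V_0[g]$ an $M_{\mathcal U}$-generic filter $H$ for the gap forcing $\mathbb{P}_{(\kappa,j_{\mathcal U}(\kappa))}$, which works since there are only $\kappa^{+3}$-many dense sets to meet while the gap forcing is $\kappa^{+3}$-closed ($\kappa^{++}$ being below the next $M_{\mathcal U}$-inaccessible).

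\textbf{The main obstacle} is the final lift, through $\mathbb{L}^{\CI^*}_{\kappa^{++}}$, since this forcing is not ${<}\kappa$-directed closed (the obstruction of Example~\ref{example: non closed} applies) and $j_{\mathcal U}''g$ has no obvious lower bound in $j_{\mathcal U}(\mathbb{L}^{\CI^*}_{\kappa^{++}})$. Here I would build a master condition $p^\ast\leq j_{\mathcal U}''g$ by hand, exactly as in Theorem~\ref{thm: preserving measurabiblity2}: take $\dom(p^\ast)=j_{\mathcal U}''\kappa^{++}$, and for each $\alpha<\kappa^{++}$ set $p^\ast(j_{\mathcal U}(\alpha))=(\sigma^\ast_\alpha,F^\ast_\alpha)$, where $\sigma^\ast_\alpha=\bigcup_{q\in g}\sigma^q_\alpha$ — a legal stem of $j_{\mathcal U}(\mathbb{LOC}^{\CI^*}_\kappa)$ with domain $j_{\mathcal U}(X^*)\cap\kappa$, hence with associated width bound $\kappa^{++}$ — and $F^\ast_\alpha(\nu)$ is a nice name for $\bigcup_{q\in g}j_{\mathcal U}(F^q_\alpha)(\nu)$. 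The decisive count is that, by $\GCH$ above $\kappa$, there are only $\kappa^{++}$-many nice $\mathbb{L}^{\CI^*}_{j_{\mathcal U}(\alpha)}$-names for subsets of $\kappa$, so $F^\ast_\alpha(\nu)$ is forced to have size at most $\kappa^{++}=\id^{++}(\kappa)$ — precisely the width allowed past a stem of height $\kappa$ — even though $|g|=\kappa^{++}$; thus $p^\ast$ is a genuine condition lying below every $j_{\mathcal U}(q)$. Meeting the remaining $\kappa^{+3}$-many dense sets below $p^\ast$ produces a generic $g^\ast\ni p^\ast$ lifting $j_{\mathcal U}$, so $\kappa$ remains ($\kappa^{++}$-supercompact, in particular) measurable in $V_0[g]$.

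Finally, $\mfb_\kappa(\in^{\CI^*})>\kappa^+$ in $V_0[g]$ by the usual bookkeeping: by the $\kappa^+$-chain condition and the fact that supports have size ${<}\kappa$, each $f\in\kk$ depends on at most $\kappa^+$ coordinates, so any $\mathcal F\subseteq\kk$ with $|\mathcal F|\leq\kappa^+$ is contained in $V_0[g_\beta]$ for some $\beta<\kappa^{++}$ (using $\cf(\kappa^{++})=\kappa^{++}$), and the generic slalom $\varphi_\beta$ added at stage $\beta$ then $\in^{\CI^*}$-localizes every member of $(\kk)^{V_0[g_\beta]}\supseteq\mathcal F$. Hence no family of size $\kappa^+$ is $\in^{\CI^*}$-unbounded, i.e. $\mfb_\kappa(\in^{\CI^*})\geq\kappa^{++}>\kappa^+$, while by Theorem~\ref{thm: trivialities at a measurable} (and Corollary~\ref{cor: trivialities at a measurable}) the corresponding value for the bounded and nonstationary ideals is $\kappa^+$ — this is the promised difference. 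Since moreover $2^\kappa=\kappa^{++}$ in $V_0[g]$, in fact $\mfb_\kappa(\in^{\CI^*})=\kappa^{++}$.
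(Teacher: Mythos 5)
Your proposal is correct and follows essentially the same route as the paper: run the Laver preparation and lifting argument of Theorems~\ref{thm: preserving measurabiblity}/\ref{thm: preserving measurabiblity2}, with the master condition made possible by the observation that $\kappa\notin j_{\mathcal U}(X^*)$ (indeed $\kappa,\kappa^+\notin j_{\mathcal U}(X^*)$), so the master stem may stop at height $\kappa$ and the permitted width $\kappa^{++}$ for the second coordinate absorbs the union over $g$. Your write-up just fills in the details the paper leaves implicit.
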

\begin{proof}
    The proof again is similar to Theorem~\ref{thm: preserving measurabiblity}, we iterate the forcing, exploiting the fact that $\CI^*$ has a canonical definition and we keep the notations from \ref{thm: preserving measurabiblity}. All the proof goes through without any changes, except the part where we have to find a master condition above $j_{\mathcal{U}}''g_\kappa$. To do that, we can simply note that $\kappa,\kappa^+\notin j_U(X^*)$, and therefore there is no risk by taking unions of $\kappa^{++}$-many sets  at each coordinate in $ j_U(X^*)\setminus \kappa$, to form $j_{\mathcal{U}}(F^p_\alpha)$. 
 \end{proof}
     \section{Club Localization Forcing and the Separation of More Cardinal Invariants} \label{section: club localization}

    The $\mathsf{ZFC}$ results from \S\ref{section: ZFC} at measurable apply equally well for the non-stationary ideal at $\kappa$. In this section we consider a variant of $\mathbb{LOC}_{h, \kappa}$ for adding a slalom capturing every ground model element of $\kk$ on a club and use this to show that the bounds are optimal for club capturing as well.  Nevertheless, in contrast to what was shown in \cite{CUMMINGSShelah} for the standard bounding and dominating numbers, we also show that the localization numbers for club capturing are not equal to the mod bounded counterparts, even at a measurable cardinal. 

    \begin{definition}
    Let $h:\kappa\to \kappa$ be strictly increasing,  $\mathbb{LOC}^{cl}_{h,\kappa}$ consists of pairs $(\sigma,F)$ where $\sigma:c\to P(\kappa)$ for some closed, bounded $c \subseteq\kappa$ such that $|\sigma(\alpha)|\leq |h(\alpha)|$ for each $\alpha \in c$. $F:\kappa\to P(\kappa)$ is a function such that $|F(\alpha)|\leq|h({\rm max}(c))|$ for every $\alpha<\kappa$. Define $(\sigma,F)\leq (\tau,G)$ if $\sigma$ end-extends $\tau$ i.e. $\sigma = \tau \hook {\rm dom}(\tau)$ and $G(\alpha)\subseteq F(\alpha)$ and for every $\alpha\in dom(\sigma)\setminus dom(\tau)$, $G(\alpha)\subseteq \sigma(\alpha)$.
\end{definition}
Note that generically, the domain of the slalom we are building will be a new club subset of $\kappa$ (indeed, it will be generic for the standard forcing to add a club via bounded approximations). Again an easy density argument shows the following.
\begin{proposition}
    Suppose that $G\subseteq \mathbb{LOC}^{cl}_{h,\kappa}$ is $V$-generic, and let $\varphi=\bigcup_{(\sigma,F)\in G}\sigma$ then $\varphi$ is a $(cl,h)$-slalom and for every $f:\kappa\to\kappa\in V$, $f\in^{cl} \varphi$.
\end{proposition}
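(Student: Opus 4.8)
The proof is a routine density argument, modeled on the corresponding fact for $\mathbb{LOC}_{h,\kappa}$. I will break it into two parts: first, that $\varphi = \bigcup_{(\sigma,F)\in G}\sigma$ is a legitimate $(cl,h)$-slalom, and second, that it captures every ground model $f:\kappa\to\kappa$ on a club.

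For the first part, I would argue that the set $D_\gamma = \{(\sigma,F) \mid \max(\dom(\sigma)) \geq \gamma\}$ is dense for each $\gamma < \kappa$: given any condition $(\sigma,F)$ with $\max(\dom(\sigma)) = \delta < \gamma$, simply extend $\sigma$ to $\sigma'$ with domain $\dom(\sigma) \cup \{\delta+1, \ldots, \gamma\}$ (a closed bounded set), setting $\sigma'(\alpha) = F(\alpha)$ on the new points — this is allowed since $|F(\alpha)| \leq |h(\max(\dom(\sigma)))| = |h(\delta)| \leq |h(\alpha)|$ by monotonicity of $h$ — and let $F'$ be the restriction of $F$ together with the new cardinality bound $|h(\gamma)|$, which only shrinks the sets $F(\alpha)$ so remains a condition below $(\sigma,F)$. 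Meeting all $D_\gamma$ forces $\dom(\varphi) = \bigcup_{(\sigma,F)\in G} \dom(\sigma)$ to be unbounded in $\kappa$; since each $\dom(\sigma)$ is closed and bounded and conditions end-extend, $\dom(\varphi)$ is a club. The constraint $|\varphi(\alpha)| \leq |h(\alpha)|$ is built into the definition of a condition, so $\varphi$ is a $(cl,h)$-slalom.

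For the second part, fix $f:\kappa\to\kappa \in V$ and consider, for each $\gamma < \kappa$, the set $E^f_\gamma = \{(\sigma,F) \mid \max(\dom(\sigma)) \geq \gamma \text{ and } f(\alpha) \in \sigma(\alpha) \text{ for all } \alpha \in \dom(\sigma) \text{ with } \alpha > \gamma\}$. Wait — more cleanly: let $E^f_\gamma = \{(\sigma,F) \mid \exists \alpha \in \dom(\sigma),\ \alpha \geq \gamma\ \wedge\ \gamma \in \dom(\sigma) \wedge f(\beta)\in\sigma(\beta) \text{ for } \beta \in \dom(\sigma)\setminus\gamma\}$ is awkward to state; instead observe it suffices to show $F^f_\gamma = \{(\sigma,F) \mid f(\gamma) \in F(\gamma) \text{ is forced to persist}\}$... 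The clean approach: show that $\{(\sigma,F) \mid \gamma \in \dom(\sigma)\} \cap \{(\sigma,F) \mid f(\alpha) \in F(\alpha) \text{ for all } \alpha \geq \max\dom(\sigma)\text{... }\}$ — the simplest correct phrasing is that for every condition $(\sigma,F)$, the condition $(\sigma, F')$ where $F'(\alpha) = F(\alpha) \cup \{f(\alpha)\}$ for $\alpha > \max\dom(\sigma)$ and $F'(\alpha) = F(\alpha)$ otherwise is still a condition below (or equal to, after adjusting) $(\sigma,F)$, provided $|F(\alpha)| < |h(\max\dom(\sigma))|$ — using that $h$ never outputs finite values and is strictly increasing, so adding one element keeps us under the bound after one more extension step. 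Iterating the density of $D_\gamma$ while maintaining $f(\alpha) \in F(\alpha)$ for all $\alpha$ past the current stem shows that once $\gamma$ enters $\dom(\varphi)$ at some condition whose stem already had $f$ fed into its side function, we get $f(\gamma) \in \varphi(\gamma)$. Thus $f(\alpha)\in\varphi(\alpha)$ for all $\alpha$ in the club $\dom(\varphi)$, except possibly below the stem length of the first condition in $G$ handling $f$; intersecting with a tail of $\dom(\varphi)$ (still a club) gives $f \in^{cl}\varphi$.

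The main obstacle, and the only place requiring genuine care, is the bookkeeping of the cardinality bounds: when we extend the stem, the bound on $F$ jumps from $|h(\max\dom(\sigma))|$ to $|h(\max\dom(\sigma'))|$ with $\sigma' \supsetneq \sigma$, and we must ensure that the values $F(\alpha)$ — into which we have been inserting $f(\alpha)$ — still satisfy the old bound $|h(\max\dom(\sigma))|$ at the moment of extension (so that they may be legally poured into $\sigma'(\alpha)$). Since $h$ is strictly increasing with infinite values, $|h(\max\dom(\sigma))| \geq |h(\alpha)|$ fails for $\alpha > \max\dom(\sigma)$ — so in fact one must extend $\sigma$ one point at a time, or note that $F(\alpha)$ has size $\leq |h(\max\dom(\sigma))| \leq |h(\alpha)|$ for $\alpha \geq \max\dom(\sigma)$ precisely because of monotonicity, which is exactly why the definition demands $h$ strictly increasing. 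Once this monotonicity observation is in place the interleaving of $D_\gamma$-density with the ``keep feeding $f$ into the side function'' step goes through verbatim as in the $\mathbb{LOC}_{h,\kappa}$ case, and the genericity of $G$ delivers both conclusions.
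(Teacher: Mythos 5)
Your argument is correct and is exactly the ``easy density argument'' the paper alludes to without writing out: density of the sets forcing the stem's domain past any $\gamma<\kappa$ (yielding a club domain, using that end-extensions of closed bounded sets union to a club), plus density of the sets in which $f(\alpha)$ has been inserted into $F(\alpha)$ for all $\alpha$ beyond the current stem, with the order then guaranteeing capture at every later point of $\dom(\varphi)$. Despite the false starts in your second paragraph, you do land on the two load-bearing observations --- that $|F(\alpha)|\le|h(\max\dom\sigma)|\le|h(\alpha)|$ for new stem points $\alpha$ by monotonicity of $h$, and that adding $\{f(\alpha)\}$ to $F(\alpha)$ preserves the cardinality bound because $h$ takes infinite values --- so the proof goes through (note only that your parenthetical ``which only shrinks the sets'' is backwards: the bound on $F$ becomes more permissive after extending the stem, so $F'=F$ works with no adjustment).
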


Similarly we have the following. \begin{proposition}
    Let $\kappa$ be inaccessible and $h:\kappa \to \kappa$ monotone increasing. Let $\kappa < \lambda < \mu$ be cardinals with $\lambda$ regular and $\mu$ of cofinality ${>}\kappa$. The following are consistent. 

    \begin{enumerate}
        \item $\mfb_h(\in^{cl}) = 2^\kappa = \lambda$
        \item $\mfd_h(\in^{cl}) = \lambda < \mu = 2^\kappa$
    \end{enumerate}
\end{proposition}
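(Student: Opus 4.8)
The plan is to mimic the two standard forcing arguments used for $\mathbb{LOC}_{h,\kappa}$ (Proposition 4.x above), only now with the club variant $\mathbb{LOC}^{cl}_{h,\kappa}$, and to verify that this forcing has the properties needed to run the usual $\Delta$-system/closure bookkeeping. First I would record the basic combinatorial properties of $\mathbb{LOC}^{cl}_{h,\kappa}$: exactly as in Example~\ref{example: non closed} and the subsequent discussion, the naive order is not ${<}\kappa$-closed, but passing to the strict order (insisting $\sigma^p$ end-extends $\sigma^q$ as a \emph{proper} end-extension, with the top point of $\dom(\sigma^q)$ strictly below $\min(\dom(\sigma^p)\setminus\dom(\sigma^q))$) gives a dense suborder which is ${<}\kappa$-closed, since the union of a ${<}\kappa$-chain of closed bounded sets whose maxima are increasing is again closed and bounded (using $\cf(\kappa)=\kappa$ and that the limit of the lengths is ${<}\kappa$). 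The same proof as in Proposition~\ref{basicfactsaboutLOC} shows there is a $\kappa$-linked dense subset (the stem determines the condition up to the second coordinate, and there are only $\kappa^{<\kappa}=\kappa$ possible stems since $\kappa$ is inaccessible), and $\kappa$-linked plus ${<}\kappa$-closed yields the $\kappa^+$-c.c. by the usual argument. Finally the density argument of the Proposition immediately above shows $\varphi_G$ is a $(cl,h)$-slalom with $f\in^{cl}\varphi_G$ for every $f\in(\kk)^V$.

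For part (1), the plan is to start in a model of $2^\kappa=\lambda$ (or with enough of GCH to arrange it) and iterate $\mathbb{LOC}^{cl}_{h,\kappa}$ with ${<}\kappa$-support for $\lambda$ many steps; by the closure and chain condition above, standard preservation theorems for ${<}\kappa$-support iterations of ${<}\kappa$-closed, $\kappa^+$-c.c. forcings over a model with $\kappa^{<\kappa}=\kappa$ (cf.\ the iteration lemmas underlying Lemma~\ref{decide check names of stems}) give that the iteration is $\kappa^+$-c.c., preserves cardinals and cofinalities, does not blow up $2^\kappa$ beyond $\lambda$, and each $\mathbb{LOC}^{cl}_{h,\kappa}$-generic slalom $\in^{cl}$-captures all functions added by earlier stages. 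A routine reflection/name-counting argument then shows any $\mathcal F\subseteq(\kk)^{V[G]}$ of size ${<}\lambda$ appears at some stage $\alpha<\lambda$ and is caught by the slalom added at stage $\alpha$, so $\mfb_h(\in^{cl})\geq\lambda$; on the other hand the generic functions added at cofinally many stages form a $\in^{cl}$-unbounded family of size $\lambda=2^\kappa$, giving $\mfb_h(\in^{cl})\leq 2^\kappa=\lambda$. For part (2) I would instead begin in a model where $2^\kappa=\mu$ with $\cf(\mu)>\kappa$ (which is preserved by the $\kappa^+$-c.c.\ iteration) and iterate $\mathbb{LOC}^{cl}_{h,\kappa}$ for only $\lambda$ many steps with $\cf(\lambda)=\lambda>\kappa$; the $\lambda$ many generic slaloms then form a $\in^{cl}$-dominating family witnessing $\mfd_h(\in^{cl})\leq\lambda$, while $\mfd_h(\in^{cl})\geq\mfd_h(\mathrm p\in^{cl})\geq\mfd_\kappa$ and a genericity/unbounded-family argument shows no family of size ${<}\lambda$ can be $\in^{cl}$-dominating since $\cf(\lambda)>\kappa$ forces any such family to be captured before stage $\lambda$; that $2^\kappa$ remains $\mu$ follows from cardinal arithmetic bookkeeping (the iteration has size $\mu^{<\kappa}=\mu$ by $\cf(\mu)>\kappa$, so adds no new subsets of $\kappa$ beyond $\mu$ many).

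I expect the main obstacle to be checking that the ${<}\kappa$-support iteration of $\mathbb{LOC}^{cl}_{h,\kappa}$ genuinely has the $\kappa^+$-c.c.\ and the right closure, since—just as flagged after Example~\ref{example: non closed} and in Lemma~\ref{decide check names of stems}—the forcing is not ${<}\kappa$-directed closed, only ${<}\kappa$-closed on the strict order, so one must be careful that limit stages of the iteration behave well. The cleanest route is to observe that everything in the construction of lower bounds at limit stages is decided by the stems (the second coordinates are only intersected against stem values), so the dense-set argument of Lemma~\ref{decide check names of stems} goes through verbatim for $\mathbb{LOC}^{cl}_{h,\kappa}$ with ``stem'' now meaning a function on a closed bounded set; once conditions are restricted to that dense set, the iteration is forcing-equivalent to a ${<}\kappa$-closed one and the $\kappa^+$-c.c.\ follows from the $\kappa$-linked dense subset together with $\kappa^{<\kappa}=\kappa$ by the standard $\Delta$-system argument. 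The remaining verifications (cardinal arithmetic, that generics at different stages give unbounded resp.\ dominating families) are routine once these structural facts are in place, so I would present them only in outline, referring back to \cite[Proposition 52]{BrendleFreidmanMontoya} and the analogous statements for $\mathbb{LOC}_{h,\kappa}$ proved earlier in this section.
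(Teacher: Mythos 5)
Your overall strategy is the same as the paper's: iterate $\mathbb{LOC}^{cl}_{h,\kappa}$ with ${<}\kappa$-supports for $\lambda$ many steps (over a model with $2^\kappa=\lambda$ for (1), resp.\ $2^\kappa=\mu$ for (2)), supported by the closure, linkedness and chain-condition facts for the club forcing and the analogue of Lemma~\ref{decide check names of stems}. That is exactly the intended argument, and the surrounding bookkeeping you describe is fine.

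There is, however, one concrete wrong step in your verification of closure, and it sits precisely at the point where the club forcing differs from $\mathbb{LOC}_{h,\kappa}$. You claim that the union of a ${<}\kappa$-chain of closed bounded sets with increasing maxima is again closed. It is not: the supremum of the maxima is a limit point of the union that does not belong to it, so the union of the stems of a strictly decreasing chain is \emph{not} a legal stem, and your proposed lower bound is not a condition. The correct fix (which is what Proposition~\ref{basic facts about club localization forcing} does) is to append the supremum $\xi$ to the domain and set $\sigma(\xi)\supseteq\bigcup_i F_i(\xi)$; one then checks $\bigl|\bigcup_i F_i(\xi)\bigr|\leq |h(\xi)|$ using that each $|F_i(\xi)|\leq |h(\max\dom\sigma_i)|$, that $h$ is increasing, and that $\kappa$ is inaccessible. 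Note that this also explains why, contrary to what you assert, the club version does \emph{not} require passing to the strict order: even when the stems stabilize, the freedom to place the next point of the (closed but otherwise arbitrary) domain far out, where $h$ is large enough to absorb all the promises $\bigcup_i F_i$, yields genuine ${<}\kappa$-closure and in fact $\kappa$-centeredness. This freedom is unavailable for $\mathbb{LOC}_{h,\kappa}$, whose stems must be initial segments, which is why Example~\ref{example: non closed} bites there but not here. Once the lower-bound construction is corrected in this way, the rest of your argument goes through as in the paper.
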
 

We now again examine some of the key properties of $\mathbb{LOC}^{cl}_{h, \kappa}$, emphasizing the differences with $\mathbb{LOC}_{h, \kappa}$. 

\begin{proposition} \label{basic facts about club localization forcing} Assume that $\kappa$ is inaccessible and that $h:\kappa \to \kappa$ is monotone increasing. 
\begin{enumerate}
    \item $\mathbb{LOC}^{cl}_{h,\kappa}$ is $\kappa$-closed.
    \item $\mathbb{LOC}^{cl}_{h,\kappa}$ is $\kappa$-centered. 
\end{enumerate}
\end{proposition}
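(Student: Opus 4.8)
The plan is to verify the two assertions directly from the definition of $\mathbb{LOC}^{cl}_{h,\kappa}$, exactly as one does for the standard club-adding forcing, being careful only about the role of the side condition $F$.

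\emph{Item (1): $\kappa$-closure.} First I would take a decreasing sequence $\langle (\sigma_i, F_i) \mid i < \delta\rangle$ of conditions with $\delta < \kappa$ and produce a lower bound. Since each $\sigma_i$ has domain a closed bounded subset $c_i$ of $\kappa$ and the order requires end-extension, the $c_i$ form an increasing (in the end-extension sense) chain of closed bounded sets; because $\delta < \kappa$ and $\kappa$ is regular, $\gamma := \sup_{i<\delta} \sup(c_i) < \kappa$. Set $c = \bigl(\bigcup_{i<\delta} c_i\bigr) \cup \{\gamma\}$, which is closed and bounded, and let $\sigma = \bigcup_{i<\delta} \sigma_i$, extended to $\gamma$ by picking $\sigma(\gamma)$ to be any single point of $\kappa$ (or $\emptyset$), so $|\sigma(\gamma)| \leq |h(\gamma)|$ holds since $h$ never outputs finite values and, in any case, $h$ is strictly increasing. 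For the side condition set $F(\alpha) = \bigcap_{i<\delta} F_i(\alpha)$ for each $\alpha < \kappa$; then $|F(\alpha)| \leq |h(\max(c_0))| \leq |h(\gamma)|$, but we need the cardinality bound $|F(\alpha)| \leq |h(\max(c))| = |h(\gamma)|$, which holds since $F(\alpha)\subseteq F_0(\alpha)$ and $|h(\max c_0)| \le |h(\gamma)|$. One then checks $(\sigma, F) \leq (\sigma_i, F_i)$ for every $i$: $\sigma$ end-extends $\sigma_i$ by construction; $F(\alpha) \subseteq F_i(\alpha)$ by definition of the intersection; and for $\alpha \in \dom(\sigma)\setminus\dom(\sigma_i)$ we need $F(\alpha) \subseteq \sigma(\alpha)$ — here, if $\alpha \in c_j$ for some $j > i$ then $F(\alpha) \subseteq F_j(\alpha) \subseteq \sigma_j(\alpha) = \sigma(\alpha)$ using $(\sigma_j,F_j)\le(\sigma_i,F_i)$, and if $\alpha = \gamma$ then $F(\gamma) \subseteq F_i(\gamma) \subseteq$ (whatever we need) — to be safe one should define $\sigma(\gamma) \supseteq F(\gamma)$, but then $|\sigma(\gamma)| \le |F(\gamma)| \le |h(\gamma)|$, so this works. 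So actually the right choice is $\sigma(\gamma) = F(\gamma)$.

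\emph{Item (2): $\kappa$-centeredness.} The partition is by the stem: for a fixed stem $\sigma$ (a function from a closed bounded $c \subseteq \kappa$ into $P(\kappa)$ with $|\sigma(\alpha)| \le |h(\alpha)|$), let $P_\sigma = \{(\sigma, F) \mid (\sigma,F) \in \mathbb{LOC}^{cl}_{h,\kappa}\}$. Since $\kappa^{<\kappa} = \kappa$ (by inaccessibility) there are only $\kappa$-many such stems, so there are $\kappa$-many pieces. I then claim that any fewer than $\kappa$-many conditions $\{(\sigma, F_j) \mid j < \delta\}$, $\delta < \kappa$, sharing a common stem $\sigma$ with $\max(c) = \gamma$, have a common lower bound, namely $(\sigma, \bigcap_{j<\delta} F_j)$: the cardinality requirement $|\bigcap_j F_j(\alpha)| \le |h(\gamma)|$ holds since the intersection is contained in any one $F_j(\alpha)$; and $(\sigma, \bigcap_j F_j) \le (\sigma, F_j)$ for each $j$ is immediate since the stems are equal (so the end-extension condition is trivial and there are no $\alpha \in \dom(\sigma)\setminus\dom(\sigma)$ to worry about) and $\bigcap_j F_j(\alpha) \subseteq F_j(\alpha)$. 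This gives $\kappa$-centeredness in the strong sense (${<}\kappa$-many conditions in a piece have a joint lower bound), which is what is meant here.

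\emph{Main obstacle.} Neither item is technically deep; the only points requiring care are (a) the choice of the top value $\sigma(\gamma)$ of the stem at the new limit point $\gamma$ in the closure argument — it must be large enough to contain $F(\gamma)$ yet still satisfy $|\sigma(\gamma)| \le |h(\gamma)|$, which is fine since $F(\gamma) \subseteq F_0(\gamma)$ has size at most $|h(\max c_0)| \le |h(\gamma)|$ — and (b) confirming that the cardinality constraint on $F$ in a lower bound is governed by $|h(\max(c))|$ for the \emph{new, larger} $c$, which only makes the constraint easier to satisfy since $h$ is increasing. I expect no genuine difficulty, so the write-up should be short.
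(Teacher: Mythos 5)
There is a genuine gap: you have reversed the direction of the order on the side conditions. By definition, $(\sigma,F)\leq(\tau,G)$ requires $G(\alpha)\subseteq F(\alpha)$ for all $\alpha$ — the promises \emph{accumulate} as conditions get stronger. Hence a lower bound $(\sigma,F)$ of a decreasing sequence $\langle(\sigma_i,F_i)\mid i<\delta\rangle$ must satisfy $F_i(\alpha)\subseteq F(\alpha)$ for every $i$, so $F(\alpha)$ must contain $\bigcup_{i<\delta}F_i(\alpha)$, not $\bigcap_{i<\delta}F_i(\alpha)$. Your candidate $(\sigma,\bigcap_i F_i)$ is simply not below the $(\sigma_i,F_i)$'s (unless all the $F_i$ coincide), and the same error invalidates your centeredness argument. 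Likewise, at the new top point $\gamma$ of the stem you need $\sigma(\gamma)\supseteq F_i(\gamma)$ for \emph{every} $i$ (third clause of the order), i.e.\ $\sigma(\gamma)\supseteq\bigcup_i F_i(\gamma)$, not merely $\supseteq\bigcap_i F_i(\gamma)$. This is not a cosmetic slip: the entire subtlety of this forcing — why the plain $\mathbb{LOC}_{h,\kappa}$ fails to be ${<}\kappa$-closed (Example \ref{example: non closed}), why neither version is directed closed, and why clb fails for the club version — lives precisely in the fact that the union of ${<}\kappa$ many promise sets can be too large for the current stem height.

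The correct argument, which is what the paper does, splits into cases. If the stems strictly increase cofinally often, then $\bigl|\bigcup_{i<\delta}F_i(\alpha)\bigr|\leq |h(\gamma)|$ where $\gamma=\sup_i\max(\dom(\sigma_i))$, because each $|F_i(\alpha)|\leq|h(\max(\dom(\sigma_i)))|$ and $h$ is increasing; one then closes the stem by appending $\gamma$ to the domain with $\sigma(\gamma)=\bigcup_i F_i(\gamma)$. If the stems stabilize (and likewise for $\kappa$-centeredness, where $\delta$ many conditions share one stem with top $\max(c)$), the union $\bigcup_{i<\delta}F_i(\alpha)$ may have size $\delta\cdot|h(\max(c))|>|h(\max(c))|$; the fix is to \emph{extend the stem by one further point} $\xi$ chosen large enough that $|h(\xi)|$ bounds this union (and large enough that $\sigma(\xi)\supseteq\bigcup_i F_i(\xi)$ is a legal stem value), which is possible exactly because the constraint on $F$ in a condition is $|F(\alpha)|\leq|h(\max(\dom(\sigma)))|$ and so relaxes as the stem grows. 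Your write-up never extends the stem beyond the single closure point $\gamma$ and never takes unions, so it misses the one step where the proposition has actual content.
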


\begin{proof}
    We remark that this is different than the case of $\mathbb{LOC}_{h, \kappa}$ as we obtain a genuine ${<}\kappa$-closed poset, with no need to pass to the strict order. Let $\langle p_i \mid i \in \gamma < \kappa\rangle$ be a decreasing sequence of conditions of some length $\gamma < \kappa$. If the stems of the conditions properly extend cofinally often then we can obtain a lower bound in almost the same way described in the proof of Proposition \ref{basicfactsaboutLOC}. Concretely, if $\lambda < \kappa$ and $\{(\sigma_i, F_i)\mid i \in \lambda\}$ is a decreasing sequence of conditions with strictly increasing stems then as before we get that $|\bigcup_{i \in \lambda} F_i({\rm sup}(\sigma_i))| \leq h({\rm sup}_{i \in \lambda} |\sigma_i|)$. This allows us to define a lower bound. The difference is now because the domain has to be closed we need to extend the union of the stems to one more point. This is now easy because we can append $\bigcup_{i \in \lambda} F_i({\rm sup}(\sigma_i))$ to the union of the stems. 
    
    If the stems stabilize at some initial stage, then we can simply find a $\sigma \supseteq \bigcup_{i < \gamma} \sigma_i$, where $\sigma_i$ is the stem of $p_i$ so that ${\rm dom} (\sigma) \setminus {\rm dom} \bigcup_{i < \gamma} \sigma_i$ consists of a single point $\xi$ larger than $\gamma$ and in particular, large enough that $\sigma  (\xi) \supseteq \bigcup_{i \in \gamma} F_i(\xi)$ for $F_i$ the second coordinate of $p_i$.  We remark for later purposes that this $\sigma$ depends not only on the stems but also on the functions $F_i$. 
    
    For $(2)$, since $\kappa = \kappa^{<\kappa}$, it suffices to show that if $\lambda < \kappa$ and $\{p_i\}_{i \in \lambda}$ all have the same stem then they have a lower bound. This is exactly however as in the first paragraph as we can omit ordinals from the domain to find a lower bound whose next point is large accommodate the promises. 
\end{proof}

A consequence of this fact, using \cite[Lemma 59]{BrendleFreidmanMontoya} is that the forcing $\mathbb{LOC}_{h, \kappa}^{cl}$ does not add an $h'$-slalom capturing the ground model elements of $\kk$ for any $h' \in \kk \cap V$ which is monotone increasing. This is also true for iterations, assuming they are of length at most $(2^\kappa)^+$, a fact whose proof however we delay until later. 

We also note for later that $\mathbb{LOC}^{cl}_{h, \kappa}$ is not directed closed, as can be seen by an example similar to that of Example \ref{example: non closed}. 
\begin{example}
    Partition $\omega_1$ (for example) into $\omega$ many pieces, say $\{A_n\; | \; n < \omega\}$. Let for each $n < \omega$ and each $\alpha \in A_n$ the condition $p_{n, \alpha} = (\sigma_{n, \alpha}, F_{n, \alpha})$ be defined as follows. First, the domain of $\sigma_{n, \alpha}$ is $n$ and for each $k < n$ we have $\sigma_{n, \alpha}(k) = k$. Let $F_{n, \alpha}(\omega) = \{\alpha\}$ and be the empty set otherwise. Then all the $p_{n, \alpha}$'s are compatible (and there is only $\omega_1$-many of them) but any joint lower bound $(\sigma, F)$ would have to have $F(\omega) = \omega_1$ and $\omega \in {\rm dom}(\sigma)$, which is not possible. 
\end{example}

However it has the same ``almost"-directed closure. The proofs of these are nearly identical to the corresponding facts for $\mathbb{LOC}_{h, \kappa}$ however the same difference appears again.

\begin{proposition} \label{semidirectedness of club version one step}
    Suppose that $A=\{(\sigma_i,F_i)\mid i<h(\lambda)\}$ is a directed system of conditions in $\mathbb{LOC}_{h, \kappa}^{cl}$ such that $\bigcup_{i \in h(\lambda)}{\rm dom}(\sigma_i)\geq\lambda$ or there an $i < h(\lambda)$ so that $\sigma_i \supseteq \sigma_j$ for all $j < h(\lambda)$. Then $A$ has a lower bound.
\end{proposition}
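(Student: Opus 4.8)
The plan is to mirror, almost verbatim, the proof of Proposition~\ref{Prop: Semi-directedness of the one step} (the ``almost-directedness'' of the one-step $\mathbb{LOC}_{h,\kappa}$), adapting only the two features that distinguish the club version: the domain of a stem must be a closed bounded set, and the bound on the second coordinate $F$ refers to $h$ evaluated at $\max(\dom\sigma)$ rather than at $|\dom\sigma|$. So first I would split into the two hypothesized cases exactly as stated. In the case where there is $i_0 < h(\lambda)$ with $\sigma_{i_0}\supseteq\sigma_j$ for all $j$, the common stem is literally $\sigma_{i_0}$; set $\sigma = \sigma_{i_0}$ and $F(\alpha) = \bigcup_{j<h(\lambda)}F_j(\alpha)$. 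Each $F_j(\alpha)$ has size $\le |h(\max\dom\sigma_j)| \le |h(\max\dom\sigma_{i_0})|$, and since there are only $h(\lambda)$-many indices and $h(\lambda)\le|h(\max\dom\sigma_{i_0})|$ (here one uses $h$ increasing and $\max\dom\sigma_{i_0}\ge\lambda$, or if not one argues the union is still small), the union $F(\alpha)$ has cardinality $\le|h(\max\dom\sigma)|$ so $(\sigma,F)$ is a legitimate condition; the verification that it is below every $(\sigma_j,F_j)$ is the same compatibility bookkeeping as in Proposition~\ref{Prop: Semi-directedness of the one step}: $\sigma_j\subseteq\sigma$, $F_j(\alpha)\subseteq F(\alpha)$, and for $\alpha\in\dom(\sigma)\setminus\dom(\sigma_j)$ pick $\beta$ with $\alpha\in\dom(\sigma_\beta)$ and use compatibility of $\sigma_\beta,\sigma_j$ to get $F_j(\alpha)\subseteq\sigma_\beta(\alpha)=\sigma(\alpha)$.

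In the other case, where $\bigcup_{i<h(\lambda)}\dom(\sigma_i)$ has supremum $\ge\lambda$, let $s=\bigcup_{i<h(\lambda)}\sigma_i$, a partial slalom whose domain is a directed union of closed bounded sets, hence has supremum $\ge\lambda$. The new wrinkle is that $\dom(s)$ need not be closed, nor need it contain its supremum, so $s$ is not yet a legal stem; following the proof of Proposition~\ref{basic facts about club localization forcing}(1), I would close it off by adjoining a single large point. Concretely set $F(\alpha)=\bigcup_{i<h(\lambda)}F_i(\alpha)$, pick one ordinal $\xi<\kappa$ above $\sup\dom(s)$ and large enough that $|h(\xi)|\ge|F(\eta)|$ for the relevant $\eta$, and define the stem $\sigma$ to have domain $\overline{\dom(s)}\cup\{\xi\}$ (the closure together with $\xi$), with $\sigma\hook\dom(s)=s$, with $\sigma(\gamma)=\emptyset$ for $\gamma\in\overline{\dom(s)}\setminus\dom(s)$, and with $\sigma(\xi)\supseteq\bigcup_{i<h(\lambda)}F_i(\xi)$ — possible since $|{\bigcup_i F_i(\xi)}|\le|h(\xi')|\le|h(\xi)|$ for any $\xi'\le\sup\dom(s)<\xi$ using monotonicity of $h$, where here one uses that $\sup\dom(s)\ge\lambda$ so each $F_i$ had its bound governed by an ordinal $\le$ something $<\xi$. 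Then define the second coordinate of the lower bound to be the function $\alpha\mapsto F(\alpha)$ for $\alpha>\xi$ (and irrelevant below); its values have size $\le|h(\xi)|=|h(\max\dom\sigma)|$ as required. The verification that $(\sigma,F)\le(\sigma_i,F_i)$ for each $i$ is then the same as before, noting that $\sigma$ end-extends each $\sigma_i$ (this is why the club-version order asks for end-extension, and the union of an increasing-in-domain directed family of stems is end-extended by its closure-plus-a-point) and the promise-absorption at coordinates in $\dom(\sigma)\setminus\dom(\sigma_i)$ goes through via compatibility exactly as in Proposition~\ref{Prop: Semi-directedness of the one step}.

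The only genuine subtlety — and the step I would be most careful about — is the cardinality accounting on $\sigma(\xi)$ and on the function $F$: one must ensure that $|h(\max\dom\sigma)| = |h(\xi)|$ really does dominate $|\bigcup_{i<h(\lambda)}F_i(\xi)|$ and $|\bigcup_{i<h(\lambda)}F_i(\alpha)|$ for $\alpha>\xi$. Each summand $F_i(\alpha)$ has size $\le|h(\max\dom\sigma_i)|$, and there are $h(\lambda)$-many of them; since $\sup_i\max\dom\sigma_i\ge\lambda$ we have $h(\lambda)\le\sup_i|h(\max\dom\sigma_i)|\le|h(\xi)|$, and $\sup_i|h(\max\dom\sigma_i)|\le|h(\sup_i\max\dom\sigma_i)|$ by monotonicity, $\le|h(\xi)|$ since $\xi>\sup\dom(s)\ge\sup_i\max\dom\sigma_i$; a union of $h(\lambda)$-many sets each of size $\le|h(\xi)|$ has size $\le|h(\xi)|\cdot|h(\xi)| = |h(\xi)|$ as $|h(\xi)|$ is infinite. (In the first case, where the stem stabilizes at $\sigma_{i_0}$ with $\max\dom\sigma_{i_0}$ possibly below $\lambda$, one instead notes the union $\bigcup_j F_j(\alpha)$ is a union of $h(\lambda)$-many sets each of size $\le|h(\max\dom\sigma_{i_0})|$, which is $\le|h(\lambda)|\cdot|h(\max\dom\sigma_{i_0})|$; if this is to be $\le|h(\max\dom\sigma_{i_0})|$ one needs $h(\lambda)\le|h(\max\dom\sigma_{i_0})|$, so in fact the statement should be read with the understanding that in the stabilizing case $\max\dom\sigma_{i_0}\ge\lambda$ as well — or, cleanly, one observes the stems being directed and of eventually-$\ge\lambda$ domain forces $\max\dom\sigma_{i_0}\ge\lambda$ in the stabilizing case too, matching the parenthetical ``or'' clause in the hypothesis.) Once this bookkeeping is pinned down, the rest is the routine compatibility verification copied from Proposition~\ref{Prop: Semi-directedness of the one step} and Proposition~\ref{basic facts about club localization forcing}(1).
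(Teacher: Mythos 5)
Your two-case decomposition is the same as the paper's, which handles the first case essentially as in Proposition~\ref{Prop: Semi-directedness of the one step} (plus the closure fix from the proof of Proposition~\ref{basic facts about club localization forcing}) and the second case by ``the exact same argument for centeredness.'' However, there are two concrete problems. First, in the case $\sup\bigcup_i\dom(\sigma_i)\geq\lambda$ you set $\sigma(\gamma)=\emptyset$ at the closure point $\gamma=\sup\dom(s)$. Since $\gamma\in\dom(\sigma)\setminus\dom(\sigma_i)$ for every $i$, the definition of the order requires $F_i(\gamma)\subseteq\sigma(\gamma)$, so you must instead put $\bigcup_i F_i(\gamma)$ there (this is legal: $\gamma\geq\lambda$ and $\gamma>\max\dom\sigma_i$ for all $i$, so the union has size at most $|h(\lambda)|\cdot\sup_i|h(\max\dom\sigma_i)|\leq|h(\gamma)|$). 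This is exactly the point the paper emphasizes in the closure proof --- the new top level of the stem ``depends not only on the stems but also on the functions $F_i$'' --- and once it is done your extra point $\xi$ is superfluous.

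Second, and more seriously: in the stabilizing case your construction keeps $\sigma=\sigma_{i_0}$ and takes $F=\bigcup_j F_j$, which is a legal condition only when $|h(\lambda)|\leq|h(\max\dom\sigma_{i_0})|$. You notice this and propose to reread the hypothesis as implicitly forcing $\max\dom\sigma_{i_0}\geq\lambda$; that reading is wrong, since the two clauses are genuine alternatives and the stabilizing alternative is invoked downstream (e.g.\ in the directedness claim inside the proof of Lemma~\ref{clubversiondoesntaddfullslalom}) precisely for families whose common stem is short. The intended argument, and the one the paper points to, is the $\kappa$-centeredness argument of Proposition~\ref{basic facts about club localization forcing}: append a single new point $\xi$ to $\dom(\sigma_{i_0})$, chosen above $\max\dom\sigma_{i_0}$ and large enough that $|h(\xi)|\geq|h(\lambda)|\cdot|h(\max\dom\sigma_{i_0})|$ (possible because $h$ is strictly increasing, so $|h(\xi)|\geq|\xi|$), set $\sigma(\xi)\supseteq\bigcup_j F_j(\xi)$, and only then take $F(\alpha)=\bigcup_j F_j(\alpha)$, whose size is now bounded by $|h(\max\dom\sigma)|=|h(\xi)|$. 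In short, the same ``absorb the promises into a new top level of the stem'' device you already use for $\xi$ in the first case is what is needed to rescue the second case; as written, that case of your argument does not go through.
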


\begin{proof}
The proof is nearly verbatim as that of Proposition \ref{Prop: Semi-directedness of the one step} except for the case that $\sigma_i \supseteq \sigma_j$ for all $j < h(\lambda)$. Here, however the exact same argument for centeredness works.

\end{proof}

 Let us now explore properties of iterations of $\mathbb{LOC}^{cl}_{h,\kappa}$.
   \begin{lemma}\label{decide checknames for club version}
        For each $\gamma < \kappa$, there is a dense set of conditions $p$ so that there is a $\beta > \gamma$ and if $\alpha \in {\rm dom}(p)$ then $p \hook \alpha$ forces that the stem of $p(\alpha)$ has length $\beta$ and its restriction to $\gamma$ is a check name.
    \end{lemma}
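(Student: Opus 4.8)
The plan is to follow the proof of Lemma~\ref{decide check names of stems}, with two adjustments. The first is cosmetic: since $\mathbb{LOC}^{cl}_{h,\kappa}$ is genuinely ${<}\kappa$-closed (Proposition~\ref{basic facts about club localization forcing}), so is its ${<}\kappa$-support iteration, so there is no need to pass to a strict order. The second is the new content: after reducing to conditions whose stems are all decided, one must further homogenise their lengths to a single value $\beta$; here, and throughout, by the \emph{length} of a stem I mean the supremum of its (closed, bounded) domain, and I will arrange that this supremum is a successor ordinal lying in the domain.

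First I would reduce, by induction on $\delta$ exactly as in Lemma~\ref{decide check names of stems}, to a dense set of conditions $p$ in the ${<}\kappa$-support iteration of $\mathbb{LOC}^{cl}_{h,\kappa}$ such that for every $\alpha\in\dom(p)$, $p\hook\alpha$ forces the stem of $p(\alpha)$ to equal a fixed ground-model stem $\check{\tau}_\alpha$. The successor and $\cf(\delta)\geq\kappa$ cases are verbatim; for $\cf(\delta)<\kappa$ one runs the same recursion along a cofinal sequence in $\delta$, taking lower bounds by ${<}\kappa$-closure at limits of the recursion. The one genuinely new point is that when such a lower bound is taken, the union of the stems appearing at a fixed coordinate need not have closed domain, so --- exactly as in the proof of Proposition~\ref{basic facts about club localization forcing} --- one closes it up by adjoining a single point above the supremum whose value absorbs the relevant promises. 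Since ${<}\kappa$-closed forcing adds no new elements of $H(\kappa)$, at the next successor stage such a stem (which lies in $H(\kappa)$) can be re-decided as a check name, and the bookkeeping of Lemma~\ref{decide check names of stems} goes through; note that for the present purposes it would in fact suffice to decide only the restriction of each stem to $\gamma$ as a check name together with a bound below $\kappa$ on its length.

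Now given such a $p$ with $\dom(p)=S$, since $|S|<\kappa=\cf(\kappa)$ and $\gamma<\kappa$ I would fix a successor ordinal $\beta$ with $\gamma<\beta<\kappa$ and $\beta>\max\dom(\tau_\alpha)$ for all $\alpha\in S$, and then define $q\leq p$ by recursion on $\alpha\in S$: given $q\hook\alpha\leq p\hook\alpha$, let $q(\alpha)$ be the name forced by $q\hook\alpha$ for the pair $(\sigma_\alpha,F^p_\alpha)$, where $F^p_\alpha$ is the second coordinate of $p(\alpha)$ and $\sigma_\alpha$ is $\tau_\alpha$ together with the single extra point $\beta$ assigned the value $F^p_\alpha(\beta)$. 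Then $q\hook\alpha$ forces that $\dom(\sigma_\alpha)=\dom(\tau_\alpha)\cup\{\beta\}$ is closed and bounded with supremum $\beta$; that the cardinality constraints hold, since $|F^p_\alpha(\zeta)|\leq|h(\max\dom(\tau_\alpha))|\leq|h(\beta)|$ for all $\zeta$ (using that $h$ is strictly increasing), so $(\sigma_\alpha,F^p_\alpha)$ is a legitimate condition; and that $(\sigma_\alpha,F^p_\alpha)\leq(\tau_\alpha,F^p_\alpha)=p(\alpha)$, since $\sigma_\alpha$ end-extends $\tau_\alpha$ and $F^p_\alpha(\beta)\subseteq\sigma_\alpha(\beta)$. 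Taking unions at limit $\alpha$ this yields $q\leq p$, and since the restriction of $\sigma_\alpha$ to $\gamma$ is just $\tau_\alpha\hook\gamma$, a ground-model object, $q\hook\alpha$ forces the stem of $q(\alpha)$ to have length $\beta$ and restriction to $\gamma$ a check name. As every condition has a refinement of the form of $p$ and then a further refinement in the family of such $q$'s, the family is dense.

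I expect the main obstacle to be the bookkeeping of the first step --- tracking which stems are decided and handling the closure of domains at limit stages while still ending with check-name stems --- which is already the delicate part of Lemma~\ref{decide check names of stems}; the second, length-homogenising, step is then routine, the only thing to verify being the cardinality bound on the adjoined value, which holds precisely because $h$ is increasing.
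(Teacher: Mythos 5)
Your overall route is the same as the paper's: run the fusion of Lemma \ref{decide check names of stems} for the club version, absorbing promises into a new top point of each stem whenever a lower bound is taken, and then homogenise the lengths. Your second step --- appending a single point $\beta$ above all the (bounded, ${<}\kappa$-many) stem lengths, with value $F^p_\alpha(\beta)$, and checking the order and cardinality constraints using monotonicity of $h$ --- is exactly the ``slightly more care to make the stems the same size'' that the paper leaves implicit, and it is correct.

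The one genuine problem is the first step as you state it: you claim a dense set of conditions whose stems are \emph{fully} decided as check names $\check\tau_\alpha$. Your own mechanism does not deliver this, and the paper's remark immediately after the lemma flags precisely this obstruction. In the fusion, every time a lower bound is formed at a limit stage the closure point added at the top of each stem has value (a name for) the union of the second coordinates $F_i$ of the earlier conditions, which is not a check name. Your fix --- ``re-decide it at the next successor stage'' --- works at interior limits of the recursion, but at the \emph{final} limit stage, where the actual lower bound $p_\lambda$ is produced, there is no next successor stage, and one cannot simply decide all the top values afterwards: doing so for every coordinate in the (possibly cofinal) support requires another fusion, which reintroduces new undecided closure points. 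So what the argument actually yields is a condition all of whose stems are check names \emph{strictly below} their common supremum $\beta$, with the value at $\beta$ itself possibly a nontrivial name --- which is exactly the paper's formulation and, as your own parenthetical hedge observes, is all the lemma needs since $\gamma<\beta$. To make your write-up consistent with this, in the second step $\sigma_\alpha$ should be defined as the \emph{name} for the stem of $p(\alpha)$ extended by the point $\beta$ (not as $\check\tau_\alpha$ extended by $\beta$), and you should note that the fusion can be arranged so that the undecided top point of each stem already lies above $\gamma$ (e.g.\ by insisting at successor stages of the recursion that all stems are extended past $\gamma$), so that the restriction to $\gamma$ is indeed a check name. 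With those repairs the proof is correct and matches the paper's.
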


Note this is similar, but not the same as Lemma \ref{decide check names of stems}. The difference here is that conditions may know how long their stems are but they cannot, in general, decide the value at the maximal point. This issue does not come up for the standard $\mathbb{LOC}_{h, \kappa}$ forcing since conditions do not need to have stems with closed domains.  

    \begin{proof}
    Let $\mathbb L^{cl, h}_\delta$ be the $\delta$-length iteration of $\mathbb{LOC}^{cl}_{h, \kappa}$ with ${<}\kappa$-support and $\mathbb L^{cl, h}_{\alpha,\delta}$ the $\mathbb L^{cl, h}_\alpha$-name for $\mathbb L^{cl, h}_{\gamma}$ as computed in $V^{\mathbb L^{cl, h}_{\alpha}}$, where $\gamma$ is the unique ordinal such that $\alpha+\gamma=\delta$. 
 Fix ordinals $\delta$ and $\gamma$ and assume inductively that for all $\alpha < \beta < \delta$ we have that $\mathbb L^{cl, h}_\alpha$ forces that $\mathbb L_{\alpha, \beta}^{cl, h}$ has a dense subset consisting of conditions whose stems are check names up to $\gamma$ and are forced to all have the same length as in the statement of the lemma. We want to show that the same holds for $\alpha < \delta$. The case where $\delta$ is a successor ordinal follows directly from the induction hypothesis plus the fact that $\mathbb{L}^{cl}_{h, \kappa}$ is ${<}\kappa$-closed. Hence let us assume $\delta$ is a limit ordinal. Let $p \in \mathbb L^{cl, h}_\delta$. If ${\rm supp}(p)$ is bounded below $\delta$ then again the induction hypothesis suffices so we assume that the support of $p$ is cofinal in $\delta$. Note this implies moreover that ${\rm cf}(\delta) := \lambda < \kappa$. Let $\{\delta_i\mid i \in \lambda\} \subseteq \delta$ be cofinal and strictly increasing. 

By mimicking the proof of Lemma \ref{decide check names of stems} with slightly more care to make the stems the same size, we obtain a condition $q \leq p$, all of whose stems are forced to have some size $\beta > \gamma$ and for each $\xi \in {\rm dom}(q)$ we have that the stem of $q(\xi)\hook \beta$ is a check name (but maybe the top level is not). This $q$ is then as needed.

    \end{proof}
  Using this we have the analogue of Proposition \ref{Prop: Semi-directedness of the iteration}  
\begin{proposition}
    Suppose that $\delta$ is an ordinal and $A=\{p_i\mid i<h(\lambda)\}$ is a directed system of conditions in the dense set of conditions described in Lemma \ref{decide checknames for club version} of the length $\delta$, $<\kappa$-support iteration of $\mathbb{LOC}^{cl}_{h,\kappa}$ such that for every $\alpha\in \bigcup_{i < h(\lambda)} {\rm supp}(p_i)$ either one of the alternatives of Proposition \ref{semidirectedness of club version one step} then $A$ has a lower bound.
\end{proposition}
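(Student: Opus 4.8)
The plan is to establish this by reducing to the one-step case of Proposition~\ref{semidirectedness of club version one step}, exactly as Proposition~\ref{Prop: Semi-directedness of the iteration} reduces to Proposition~\ref{Prop: Semi-directedness of the one step}. First I would fix the directed system $A=\{p_i\mid i<h(\lambda)\}$ inside the dense set of Lemma~\ref{decide checknames for club version}, and I would construct the putative lower bound $p^*$ coordinate by coordinate. Its support is $\bigcup_{i<h(\lambda)}{\rm supp}(p_i)$, which has size ${<}\kappa$ since each support does and $h(\lambda)<\kappa$ by our standing assumption that $h$ outputs cardinals (and $\lambda<\kappa$); closure of the support under the ${<}\kappa$-support iteration is automatic. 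For each $\alpha$ in this support I would define $p^*(\alpha)$ to be a name for the lower bound, in $\mathbb{LOC}^{cl}_{h,\kappa}$, of the directed system $\{p_i(\alpha)\mid i<h(\lambda),\ \alpha\in{\rm supp}(p_i)\}$ as interpreted in $V^{\mathbb L^{cl,h}_\alpha}$.

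The key point is that, because we have restricted to the dense set from Lemma~\ref{decide checknames for club version}, at each coordinate $\alpha$ the stems appearing in the $p_i(\alpha)$ are check names up to some common length, so the directedness of $A$ at coordinate $\alpha$ is literally a directedness statement about conditions in $\mathbb{LOC}^{cl}_{h,\kappa}$ that can be read off in the ground model rather than only in the intermediate extension. Thus the hypothesis -- that for each such $\alpha$ one of the two alternatives of Proposition~\ref{semidirectedness of club version one step} holds, i.e.\ either the union of the stem domains at $\alpha$ reaches $\lambda$ or one of the stems at $\alpha$ dominates all the others -- is exactly what is needed to invoke Proposition~\ref{semidirectedness of club version one step} at coordinate $\alpha$ and obtain a lower bound there. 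One does this recursively up the iteration: having defined $p^*\hook\alpha$, it forces the relevant coordinate-$\alpha$ statement, and one applies the one-step proposition in $V^{\mathbb L^{cl,h}_\alpha}$ (or rather, one checks that $p^*\hook\alpha$ forces the one-step hypothesis, so a name for the lower bound exists). Then $p^*=\langle p^*(\alpha)\mid \alpha\in\bigcup_i{\rm supp}(p_i)\rangle$ is a condition, and for each $i<h(\lambda)$ one verifies $p^*\leq p_i$ by checking the order coordinate-wise: this is immediate since $p^*(\alpha)$ was chosen below $p_i(\alpha)$ at every $\alpha\in{\rm supp}(p_i)$.

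The one subtlety to address -- and the step I expect to require the most care -- is exactly the one flagged in the proof of Lemma~\ref{decide checknames for club version} and in Proposition~\ref{basic facts about club localization forcing}: in the club version a condition may decide the \emph{length} of a stem but not its value at the top point, and forming the lower bound of a decreasing sequence of conditions with stabilizing stems requires extending the stem domain by one fresh point whose value absorbs all the $F_i$-promises. So when I pass from ``directed system'' to a decreasing cofinal subsequence in order to apply the one-step argument coordinate-wise, I need to make sure the top-point bookkeeping is consistent across coordinates, i.e.\ that the common new length $\beta$ produced by Lemma~\ref{decide checknames for club version} can be used uniformly and that appending $\bigcup_i F_i(\text{top})$ still yields a set of size ${\leq}|h(\text{top})|$; this is fine precisely because $|h(\lambda)|$-many unions stay bounded by $h$ evaluated slightly above $\lambda$, as in Proposition~\ref{semidirectedness of club version one step}. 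As the authors remark, ``there is no issue with the iteration, as everything is decided from the stems,'' so modulo this top-point care the argument is verbatim that of Proposition~\ref{Prop: Semi-directedness of the iteration}, and I would present it that concisely.
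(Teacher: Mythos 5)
Your proposal is correct and follows exactly the route the paper intends: the paper omits the proof with a pointer to Proposition~\ref{Prop: Semi-directedness of the iteration}, whose argument is precisely the coordinate-wise reduction to the one-step case over the dense set of Lemma~\ref{decide checknames for club version} that you carry out. Your explicit handling of the club-specific subtlety --- appending a top point to the closed stem domain whose value absorbs the union of the $h(\lambda)$-many promise sets --- is the right extra detail and matches the bookkeeping already done in Propositions~\ref{basic facts about club localization forcing} and~\ref{semidirectedness of club version one step}.
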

The proof is nearly identical to that of Proposition \ref{Prop: Semi-directedness of the iteration} so we omit it. 

\begin{definition}\label{clb}
    Let $\kappa$ be a regular cardinal and $\P$ be a forcing notion which is ${<}\kappa$-closed and $\kappa$-centered as witnessed by $\mathbb{P} = \bigcup_{i\in \kappa} P_i$. We say that $\P$ has {\em canonical lower bounds} (as witnessed by $f$) if there is a function $f:\kappa^{<\kappa} \to \kappa$ so that for every $
    \lambda < \kappa$ and every decreasing sequence of conditions $\{p_i\mid i \in \lambda\}$ (if $i < j$ then $p_i \geq p_j$) with $p_i \in P_{\xi_i}$ for all $i < \lambda$ we have that there is a lowerbound $p^*$ on the sequence in $P_{f(\langle \xi_i\mid i \in \lambda\rangle)}$.
\end{definition}

If $\P$ is ${<}\kappa$-closed and $\kappa$-centered with canonical lower bounds we will simply write that $\P$ has clb. Note that if $\P$ has clb then it is ${<}\kappa$-closed and $\kappa^+$-c.c. In particular, it will not collapse cardinals, a fact used implicitly and repeatedly in what follows. Most standard examples of $\kappa$-centered generalizations of forcing notions that are well known in set theory of the reals to the higher Baire spaces have clb. Interestingly, the next lemma shows that $\mathbb{LOC}^{cl}_{h, \kappa}$ does not have clb.

\begin{lemma}\label{lemma:clb do not add clubslalom}
    Suppose that $\P$ has clb and $h:\kappa \to \kappa$ is a function. If $\dot{\phi}$ is a $\P$-name for an $h$-slalom there is a function $g:\kappa \to \kappa\in V$ such that $\Vdash_{\mathbb{P}}\check{g}\not\in^{cl}\dot{\phi}$. 
\end{lemma}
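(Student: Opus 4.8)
The plan is to exploit the contrast with $\mathbb{LOC}^{cl}_{h,\kappa}$: a forcing with clb is ``tame'' in the sense that lower bounds of decreasing $\kappa$-sequences are computed from the names of the indices only (in a ground-model definable way), while the club slalom being built needs genuinely new club-domain. First I would set up the fusion-type argument. Fix a name $\dot\phi$ for an $h$-slalom and suppose toward a contradiction that for every $g \in \kappa^\kappa \cap V$ some condition forces $\check g \in^{cl}\dot\phi$. Working below a fixed condition $p_0$, I would build, by recursion on $\xi < \kappa$, a decreasing sequence of conditions $p_\xi$ together with ordinals $\gamma_\xi$ increasing and continuous, so that $p_{\xi+1}$ decides $\dot\phi\hook\gamma_\xi$ (possible since $\mathbb P$ is ${<}\kappa$-closed, by $\kappa$-many dense sets), using the clb function $f$ to take canonical lower bounds at limit stages of cofinality ${<}\kappa$ — crucially, by the canonical lower bound property the index (in the $\kappa$-centered decomposition) of $p_\xi$ at limit $\xi$, and hence $p_\xi$ itself, is computed by $f$ from the sequence of earlier indices, so the whole construction can be arranged to live below one ``master'' branch.

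The key step is then the diagonalization. Having decided $\dot\phi$ on a tail-cofinal set of ordinals $\{\gamma_\xi : \xi<\kappa\}$, at each stage $\xi$ I know (from $p_{\xi+1}$) a candidate set $\phi(\gamma_\xi)$ of size ${\le}|h(\gamma_\xi)|$; I then choose $g(\gamma_\xi) \in \kappa \setminus \phi(\gamma_\xi)$ (possible since $\kappa$ is regular and $h(\gamma_\xi)<\kappa$) and extend $g$ arbitrarily off $\{\gamma_\xi\}$. Now $g \in V$. The point is that the set $D_g = \{\gamma_\xi : \xi<\kappa\}$ on which $g$ is guaranteed to fail to be captured is club in $\kappa$ (it is the range of a continuous increasing sequence), so no club $C$ can avoid $D_g$; hence along the generic built through the master branch, $g \not\in^{cl}\dot\phi$. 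One then needs to argue this happens densely / that the master branch is compatible with the hypothesized condition forcing $\check g\in^{cl}\dot\phi$ — here I would use that the canonical lower bounds give enough uniformity that the construction below $p_0$ produces, in the extension, an actual filter-generic object along which $\dot\phi$ is evaluated and the above failure is visible; alternatively, phrase it as: the single condition $p^* = $ the clb-lower bound of the whole $\kappa$-sequence (if it exists on a club-cofinal sub-sequence) already forces $\check g\not\in^{cl}\dot\phi$.

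The main obstacle I anticipate is making the ``master branch'' argument precise without $\mathbb P$ being ${<}\kappa$-\emph{directed} closed: a $\kappa$-sequence of conditions need not have a lower bound, so I cannot simply take $p^*$ at the end. The clb hypothesis is exactly what rescues this — at limit stages of cofinality $<\kappa$ we do get lower bounds, and the canonical (definable) nature of these bounds lets one interleave the ``decide more of $\dot\phi$'' requirements with the ``take the canonical bound'' requirements so that the construction never stalls and the resulting tree of conditions has a cofinal branch realizing all the decisions; this is the standard trick for, e.g., showing $\kappa$-centered clb forcings are $\mathfrak p_\kappa$-distributive (cf. \cite{FischerGaps}), adapted to extract a single $g$. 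I would also need the small bookkeeping observation that $\dom(\dot\phi)$, forced to be a club, meets $D_g$ in a club (intersection of two clubs), so the contradiction with $\check g\in^{cl}\dot\phi$ is genuine. The rest — verifying sizes of slices, that $D_g$ is closed, that $g\in V$ — is routine.
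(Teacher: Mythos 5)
There is a genuine gap, and it sits exactly where you flagged your ``main obstacle.'' Your construction diagonalizes $g$ against the values of $\dot\phi$ decided along a \emph{single} decreasing $\kappa$-sequence $\langle p_\xi \mid \xi<\kappa\rangle$. But the conclusion you need is that the \emph{maximal} condition forces $\check g\not\in^{cl}\dot\phi$, i.e.\ that \emph{no} condition forces capture of $g$ on a club. A $\kappa$-length branch has, in general, no lower bound (clb only gives lower bounds for sequences of length ${<}\kappa$), so ``the generic built through the master branch'' is not realized by any condition, and a generic filter incompatible with your branch may evaluate $\dot\phi$ so as to capture $g$ everywhere. Worse, the negation of the lemma is ``for every $g$ there is \emph{some} condition forcing $\check g\in^{cl}\dot\phi$,'' and that condition need not be compatible with your branch at all; exhibiting one branch along which $g$ escapes does not contradict it. The interleaving you propose cannot repair this, because the obstruction is not about the construction stalling but about the quantifier over conditions: $g$ must be chosen uniformly against every possible way the forcing can decide $\dot\phi$, before any particular condition or branch is fixed.

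The paper's proof resolves this by diagonalizing against the \emph{centered pieces} rather than against a branch. Writing $\P=\bigcup_{\gamma<\kappa}P_\gamma$, one sets $\phi_\gamma(\beta)=\{\delta \mid \exists p\in P_\gamma\ p\Vdash\check\delta\in\dot\phi(\check\beta)\}$; centeredness forces $|\phi_\gamma(\beta)|\le|h(\beta)|$, so each $\phi_\gamma$ is an $h$-slalom in $V$. Then $g(\alpha)$ is chosen outside $\bigcup_{s\in s(\alpha)}\phi_{f(s)}(\alpha)$, where $s(\alpha)$ is the ($<\kappa$-sized, by inaccessibility) set of potential index-sequences of length ${\le}\alpha$ and $f$ is the clb function. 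This makes $g$ escape every value that \emph{any} canonical lower bound of \emph{any} short decreasing sequence could force into $\dot\phi$. Given an arbitrary $p$ forcing $\check g\in^{cl}\dot\phi$ via a club name $\dot C$, an elementary submodel $M$ with $\gamma=\sup(M\cap\kappa)$ yields an $M$-generic sequence below $p$ whose canonical lower bound $q$ lies in $P_{f(s)}$ for some $s\in s(\gamma)$ and forces $\gamma\in\dot C$, hence $g(\gamma)\in\dot\phi(\gamma)\subseteq\phi_{f(s)}(\gamma)$ --- contradicting the choice of $g(\gamma)$. Note also that the role of your set $D_g$ is played there by the single closure point $\gamma$ of $M$: one does not need a club of escape points, only that every club name must, below every condition, admit a point of the form $\sup(M\cap\kappa)$ reached by a short $M$-generic sequence. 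If you reorganize your argument so that $g$ is defined from the $\phi_{f(s)}$'s first and the branch is built only \emph{after} fixing the putative capturing condition, you recover the paper's proof.
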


\begin{proof}
    Let $f:\kappa^{<\kappa} \to \kappa$ witness that $\P$ has canonical lower bounds. Let $\P = \bigcup_{\gamma < \kappa} P_\gamma$ be the centered pieces. For each $\gamma < \kappa$ let $\phi_\gamma$ be the slalom defined by $\phi_\gamma (\beta) = \{\delta \; | \; \exists p \in P_\gamma \; p \forces \check{\delta} \in \dot{\phi}(\check{\beta})\}$. Since $\P$ is $\kappa$-centered this is an $h$-slalom. For each $\alpha < \kappa$ denote by $s(\alpha)$ the set of all functions of size at most $\alpha$ with domain and range subsets of $\alpha$. Let $g:\kappa \to \kappa$ be such that for each $\alpha < \kappa$ we have $g(\alpha) \notin \bigcup_{s \in s(\alpha)} \phi_{f(s)}(\alpha)$. Note that such a $g$ exists since $|s(\alpha)|\times h(\alpha) < \kappa$ since $\kappa$ is strongly inaccessible. 

    Now towards a contradiction suppose $p \in \P$ forces that $\dot{C}$ names a club, and for all $\beta \in \dot{C}$ we have $\check{g}(\beta) \in \dot{\phi}(\beta)$. Let $\theta >> \kappa$ be sufficiently large and let $M \prec H_\theta$ be a model so that $p, \P, \dot{C}, g, \dot{\phi}, \kappa \in M$ and $|M| < \kappa$. Moreover, assume that if $\gamma := {\rm sup}(M \cap \kappa)$ then $M$ is closed under ${<}\gamma$-sequences and that $|M|=\gamma$. In particular, $M\cap \gamma = \gamma$. This is possible by inaccessiblity. By recursion using the ${<}\kappa$-closure define a sequence $\vec{p}_M :=\{p_i\; | \; i < \delta < \kappa\}$ so that $\vec{p}_M$ generates an $M$-generic filter. Note this is where we need the closure - as we need to assume that the initial segments of the sequence are in $M$. For each $i$ let $p_i \in P_{\gamma_i}$ and note that $\gamma_i \in M$ and hence $\gamma_i < \gamma$ with $p_0 = p$. Let $s = \langle \gamma_i\; | \;i \in \delta\rangle$ and note that $s \in s(\gamma)$. Thus by construction we have $g(\gamma) \notin \phi_{f(s)} (\gamma)$. Now let $q \in P_{f(s)}$ be the lower bound on the $p_i$'s given by the canonical lower bounds. We have by density plus the fact that $\gamma \cap M$ is cofinal in $\gamma$ that $q \forces \gamma \in \dot{C}$ and hence $q \forces \check{g}(\check{\gamma}) \in \dot{\phi}(\check{\gamma})$, but this contradicts the sentence a few lines ago. 
\end{proof}

Intuitively, the reason why clb fails for $\mathbb{LOC}_{h, \kappa}^{cl}$ is because the domain is required to be a closed set and hence if we take a sequence of conditions with strictly increasing stems we cannot simply put their union as the stem of the lower bound. Rather we need to extend to the closure of that union. Then, we are required to state explicitly which ordinals the slalom captures at the closure point. This cannot be decided from the stems alone - an observation we remarked upon in the proof of the $\kappa$-closure of the forcing. In the next section, we will put these facts together to obtain several inequalities.

\subsection{Consistency Results and Cardinal Characteristic Inequalities} \label{cons1}
We continue our discussion from the previous section on $\mathbb{LOC}^{cl}_{h, \kappa}$ and obtain several consistent cardinal characteristic inequalities. These results hold for an inaccessible cardinal $\kappa$ but the proofs are flexible enough that we will be able to apply them to preserve larger cardinals when interwoven with the lifting arguments from \S~\ref{cons2}. First, we separate the standard localization numbers from their club variants. The following lemma gives the requisite preservation result. Denote by $\text{Add}(\kappa,\lambda)$ the standard forcing to add $\lambda$ many Cohen subsets to $\kappa$.  

\begin{lemma} \label{clubversiondoesntaddfullslalom}
    Assume $\GCH$. Let $\kappa$ be strongly inaccessible, let $h, h'\in \kk$ be monotone increasing functions. Let $\lambda > \kappa$ be a regular cardinal. Suppose $\beta \leq \lambda^+$ and let $G_0 \subseteq \text{Add}(\kappa, \lambda)$. In $V[G_0]$, the ${<}\kappa$-support iteration of $\mathbb{LOC}^{cl}_{h, \kappa}$ of length $\beta$ does not add an $h'$-slalom capturing the Cohen functions in $\kk$ from $G_0$. 
\end{lemma}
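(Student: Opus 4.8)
The plan is to argue by contradiction. Write $\mathbb{Q}:=\text{Add}(\kappa,\lambda)*\dot{\mathbb{L}}$, where $\dot{\mathbb{L}}$ is the $\text{Add}(\kappa,\lambda)$-name for the ${<}\kappa$-support iteration $\mathbb{L}^{cl,h}_\beta$ of $\mathbb{LOC}^{cl}_{h,\kappa}$ as computed in $V[G_0]$, and let $\dot c_i$ ($i<\lambda$) name the Cohen functions of $\kk$ added by $G_0$. Suppose some $p^*\in\mathbb{Q}$ forces that $\dot\varphi$ is an $h'$-slalom with $\check c_i\in^*\dot\varphi$ for every $i<\lambda$. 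First I would record that $\mathbb{Q}$ is ${<}\kappa$-closed and $\kappa^+$-c.c. (the iteration is $\kappa^+$-c.c. by the usual $\Delta$-system argument applied to the $\kappa$-centered iterands $\mathbb{LOC}^{cl}_{h,\kappa}$, using $\kappa^{<\kappa}=\kappa$ in $V[G_0]$). Coding $[\kappa]^{<\kappa}$ as $\kappa$, treat $\dot\varphi$ as a name for a member of $\kk$; by $\kappa^+$-c.c. it has a nice name mentioning ${\le}\kappa$ conditions. The hypothesis $\beta\le\lambda^+$ — note $\lambda^+=(2^\kappa)^+$ in $V[G_0]$ — enters here: for a ${<}\kappa$-support iteration of $\kappa^+$-c.c. forcings of length at most $(2^\kappa)^+$, a nice name for a subset of $\kappa$ has hereditary support (the transitive closure of the coordinates appearing, following the promise-function names) of size ${\le}\kappa$. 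Combined with the $\kappa^+$-c.c. of $\text{Add}(\kappa,\lambda)$, this produces a set $E\in[\lambda]^{\le\kappa}$ so that $\dot\varphi$ and $p^*$ involve only Cohen coordinates in $E$. Since $|E|\le\kappa<\lambda$, fix $j_0\in\lambda\setminus E$.

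Next I would isolate the decisive lemma: \emph{$\check c_{j_0}$ is $\text{Add}(\kappa,1)$-generic over some model containing $\varphi$} — equivalently, writing $\text{Add}(\kappa,\lambda)=\text{Add}(\kappa,\{j_0\})\times\text{Add}(\kappa,\lambda\setminus\{j_0\})$, whether a condition of $\mathbb{Q}$ forces a given statement about $\dot\varphi$ depends only on the part of that condition outside the $\text{Add}(\kappa,\{j_0\})$-factor. Granting this, the contradiction is quick. Unwinding $p^*\Vdash\check c_{j_0}\in^*\dot\varphi$, pick $q\le p^*$ and $\delta<\kappa$ with $q\Vdash\forall\alpha>\delta\ (\check c_{j_0}(\alpha)\in\dot\varphi(\alpha))$; let $a$ be the $\text{Add}(\kappa,\{j_0\})$-part of $q$ and choose $\alpha_0>\max(\delta,|a|)$. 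For each $v<\kappa$, extend $a$ so as to force $\check c_{j_0}(\alpha_0)=\check v$, getting $q_v\le q$ with $q_v\Vdash\check v\in\dot\varphi(\alpha_0)$. As $q_v$ and $q$ agree off the $\text{Add}(\kappa,\{j_0\})$-factor, the lemma gives $q\Vdash\check v\in\dot\varphi(\alpha_0)$; since $v<\kappa$ was arbitrary, $q\Vdash\check\kappa\subseteq\dot\varphi(\alpha_0)$, contradicting $q\Vdash|\dot\varphi(\alpha_0)|\le|h'(\alpha_0)|<\kappa$.

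The hard part is the lemma, and the \emph{club} nature of $\mathbb{LOC}^{cl}_{h,\kappa}$ makes it genuinely subtle. The naive factorization fails: since $\dot{\mathbb{L}}$ is taken over $V[G_0]$, its generic club-slaloms are forced to capture $\check c_{j_0}$ on a club, and the sub-order of conditions with trivial $\text{Add}(\kappa,\{j_0\})$-coordinate and $c_{j_0}$-free promise functions is \emph{not} a complete sub-order of $\mathbb{Q}$ — at an iteration coordinate a stem may be end-extended only if the new values cover the old promises, and a promise mentioning $c_{j_0}$ can be covered by no $c_{j_0}$-free extension. So the plan is to use that $\dot\varphi$ has $c_{j_0}$-free hereditary support to argue that, for the purpose of deciding the statements $\check v\in\dot\varphi(\alpha_0)$, one may nonetheless work with $c_{j_0}$-free conditions: using the ${<}\kappa$-closure of $\mathbb{Q}$ (to decide such statements along ${<}\kappa$-length fusions) and the definability of the iteration, one shows that the complete sub-order of $\mathbb{Q}$ generated by the $c_{j_0}$-free conditions neither decides $\check c_{j_0}$ nor destroys its $\text{Add}(\kappa,1)$-genericity over the resulting extension — the club-slaloms' commitment to capture $\check c_{j_0}$ being pushed entirely into the ${<}\kappa$-closed remainder and invisible to $\dot\varphi$. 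Verifying this — that the relevant reductions exist and that no $c_{j_0}$-free condition is forced by these commitments — is the technical crux; I expect essentially all of the work to lie there, the rest being bookkeeping and the density computation above.
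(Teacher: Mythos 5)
Your reduction of the lemma to the ``key sub-lemma'' --- that whether a condition of $\mathbb{Q}=\text{Add}(\kappa,\lambda)*\dot{\mathbb L}$ forces $\check v\in\dot\varphi(\alpha_0)$ depends only on its part outside the $\text{Add}(\kappa,\{j_0\})$-factor --- is where the proposal breaks, and you have not closed the gap you yourself identify. The sub-lemma is not a consequence of $\dot\varphi$ having hereditary name-support disjoint from $j_0$. Consider the canonical name for the stage-$0$ generic club-slalom, extended arbitrarily to a total slalom: its nice name mentions no Cohen coordinate at all, yet its evaluation is heavily correlated with $c_{j_0}$ (on a club, $c_{j_0}(\alpha)$ is pinned down to a set of size $\le h(\alpha)$ by $\varphi_0(\alpha)$), and two conditions differing only in the $j_0$-coordinate can easily decide $\check v\in\dot\varphi(\alpha_0)$ differently. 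So ``small hereditary support'' does not yield the homogeneity your final computation needs; the obstruction is not merely that the $c_{j_0}$-free conditions fail to form a complete suborder (which you note), but that \emph{no} reduction of an arbitrary name $\dot\varphi$ to a model over which $c_{j_0}$ stays Cohen-generic exists: already $N[\varphi_0]$ (for $N=V[G_0\hook(\lambda\setminus\{j_0\})]$) destroys the genericity of $c_{j_0}$. Your closing paragraph defers exactly this point (``I expect essentially all of the work to lie there''), so the proof is incomplete, and the route sketched for completing it does not appear viable.

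The paper's argument is structured entirely differently and avoids any factorization or mutual-genericity claim. It proceeds by induction on the length $\beta$ of the iteration, strengthening the statement to a counting bound: every $h'$-slalom in $V[G_0][G_\alpha]$ captures at most $\kappa$ of the $\lambda$-many Cohen functions. Successor stages are handled by the $\kappa$-centeredness of $\mathbb{LOC}^{cl}_{h,\kappa}$ together with the fact that $\kappa$-centered forcings cannot add capturing slaloms for ground-model functions; limits of cofinality $>\kappa$ add no new slaloms; limits of cofinality $\kappa$ reduce, via bounded supports and a pigeonhole on the capture point $\delta$, to the observation that $\kappa^+$-many mutually Cohen functions cannot be covered width-$h'$; and limits of cofinality $<\kappa$ --- the hard case --- use the Engelking--Karlowicz functions and the partial directedness of the club localization forcing to assemble a single lower bound on $\kappa^+$-many deciding conditions and run the same width argument. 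If you want to salvage your write-up, you should abandon the genericity-of-$c_{j_0}$ lemma and instead prove a quantitative statement of this kind by induction on $\beta$.
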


\begin{proof}
    Let $\lambda$, $\beta$, $G_0$ etc be as in the statement of the lemma. Work in $V[G_0]$ and let $C= \{c_\xi \mid \xi \in \lambda\}$ denote the Cohen functions. Note that in $V[G_0]$ we have $2^\kappa = \lambda$ and hence $\beta \leq (2^\kappa)^+$. As no new elements of $\kk$ are added at the final stage, it suffices to consider the case where $\beta < (2^\kappa)^+$. By the Engelking-Karlowicz theorem, \cite{EKthm}, we can fix a family of $\kappa$ many functions $f_i$ for $i < \kappa$ whose domains are $\beta$ and whose ranges are the set of partial $h$-slaloms with closed domains (i.e. the stems of conditions in $\mathbb{LOC}^{cl}_{h, \kappa}$) so that every such function with domain of size ${<}\kappa$ is contained in one of the $f_i$'s. Note that if we force over this model now with a $\kappa$-closed partial order, the $f_i$'s will retain this property. 

    We want to show by induction on $\beta$ that the $\beta$-length ${<}\kappa$-supported iteration of $\mathbb{LOC}^{cl}_{h, \kappa}$, which recall we denote $\mathbb{L}^{cl, h}_{\beta}$, will preserve that the $c_\alpha$'s are unbounded with respect to $\in^*$ for $h'$-slaloms. Let $G_\beta$ be generic for this iteration and for each $\alpha < \beta$ let $G_\alpha$ be the generic up to $\alpha$. Inductively we will assume the following:

    \begin{center}
        For every $h'$-slalom $\phi$ in $V[G_0][G_\alpha]$ there are at most $\kappa$ many $\xi \in \lambda$ so that $c_\xi \in^* \phi$.
    \end{center}
    
    Clearly if we can show this for all $\beta$ this is enough. We also note that this is true of the Cohen functions in $V[G_0]$. Let us therefore assume for contradiction that there is a name $\dot\phi$ for an $h'$-slalom and the maximum condition of $\text{Add}(\kappa, \lambda) * \dot {\mathbb L}_{\beta}^{cl, h}$ forces that it will capture $\kappa^+$ many of the Cohen functions. There are four cases. 

    \noindent \underline{Case 1}: $\beta = \alpha + 1$ is a successor ordinal. Work in $V[G_0][G_\alpha]$. Since in this model the quotient of the iteration is just $\mathbb{LOC}_{h, \kappa}^{cl}$ we obtain that the quotient is $\kappa$-centered. Therefore in $V[G_0][G_\alpha]$ there are $\kappa$ many $h'$-slaloms $\l \phi_\alpha\mid\alpha \in \kappa\r$ so that if some $c_\xi$ avoids all of them then it will avoid $\dot \phi$. The inductive assumption then tells us that all but $\kappa$-many of the $c_\xi$'s will therefore be forced to avoid $\dot \phi$. 

    \noindent \underline{Case 2}: $\beta$ is a limit ordinal of cofinality greater than $\kappa$. In this case no new slaloms are added so our induction hypothesis suffices. 

    \noindent \underline{Case 3}: $\beta$ is a limit of cofinality $\kappa$. Let $\dot Z$ be the name for the set of $\xi < \lambda$ so that $c_\xi \in^*\dot \phi$ in the extension. By assumption this set is forced to have size at least $\kappa^+$. By pigeonhole, in the extension there will be some $\delta < \kappa$ so that $\kappa^+$ many of the $c_\xi$'s for $\xi \in \dot Z$ will be caught above $\delta$. As the supports are therefore bounded in $\beta$, we can find a single $\gamma < \beta$ so that for $\kappa^+$ many $\xi$ there is a $p_\xi$ whose support is contained in $\gamma$ and decides $\xi \in \dot{Z}$ with $c_\xi$ and $c_\xi$ is forced to be caught by $\phi$ above $\delta$. Work in $V[G_0][G_\gamma]$. In this model we can therefore evaluate $\kappa^+$-many elements of $\dot{Z}$ all of which are forced to be caught by $\dot \phi$ above $\delta$. Let $\bar{Z}$ be the set of these $\xi$. Now on the one hand, $\{c_\xi\mid \xi \in \bar{Z}\}$ cannot be caught by a single slalom in $V[G_0][G_\gamma]$ because of our inductive hypothesis. On the other hand, consider the function $\psi$ defined on $[\delta, \kappa)$ which maps $\alpha \mapsto \{c_\xi(\alpha)\mid \xi \in \bar{z}\}$. In $V[G_0][G_\beta]$ this function will extend to an $h'$-slalom but since we do not collapse cardinals, this means it must be such in this model, which is a contradiction. 

    \noindent \underline{Case 4}: $\beta$ is a limit ordinal and the cofinality of $\beta$ is less than $\kappa$. This is the hard case. As in Case 3, we can fix a set $\dot{Z}$ and an ordinal $\delta < \kappa$ so that in the extension $\dot{Z}$ will be a $\kappa^+$-sized set of $\xi$ so that $c_\xi \in^*\dot \phi$ and each $c_\xi$ is caught above $\delta$. Work momentarily in the extension and let $Z$ be the evaluation of $\dot{Z}$ here. 

    Recall that by Lemma \ref{decide checknames for club version} there is a dense set of conditions with a {\em height}: i.e. they force all their stems to have the same length, some fixed $\gamma$. An immediate pigeonhole argument shows the following claim.

    \begin{claim}
        There is a $\gamma < \kappa$ and an $i <\kappa$ so that $\kappa^+$-many $\xi$ are forced to be in $Z$ by a condition $p_\xi$ of height $\gamma$. Moreover, when evaluated in the extension, the map defined on the support of $p_\xi$ mapping $\alpha \mapsto {\rm stem}(p_\xi(\alpha))$ is extended by $f_i$.
    \end{claim}

    Let us say that a fondition $p$ is {\em compatible with} $f_i$ if it has the last property above. Back in $V[G_0]$, let $r$ force that some specific $i$ and $\gamma$ are as above. By strengthening if necessary we can moreover assume that $r$ forces all its stems to have some height $\theta > \gamma$ and each stem is a check name when restricted to $\gamma + 1$. Consider now the following set:

    \[Q = \{(p, \xi) \mid p || r \, \text{and} \, p \, \text{has height}\, \gamma \, \text{and} \, r \forces p \, \text{is compatible with}\, f_i \, \text{and}\,  p \forces \check \xi \in \dot Z\}\]

    The assumption on $r$ implies that $Q$ has size at least $\kappa^+$. Let $Q_0$ be the projection onto the first coordinate and $Q_1$ that onto the second coordinate. 

    \begin{claim}
        $Q_0 \cup \{r\}$ is ${<}\kappa$-directed i.e. every family of ${<}\kappa$ many conditions has a joint lower bound.
    \end{claim}

    \begin{proof}[Proof of Claim]
        Let $\{p_i\}_{i \in I}$ be some family of elements of $Q_0$ of size ${<}\kappa$. Note first that $r$ decides as check names all of the stems of the $p_i$'s, decides them to be the same as the stems of $r$ itself up to $\gamma$ and, since each $p_i$ is forced by $r$ to be compatible with $r$ we have that $r$ forces the stem of $r(\alpha)$ extends the stem of $p_i(\alpha)$ for each $i \in I$ and $\alpha \in {\rm dom}(p_i) \cap {\rm dom}(r)$. Moreover, if $\alpha \notin {\rm dom}(r)$, $r$ still forces the stems of the $p_i(\alpha)$'s to be compatible as they are both being forced to be compatible with $f_i$. Similarly $r$ forces that for each such $i$ and $\alpha$ that the stem of $r(\alpha)$ at $\zeta$ contains the ordinals given by the function in the second coordinate of $p_i(\alpha)$ at $\zeta$ for every $\zeta \in (\gamma, \theta]$. Putting these together, $r\hook \alpha$ forces that $r(\alpha)$ and $\{p_i(\alpha) \mid i \in I\}$ for a directed set of conditions exactly as in the proof of $\kappa$-centeredness of $\mathbb{LOC}^{cl}_{h, \kappa}$. This suffices therefore to find the lower bound.
    \end{proof}

    Now, consider the function $\psi$ defined on $(\delta, \kappa)$ so that $\alpha \mapsto \{c_\xi(\alpha)\mid \xi \in Q_1\}$. As in Case 3 our inductive hypothesis tells us that this function is not an $h'$-slalom (even on a tail). In particular, there is a $\gamma' > \gamma$ so that $\{c_\xi(\gamma ')\mid \xi \in Q_1\}$ has size greater than $h'(\gamma')$. Now consider some set of $h'(\gamma')^+$ many $\xi \in Q_1$ witnessing this and apply the Claim above to find an $r' \leq r$ so that $r'$ is a lower bound on all $p$ so that $\xi$ is in this set for $(p, \xi) \in Q$. This lowerbound now witnesses the same contradiction as in Case 3 so we are done. 
    
\end{proof}

As an immediate consequence, we have the following.

\begin{theorem}\label{separateclubsfrombounded}
    Let $h, h' \in \kk$ be strictly increasing. The following are consistent.
    \begin{enumerate}
        \item $\mfb_h(\in^*) = \kappa^+ < \mfb_{h'} (\in^{cl}) = \kappa^{++}$
        \item $\mfd_{h'}(\in^{cl}) = \kappa^+ < \mfd _h(\in^*) = \kappa^{++}$
    \end{enumerate}
\end{theorem}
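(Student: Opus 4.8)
The plan is to derive Theorem~\ref{separateclubsfrombounded} directly from Lemma~\ref{clubversiondoesntaddfullslalom} together with the forcing $\mathbb{LOC}^{cl}_{h',\kappa}$ and its iteration $\mathbb{L}^{cl,h'}_\beta$, and the basic chain condition/closure facts established above. Starting from a model of $\mathrm{GCH}$ with $\kappa$ strongly inaccessible, first force with $\mathrm{Add}(\kappa,\kappa^{++})$ to obtain $V[G_0]$ in which $2^\kappa = \kappa^{++}$ and there are $\kappa^{++}$ many Cohen functions $\{c_\xi \mid \xi < \kappa^{++}\} \subseteq \kk$. Then, over $V[G_0]$, iterate $\mathbb{LOC}^{cl}_{h',\kappa}$ with ${<}\kappa$-support for $\kappa^{++}$ many steps to get the final model $V_1 = V[G_0][G]$. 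Since $\mathbb{L}^{cl,h'}_{\kappa^{++}}$ is ${<}\kappa$-closed and $\kappa^+$-c.c. (by Proposition~\ref{basic facts about club localization forcing} and a standard $\Delta$-system argument for the iteration), no cardinals are collapsed and $2^\kappa$ stays $\kappa^{++}$ in $V_1$.

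For part (1): in $V_1$ we have $\mfb_{h'}(\in^{cl}) = \kappa^{++}$ because the generic slaloms added at each of the $\kappa^{++}$ stages form, by a standard reflection/cofinality argument, a scale-type witness — more precisely, any family of size $\kappa^+$ of functions in $\kk$ appears in some intermediate model $V[G_0][G\hook\alpha]$ for $\alpha < \kappa^{++}$ (using the $\kappa^+$-c.c. to reflect the names), and the slalom added at stage $\alpha$ captures all of them on a club; hence $\mfb_{h'}(\in^{cl}) \geq \kappa^{++}$, and the upper bound $\mfb_{h'}(\in^{cl}) \leq 2^\kappa = \kappa^{++}$ is Fact~(the basic bounds). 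Meanwhile $\mfb_h(\in^*) = \kappa^+$: this is exactly where Lemma~\ref{clubversiondoesntaddfullslalom} enters — with $\beta = \kappa^{++} = \lambda^+$ (taking $\lambda = \kappa^+$ in the lemma, or rather $\lambda = \kappa^{++}$ and $\beta = \kappa^{++} \leq \lambda^+$; one checks the index arithmetic so that $\beta \le \lambda^+$), the iteration $\mathbb{L}^{cl,h}_{\kappa^{++}}$ (using $h$ here, which the lemma allows since $h' = h$ is a legal instance, or simply run the lemma with the forcing parameter $h'$ and the slalom-width parameter $h$) does not add an $h$-slalom capturing the Cohen functions in $\kk$ from $G_0$; hence the $\kappa^+$-many Cohen functions $\{c_\xi \mid \xi < \kappa^+\}$ (in fact $\kappa^{++}$ many, but $\kappa^+$ suffice to exceed $\kappa^+$... careful: we need a family of size $\kappa^+$ that is $\in^*$-unbounded for $h$-slaloms, so we use $\kappa^+$ of the Cohen functions) remain $\in^*$-unbounded with respect to $h$-slaloms, giving $\mfb_h(\in^*) \leq \kappa^+$, and the reverse inequality is the basic Fact $\kappa^+ \leq \mfb_h(\in^*)$.

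For part (2): dually, in the same model $V_1$ the iteration of $\mathbb{LOC}^{cl}_{h',\kappa}$ produces, after $\kappa^+$ steps already, a family of $\kappa^+$ many $(cl,h')$-slaloms that $\in^{cl}$-dominate every $f \in \kk$ — here we must be slightly more careful: we want the iteration of length exactly $\kappa^+$ (or a suitable cofinal reflection argument) so that every $f$ appears at a bounded stage and is then captured by the next slalom, giving $\mfd_{h'}(\in^{cl}) \leq \kappa^+$, while $\kappa^+ \leq \mfd_\kappa \leq \mfd_{h'}(\in^{cl})$ gives equality. Simultaneously $\mfd_h(\in^*) = \kappa^{++}$: by Lemma~\ref{clubversiondoesntaddfullslalom} the full iteration does not localize the $\kappa^{++}$ Cohen functions by any single $h$-slalom from the extension, and more strongly a counting/reflection argument shows no family of $\kappa^+$ many $h$-slaloms in $V_1$ can $\in^*$-dominate all of them (any such family lives in an intermediate model, and the later Cohen generics escape it — this uses the mutual genericity of the Cohen functions over the localization iteration). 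Combined with $\mfd_h(\in^*) \leq 2^\kappa = \kappa^{++}$ this yields $\mfd_h(\in^*) = \kappa^{++}$. One technical point to handle is that parts (1) and (2) may need two different models (the theorem says ``the following are consistent'', read separately), so for (2) one can instead start the iteration of $\mathbb{LOC}^{cl}_{h',\kappa}$ of length only $\kappa^+$ over $V[G_0]$ where $G_0$ adds $\kappa^{++}$ Cohens, and invoke Lemma~\ref{clubversiondoesntaddfullslalom} with $\beta = \kappa^+ \leq \lambda^+$.

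The main obstacle is the bookkeeping that makes the ``short'' side exactly $\kappa^+$ and the ``long'' side exactly $\kappa^{++}$ in the \emph{same} forcing extension, or else arranging cleanly that we are allowed two separate extensions: specifically, ensuring that the iteration of $\mathbb{LOC}^{cl}_{h',\kappa}$ of length $\kappa^{++}$ genuinely drives $\mfd_h(\in^*)$ all the way up to $\kappa^{++}$ (not just past $\kappa^+$) requires the strengthened non-localization — that no size-$\kappa^+$ family of $h$-slaloms in the final model captures all $\kappa^{++}$ Cohen reals. This should follow by combining Lemma~\ref{clubversiondoesntaddfullslalom} applied to tails of the iteration (using the product-like decomposition $\mathrm{Add}(\kappa,\kappa^{++}) \cong \mathrm{Add}(\kappa,\kappa^+) \times \mathrm{Add}(\kappa,\kappa^{++})$ to peel off fresh Cohen functions mutually generic over any $\kappa^+$-sized fragment of the slalom iteration), but the precise reflection argument identifying where an arbitrary size-$\kappa^+$ family of $h$-slaloms lives, and then feeding it to the lemma, is the delicate step and is where I would spend most of the write-up.
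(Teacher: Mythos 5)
Your construction is essentially the paper's: Cohen forcing at $\kappa$ followed by a ${<}\kappa$-support iteration of $\mathbb{LOC}^{cl}_{h',\kappa}$ of length $\kappa^{++}$ for (1) (the paper adds only $\kappa^+$ Cohen functions there, which is immaterial) and of length $\kappa^+$ over $\kappa^{++}$ Cohen functions for (2), with Lemma~\ref{clubversiondoesntaddfullslalom} doing the preservation work in both cases. The one step you flag as delicate --- that no family of $\kappa^+$ many $h$-slaloms in the final model of (2) dominates --- does not need the mutual-genericity/product-decomposition argument you sketch (which is problematic anyway, since the localization iteration is defined over the full Cohen extension and its generic is not mutually generic with any of the Cohen coordinates): the inductive hypothesis inside the proof of Lemma~\ref{clubversiondoesntaddfullslalom} already shows that every $h$-slalom in the extension captures at most $\kappa$ many of the $\kappa^{++}$ Cohen functions, so a family of size $\kappa^+$ captures at most $\kappa^+$ of them and a counting argument finishes.
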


\begin{remark}
    Again, part of the interest in this result is that it contrasts with the case of $\mfb$ and $\mfd$ which are provably equal to their club versions on an inaccessible (and less), see \cite{CUMMINGSShelah}. 
\end{remark}

\begin{proof}
    For (1), simply begin in a model of $\GCH$, add $\kappa^+$-many Cohen functions and then iterate with $\mathbb{LOC}_{h', \kappa}^{cl}$. By Lemma \ref{clubversiondoesntaddfullslalom} this suffices. 
    
    For (2), again assume $\GCH$ and first add $\kappa^{++}$-many Cohen subsets to $\kappa$. Enumerate these as $\{c_\alpha \mid \alpha \in \kappa^{++}\}$ and work in $V[c_\alpha\mid \alpha \in \kappa^{++}]$. In this model we will have that $\mfd_{h'}(\in^{cl}) =  \mfd _h(\in^*) = \kappa^{++} = 2^\kappa$. Now perform a $\kappa^+$-length iteration of $\mathbb{LOC}_{h', \kappa}^{cl}$. 
\end{proof}

The above proofs do not interfere with our preservation theorems for large cardinals. In particular, by apply the proof strategy above when $\kappa$ is supercompact we obtain the following two results.

\begin{theorem}
      Relative to a supercompact cardinal, for every $\xi>1$ the following are each consistent. 
      \begin{enumerate}
      \item
      There is a measurable cardinal $\kappa$ and $\mathfrak{b}_{\kappa,\id^{+\xi}}(\in^{cl})=\kappa^{+\xi + 1}>\kappa^+$. In particular, $\mathfrak{b}_{\id^{+\xi}}(\in^{cl})>\mathfrak{b}_\kappa(\in^{cl})$.
      \item 
    There is a measurable cardinal $\kappa$ and $\mathfrak{d}_{\kappa,\id^{+\xi + 1}}(\in^{cl})=\kappa^+ < \kappa^{+\xi+1} = 2^\kappa$. In particular, $\mathfrak{d}_{\id^{+\xi + 1}}(\in^{cl})<\mathfrak{d}_\kappa(\in^{cl})$.
      \end{enumerate}
\end{theorem}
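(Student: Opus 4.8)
The plan is to run the lifting arguments of \S\ref{cons2} with the club–localization forcing $\mathbb{LOC}^{cl}_{h,\kappa}$ in place of $\mathbb{LOC}_{h,\kappa}$, and then read off the values of the invariants using the trivialization Corollary~\ref{cor: trivialities at a measurable} applied to $NS_\kappa$ (applicable because a normal measure on $\kappa$ extends the club filter). For (1), starting from a supercompact $\kappa$ with $\GCH$, I would carry out the preparation of Theorem~\ref{thm: preserving measurabiblity2} verbatim but for $\mathbb{LOC}^{cl}_{\id^{+\xi},\kappa}$, obtaining a model $V_0$, with $\GCH$ above $\kappa$ and $2^\kappa=\kappa^+$, in which the supercompactness of $\kappa$ survives the ${<}\kappa$-support iteration $\mathbb L^{cl,\id^{+\xi}}_{\kappa^{+\xi+1}}$ of length $\kappa^{+\xi+1}$. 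Forcing with this iteration over $V_0$ gives a model in which: $2^\kappa=\kappa^{+\xi+1}$ (the iteration is $\kappa^+$-c.c.\ of size $\kappa^{+\xi+1}$); $\mfb_{\id^{+\xi}}(\in^{cl})=\kappa^{+\xi+1}$ (any fewer than $\kappa^{+\xi+1}$ functions in $\kk$ have names supported below some $\gamma<\kappa^{+\xi+1}$, hence are caught on a club by the $(cl,\id^{+\xi})$-slalom generically added at stage $\gamma$, while $\mfb_{\id^{+\xi}}(\in^{cl})\le 2^\kappa$); and $\mfb_\kappa(\in^{cl})=\kappa^+$ by Corollary~\ref{cor: trivialities at a measurable}, since $\kappa$ is still measurable.

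For (2), from a model prepared (by the same argument) for the forcing $\text{Add}(\kappa,\kappa^{+\xi+1})*\dot{\mathbb L}^{cl,\id^{+\xi+1}}_{\kappa^+}$, I would first force with $\text{Add}(\kappa,\kappa^{+\xi+1})$ — being ${<}\kappa$-directed closed it preserves supercompactness, and it makes $2^\kappa=\kappa^{+\xi+1}$ while keeping $\GCH$ above $\kappa$ — and then iterate $\mathbb{LOC}^{cl}_{\id^{+\xi+1},\kappa}$ with ${<}\kappa$-support for $\kappa^+$ steps, arranging (by the preservation argument below) that $\kappa$ stays supercompact. This $\kappa^+$-c.c.\ iteration has size at most $\kappa^{+\xi+1}$, so $2^\kappa=\kappa^{+\xi+1}$ is unchanged; the $\kappa^+$ generic $(cl,\id^{+\xi+1})$-slaloms form a $\in^{cl}$-dominating family (every $f\in\kk$ has a name supported below some $\gamma<\kappa^+$, hence is caught on a club at stage $\gamma$), so $\mfd_{\id^{+\xi+1}}(\in^{cl})=\kappa^+$; and $\mfd_\kappa(\in^{cl})=2^\kappa=\kappa^{+\xi+1}$ by Corollary~\ref{cor: trivialities at a measurable}.

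The hard part, and the one place where something genuinely new beyond \S\ref{cons2} is needed, is the master condition for $j_{\mathcal{U}}''g_\kappa$ inside the image of the club–localization iteration, where $j_{\mathcal{U}}\colon V_0\to M_{\mathcal{U}}$ is a supercompactness embedding with $j_{\mathcal{U}}(\ell)(\kappa)$ the relevant iteration; everything else in the lift — the intermediate generic, the Silver criterion, and in (2) the $\text{Add}(\kappa,\kappa^{+\xi+1})$-factor, whose master condition is free from directed closure — is exactly as in Theorems~\ref{thm: preserving measurabiblity} and~\ref{thm: preserving measurabiblity2}. As there, $p^*$ is built coordinate-wise, with stem beginning $\sigma^*_\alpha:=\bigcup_{p\in g_\kappa}\sigma^p_\alpha$ (a union of check-name stems) and second coordinate a name for $\nu\mapsto\bigcup_{p\in g_\kappa}j_{\mathcal{U}}(F^p_\alpha)(\nu)$, the ${<}\kappa$-support being handled by Lemma~\ref{decide checknames for club version} and the club versions of the semi-directedness propositions (Proposition~\ref{semidirectedness of club version one step} and its iteration analogue). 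The new point is that $\sigma^*_\alpha$ has domain the generic club, hence fails to be closed at its supremum $\kappa<j_{\mathcal{U}}(\kappa)$, so I would adjoin the single point $\kappa$ to the stem and set its value there to be a name for $\bigcup_{p\in g_\kappa}j_{\mathcal{U}}(F^p_\alpha)(\kappa)$; this is a legal top value because, by $\GCH$ above $\kappa$ in $V_0$, the relevant initial segment of the iteration has only $\kappa^{+\xi}$ (resp.\ $\kappa^{+\xi+1}$ in case (2)) nice names for subsets of $\kappa$, hence only that many functions $F^p_\alpha$ and $j_{\mathcal{U}}(F^p_\alpha)$, so the union is of at most $\kappa^{+\xi}$ (resp.\ $\kappa^{+\xi+1}$) sets each of size $<\kappa$ and thus has size at most $|j_{\mathcal{U}}(\id^{+\xi})(\kappa)|=\kappa^{+\xi}$ (resp.\ $\kappa^{+\xi+1}$) — which is precisely why the widths $\id^{+\xi}$ and $\id^{+\xi+1}$ are the ones that go through. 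That $p^*\le j_{\mathcal{U}}(p)$ for each $p\in g_\kappa$ then reduces, on the coordinates $\beta\in\dom(\sigma^*_\alpha)\setminus\dom(\sigma^p_\alpha)$ — in particular at $\beta=\kappa$ — to the inclusion $j_{\mathcal{U}}(F^p_\alpha)(\beta)\subseteq\sigma^*_\alpha(\beta)$, which holds by the definition of the ordering on $\mathbb{LOC}^{cl}_{h,\kappa}$ just as in the proof of Theorem~\ref{thm: preserving measurabiblity2}.
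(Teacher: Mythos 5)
Your proposal is correct and is essentially the paper's intended argument: the paper gives no written proof beyond the remark that the constructions of \S\ref{cons1} can be interwoven with the lifting arguments of Theorems~\ref{thm: preserving measurabiblity} and~\ref{thm: preserving measurabiblity2}, and then one reads off $\mathfrak{b}_\kappa(\in^{cl})=\kappa^+$ and $\mathfrak{d}_\kappa(\in^{cl})=2^\kappa$ from Corollary~\ref{cor: trivialities at a measurable} applied to $NS_\kappa$. You correctly identify and handle the one genuinely new technical point, namely that the union of the stems along $j_{\mathcal{U}}''g_\kappa$ has domain the generic club and must be closed off at $\kappa$ with a value whose legality follows from the same nice-name counting used in Theorem~\ref{thm: preserving measurabiblity2}.
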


\begin{theorem}
       For every $\xi_0, \xi_1 > 1$, relative to a supercompact cardinal $\kappa$ the following are consistent.
    \begin{enumerate}
        \item $\kappa$ is supercompact and $\mfb_{\id^{+\xi_0}}(\in^*) = \kappa^+ < \mfb_{\id^{+\xi_1}} (\in^{cl}) = \kappa^{++}$
        \item $\kappa$ is supercompact, $\kappa^+\leq cf(\lambda)\leq\lambda\leq\kappa^{+\xi}$, and $\mathfrak{d}_{\kappa,\id^{+\xi_1+1}}(\in^{cl})=\lambda$ and $2^\kappa=\kappa^{+\xi_0+1} = \mfd_{\kappa, \id^{+\xi_0 + 1}}(\in^*)$.
    \end{enumerate}
\end{theorem}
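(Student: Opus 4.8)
The plan is to run the forcing constructions of Theorem~\ref{separateclubsfrombounded} (and of the measurable version immediately above) over a Laver‑prepared ground model, and to upgrade measurability to full supercompactness by the lifting argument of Theorem~\ref{thm: preserving measurabiblity2}. The observation that makes this work is that the club localization forcing $\mathbb{LOC}^{cl}_{h,\kappa}$ is, if anything, better behaved than $\mathbb{LOC}_{h,\kappa}$: it is genuinely ${<}\kappa$‑closed and $\kappa$‑centered (Proposition~\ref{basic facts about club localization forcing}), and its ${<}\kappa$‑support iterations enjoy the same semi‑directedness (Proposition~\ref{semidirectedness of club version one step} and the iterated version following it), which is exactly the ingredient used in Theorem~\ref{thm: preserving measurabiblity2} to manufacture master conditions. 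So the lifting argument there goes through with $\mathbb{LOC}^{cl}$ in place of $\mathbb{LOC}$; the only extra point is the leading $\text{Add}(\kappa,\cdot)$ factor, which is ${<}\kappa$‑directed closed and is absorbed by letting the Laver function guess the full two‑step iteration $\text{Add}(\kappa,\mu)*\dot{\mathbb{L}}^{cl,h}_\beta$ at $\kappa$ (the Cohen factor is then handled by the standard lifting argument for $\text{Add}(\kappa,\cdot)$, see \cite{CummingsHand}), while the tail is treated by the semi‑directedness. Throughout I would use an appropriate supercompactness measure $\mathcal U$ on $P_\kappa(\theta)$ with $\theta$ at least the final value of $2^\kappa$ and with $j_{\mathcal U}(\ell)(\kappa)$ equal to the iteration being forced.

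For part (1): start with $\kappa$ supercompact under $\GCH$, run the Easton iteration of Theorem~\ref{thm: preserving measurabiblity} (with the Laver function now guessing $\text{Add}(\kappa,\kappa^+)*\dot{\mathbb{L}}^{cl,\id^{+\xi_1}}_{\kappa^{++}}$) to get $V_0$ with $\GCH$ at and above $\kappa$, and then force over $V_0$ with $\text{Add}(\kappa,\kappa^+)*\dot{\mathbb{L}}^{cl,\id^{+\xi_1}}_{\kappa^{++}}$. The lift keeps $\kappa$ supercompact; here one uses that the iteration length $\kappa^{++}$ is at most $\kappa^{+\xi_1}=\id^{+\xi_1}(\kappa)$, which is where $\xi_1>1$ enters. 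In the final model $2^\kappa=\kappa^{++}$; every $\kappa^+$‑sized family of functions appears at a stage below $\kappa^{++}$ and is then captured on a club by the club‑slalom added at the next step, so $\mfb_{\id^{+\xi_1}}(\in^{cl})=\kappa^{++}$; and by Lemma~\ref{clubversiondoesntaddfullslalom} (with its ``$h$'' being $\id^{+\xi_1}$ and its ``$h'$'' being $\id^{+\xi_0}$) the $\kappa^+$ Cohen functions remain $\in^*$‑unbounded for $\id^{+\xi_0}$‑slaloms, whence $\mfb_{\id^{+\xi_0}}(\in^*)=\kappa^+$. For part (2): force $\text{Add}(\kappa,\kappa^{+\xi_0+1})*\dot{\mathbb{L}}^{cl,\id^{+\xi_1+1}}_{\lambda}$ (for the given $\lambda$) over the analogously prepared $V_0$; the lift is as before. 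In the extension $2^\kappa=\kappa^{+\xi_0+1}$; a density argument using $|\alpha^{+\xi_0+1}|<\kappa$ shows no $\id^{+\xi_0+1}$‑slalom from an intermediate $\text{Add}$‑extension $\in^*$‑captures the later Cohen functions, and Lemma~\ref{clubversiondoesntaddfullslalom} preserves this through the club iteration, giving $\mfd_{\id^{+\xi_0+1}}(\in^*)=2^\kappa$. Finally the $\lambda$ club‑slaloms added by the iteration $\in^{cl}$‑dominate everything (each function appears at a stage $<\lambda$ and is caught at the next stage), while any family of fewer than $\lambda$ club‑slaloms appears at a bounded stage and so fails to $\subseteq^{cl}$‑dominate the generic club‑slalom added there; via the $\subseteq^{cl}$‑reformulation (Proposition~\ref{subseteqcharacterization}) this yields $\mfd_{\id^{+\xi_1+1}}(\in^{cl})=\lambda$.

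The hard part, exactly as in Theorem~\ref{thm: preserving measurabiblity2}, is the master‑condition step of the lifting: $\mathbb{LOC}^{cl}_{h,\kappa}$ is not ${<}\kappa$‑directed closed (as an example in \S\ref{section: club localization} shows), so one cannot take a naive union of $j''g$. Instead, having restricted to the dense set of conditions with a common stem‑height (Lemma~\ref{decide checknames for club version}), the images $j(q)$ of the generic conditions form a directed system whose stems at each coordinate are eventually cofinal, and the iterated form of Proposition~\ref{semidirectedness of club version one step} supplies a lower bound; the arithmetic constraint $\mathrm{length}\le\id^{+\xi}(\kappa)$ built into that proposition is precisely what pins down the hypothesis $\xi_0,\xi_1>1$. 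Everything else --- the cardinal‑characteristic computations, the dense‑set bookkeeping (stems as check names, bounded versus cofinal supports), and checking that the $\text{Add}(\kappa,\cdot)$ factor is harmless --- is routine and runs as in Theorems~\ref{thm: preserving measurabiblity2} and~\ref{separateclubsfrombounded} and Lemma~\ref{clubversiondoesntaddfullslalom}.
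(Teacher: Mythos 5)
Your proposal is correct and follows exactly the route the paper intends: the paper states this theorem as an immediate consequence of "applying the proof strategy above" (i.e., combining the construction behind Theorem~\ref{separateclubsfrombounded} and Lemma~\ref{clubversiondoesntaddfullslalom} with the lifting arguments of Theorems~\ref{thm: preserving measurabiblity} and~\ref{thm: preserving measurabiblity2}), and you have filled in precisely those details, including the correct identification of where $\xi_0,\xi_1>1$ is used in the master-condition step.
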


Next, we look to separate more cardinal characteristics. We mentioned in the preliminaries section, $\mfb(\in^*) \leq \mfb (\in^{cl}) \leq \mfb (\mathsf{p} \in^*)$ , and dually $\mfd(\in^*) \geq \mfd (\in^{cl}) \geq \mfd (\mathsf{p} \in^*)$. Having separated the first two of these we now separate the next two. Before doing so we recall the definitions of two more standard cardinal characteristics on $\kk$. See, e.g. \cite{FischerGaps, BROOKETAYLOR201737} for more.
\begin{definition}
    Let $\kappa$ be a regular cardinal. 
    \begin{enumerate}
        \item A family $\mathcal F \subseteq [\kappa]^\kappa$ is said to have the {\em strong intersection property}, denoted SIP, if every $\mathcal F' \subseteq \mathcal F$ of size ${<}\kappa$ has intersection size $\kappa$. The cardinal $\mathfrak{p}_\kappa$ is the least size of a family $\mathcal F$ with the SIP and no pseudointersection i.e. no $A\in [\kappa]^\kappa$ so that for all $B \in \mathcal F$ we have $A \setminus B$ has size ${<}\kappa$.

        \item A family $\mathcal S \subseteq [\kappa]^\kappa$ is called splitting if for every $A \in[\kappa]^\kappa$ there is a $B \in \mathcal S$ so that $|A \cap B| = |A \setminus B| = \kappa$. The {\em splitting number} $\mathfrak{s}_\kappa$ is the least size of a splitting family.
    \end{enumerate}
\end{definition}
Note that by aforementioned results of Suzuki \cite{suzuki} if $\mathfrak{s}_\kappa > \kappa$ then $\kappa$ is weakly compact. It follows that if $\kappa$ is not a large cardinal then it may not be true that $\mathfrak{p}_\kappa \leq \mathfrak{s}_\kappa$ (in the $\omega$ case this holds). However we have the following.

\begin{proposition}\label{Prop: relation between p and s on large}
        If $\kappa$ is measurable then $\mathfrak{p}_\kappa \leq \mathfrak{s}_\kappa$. 
    \end{proposition}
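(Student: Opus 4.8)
The plan is to exploit the normal measure on $\kappa$ directly. Let $U$ be a normal ultrafilter on $\kappa$ with ultrapower embedding $j = j_U : V \to M_U$, where $M_U$ is closed under $\kappa$-sequences. Suppose $\mathcal S = \{S_\xi \mid \xi < \theta\} \subseteq [\kappa]^\kappa$ is a family with the strong intersection property and $\theta < \mathfrak{p}_\kappa$; I want to produce a pseudo-intersection of $\mathcal S$, which will show that no splitting family of size $<\mathfrak{p}_\kappa$ exists (equivalently, I will directly run the contrapositive: given a family of size $<\mathfrak p_\kappa$ I will show it is not splitting, by finding a set $A$ not split by it). Actually the cleanest route is: show every family $\mathcal S \subseteq [\kappa]^\kappa$ of size $<\mathfrak p_\kappa$ fails to be splitting. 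Since $|\mathcal S| < \mathfrak p_\kappa \le \mathfrak b_\kappa$ and these are at most $2^\kappa$, we have $\theta := |\mathcal S| < \kappa$ is possible but also $\theta$ could be $\ge \kappa$; the point is that $\mathfrak p_\kappa$ is uncountable and regular, so in any case $\theta < \mathfrak p_\kappa$.

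Here is the key construction. For each $\xi < \theta$, exactly one of $S_\xi$, $\kappa \setminus S_\xi$ belongs to $U$ (since $U$ is an ultrafilter); call that set $T_\xi \in U$. Now the family $\{T_\xi \mid \xi < \theta\} \subseteq U$ has the strong intersection property automatically because $U$ is $\kappa$-complete: any $<\kappa$-subfamily has intersection in $U$, hence of size $\kappa$. But I need more — I need a genuine pseudo-intersection $A$ of $\{T_\xi \mid \xi < \theta\}$, i.e. an $A \in [\kappa]^\kappa$ with $A \setminus T_\xi$ bounded (or of size $<\kappa$) for every $\xi$. This is where $\mathfrak p_\kappa$ enters: since $\{T_\xi \mid \xi<\theta\}$ has the SIP and $\theta < \mathfrak p_\kappa$, by definition of $\mathfrak p_\kappa$ there is a pseudo-intersection $A \in [\kappa]^\kappa$ of $\{T_\xi \mid \xi < \theta\}$. (Here we may need to first close the family under finite, or rather $<\kappa$-sized, intersections, or simply observe the SIP is exactly what is required; closing under $<\kappa$ intersections is harmless as there are still $<\mathfrak p_\kappa$ of them because $\mathfrak p_\kappa$ is regular and $\ge \kappa^+$ — wait, one must be slightly careful if $\theta \ge \kappa$, but $[\theta]^{<\kappa}$ has size $\le \theta^{<\kappa}$ which could exceed $\mathfrak p_\kappa$; so instead I will use the $\kappa$-completeness of $U$ more cleverly, or just note that SIP families of size $<\mathfrak p_\kappa$ have pseudo-intersections directly, as $\mathfrak p_\kappa$ is defined via SIP families not via filters.) So $A$ is a pseudo-intersection of all the $T_\xi$.

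Now I claim $A$ is not split by $\mathcal S$: for each $\xi$, since $A \setminus T_\xi$ has size $<\kappa$, and either $T_\xi = S_\xi$ or $T_\xi = \kappa \setminus S_\xi$. In the first case $A \setminus S_\xi$ has size $<\kappa$ so $A \cap S_\xi$ has size $\kappa$ while $A \setminus S_\xi$ does not — so $S_\xi$ does not split $A$. In the second case $A \setminus (\kappa \setminus S_\xi) = A \cap S_\xi$ has size $<\kappa$, so again $S_\xi$ does not split $A$. Thus no $S_\xi$ splits $A$, so $\mathcal S$ is not a splitting family. Since this holds for every $\mathcal S$ of size $<\mathfrak p_\kappa$, we conclude $\mathfrak s_\kappa \ge \mathfrak p_\kappa$, as desired.

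The main obstacle — and the point that needs the most care in writing — is the bookkeeping around whether $\theta = |\mathcal S|$ might be $\ge \kappa$ and whether one can legitimately invoke the definition of $\mathfrak p_\kappa$ on the family $\{T_\xi \mid \xi < \theta\}$: one must confirm this family genuinely has the strong intersection property (immediate from $\kappa$-completeness of $U$) and has size $<\mathfrak p_\kappa$ (immediate since it has the same index set as $\mathcal S$), so the definition of $\mathfrak p_\kappa$ as stated in the excerpt applies directly and yields the pseudo-intersection $A \in [\kappa]^\kappa$. No closure under intersections is actually needed because the definition of $\mathfrak p_\kappa$ in the paper only requires the SIP, not that the family be a filter base. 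The rest is the routine case split above. One should also remark that only measurability (a normal, or even just $\kappa$-complete, ultrafilter) is used, matching the hypothesis.
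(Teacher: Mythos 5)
Your proof is correct and is essentially the paper's own argument: the paper likewise picks, for each set in the family, a $U$-large set deciding it (either contained in it or disjoint from it), applies the definition of $\mathfrak{p}_\kappa$ to get a pseudo-intersection, and observes that this set is unsplit. The extra worry about closing under ${<}\kappa$-intersections is, as you ultimately note, unnecessary since $\mathfrak{p}_\kappa$ is defined via SIP families.
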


    \begin{proof}
        Let $\mu < \mathfrak{p}_\kappa$ and let $\{A_i\mid i \in \mu\} \subseteq [\kappa]^\kappa$. Let $\mathcal U$ be a measure on $\kappa$ and for each $i < \mu$ let $X_i \in \mathcal U$ be such that either $X_i \cap A_i = \emptyset$ or $X_i \subseteq A_i$. Applying $\mathfrak{p}_\kappa > \mu$ we can find an $X^* \subseteq^* X_i$ for all $i < \mu$ and hence $X^*$ witnesses that $\{A_i\mid i \in \mu\}$ is not a splitting family.
    \end{proof}

The following will allow us to separate $\mfb_{h}(\in^{cl})$ from $\mathfrak{p}_\kappa$. We refer the reader to \cite[Definition 4.4]{FischerGaps} for the definition of the higher Mathias forcing, $\mathbb M(\mathcal F)$. 

\begin{lemma}\label{lemma:preserve small b club}
    Let $\beta \leq (2^\kappa)^+$ be an ordinal. Inductively define a ${<}\kappa$-supported iteration $\{\P_\alpha,  \dot{\Q}_\alpha \mid \alpha < \beta\}$ by allowing $\dot{\mathcal F}_\alpha$ to be a $\P_\alpha$-name for a ${<}\kappa$-closed filter on $\kappa$ and $\forces_\alpha \dot{\Q}_\alpha = \mathbb M(\dot{\mathcal F}_\alpha)$. Then $\forces_\beta$``For every $h \in \kk \cap V$ which is monotone increasing there is no $h$-slalom $\varphi$ so that $f \in^{cl}\varphi$ for every $f \in \kk \cap V$". In particular, it is consistent that $\mathfrak{p}_\kappa = \mu > \mfb_h (\in^{cl}) = \kappa^+$ is consistent for any regular $\mu > \kappa^+$.
\end{lemma}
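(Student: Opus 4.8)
The plan is to prove the displayed preservation statement by induction on $\beta$ and then deduce the ``in particular'' clause by a routine bookkeeping argument. Write $V$ for the ground model (so $\kappa$ is inaccessible) and $\P_\beta$ for the length-$\beta$ iteration. The first thing to record is that each iterand $\mathbb M(\dot{\mathcal F}_\alpha)$ is, over $V^{\P_\alpha}$, a ${<}\kappa$-closed, $\kappa$-centered forcing with canonical lower bounds in the sense of Definition~\ref{clb}: the centered pieces are indexed by the stems, of which there are $\kappa^{<\kappa}=\kappa$ many, and the canonical lower bound of a ${<}\kappa$-sequence of conditions with a common stem $s$ is simply $(s,\bigcap_i A_i)$, which lies in the filter because the filter is ${<}\kappa$-closed. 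Hence $\P_\beta$ is ${<}\kappa$-closed, and a $\Delta$-system argument (using $\kappa^{<\kappa}=\kappa$ and $\beta\le(2^\kappa)^+$) shows $\P_\beta$ is $\kappa^+$-c.c., so no cardinals are collapsed.

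For the induction I would prove the formally stronger assertion: for \emph{every} monotone increasing $t\in\kk\cap V$, the family $\kk\cap V$ is $\in^{cl}$-unbounded with respect to $t$-slaloms in $V^{\P_\beta}$. This is trivial at $\beta=0$, since no single $t$-slalom can $\in^{cl}$-capture all of $\kk$ (as $\mfd_t(\in^{cl})\ge\kappa^+>1$). The heart of the matter is the successor step $\beta=\alpha+1$, and the point where one must work harder than in Lemma~\ref{lemma:clb do not add clubslalom} — which by itself only delivers a witness living in $V^{\P_\alpha}$ — is to keep the witnessing function inside $V$. Suppose, over $W:=V^{\P_\alpha}$, that $\dot\varphi$ names a $t$-slalom forced by some $p$ to $\in^{cl}$-catch every $g\in\kk\cap V$. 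As in Lemma~\ref{lemma:clb do not add clubslalom}, form the trace slaloms $\varphi_s(\gamma)=\{\delta\mid\exists A\,((s,A)\forces\delta\in\dot\varphi(\gamma))\}$ indexed by stems $s$; these are again $t$-slaloms, since conditions sharing a stem are ${<}\kappa$-directed and hence decide membership in $\dot\varphi(\gamma)$ coherently and without inflating the width. With $f$ the canonical-lower-bounds function and $s(\gamma)$ as in Lemma~\ref{lemma:clb do not add clubslalom}, the set $B(\gamma):=\bigcup_{\sigma\in s(\gamma)}\varphi_{f(\sigma)}(\gamma)$ has size $|s(\gamma)|\cdot t(\gamma)<\kappa$, so $t':=(\gamma\mapsto|s(\gamma)|\cdot t(\gamma)+1)$ is again a monotone increasing function in $V$. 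Applying the induction hypothesis at stage $\alpha$ \emph{to $t'$}, we obtain $g\in\kk\cap V$ with $S:=\{\gamma\mid g(\gamma)\notin B(\gamma)\}$ stationary in $W$. One then runs the argument of Lemma~\ref{lemma:clb do not add clubslalom} with this particular $g$, additionally choosing the elementary submodel $M\prec H^W_\theta$ so that $\sup(M\cap\kappa)\in S$ (possible since the ``suitable'' $\gamma$ form a club and $S$ is stationary): the $M$-generic sequence built from canonical lower bounds produces a condition forcing some $\gamma\in S$ into the club on which $\dot\varphi$ catches $g$, while that same condition lies in the centered piece $f(\sigma)$ for the sequence $\sigma$ of centers it traversed, whence $g(\gamma)\in\varphi_{f(\sigma)}(\gamma)\subseteq B(\gamma)$, contradicting $\gamma\in S$.

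The limit steps I would handle by adapting the corresponding cases of the proof of Lemma~\ref{clubversiondoesntaddfullslalom}, with $\kk\cap V$ in the role played there by the Cohen functions and using the same devices (the direct-limit structure of the ${<}\kappa$-support iteration at cofinality $\kappa$, the semi-directedness of the iteration at cofinality $<\kappa$, and the inductive hypothesis together with the fact that a partial function with ${<}\kappa$-sized values extending to a $t$-slalom in the final model is already one there since no cardinals are collapsed). I expect the persistent obstacle throughout to be exactly what the successor step illustrates: all the witnessing functions must be produced inside the ground model $V$ rather than inside an intermediate extension, and this is arranged by quantifying the inductive statement over all monotone widths and by threading the various reflecting submodels through prescribed stationary sets supplied by the earlier stages.

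Finally, for the ``in particular'' clause: begin in a model of $\GCH$ and iterate with ${<}\kappa$-support for $\mu$ steps (taking $\mu\le(2^\kappa)^+$; larger $\mu$ reduce to this after a preliminary forcing adjoining $\mu$ Cohen subsets of $\kappa$, which does not add an $h$-slalom capturing the original $\GCH$-model's functions — a name for such a slalom factors through one copy of $\text{Add}(\kappa,1)$, which has canonical lower bounds, so Lemma~\ref{lemma:clb do not add clubslalom} applies — after which one re-runs the induction above with that $\kappa^+$-sized family as the distinguished one). Let the names $\dot{\mathcal F}_\alpha$ enumerate by bookkeeping all names for ${<}\kappa$-closed filters generated by fewer than $\mu$ sets, equivalently all SIP families of size ${<}\mu$, which do generate ${<}\kappa$-closed filters because $\kappa$ is regular. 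Each step adjoins a pseudo-intersection of $\dot{\mathcal F}_\alpha$, so $\mathfrak p_\kappa\ge\mu$; a size count (using $\GCH$, regularity of $\mu$, and inaccessibility of $\kappa$) gives $|\P_\mu|=\mu$, the $\kappa^+$-c.c., and $2^\kappa=\mu$ in the extension, whence $\mathfrak p_\kappa=\mu$. On the other hand the preservation statement keeps $\kk\cap V$ — of size $\kappa^+$ by $\GCH$ — an $\in^{cl}$-unbounded family in the extension, so $\mfb_h(\in^{cl})\le\kappa^+$; together with the trivial lower bound $\mfb_h(\in^{cl})\ge\kappa^+$ this yields $\kappa^+=\mfb_h(\in^{cl})<\mu=\mathfrak p_\kappa$.
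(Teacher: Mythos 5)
Your overall strategy diverges from the paper's. The paper does not induct on $\beta$ at all: it shows that the \emph{entire} iteration $\P_\beta$ has canonical lower bounds in the sense of Definition~\ref{clb}, by using the Engelking--Karlowicz theorem to index the centered pieces by functions $f_i:\beta\to\kappa^{<\kappa}$ and observing that the coordinatewise limit $\lim\vec f$ of a ${<}\kappa$-sequence of such indices is again (essentially) such an index which witnesses a lower bound. Lemma~\ref{lemma:clb do not add clubslalom} is then applied once, to $\P_\beta$ itself over $V$, so the witnessing function automatically lives in $V$ and the problem you are fighting against --- that a stage-by-stage application of that lemma only produces witnesses in intermediate extensions --- never arises. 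Your inductive workaround is creative, but it has two genuine gaps.

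First, in the successor step you need an elementary submodel $M\prec H^W_\theta$ with $\gamma:=\sup(M\cap\kappa)$ lying in the stationary set $S$ supplied by the inductive hypothesis, and you justify this by asserting that the ``suitable'' $\gamma$ form a club. For the notion of suitability actually used in the proof of Lemma~\ref{lemma:clb do not add clubslalom} --- namely $|M|=\gamma$ and $M^{<\gamma}\subseteq M$ --- this is false: such an $M$ forces $\gamma^{<\gamma}=\gamma$, so in particular $\gamma$ is regular, and the regular cardinals below $\kappa$ are never a club and are not even stationary unless $\kappa$ is Mahlo. Hence $S$ may miss every suitable $\gamma$ and the contradiction cannot be reached. (One can imagine repairing this by weakening the closure demand to $M^{<\cf(\gamma)}\subseteq M$ and meeting only cofinally many of the relevant dense sets, but that is a nontrivial reworking of Lemma~\ref{lemma:clb do not add clubslalom} which you neither state nor prove.) Second, the limit steps of cofinality ${<}\kappa$ are not actually handled: Cases 3 and 4 of Lemma~\ref{clubversiondoesntaddfullslalom} hinge on the quantitative inductive hypothesis that at most $\kappa$ of the $\kappa^+$ distinguished Cohen functions are captured at earlier stages, and this counting has no analogue when the distinguished family is all of $\kk\cap V$ (of size $2^\kappa$) and the statement being propagated is merely ``no single slalom captures all of them.'' A slalom appearing first at a limit of cofinality ${<}\kappa$ is therefore not disposed of by your induction, whereas the paper's global-clb argument handles it for free. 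The ``in particular'' clause is fine modulo the preservation statement.
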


\begin{proof}
    Let $\beta$, $\{\dot{\mathcal F}_\alpha\}_{\alpha < \beta}$ and $h$ be as in the statement of the lemma. By \cite[Lemma 55]{BrendleFreidmanMontoya} we can pass to a dense set and hence assume without loss that if $p \in \P_\beta$ then for all $\alpha \in {\rm supp}(p)$ there is an $s \in\kappa^{<\kappa}$ so that $p\hook \alpha \forces p(\alpha) = (\check{s}, \dot{A})$ for some $\dot{A}$. Let us restrict to this dense subset. Since no new slaloms are added at stage $(2^\kappa)^+$ we can assume that $\beta \leq 2^\kappa$. By \cite[Lemma 4.5]{FischerGaps} $\mathbb M(\mathcal F)$ has clb for any $\kappa$-complete $\mathcal F$ and hence by \cite[Lemma 55]{BrendleFreidmanMontoya} the iteration is $\kappa$-centered. In general, it is not clear that $\P_\beta$ has clb, but in this specific case it turns out to be true, a fact we show now. By the Engelking-Karlowicz theorem, \cite{EKthm}, there are functions $f_i:\beta \to \kappa^{<\kappa}$ so that every partial function from $\beta$ to $\kappa^{<\kappa}$ of size ${<}\kappa$ is contained in one of the $f_i$'s. Let $P_i$ be the ${<}\kappa$-centered set of conditions $p$ so that if $\xi \in {\rm supp}(p)$ then the stem of $p(\xi)$ is $f_i(\xi)$. 


    First, as a preliminary observation observe that if $f:\beta \to \kappa^{<\kappa}$ is any (partial) function and $p_0, p_1 \in \P_\beta$ are such that ${\rm supp}(p_0), {\rm supp}(p_1) \subseteq {\rm dom}(f)$ and for all $i < 2$ and for all $\xi \in {\rm supp}(p_i)$ we have that $f(\xi)$ is the stem of $p_i(\xi)$ then $p_0$ and $p_1$ are compatible. Note moreover this will be true for not just two conditions but any family of less than $\kappa$ many will be jointly compatible under these conditions. 
    
    Next, for any  sequence of $f_i$'s of length $\lambda<\kappa$, say $\vec{f} = \{f_{i_\xi} \mid \xi < \lambda\}$ define a function $\lim \vec{f}$ as the (partial) function from $\beta$ to $\kappa^{<\kappa}$ defined by $\lim \vec{f} (\alpha) = s \in \kappa^{<\kappa}$ if and only if for a tail of $\xi < \eta < \lambda$ we have that $f_{i_\xi}(\alpha) \subseteq f_{i_\eta}(\alpha)$ and $s$ is the union of the $f_{i_\xi}(\alpha)$'s. In other words take the coordinate wise union of the stems given by the $f_i$'s if this is defined (on a tail) and leave it undefined otherwise. Note that, by the previous paragraph, if $\{p_i\mid i < \delta < \kappa\}$ is some family of conditions of size ${<}\kappa$ all of whose supports are contained in the domain of $\lim(\vec{f})$ and whose stems agree with $\lim (\vec{f})$ then they have a joint lower bound. Let us say that a condition like this is {\em compatible} with $\lim (\vec{f})$. The point is the following.
    \begin{claim}
Let $\lambda < \kappa$ and let $\{p_i\mid i \in \lambda\}$ be a decreasing sequence of conditions so that for each $i$ we have that $p_i \in P_{{\xi_i}}$. If $\vec{f}$ denotes the sequence of functions $\l f_{\xi_i} \mid i < \lambda \r$, then there is a lower bound on the sequence $p^*$ compatible with $\lim (\vec{f})$.
    \end{claim}

    \begin{proof}
        Let $p^*$ be the greatest lower bound of the sequence. In other words, for each $\alpha \in \bigcup_{i < \lambda} {\rm supp}(p_i)$ we define $p^*(\alpha)$ to be $(\sigma^*_\alpha, \dot{A}^*_\alpha)$ where $\sigma^*_\alpha$ is the check name for the union of the stems in the $\alpha^{\rm th}$-coordinate of the $p_i$'s and $\dot{A}^*_\alpha$ is the name for the intersection of the second coordinates (which exists by ${<}\kappa$-closure of the functions). For each $\alpha \in {\rm supp}(p^*)$, note that there is a tail of $i < \lambda$ so that $\alpha \in {\rm supp}(p)$ and for this tail the set of $f_{\xi_i}(\alpha)$'s is increasing hence $\alpha \in {\rm dom}(\lim \vec f)$ and is equal to the stem of $p^*(\alpha)$. The rest is now clear. 
    \end{proof}
    
    Given any sequence $\vec{f}$ of the functions $f_i$ of length less than $\kappa$, let $Q_{\vec{f}}$ be the set of conditions compatible with $\lim \vec{f}$. Note that by choosing $\vec{f}$ to be the constant function for some fixed $f_i$ we recover $P_i$ and hence a dense subset of $\P_\beta$ is covered by the $Q_{\vec{f}}$'s. Note that since $\kappa^{<\kappa} = \kappa$ there are only $\kappa$ many sequences $\vec{f}$. But now the proof of the claim above gives that the map $\vec{f} \mapsto \lim \vec f$ gives canonical lowerbounds, which completes the proof by Lemma \ref{lemma:clb do not add clubslalom}.

\end{proof}

A consequence of this lemma is the following.
\begin{theorem}
    Assume $\GCH$. Let $\kappa$ be strongly inaccessible and $h:\kappa \to \kappa$ strictly increasing. There is a cofinality preserving forcing extension in which $\mfb_h (\in^{cl}) = \kappa^+ < \mathfrak{p}_\kappa = \kappa^{++}$.
\end{theorem}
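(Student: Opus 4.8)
The plan is to realize this as the $\mu=\kappa^{++}$ case of Lemma~\ref{lemma:preserve small b club}, spelling out an iteration of higher Mathias forcings that drives $\mathfrak{p}_\kappa$ up to $\kappa^{++}$ while the lemma itself guarantees that no club-capturing $h$-slalom appears. Work over a model of $\GCH$ with $\kappa$ strongly inaccessible. Fix a bookkeeping surjection $\Phi\colon\kappa^{++}\to\kappa^{++}$ all of whose fibres are unbounded in $\kappa^{++}$, and build a ${<}\kappa$-support iteration $\langle\P_\alpha,\dot\Q_\alpha\mid\alpha<\kappa^{++}\rangle$ of the shape required by Lemma~\ref{lemma:preserve small b club}: at stage $\alpha$ use $\Phi(\alpha)$ to select a $\P_\alpha$-name $\dot{\mathcal A}_\alpha$ from a fixed enumeration of all $\P_\alpha$-names for subfamilies of $[\kappa]^\kappa$ of size $\le\kappa^+$; let $\dot{\mathcal F}_\alpha$ name the $\kappa$-complete filter generated by $\dot{\mathcal A}_\alpha$ if $\P_\alpha$ forces that $\dot{\mathcal A}_\alpha$ has the strong intersection property, and the trivial $\kappa$-complete filter otherwise; and set $\dot\Q_\alpha=\mathbb M(\dot{\mathcal F}_\alpha)$, the higher Mathias forcing of \cite{FischerGaps}. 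The length $\kappa^{++}$ is admissible ($\le(2^\kappa)^+$) since $2^\kappa=\kappa^+$ in $V$; and using $\GCH$ in $V$ together with the ${<}\kappa$-closure and $\kappa^+$-chain condition of the iteration (it has canonical lower bounds, as established in the proof of Lemma~\ref{lemma:preserve small b club}) one checks that $2^{\kappa^+}=\kappa^{++}$ in every intermediate model, so there are only $\kappa^{++}$ relevant names over the whole iteration and $\Phi$ can be arranged to enumerate each of them cofinally often.

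Let $\P=\P_{\kappa^{++}}$, let $G$ be generic, and argue in $V[G]$. The iteration is ${<}\kappa$-closed and $\kappa^+$-c.c., so it preserves all cofinalities (hence all cardinals) and forces $2^\kappa=\kappa^{++}$; consequently $\mathfrak{p}_\kappa\le\mfb_\kappa\le2^\kappa=\kappa^{++}$. Applying Lemma~\ref{lemma:preserve small b club} to our fixed $h$, no $h$-slalom $\varphi$ satisfies $f\in^{cl}\varphi$ for all $f\in\kk\cap V$; since $|\kk\cap V|=(2^\kappa)^V=\kappa^+$, this family witnesses $\mfb_h(\in^{cl})\le\kappa^+$, and together with the $\mathsf{ZFC}$ lower bound $\kappa^+\le\mfb_h(\in^{\CI})$ for $\CI\supseteq\jbd{\kappa}$ (applied to $\CI=NS_\kappa$) we get $\mfb_h(\in^{cl})=\kappa^+$. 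It remains to show $\mathfrak{p}_\kappa\ge\kappa^{++}$. Families with the SIP of size $\le\kappa$ always have a pseudo-intersection by the usual diagonalization (using regularity of $\kappa$), so let $\mathcal A\in[[\kappa]^\kappa]^{\kappa^+}$ have the SIP in $V[G]$. By the $\kappa^+$-chain condition, the ${<}\kappa$-support, and $\cf(\kappa^{++})=\kappa^{++}>\kappa^+$, every member of $\mathcal A$ is added by a bounded part of the iteration, so $\mathcal A$ (with an enumeration of it) lies in $V[G_\alpha]$ for some $\alpha<\kappa^{++}$; since the tail forcing is ${<}\kappa$-closed it adds no new ${<}\kappa$-subfamilies of $\mathcal A$ and changes no intersection, so $\mathcal A$ has the SIP in $V[G_\alpha]$ and the filter $\mathcal F$ it generates there is a proper $\kappa$-complete filter. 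By the choice of $\Phi$ there is $\beta\in[\alpha,\kappa^{++})$ with $\dot{\mathcal F}_\beta$ evaluating to $\mathcal F$, so $\dot\Q_\beta=\mathbb M(\mathcal F)$ adds a pseudo-intersection $A_\beta$ of $\mathcal F$, hence of $\mathcal A$. The property ``$A_\beta\setminus B$ is bounded for every $B\in\mathcal A$'' is not affected by the cofinality-preserving tail forcing, so $A_\beta$ is a pseudo-intersection of $\mathcal A$ in $V[G]$ as well. Thus every SIP family of size $\le\kappa^+$ has a pseudo-intersection, so $\mathfrak{p}_\kappa=\kappa^{++}$, as desired.

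The substantive part — that the entire ${<}\kappa$-support iteration of higher Mathias forcings retains canonical lower bounds and therefore, by Lemma~\ref{lemma:clb do not add clubslalom}, adds no $h$-slalom club-capturing $\kk\cap V$ — is exactly the content of the already-established Lemma~\ref{lemma:preserve small b club}, which was deliberately phrased for an arbitrary sequence of names for $\kappa$-complete filters; so nothing new is needed there. The only points demanding (routine) care are the cardinal-arithmetic bookkeeping check $2^{\kappa^+}=\kappa^{++}$ in intermediate models, and the reflection step placing each small SIP family, and the $\kappa$-complete filter it generates, into a bounded initial segment of the iteration; both follow in the standard way from the ${<}\kappa$-closure and $\kappa^+$-chain condition of $\P$. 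There is therefore no genuine obstacle beyond carefully invoking the earlier lemmas.
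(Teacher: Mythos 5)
Your proposal is correct and follows exactly the route the paper takes: the paper's own proof is a two-line sketch ("iterate all possible $\mathbb M(\mathcal F)$ with good bookkeeping; Lemma \ref{lemma:preserve small b club} gives $\mfb_h(\in^{cl})=\kappa^+$"), and you have simply filled in the standard bookkeeping, chain-condition, and reflection details. No discrepancies to report.
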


\begin{proof}
    Simply iterate to force with all possible $\mathbb M (\mathcal F)$ as in the description of the lemma above. Clearly good enough bookkeeping will ensure that $\mathfrak{p}_\kappa = \kappa^{++}$ holds in the extension while $\mfb_h (\in^*) = \kappa^+$ will be witnessed by the ground model functions - this is the content of Lemma \ref{lemma:preserve small b club}.
\end{proof}
Since $\mathfrak{p}_\kappa \leq \mfb (\mathsf{p} \in^*)$ we have the following immediate corollary.
\begin{corollary}
    It is consistent that $\mfb_h(\in^{cl}) < \mfb (\mathsf{p} \in^*)$.
\end{corollary}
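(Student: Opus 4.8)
The plan is to read the inequality off directly from the model produced by the preceding theorem, so that nothing new needs to be forced. Recall from the preliminaries the Proposition that $\mathfrak{p}_\kappa \leq \mfb_\kappa(\mathsf{p}\in^*)$ for every uncountable regular $\kappa$; this was obtained by combining the fact that the ${<}\kappa$-closed, $\kappa$-centered forcing of \cite[Lemma 58]{BrendleFreidmanMontoya}, which has canonical lower bounds, adds a partial slalom capturing any prescribed $\lambda$ many ground-model functions while meeting only $\lambda + \kappa$ dense sets, with the result of \cite[Theorem 1.8]{FischerGaps} that such forcings always admit generics meeting fewer than $\mathfrak{p}_\kappa$ many dense sets. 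Moreover, since a strictly increasing $h : \kappa \to \kappa$ satisfies $\id \leq h$ pointwise, monotonicity in the parameter $h$ gives $\mfb_\kappa(\mathsf{p}\in^*) \leq \mfb_h(\mathsf{p}\in^*)$ as well, so the comparison below works under either reading of $\mfb(\mathsf{p}\in^*)$.

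First I would invoke the previous theorem to pass to its cofinality-preserving extension, in which $\mfb_h(\in^{cl}) = \kappa^+$ while $\mathfrak{p}_\kappa = \kappa^{++}$. Then I would simply chain the inequalities: in that model
\[
\mfb_h(\in^{cl}) = \kappa^+ < \kappa^{++} = \mathfrak{p}_\kappa \leq \mfb_\kappa(\mathsf{p}\in^*),
\]
which is the desired strict separation.

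Since the whole argument is just this composition of two previously established facts, there is no genuine obstacle. The only bookkeeping point is that the iteration used in the previous theorem does not collapse cardinals: each iterand $\mathbb M(\mathcal F)$ has clb by \cite[Lemma 4.5]{FischerGaps}, hence by \cite[Lemma 55]{BrendleFreidmanMontoya} the ${<}\kappa$-supported iteration is ${<}\kappa$-closed and $\kappa^+$-c.c., so that the displayed values $\kappa^+$ and $\kappa^{++}$ are computed as intended. Note that the same model also witnesses $\mfb_h(\in^*) < \mfb(\mathsf{p}\in^*)$, rounding out the picture $\mfb(\in^*) \leq \mfb(\in^{cl}) \leq \mfb(\mathsf{p}\in^*)$ whose first gap was separated in Theorem \ref{separateclubsfrombounded}.
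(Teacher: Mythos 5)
Your proof is correct and is essentially the paper's own argument: the model of the preceding theorem gives $\mfb_h(\in^{cl}) = \kappa^+ < \kappa^{++} = \mathfrak{p}_\kappa$, and the inequality $\mathfrak{p}_\kappa \leq \mfb_\kappa(\mathsf{p}\in^*)$ from the preliminaries finishes it. The extra remarks on monotonicity in $h$ and on cardinal preservation are fine but not needed beyond what the cited theorem already provides.
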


Regarding $\mathfrak{p}_\kappa$ it turns out the other inequality is also consistent. 

\begin{lemma} \label{cohen witnesse are preserved}
    If $\mu > \kappa^+$ is a regular cardinal and $\dot{\mathbb L^{h}_\mu}$ is the $\text{Add}(\kappa,\kappa^+)$-name for the ${<}\kappa$-supported iteration of $\mathbb{LOC}_{h, \kappa}$ (for any $h \in V$) of length $\mu$. Then $\text{Add}(\kappa,\kappa^+) *\dot{\mathbb L^h_{\mu}}$ forces that $\mathfrak{p}_\kappa = \kappa^+ < \mfb_h(\in^*) = \mu$.
\end{lemma}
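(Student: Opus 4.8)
The plan is to verify the two equalities separately. Throughout I would work over a model of $\GCH$ (as in the surrounding results), write $V_1 = V[G_0]$ for the extension by $\text{Add}(\kappa,\kappa^+)$ and $V_2 = V_1[G]$ for the further extension by $\mathbb L^h_\mu$. First I would record the relevant cardinal arithmetic: $\text{Add}(\kappa,\kappa^+)$ has size $\kappa^+$, each iterand $\mathbb{LOC}_{h,\kappa}$ has size $2^\kappa = \kappa^+$ in $V_1$, so a routine nice-name count gives $|\text{Add}(\kappa,\kappa^+)*\dot{\mathbb L^h_\mu}| = \mu$; and since the iteration adjoins $\mu$ pairwise distinct generic slaloms, $2^\kappa = \mu$ in $V_2$. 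For $\mfb_h(\in^*) = \mu$ the inequality $\le \mu$ is then immediate since $\mfb_h(\in^*) \le 2^\kappa$. For the reverse, I would use that $\text{Add}(\kappa,\kappa^+)*\dot{\mathbb L^h_\mu}$ is ${<}\kappa$-closed (via the strict order on the iterands) and $\kappa^+$-c.c.; hence any family $\mathcal F \subseteq \kk$ of size $<\mu$ has all of its members given by names depending on at most $\kappa$ of the $\mathbb L^h_\mu$-coordinates, so---$\mu$ being regular with $\mu > \kappa^+$---there is $\alpha < \mu$ with $\mathcal F \subseteq V_1[G\hook\alpha]$. By the basic density argument for $\mathbb{LOC}_{h,\kappa}$, the slalom adjoined at stage $\alpha$ is an $h$-slalom $\in^*$-capturing every element of $\kk \cap V_1[G\hook\alpha]$, in particular every $f \in \mathcal F$; so $\mathcal F$ is not $\in^*$-unbounded and $\mfb_h(\in^*) \ge \mu$.

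For $\mathfrak p_\kappa = \kappa^+$ the lower bound $\mathfrak p_\kappa \ge \kappa^+$ holds in $\mathsf{ZFC}$ (for $\kappa$ regular), so the content is the upper bound---this is what ``Cohen witnesses are preserved'' refers to. I would take as the candidate witness the family $\mathcal A = \langle c_\alpha \mid \alpha < \kappa^+\rangle$ of Cohen subsets of $\kappa$ added by $G_0$, identifying $\text{Add}(\kappa,\kappa^+)$ with $\text{Add}(\kappa,1)^{\kappa^+}$. In $V_1$ this is a family with the SIP and with no pseudo-intersection: any $B \in [\kappa]^\kappa \cap V_1$ depends on at most $\kappa$ of the Cohen coordinates, so fixing any $\alpha$ outside that set, $c_\alpha$ is $\text{Add}(\kappa,1)$-generic over a model containing $B$, whence $\{\beta \in B \mid \beta \notin c_\alpha\}$ is unbounded, i.e. $B \not\subseteq^* c_\alpha$. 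The lemma thus reduces to showing that $\mathcal A$ still has no pseudo-intersection in $V_2$.

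To prove that preservation I would argue by reflection, in the spirit of the standard construction of $\text{Add}(\kappa,\kappa^+)$-indestructible maximal families. Suppose $p$ forces that an $\mathbb L^h_\mu$-name $\dot B$ is a pseudo-intersection of $\mathcal A$; by $\kappa^+$-c.c. the name $\dot B$ uses only $\le\kappa$ of the $\mathbb L^h_\mu$-coordinates, so it is really a name over $V_1$ for a subforcing $\mathbb L_A$ of size $\le\kappa^+$. Passing to the dense subset of Proposition~\ref{decide check names of stems} (on which stems are check names), fix---in $V$, with names---an elementary submodel $M \prec H_\theta$ with $|M| = \kappa$, $M^{<\kappa} \subseteq M$, $\kappa \subseteq M$, $\alpha^{*} := M \cap \kappa^+ \in \kappa^+$, containing all relevant parameters. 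Then $M \cap \text{Add}(\kappa,\kappa^+) = \text{Add}(\kappa,\alpha^{*})$ and, crucially, $c_{\alpha^{*}}$ is $\text{Add}(\kappa,1)$-generic over $V[G_0\hook(\kappa^+\setminus\{\alpha^{*}\})]$. Using the ${<}\kappa$-closure of $\mathbb L^h_\mu$ and the elementarity of $M$, one builds below $p$ an $M$-generic object for the fragment of the iteration reflected into $M$ which, together with the product factorization of $\text{Add}(\kappa,\kappa^+)$ at $\alpha^{*}$, exhibits an unbounded $B'$ forced to be contained in $\dot B$ and computed from data mutually generic with $c_{\alpha^{*}}$ over $V[G_0\hook(\kappa^+\setminus\{\alpha^{*}\})]$; then genericity of $c_{\alpha^{*}}$ forces $B'\setminus c_{\alpha^{*}}$ unbounded, contradicting $p \Vdash \dot B \subseteq^* c_{\alpha^{*}}$. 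Combining the two halves gives $\kappa^+ = \mathfrak p_\kappa < \mu = \mfb_h(\in^*)$ in $V_2$.

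I expect the main obstacle to be precisely this last reflection step: one must ensure that the fragment of $\dot B$ needed to witness $\dot B \not\subseteq^* c_{\alpha^{*}}$ is genuinely captured by information mutually generic with $c_{\alpha^{*}}$ over $V[G_0\hook(\kappa^+\setminus\{\alpha^{*}\})]$, even though, formally, $\dot B$ names a subset of $\kappa$ in a forcing computed over all of $V_1$ (and so ``sees'' $c_{\alpha^{*}}$). The features that make this possible are that $\mathbb L^h_\mu$ is definable and ${<}\kappa$-closed---so it adds no new bounded subsets of $\kappa$ and supports the construction of the $M$-generic object---together with its $\kappa^+$-c.c., which confines $\dot B$ to a small subforcing; reconciling the bookkeeping of which Cohen coordinates are relevant to $\dot B$ with the choice of $M$ (hence of $\alpha^{*}$) is the delicate point, to be handled as in the classical treatment of Cohen-indestructible maximal almost disjoint families and towers.
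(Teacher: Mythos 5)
Your treatment of $\mfb_h(\in^*)=\mu$ is fine and matches the standard argument (catch any ${<}\mu$-sized family at a bounded stage using the $\kappa^+$-c.c.\ and regularity of $\mu$), and your reduction of $\mathfrak p_\kappa=\kappa^+$ to ``the Cohen family $\vec c$ acquires no pseudo-intersection after forcing with $\mathbb L^h_\mu$'' is also the right reduction, including the observation that a name $\dot B$ for a subset of $\kappa$ depends on only $\kappa$-many coordinates, so one can pick a Cohen index $\gamma$ it avoids.

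The gap is in the preservation step, exactly where you flag it, and I do not think the route you propose can be completed as stated. The classical mutual-genericity argument (as for $\mathfrak p=\aleph_1$ in the Cohen model, or for Cohen-indestructible towers) relies on the forcing being a \emph{product} across the distinguished coordinate, so that $c_\gamma$ remains generic over the model containing the evaluation of $\dot B$. Here $\mathbb L^h_\mu$ is an iteration computed over the \emph{full} Cohen extension: even after restricting to the dense set of Lemma~\ref{decide check names of stems} where stems are check names, the promise parts $\dot F$ of conditions are names that may depend on $c_\gamma$, so the evaluation of $\dot B$ is generic for a poset \emph{defined from} $c_\gamma$, and $c_\gamma$ need not be generic over any model containing it. Your proposed fix --- an elementary submodel $M$ with $|M|=\kappa$, $\kappa\subseteq M$, and an $M$-generic object ``exhibiting an unbounded $B'$ forced to be contained in $\dot B$'' --- does not produce such a $B'$: deciding unboundedly many elements of $\dot B$ requires meeting $\kappa$-many dense sets, the forcing is only ${<}\kappa$-closed so the corresponding decreasing $\kappa$-sequence has no lower bound, and the individual decisions ``$\delta\in\dot B$'' are made by conditions whose promises (and whose forcing relation) live over $V[\vec c]$, so the resulting data is not mutually generic with $c_\gamma$.

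What is actually needed, and what the paper does, is a homogeneity (``flip'') argument in place of mutual genericity. Suppose $(p,\bar a,\dot F)\forces \dot B\setminus\xi\subseteq c_\gamma$, where $\bar a$ is the (check-named) sequence of stems. Let $y\supseteq p$ be Cohen generic at coordinate $\gamma$ and let $y'$ agree with $y$ on ${\rm dom}(p)$ and be its complement above ${\rm dom}(p)$; then $V[y]=V[y']$, and the two evaluations $(\bar a,\dot F^{y})$ and $(\bar a,\dot F^{y'})$ are conditions of the \emph{same} iteration over the \emph{same} model with the \emph{same} stems, hence compatible by $\kappa$-linkedness. A common strengthening deciding some $\delta\in\dot B$ with $\delta\geq\xi\cup{\rm dom}(p)$ forces $\delta\in y$ and $\delta\in y'$ simultaneously, which is absurd since $\delta$ lies in exactly one of them. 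The essential ingredient you are missing is that compatibility and the relevant forcing decisions are controlled by the stems, which are immune to the flip; without that, the ``delicate point'' you defer to the classical treatment is precisely the point at which the naive argument breaks.
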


This is similar to the proof of \cite[Theorem 4.8]{FischerGaps}.
\begin{proof}
      Let $\vec{c} = \{c_i\mid i \in \kappa^+\}$ denote the generic Cohen subsets added by $\text{Add}(\kappa,\kappa^+)$. We want to show that this forms a witness to $\mathfrak{p}_\kappa = \kappa^+$ after forcing with $\dot{\mathbb L^h_\mu}$. Applying Lemma \ref{decide check names of stems} to $\text{Add}(\kappa,\kappa^+) * \dot{\mathbb L}_\mu^h$ we can assume there is a dense set of conditions of $\text{Add}(\kappa,\kappa^+) * \dot{\mathbb L}_\mu^h$ of the form $(p, \bar{a}, \dot{F})$ where $\bar{a} \in V$ is the set of stems of the conditions in the support and $p \forces \dot{F}$ is the set of possible promises in the support. Moreover, by the $\kappa^+$-cc it's clear that if $\dot{x}$ is a $\text{Add}(\kappa,\kappa^+) * \dot{\mathbb L}_\mu^h$ name for a subset of $\kappa$ then the transitive closure of (a nice name for) $\dot{x}$ is coded by a set of ordinals of size $\kappa$ and in particular there is a $\gamma < \kappa^+$ so that $\dot{x} \in V[c_i\mid i \neq \gamma]$. 

    Now suppose that $\dot{x}$ is such a name. Let's show that it isn't a pseudo-intersection of $\vec{c}$. Let $\gamma$ be as above for $\dot{x}$. We want to show that $\forces \dot{x} \nsubseteq^* c_\gamma$. Suppose that there is an $\xi < \kappa$ so that $(p, \bar{a}, \dot{F}) \forces \dot{x} \setminus \xi \subseteq c_\gamma$. Now let $y \supseteq p$ be one Cohen generic and let $y' = y^c \cap [{\rm dom}(p), \kappa) \cup p$. This is also Cohen generic. Then $V[y] = V[y']$ so in this model there are the conditions $(\bar{a}, \dot{F}^y)$ and $(\bar{a}, \dot{F}^{y'})$ which are compatible since they have the same stems. Pick a common strengthening $(\bar{b}, \dot{G})$ which, without loss of generality also decides some $\delta \in \dot{x} \setminus (\xi \cup {\rm dom}(p))$. But $\delta$ is only in one of $y$ and $y'$ which is a contradiction.
\end{proof}

In \cite{RaghavanShelah}, Raghavan and Shelah showed that, contrary to the countable case, $\mathfrak{s}_\kappa \leq \mfb_\kappa$ for any regular, uncountable cardinal. Nevertheless this is not the case if we replace $\mfb_\kappa$ by $\mfb_h (\in^*)$.

\begin{lemma}
    Assume $\mathsf{GCH}$. 
    \begin{enumerate}
        \item If $\kappa$ is strongly unfoldable there is a forcing extension in which $\kappa$ is still strongly unfoldable and $\mathfrak{s}_\kappa = \kappa^+ < \mathfrak{b}_h(\in^*) = \mfb_h (\in^{cl}) = \lambda$ for any regular $\lambda > \kappa^+$.
        \item If $\kappa$ is supercompact then there is a forcing extension in which $\mathfrak{b}_h (\in^*) = \mfb_h (\in^{cl}) = \kappa^+ < \mathfrak{s}_\kappa = \lambda$ for any regular $\lambda > \kappa^+$.
    \end{enumerate}
\end{lemma}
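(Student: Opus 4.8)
The plan is to prove both statements by iterating, over suitably prepared ground models, forcings already analysed in \S\ref{forcing}. \emph{For part (1):} begin from a model of $\mathsf{GCH}$ with $\kappa$ strongly unfoldable and pass to the Hamkins--Johnstone model $V_0$ of Theorem~\ref{hamkinsindestructibility}; we may take $V_0\models\mathsf{GCH}$, and in $V_0$ the cardinal $\kappa$ is strongly unfoldable and stays so after any ${<}\kappa$-closed, $\kappa^+$-preserving forcing. Over $V_0$ force with $\mathbb{L}^h_\lambda$, the ${<}\kappa$-support iteration of $\mathbb{LOC}_{h,\kappa}$ of length $\lambda$, using the strict order, so the iteration is ${<}\kappa$-closed and $\kappa^+$-c.c., hence $\kappa^+$-preserving; thus $\kappa$ is still strongly unfoldable in $V_0[G_\lambda]$. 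A routine nice-names count (using $\lambda$ regular, $\lambda>\kappa^+$, and $\mathsf{GCH}$ below $\lambda$) gives $2^\kappa=\lambda$; and since every $\mathcal{F}\subseteq\kk$ of size $<\lambda$ appears at some stage $\beta<\lambda$, the generic slalom added at stage $\beta$ captures all of $\mathcal{F}$ modulo bounded, hence on a club, so $\mfb_h(\in^*)\ge\lambda$. With $\mfb_h(\in^*)\le\mfb_h(\in^{cl})$ (as $\in^{cl}=\in^{NS_\kappa}$ and $\jbd{\kappa}\subseteq NS_\kappa$) and $\mfb_h(\in^{cl})\le 2^\kappa=\lambda$, this gives $\mfb_h(\in^*)=\mfb_h(\in^{cl})=\lambda$. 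It then remains to check $\mathfrak{s}_\kappa=\kappa^+$ in $V_0[G_\lambda]$ -- this is the crux, discussed below.

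\emph{For part (2):} begin from a model of $\mathsf{GCH}$ with $\kappa$ supercompact, fix a Laver function \cite{Laver1978MakingTS}, and pass to the Laver preparation $V_0$, in which $\kappa$ is supercompact and indestructible under ${<}\kappa$-directed-closed forcing, with $\mathsf{GCH}$ preserved. Over $V_0$ run the ${<}\kappa$-support iteration of length $\lambda$ forcing with all $\mathbb{M}(\mathcal{F})$, $\mathcal{F}$ a ${<}\kappa$-closed filter, exactly as in Lemma~\ref{lemma:preserve small b club} and the theorem following it; by that lemma this forces $\mathfrak{p}_\kappa=\lambda$ and (since each $\mathbb{M}(\mathcal{F})$ has clb by \cite[Lemma 4.5]{FischerGaps}, so the iteration does and Lemma~\ref{lemma:clb do not add clubslalom} applies to it) no $(cl,h)$-slalom captures all of $(\kk)^{V_0}$ on a club, whence $\mfb_h(\in^{cl})\le|(\kk)^{V_0}|=\kappa^+$ and so $\mfb_h(\in^*)=\mfb_h(\in^{cl})=\kappa^+$. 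The two new ingredients are: first, each $\mathbb{M}(\mathcal{F})$ is in fact ${<}\kappa$-directed-closed (a ${<}\kappa$-sized directed set of Mathias conditions has pairwise comparable stems, and the union of the stems together with the intersection of the second coordinates, which stays in $\mathcal{F}$ by ${<}\kappa$-closure, is a lower bound), so the whole iteration is ${<}\kappa$-directed-closed and $\kappa$ remains supercompact -- in particular measurable -- in $V_0[G_\lambda]$; second, since $\kappa$ is measurable there, Proposition~\ref{Prop: relation between p and s on large} upgrades $\mathfrak{p}_\kappa=\lambda$ to $\mathfrak{s}_\kappa\ge\mathfrak{p}_\kappa=\lambda$, while $\mathfrak{s}_\kappa\le 2^\kappa=\lambda$, so $\mathfrak{s}_\kappa=\lambda$. (Lemma~\ref{lemma:preserve small b club} is stated for iterations of length $\le(2^\kappa)^+$; for arbitrary regular $\lambda$ one notes that no new subsets of $\kappa$ appear at limit stages of cofinality $\ge\kappa$ and reflects in the usual way.)

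\emph{The main obstacle} is that $\mathbb{L}^h_\lambda$ preserves $\mathfrak{s}_\kappa=\kappa^+$ in part (1). This is not for free: $\mathbb{LOC}_{h,\kappa}$ adds a dominating real (its generic slalom eventually covers every ground-model function, so the pointwise supremum dominates), hence $\mfb_\kappa=\mfd_\kappa=\lambda$ in $V_0[G_\lambda]$ and the only cheap bound available is the trivial $\mathfrak{s}_\kappa\le\mfb_\kappa=\lambda$. The intended route is a preservation theorem. Since $\kappa$ is weakly compact and $\mathsf{GCH}$ holds in $V_0$, we have $\mathfrak{s}_\kappa^{V_0}=\kappa^+$ (the lower bound is \cite{suzuki}); one fixes a splitting family $\mathcal{S}$ of size $\kappa^+$ in $V_0$ and shows $\mathbb{L}^h_\lambda$ forces $\mathcal{S}$ to remain splitting, i.e.\ adds no ``unsplit'' subset of $\kappa$. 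For the single step one exploits that $\mathbb{LOC}_{h,\kappa}$ has, on a dense set, a ${<}\kappa$-closed $\kappa$-linked refinement $\bigcup_{i<\kappa}P_i$ (Proposition~\ref{basicfactsaboutLOC}): given a name $\dot A$ for an element of $[\kappa]^\kappa$ below a condition $p$, one reads off in $V_0$ the disjoint ``traces'' $A_i^{+}=\{\alpha:\exists r\in P_i\,(r\le p\text{ and }r\forces\check\alpha\in\dot A)\}$ and $A_i^{-}$, and combines a fusion along a ${<}\kappa$-sequence of extensions of $p$ (available by closure) with the splitting property of $\mathcal{S}$ to produce a single $B\in\mathcal{S}$ forced to split $\dot A$; the iteration is then handled by reflecting through the intermediate models $V_0[G_\alpha]$, each of which still carries a splitting family of size $\kappa^+$. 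Getting this fusion to cooperate with the ``promise'' components $F$ of conditions and with the ${<}\kappa$-supports of the iteration is where the real work lies.
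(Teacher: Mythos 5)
Part (2) of your proposal is essentially the paper's argument: iterate the generalized Mathias forcings as in Lemma~\ref{lemma:preserve small b club} over a Laver-prepared (indestructibly supercompact) model, get $\mathfrak{p}_\kappa=\lambda$ while the ground-model functions witness $\mfb_h(\in^{cl})=\kappa^+$, and then use Proposition~\ref{Prop: relation between p and s on large} to convert $\mathfrak{p}_\kappa=\lambda$ into $\mathfrak{s}_\kappa=\lambda$ via measurability. That part is fine.

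Part (1) is where you diverge from the paper, and your route has a genuine gap. You force only with $\mathbb{L}^h_\lambda$ and assert that some ground-model splitting family of size $\kappa^+$ survives; but the preservation argument you sketch does not go through. First, splitting the trace $A_i^{+}=\{\alpha:\exists r\in P_i\,(r\le p\wedge r\forces\check\alpha\in\dot A)\}$ gives no control over $\dot A$ itself: $A_i^{+}$ only over-approximates $\dot A$, and a set $B$ can split $A_i^{+}$ while some condition forces $\dot A\subseteq B$ or $\dot A\cap B$ bounded. Second, the splitting family $\mathcal S$ hands you a \emph{different} splitter for each of the $\kappa$-many traces (and for each of the $\lambda$-many intermediate models), and nothing in the sketch produces the required single $B\in\mathcal S$; the ``fusion'' that is supposed to reconcile the linked pieces with the promise components $F$ and the ${<}\kappa$-supports is exactly the missing content, and it is not clear the statement is even true for an \emph{arbitrary} ground-model splitting family. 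The paper avoids this problem entirely: it forces $\Add(\kappa,\kappa^+)\ast\dot{\mathbb L}^h_\lambda$ and shows, exactly as in Lemma~\ref{cohen witnesse are preserved}, that the $\kappa^+$ Cohen subsets remain a splitting family. The mechanism there is mutual genericity rather than fusion: by the $\kappa^+$-c.c.\ and the check-name-stems dense set of Lemma~\ref{decide check names of stems}, any name $\dot x$ for an unbounded subset of $\kappa$ lives in $V[c_i\mid i\neq\gamma]$ for some $\gamma<\kappa^+$, and the omitted Cohen set $c_\gamma$ splits $\dot x$ by the usual flip-the-generic argument (the same $y$ versus $y'=y^c\cap[\dom(p),\kappa)\cup p$ trick used for $\mathfrak{p}_\kappa$). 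If you want to salvage your version of (1), you should either prove an honest preservation theorem for splitting families under $\mathbb{L}^h_\lambda$ or, more simply, prepend the Cohen forcing as the paper does; the Cohen step is harmless for the rest of your computation of $\mfb_h(\in^*)=\mfb_h(\in^{cl})=\lambda$ and for the Hamkins--Johnstone indestructibility of Theorem~\ref{hamkinsindestructibility}, since $\Add(\kappa,\kappa^+)\ast\dot{\mathbb L}^h_\lambda$ is still ${<}\kappa$-closed (on the strict-order dense set) and $\kappa^+$-c.c.
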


\begin{proof}
    We being with Item (1). This is essentially the same as the proof of Lemma \ref{cohen witnesse are preserved}. It is easy to show in the above that the family of Cohen subsets we added will be a splitting family in the final model. The assumption of strong unfoldability is simply needed to ensure, by Theorem \ref{hamkinsindestructibility} that (after preparation) we can assume $\kappa$ remains strongly unfoldable and hence weakly compact in the final model so that we can have $\mathfrak{s}_\kappa > \kappa$ - which will fail otherwise by Suzuki's theorem from \cite{suzuki}. 

   Similarly, if $\kappa$ is indestructibly supercompact then we can force $\mathfrak{p}_\kappa > \kappa^+$ while preserving $\mfb_h (\in^{cl}) = \kappa^+$ for any monotone increasing $h:\kappa \to \kappa$. Therefore, it suffices simply to see the following claim. Recall that by Proposition \ref{Prop: relation between p and s on large} $\mathfrak{s}_\kappa\geq\mathfrak{p}_\kappa$ on a measurable cardinal. 
   
\end{proof}
Putting together all of this, we get the following. 
\begin{theorem}
    If $\kappa$ is inaccessible then $\mfb_h(\in^*)$ and $\mfb_h(\in^{cl})$ are independent of $\mathfrak{p}_\kappa$. If $\kappa$ is supercompact then this is true of $\mathfrak{s}_\kappa$ as well.
\end{theorem}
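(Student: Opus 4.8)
The theorem is a summary of the constructions carried out above, so the plan is simply to assemble them and to verify that every case is covered. The key reduction I would use first is the inequality $\kappa^+\le\mfb_h(\in^*)\le\mfb_h(\in^{cl})$ recorded in the preliminaries (the monotonicity proposition applied with $\CI=\jbd\kappa\subseteq NS_\kappa$): any model with $\mfb_h(\in^{cl})=\kappa^+$ automatically has $\mfb_h(\in^*)=\kappa^+$ as well, and any model with $\mfb_h(\in^*)=\mu$ has $\mfb_h(\in^{cl})\ge\mu$. Consequently each of the eight consistency assertions implicit in the statement (a strict inequality in each direction for each of the pairs $(\mfb_h(\in^*),\mathfrak p_\kappa)$, $(\mfb_h(\in^{cl}),\mathfrak p_\kappa)$, $(\mfb_h(\in^*),\mathfrak s_\kappa)$, $(\mfb_h(\in^{cl}),\mathfrak s_\kappa)$) can be read off from a model already produced, using at most one model per direction.

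For $\mathfrak p_\kappa$, which only requires $\kappa$ strongly inaccessible: the model of the theorem following Lemma~\ref{lemma:preserve small b club} has $\mfb_h(\in^{cl})=\kappa^+<\mathfrak p_\kappa=\kappa^{++}$, hence also $\mfb_h(\in^*)=\kappa^+<\mathfrak p_\kappa$, which witnesses $\mfb_h(\in^*)<\mathfrak p_\kappa$ and $\mfb_h(\in^{cl})<\mathfrak p_\kappa$. Dually, the model $\text{Add}(\kappa,\kappa^+)\ast\dot{\mathbb L}^h_\mu$ of Lemma~\ref{cohen witnesse are preserved} has $\mathfrak p_\kappa=\kappa^+<\mfb_h(\in^*)=\mu\le\mfb_h(\in^{cl})$, which witnesses $\mathfrak p_\kappa<\mfb_h(\in^*)$ and $\mathfrak p_\kappa<\mfb_h(\in^{cl})$. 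Since in both constructions $\kappa$ remains inaccessible, the first assertion follows. For $\mathfrak s_\kappa$ I would invoke the previous lemma: item~(2), applied to our supercompact $\kappa$, gives a model with $\mfb_h(\in^*)=\mfb_h(\in^{cl})=\kappa^+<\mathfrak s_\kappa=\lambda$; and item~(1) — which applies since a supercompact cardinal is in particular strongly unfoldable, or alternatively by rerunning the argument over the Laver-prepared model of \S\ref{cons2} so as to keep $\kappa$ supercompact — gives a model in which $\kappa$ is (strongly unfoldable, hence) weakly compact, as Suzuki's theorem forces once $\mathfrak s_\kappa>\kappa$, and $\mathfrak s_\kappa=\kappa^+<\mfb_h(\in^*)=\mfb_h(\in^{cl})=\lambda$. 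Together these witness both strict inequalities between $\mathfrak s_\kappa$ and each of $\mfb_h(\in^*)$, $\mfb_h(\in^{cl})$.

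There is no real obstacle here: no new forcing is needed and the large-cardinal-preservation content is already contained in the lemmas being cited. The only thing requiring care is the bookkeeping just described — in particular, using $\mfb_h(\in^*)\le\mfb_h(\in^{cl})$ to avoid arguing twice, checking that inaccessibility alone suffices for everything involving $\mathfrak p_\kappa$, and noting that the one place a genuine large-cardinal hypothesis is unavoidable is $\mathfrak s_\kappa>\kappa$, which by Suzuki's theorem forces weak compactness; the auxiliary fact $\mathfrak p_\kappa\le\mathfrak s_\kappa$ at a measurable (Proposition~\ref{Prop: relation between p and s on large}) enters only inside the proof of item~(2) of the previous lemma, not in the present argument.
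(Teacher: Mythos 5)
Your proposal is correct and is essentially the paper's argument: the theorem is stated there with no separate proof beyond ``putting together all of this,'' and your assembly — the two $\mathfrak{p}_\kappa$ models from Lemma~\ref{lemma:preserve small b club} (via its following theorem) and Lemma~\ref{cohen witnesse are preserved}, the two $\mathfrak{s}_\kappa$ models from the preceding lemma, and the reduction through $\kappa^+\le\mfb_h(\in^*)\le\mfb_h(\in^{cl})$ — is exactly the intended one. The only caveat is cosmetic: the inequality $\mfb_h(\in^*)\le\mfb_h(\in^{cl})$ is the one the paper asserts at the start of \S\ref{cons1}, but the monotonicity proposition in the preliminaries as literally printed gives the reverse direction (an apparent typo there), so you are using the correct fact under a slightly miscredited citation.
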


As before all of these arguments can be intertwined with the lifting arguments from the previous section to obtain the following on a measurable.

\begin{theorem}
    For every $\xi > 1$, relative to a supercompact cardinal $\kappa$ it is consistent that $\kappa$ is measurable and and of the following hold. 
\begin{enumerate}
    \item $\mfb_{\id^{+\xi}}(\in^*) = \mfb_{\id^{+\xi}}(\in^{cl}) < \mathfrak{s}_\kappa = \mathfrak{p}_\kappa$

    \item $\mfb_{\id^{+\xi}}(\in^*) = \mfb_{\id^{+\xi}}(\in^{cl}) > \mathfrak{s}_\kappa = \mathfrak{p}_\kappa$
\end{enumerate}
    In particular, $\mfb_{\id^{+\xi}}(\in^*)$ and $\mfb_{\id^{+\xi}}(\in^{cl})$ are independent of $\mathfrak{p}_\kappa$ and $\mathfrak{s}_\kappa$ and $\mfb_{\id^{+\xi}}(\in^{cl})$ can be strictly less than $\mfb_\kappa (\mathsf{p}\in^*)$ at a measurable.
\end{theorem}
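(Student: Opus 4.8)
The plan is to obtain the two models by interleaving the consistency arguments of the preceding two subsections with the measurability–preservation (lifting) technique of \S\ref{cons2}, and then to read off the ``In particular'' clause from the two cases together with the inequalities $\mathfrak p_\kappa\le\mfb_\kappa(\mathsf p\in^*)$ (from the preliminaries) and $\mathfrak p_\kappa\le\mathfrak s_\kappa$ (Proposition~\ref{Prop: relation between p and s on large}).

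\emph{Part (1).} Here we want $\mfb_{\id^{+\xi}}(\in^*)=\mfb_{\id^{+\xi}}(\in^{cl})=\kappa^+$ small while $\mathfrak s_\kappa=\mathfrak p_\kappa$ is large. I would start from a model in which $\kappa$ is supercompact, $2^\kappa=\kappa^+$, and $\GCH$ holds above $\kappa$, and run the preparation of Theorem~\ref{thm: preserving measurabiblity} with the Laver function $\ell$ anticipating ${<}\kappa$-supported iterations of higher Mathias forcings $\mathbb M(\mathcal F)$ in place of $\mathbb{LOC}_{\id^{+\xi},\kappa}$. Then, fixing a regular $\lambda>\kappa^+$, force the ${<}\kappa$-supported iteration of $\mathbb M(\mathcal F)$ of length $\lambda$ as in Lemma~\ref{lemma:preserve small b club}, with bookkeeping arranged so that every ${<}\kappa$-closed filter on $\kappa$ that appears acquires a pseudo-intersection and so that $2^\kappa=\lambda$ in the extension. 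By Lemma~\ref{lemma:preserve small b club} this forces $\mathfrak p_\kappa=\lambda$ while no $\id^{+\xi}$-slalom captures $(\kk)^V$ on a club, so the canonical functions witness $\mfb_{\id^{+\xi}}(\in^{cl})=\kappa^+$, hence also $\mfb_{\id^{+\xi}}(\in^*)=\kappa^+$. To keep $\kappa$ measurable one repeats the lifting argument of Theorem~\ref{thm: preserving measurabiblity}: the only point needing attention is the master condition below $j''G$, which exists because $\mathbb M(\mathcal F)$ has canonical lower bounds (so the semi-directedness argument of Proposition~\ref{Prop: Semi-directedness of the iteration}, with $\mathbb{LOC}$ replaced by $\mathbb M$, applies to the directed system $j''G$, each of whose coordinates carries a stem of size $\kappa$). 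Since $\kappa$ is then measurable, Proposition~\ref{Prop: relation between p and s on large} gives $\mathfrak p_\kappa\le\mathfrak s_\kappa\le 2^\kappa=\lambda=\mathfrak p_\kappa$, so $\mathfrak s_\kappa=\mathfrak p_\kappa=\lambda>\kappa^+$.

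\emph{Part (2).} Here we want $\mfb_{\id^{+\xi}}(\in^*)=\mfb_{\id^{+\xi}}(\in^{cl})$ large while $\mathfrak s_\kappa=\mathfrak p_\kappa=\kappa^+$. I would begin in the model $V_0$ of Theorem~\ref{thm: preserving measurabiblity}, but run the preparation so that the Laver function also anticipates $\text{Add}(\kappa,\kappa^+)*\dot{\mathbb L}^{\id^{+\xi}}_{\kappa^{++}}$ (legitimate since $\xi>1$ gives $\kappa^{++}<\kappa^{+\xi+1}$); the lifting argument is unchanged, since the Cohen part is ${<}\kappa$-directed closed and so admits an immediate master condition, while the localization part is handled by Proposition~\ref{Prop: Semi-directedness of the iteration} exactly as in Theorem~\ref{thm: preserving measurabiblity}. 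Now force with $\text{Add}(\kappa,\kappa^+)*\dot{\mathbb L}^{\id^{+\xi}}_{\kappa^{++}}$. By indestructibility $\kappa$ stays supercompact, in particular measurable, and $2^\kappa=\kappa^{++}$; the localization iteration forces $\mfb_{\id^{+\xi}}(\in^*)=\kappa^{++}$, and since $\mfb_{\id^{+\xi}}(\in^*)\le\mfb_{\id^{+\xi}}(\in^{cl})\le 2^\kappa$ we get $\mfb_{\id^{+\xi}}(\in^*)=\mfb_{\id^{+\xi}}(\in^{cl})=\kappa^{++}$. By Lemma~\ref{cohen witnesse are preserved} the added Cohen subsets witness $\mathfrak p_\kappa=\kappa^+$, and, as in the proof of the lemma immediately preceding the theorem, they also form a splitting family preserved by $\dot{\mathbb L}^{\id^{+\xi}}_{\kappa^{++}}$, so $\mathfrak s_\kappa\le\kappa^+$; combined with Suzuki's theorem~\cite{suzuki} (a measurable $\kappa$ is weakly compact, hence $\mathfrak s_\kappa>\kappa$) this gives $\mathfrak s_\kappa=\mathfrak p_\kappa=\kappa^+<\kappa^{++}=\mfb_{\id^{+\xi}}(\in^*)=\mfb_{\id^{+\xi}}(\in^{cl})$.

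Taking (1) and (2) together yields the independence of $\mfb_{\id^{+\xi}}(\in^*)$ and $\mfb_{\id^{+\xi}}(\in^{cl})$ from $\mathfrak p_\kappa$ and from $\mathfrak s_\kappa$ at a measurable; and in the model of (1) we have $\mfb_{\id^{+\xi}}(\in^{cl})=\kappa^+<\lambda=\mathfrak p_\kappa\le\mfb_\kappa(\mathsf p\in^*)$, which gives the final assertion. The step I expect to be the main obstacle is verifying that the master condition for $j''G$ still exists once the iterands (Mathias forcing in Part (1); the localization forcing and $\text{Add}(\kappa,\kappa^+)$ in Part (2)) fail to be ${<}\kappa$-directed closed — this is precisely where the canonical-lower-bound structure and the semi-directedness lemmas of \S\ref{cons2} must be invoked with care, and where the bookkeeping has to be coordinated with the requirement that the Mathias iteration add no full $\id^{+\xi}$-slalom on a club, respectively that the Cohen subsets remain a splitting family.
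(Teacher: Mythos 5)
Your proposal is correct and follows essentially the route the paper intends (the paper's own "proof" is just the remark that the preceding lemmas are to be intertwined with the lifting arguments of \S\ref{cons2}): your part (2) is Lemma~\ref{cohen witnesse are preserved} together with the splitting observation, run inside the preparation of Theorem~\ref{thm: preserving measurabiblity}, and your part (1) is Lemma~\ref{lemma:preserve small b club} combined with Proposition~\ref{Prop: relation between p and s on large} at a measurable, with the "in particular" clause read off exactly as you do. One small simplification for part (1): the ${<}\kappa$-support iteration of the Mathias posets $\mathbb M(\mathcal F)$ is genuinely ${<}\kappa$-directed closed, so standard Laver indestructibility supplies the master condition below $j''G$ directly, and the semi-directedness machinery of Proposition~\ref{Prop: Semi-directedness of the iteration} (which you invoke somewhat awkwardly for a directed system of size $\lambda$ rather than a ${<}\kappa$-sequence) is not actually needed there.
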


\section{Covering and cofinality correctness of the ultrapowers}\label{Section: Coverling}
Our bounds on $\mathfrak{b}_h(\in^*)$ at the case of a measurable cardinal \ref{cor: trivialities at a measurable} were obtained by the ability to cover the functions below the $([h]_U^+)^{M_U}$. Let us state the exact covering property we can extract from a large $\mfb_h(\in^*)$: 
\begin{proposition}\label{prop: covering}
        Let $h:\kappa\to\kappa$ be any function. If $\mathfrak{b}_h(\in^*)\geq\lambda$, then for every $\sigma$-complete ultrafilter $U$ and set $X\subseteq j_U(\kappa)$, $|X|^V<\lambda$, there is $Y\in M_U$, $M_U\models|Y|\leq|[h]_U|$ such that $X\subseteq Y$.
    \end{proposition}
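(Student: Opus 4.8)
The plan is to read the hypothesis $\mathfrak{b}_h(\in^*)\geq\lambda$ as the statement that every ${<}\lambda$-sized family of functions in $\kappa^\kappa$ admits a common $\in^*$-bound in $SL_h$, and then push such a bound through the ultrapower via {\L}os's theorem. First I would record that, $U$ being $\sigma$-complete (countably complete), $M_U$ is well-founded, so I identify it with its transitive collapse; there is then no difficulty making sense of ``$M_U\models|Y|\leq|[h]_U|$'' for an actual set $Y\in M_U$. I also assume, as I may in all cases of interest, that $U$ extends the co-bounded filter $(\jbd{\kappa})^*$; otherwise $U$ concentrates on a set of size ${<}\kappa$ and one works with that set instead. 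Recall finally that every ordinal below $j_U(\kappa)$ is of the form $[f]_U$ for some $f:\kappa\to\kappa$.

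Now let $X\subseteq j_U(\kappa)$ with $|X|^V<\lambda$. For each $x\in X$ fix $f_x:\kappa\to\kappa$ with $[f_x]_U=x$, and set $\mathcal F=\{f_x\mid x\in X\}$, a subset of $\kappa^\kappa$ of cardinality $\leq|X|<\lambda\leq\mathfrak{b}_h(\in^*)$. By the definition of $\mathfrak{b}_h(\in^*)$, $\mathcal F$ is not $\in^*$-unbounded, so there is a single $\varphi\in SL_h$ with $f_x\in^*\varphi$ for all $x\in X$. Put $Y=[\varphi]_U$. Since $\varphi\in SL_h$, its domain is co-bounded (hence in $U$) and $|\varphi(\alpha)|=|h(\alpha)|$ on $\dom(\varphi)$; by {\L}os's theorem $M_U\models Y\subseteq j_U(\kappa)$ and $M_U\models|Y|=|[h]_U|$, so $Y$ is a legitimate candidate. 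To see $X\subseteq Y$, fix $x\in X$: from $f_x\in^*\varphi$ the set $\{\alpha\in\dom(\varphi)\mid f_x(\alpha)\in\varphi(\alpha)\}$ is co-bounded, hence lies in $U$, and {\L}os's theorem gives $M_U\models[f_x]_U\in[\varphi]_U$, i.e.\ $x\in Y$.

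The argument is short once the translation is seen, and the only point needing any care is the bookkeeping with co-bounded sets: one uses that $U$ contains $(\jbd{\kappa})^*$ exactly twice, first to know $\dom(\varphi)\in U$ so that $[\varphi]_U$ genuinely names a subset of $j_U(\kappa)$ of cardinality $|[h]_U|$, and second to upgrade ``$f_x$ is captured by $\varphi$ on a co-bounded set'' to ``on a $U$-large set'', which is what {\L}os's theorem consumes. Nothing beyond $\sigma$-completeness (for well-foundedness of $M_U$) and uniformity of $U$ is used.
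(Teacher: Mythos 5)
Your proof is correct and follows essentially the same route as the paper's: represent each $x\in X$ by a function $f_x$, capture the family $\{f_x\mid x\in X\}$ by a single $h$-slalom $\varphi$ using $|X|<\lambda\leq\mathfrak{b}_h(\in^*)$, and take $Y=[\varphi]_U$, with {\L}os's theorem giving $X\subseteq Y$ and $M_U\models|Y|\leq|[h]_U|$. Your extra remarks on uniformity of $U$ (needed to upgrade ``co-bounded'' to ``$U$-large'') are a reasonable clarification of a hypothesis the paper leaves implicit.
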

    \begin{proof}
        For each $x\in X$, set a representing function $f_x$ i.e. $[f_x]_U=x$. Since $|\{f_x\mid x\in X\}|=|X|<\mathfrak{b}_h(\in^*)$, there is an $h$-slalom $\varphi$ such that for every $x\in X$, $f_x\in^*\varphi$. Let $Y=[\varphi]_U\in M_U$. Then $M_U\models |Y|\leq |[h]_U|$ and for every $x\in X$, $x=[f_x]_U\in [\varphi]_U$. 
    \end{proof}
    \begin{corollary}\label{cor: compute cardinality}
         For every ordinal  $\lambda$ with $cf^{M_U}(\lambda)\leq j_U(\kappa)$, if $cf^{V}(\lambda)<\mathfrak{b}_{h}(\in^*)$ then $cf^{M_U}(\lambda)\leq |[h]_U|$
    \end{corollary}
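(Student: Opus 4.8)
The plan is to transfer a witnessing cofinal map for $\lambda$ from $M_U$ down into $j_U(\kappa)$, cover the resulting index set using Proposition~\ref{prop: covering}, and then observe that the cover is forced to be cofinal. We may assume $\lambda$ is a limit ordinal, the remaining cases being trivial.

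Write $\delta=\cf^{M_U}(\lambda)$, so by hypothesis $\delta\le j_U(\kappa)$; since $\delta$ is a regular cardinal of $M_U$, I would fix inside $M_U$ a strictly increasing cofinal map $e\colon\delta\to\lambda$, and note that its range $R=e[\delta]$ is genuinely cofinal in $\lambda$ (cofinality of a set of ordinals being absolute between the transitive class $M_U$ and $V$). Working now in $V$, fix $X\subseteq\lambda$ cofinal with $|X|^V=\cf^V(\lambda)$, and for each $x\in X$ let $\alpha_x<\delta$ be least with $e(\alpha_x)\ge x$. Put $X'=\{\alpha_x\mid x\in X\}\subseteq\delta$. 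Then $|X'|^V\le|X|^V=\cf^V(\lambda)<\mfb_h(\in^*)$, and $X'$ is cofinal in $\delta$: were it bounded by some $\gamma<\delta$, then monotonicity of $e$ would give $x\le e(\alpha_x)\le e(\gamma)<\lambda$ for every $x\in X$, contradicting cofinality of $X$ in $\lambda$.

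Since $X'\subseteq\delta\le j_U(\kappa)$ and $|X'|^V<\mfb_h(\in^*)$, Proposition~\ref{prop: covering} (applied with $\lambda:=(|X'|^V)^+$) yields $Y\in M_U$ with $M_U\models|Y|\le|[h]_U|$ and $X'\subseteq Y$. Replacing $Y$ by $Y\cap\delta$ — which is still a member of $M_U$, still of $M_U$-cardinality at most $|[h]_U|$, and still a superset of $X'$ — I may assume $Y\subseteq\delta$. As $X'$ is cofinal in $\delta$ and $X'\subseteq Y$, the set $Y$ is cofinal in $\delta$, so $\cf^{M_U}(\delta)\le|Y|^{M_U}\le|[h]_U|$ computed in $M_U$. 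But $\delta$ is regular in $M_U$, hence $\cf^{M_U}(\delta)=\delta$, and therefore $\cf^{M_U}(\lambda)=\delta\le|[h]_U|$, as claimed.

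The only point that needs care is keeping the two cofinal objects in the right models: the map $e$ must be chosen inside $M_U$ so that $\cf^{M_U}(\lambda)$ is the quantity we control and the bound $\delta\le j_U(\kappa)$ is available, while the cofinal set $X$ must live in $V$ so that $|X|^V<\mfb_h(\in^*)$ and Proposition~\ref{prop: covering} applies; the small verification that the pulled-back index set $X'$ stays cofinal in $\delta$ is what glues the two together, and is the only genuinely non-bookkeeping step.
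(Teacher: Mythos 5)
Your proof is correct and is exactly the intended derivation: the paper states this corollary without proof as an immediate consequence of Proposition~\ref{prop: covering}, and the argument it has in mind is precisely your pull-back of a $V$-cofinal set along an $M_U$-cofinal map $e\colon\delta\to\lambda$, followed by covering the index set $X'\subseteq\delta\leq j_U(\kappa)$ and using regularity of $\delta$ in $M_U$. The bookkeeping you flag (which cofinal object lives in which model, and the absoluteness of unboundedness needed to see that $Y$ is cofinal in $\delta$ from $M_U$'s point of view) is handled correctly.
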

       Suppose that $[h]_U=\kappa^+$ and $\mathfrak{b}_h(\in^*)>\kappa^+$, then the previous corollary says that $M_U$ computes cofinality $\kappa^+$ ordinals correctly as long as they are not greater than $j_U(\kappa)$.
One particularly interesting choice of $\lambda$ is $\lambda=(\kappa^{++})^{M_U}$:
    \begin{corollary}
        Suppose $\kappa^+<\mathfrak{b}_{\id^+}(\in^*)$ then for any normal ultrafilter $U$, $\kappa^{++}=(\kappa^{++})^{M_U}$. 
    \end{corollary}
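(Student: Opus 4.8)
The plan is to obtain the corollary immediately from Corollary~\ref{cor: compute cardinality} applied with $h=\id^+$. Fix a normal ultrafilter $U$ on $\kappa$ and let $j=j_U\colon V\to M_U$ be the ultrapower embedding; since $\kappa$ is measurable, $M_U$ is well-founded (and we identify it with its transitive collapse) and closed under $\kappa$-sequences, so in particular $P(\kappa)^{M_U}=P(\kappa)^V$ and $(\kappa^+)^{M_U}=(\kappa^+)^V$. By normality $[\id]_U=\kappa$, so by the {\L}o\'s theorem $[\id^+]_U=(\kappa^+)^{M_U}=\kappa^+$; hence $|[\id^+]_U|^V=\kappa^+$.

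Write $\lambda=(\kappa^{++})^{M_U}$. First I would record two routine observations. \emph{(a)} $\lambda$ is a successor cardinal in $M_U$, hence regular there, so $\cf^{M_U}(\lambda)=\lambda$; and since $\kappa$ is inaccessible, $j(\kappa)$ is a limit cardinal in $M_U$ lying above $\kappa$, so $\lambda<j(\kappa)$, and in particular $\cf^{M_U}(\lambda)\leq j(\kappa)$, which is precisely the hypothesis under which Corollary~\ref{cor: compute cardinality} applies to $\lambda$. \emph{(b)} $\lambda\leq\kappa^{++}$: the ordinal $(\kappa^{++})^V$ is a cardinal of $V$ and hence a cardinal of the inner model $M_U$, and it lies above $(\kappa^+)^V=(\kappa^+)^{M_U}$, so the least $M_U$-cardinal above $(\kappa^+)^{M_U}$, namely $\lambda$, is at most $(\kappa^{++})^V$.

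Now I combine these with the hypothesis $\mathfrak{b}_{\id^+}(\in^*)>\kappa^+$. We have $\cf^{M_U}(\lambda)=\lambda>\kappa^+=|[\id^+]_U|^V$, while $\cf^{M_U}(\lambda)\leq j(\kappa)$; so the contrapositive of Corollary~\ref{cor: compute cardinality} (with $h=\id^+$, for the normal ultrafilter $U$) gives $\cf^V(\lambda)\geq\mathfrak{b}_{\id^+}(\in^*)$. By assumption this is $>\kappa^+$, so $\cf^V(\lambda)\geq\kappa^{++}$, and therefore $\lambda\geq\kappa^{++}$. Together with (b) this yields $\lambda=\kappa^{++}$, i.e. $(\kappa^{++})^{M_U}=\kappa^{++}$, as desired.

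I do not expect a genuine obstacle here: the entire weight of the argument sits in Corollary~\ref{cor: compute cardinality} (itself a consequence of the covering property of Proposition~\ref{prop: covering}), and the only points requiring care are the two bookkeeping facts (a) and (b) — that $\lambda$ is small enough, $\lambda<j(\kappa)$, for the corollary to be applicable, and that $\lambda\leq\kappa^{++}$, so that the lower bound $\cf^V(\lambda)\geq\kappa^{++}$ actually pins $\lambda$ down to $\kappa^{++}$ rather than merely bounding its cofinality from below.
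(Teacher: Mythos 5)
Your proof is correct and is exactly the argument the paper intends: the corollary is meant to follow from Corollary~\ref{cor: compute cardinality} with $h=\id^+$, using $[\id^+]_U=(\kappa^+)^{M_U}=\kappa^+$ and the regularity of $(\kappa^{++})^{M_U}$ in $M_U$. Your two bookkeeping observations, that $(\kappa^{++})^{M_U}<j_U(\kappa)$ and $(\kappa^{++})^{M_U}\leq(\kappa^{++})^V$, are precisely the points needed to make the deduction airtight.
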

In relation to Question~\ref{Consistecy}, this corollary puts some restrictions on the kind of embeddings we can expect to lift in order to show that $\kappa$ is a measurable cardinal with $\mfb_{\id^+}(\in^*)>\kappa^+$. To see further restrictions, consider the following example:
    \begin{example}\label{example}
        If we try to force $\mathfrak{b}_{\id^+}(\in^*)>\kappa^+$ from less than supercompactness, we need to create a normal ultrapower which computes $\kappa^{++}$ correctly. One standard way of doing this is due to Woodin (see \cite{CummingsHand}): start with a $(\kappa,\kappa^{++})$-extender $E$ and then lift $j_E$ to an ultrapower embedding in a generic extension which preserves cofinalities. Such ultrapowers are not going to work for a different reason--the ordinal $j_E(\kappa)$. It is inaccessible in $M_U$, but has cofinality $\kappa^+$ in $V$ in contrast with Corollary~\ref{cor: compute cardinality}.
    \end{example}
  \subsection{On the $V$-cofinality of $j_U(\kappa)$}  Example~\ref{example} illustrates a situation where we should consider the cofinality of $j_U(\kappa)$. In this case, if we would like to obtain models where $\mathfrak{b}_{\id^+}(\in^*)>\kappa^+$ we need to be able to have embeddings $cf^V(j_U(\kappa))>\kappa^+$. 
    For example, in the Kunen-Paris Model \cite{Kunen-Paris} or the Friedman-Magidor Model \cite{Friednam-Magidor}, we can even argue that for every ultrafilter $U$, $cf^V(j_U(\kappa))=\kappa^+$ for any normal ultrafilter $U$. The reason is that whenever we force over a model of $GCH$ and lift an  extender/ultrafilter embedding from the ground model, $j_U(\kappa)$ is always going to have cofinality $\kappa^+$.
    We have the following bound for $j_U(\kappa)$:
\begin{proposition}[Folklore]\label{prop: b and d bounds}
    Let $\kappa>\omega$. For every $\sigma$-complete ultrafilter $U$ over $\kappa$, $\mathfrak{b}_\kappa\leq cf^V(j_U(\kappa))\leq \mathfrak{d}_\kappa$.
\end{proposition}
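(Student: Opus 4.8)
The plan is to view $j_U(\kappa)$ as the order type of $(\kappa^\kappa,<_U)$ and to read off its $V$-cofinality from the mod-bounded structure on $\kappa^\kappa$. Recall by \L os' theorem that the ordinals of $M_U$ below $j_U(\kappa)=[c_\kappa]_U$ are precisely the classes $[f]_U$ with $\{\alpha<\kappa\mid f(\alpha)<\kappa\}\in U$; replacing $f$ off a set not in $U$ we may take $f\in\kappa^\kappa$, and $[f]_U<[g]_U$ iff $\{\alpha\mid f(\alpha)<g(\alpha)\}\in U$. Since $j_U(\kappa)$ is a cardinal of $M_U$, hence a limit ordinal, choosing a representative function for each ordinal below $j_U(\kappa)$ shows that $cf^V(j_U(\kappa))$ is exactly the least cardinality of a $<_U$-cofinal subfamily of $\kappa^\kappa$. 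Here I use that $U$ is uniform, i.e.\ $\jbd{\kappa}^{*}\subseteq U$ (this may be assumed, since otherwise $U$ concentrates on a set of size $<\kappa$), so that $f\leq^{*}g$ implies $[f]_U\leq[g]_U$.

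For $cf^V(j_U(\kappa))\leq\mathfrak{d}_\kappa$, I would fix a $\leq^{*}$-dominating family $D\subseteq\kappa^\kappa$ with $|D|=\mathfrak{d}_\kappa$ and verify that $\{[f]_U\mid f\in D\}$ is cofinal in $j_U(\kappa)$: given $\beta=[g]_U<j_U(\kappa)$ with $g\in\kappa^\kappa$, pick $f\in D$ with $g\leq^{*}f$, so $\beta=[g]_U\leq[f]_U$. Hence $cf^V(j_U(\kappa))\leq|D|=\mathfrak{d}_\kappa$.

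For $\mathfrak{b}_\kappa\leq cf^V(j_U(\kappa))$, I would argue by contradiction: suppose $F\subseteq\kappa^\kappa$ has $|F|<\mathfrak{b}_\kappa$ and $\{[f]_U\mid f\in F\}$ is cofinal in $j_U(\kappa)$. Since $|F|<\mathfrak{b}_\kappa$, there is $g\in\kappa^\kappa$ with $f\leq^{*}g$ for all $f\in F$, and then $[f]_U\leq[g]_U<[c_\kappa]_U=j_U(\kappa)$ for every $f\in F$ (using that $g$ takes values below $\kappa$), contradicting cofinality. Thus every $<_U$-cofinal subfamily of $\kappa^\kappa$ has size at least $\mathfrak{b}_\kappa$, i.e.\ $\mathfrak{b}_\kappa\leq cf^V(j_U(\kappa))$.

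Being folklore, the argument presents no real obstacle; the only points that deserve a word of care are the identification in the first paragraph --- which relies on $M_U$ being well-founded, so that $j_U(\kappa)$ is genuinely an ordinal possessing a $V$-cofinality, and on matching cofinal sets of that ordinal with $<_U$-cofinal families of functions --- and the uniformity of $U$, which is what makes the passage between $\leq^{*}$ and $\leq_U$ legitimate.
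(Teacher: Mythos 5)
Your proof is correct, and since the paper states this proposition as folklore without giving any argument, there is nothing to diverge from: yours is the standard proof, identifying $cf^V(j_U(\kappa))$ with the least cardinality of a $<_U$-cofinal subfamily of $\kappa^\kappa$ and comparing $<_U$ with $\leq^*$ via a dominating family (upper bound) and an unbounded family (lower bound). You are also right to flag uniformity: as literally stated (for an arbitrary $\sigma$-complete $U$) the inequality can fail, e.g.\ for a principal ultrafilter, and uniformity is exactly what guarantees that co-bounded sets lie in $U$ so that $f\leq^* g$ implies $[f]_U\leq[g]_U$ — an implicit hypothesis the paper itself makes explicit in the subsequent corollaries.
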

We have uniform bounds for $cf(j_U(\kappa))$, so it is reasonable to conjecture that $cf(j_U(\kappa))$ might not depend on $U$. 
This invites the following interesting question:
\vskip 0.2 cm
\begin{center}
    Can there be two ultrafilters $U,W$ such that $cf^V(j_U(\kappa))\neq cf^V(j_W(\kappa))$?
\end{center}
\vskip 0.2 cm
By Propositions~\ref{prop: b and d bounds},  another scenario in which we get a single cofinality (which is not necessarily $\kappa^+$) is:
\begin{corollary}
    Suppose that $\mathfrak{b}_\kappa=\mathfrak{d}_\kappa=\lambda$, then $cf^V(j_U(\kappa))=\lambda$ for any uniform ultrafilter on $\kappa$.
\end{corollary}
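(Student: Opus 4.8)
The plan is to deduce this directly from Proposition~\ref{prop: b and d bounds}. When $U$ is $\sigma$-complete the conclusion is immediate: that proposition gives $\mathfrak{b}_\kappa\leq cf^V(j_U(\kappa))\leq\mathfrak{d}_\kappa$, and by hypothesis both ends equal $\lambda$, so $cf^V(j_U(\kappa))=\lambda$. The only point that needs attention is that here $U$ is allowed to be an arbitrary uniform (possibly not $\sigma$-complete) ultrafilter on $\kappa$, so that $M_U$ may be ill-founded. In that case I would read $j_U(\kappa)$ as the linear order $\kappa^\kappa/U$ under $<_U$ --- equivalently, the $<^{M_U}$-initial segment of $M_U$ below $[\id]_U$ --- and $cf^V(j_U(\kappa))$ as the least cardinality of a $<_U$-cofinal subset. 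With this reading the two inequalities of Proposition~\ref{prop: b and d bounds} go through verbatim, which I would spell out as follows.

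For the upper bound, fix a $\leq^*$-dominating family $D\subseteq\kappa^\kappa$ with $|D|=\mathfrak{d}_\kappa$. Since $\kappa$ is regular and $U$ is uniform, every bounded subset of $\kappa$ lies in the dual ideal of $U$. Hence if $g\leq^* f$ then $\{\alpha<\kappa\mid g(\alpha)>f(\alpha)\}$ is bounded, so it is not in $U$ and $[g]_U\leq_U[f]_U$. As $D$ is dominating, $\{[f]_U\mid f\in D\}$ is $<_U$-cofinal in $\kappa^\kappa/U$, so $cf^V(j_U(\kappa))\leq\mathfrak{d}_\kappa=\lambda$.

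For the lower bound, suppose towards a contradiction that $C\subseteq\kappa^\kappa$ is such that $\{[f]_U\mid f\in C\}$ is $<_U$-cofinal in $\kappa^\kappa/U$ while $|C|<\mathfrak{b}_\kappa$. Then $C$ is $\leq^*$-bounded, say $f\leq^* g$ for all $f\in C$; let $g'$ be defined by $g'(\alpha)=g(\alpha)+1$. For each $f\in C$ the set $\{\alpha<\kappa\mid f(\alpha)\leq g(\alpha)\}$ is cobounded, hence lies in $U$, so $[f]_U\leq_U[g]_U<_U[g']_U$. Thus no member of $\{[f]_U\mid f\in C\}$ is $\geq_U[g']_U$, contradicting cofinality. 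Hence $cf^V(j_U(\kappa))\geq\mathfrak{b}_\kappa=\lambda$, and together with the previous paragraph we get $cf^V(j_U(\kappa))=\lambda$.

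I do not expect any real obstacle here: the entire content is the folklore observation that the bounds in Proposition~\ref{prop: b and d bounds} never used $\sigma$-completeness, only the uniformity of $U$ on the regular cardinal $\kappa$. If one is content to state the corollary for $\sigma$-complete ultrafilters, nothing beyond the one-line sandwich argument of the first paragraph is needed.
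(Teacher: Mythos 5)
Your proof is correct and is exactly the paper's (implicit) argument: the corollary is an immediate sandwich from Proposition~\ref{prop: b and d bounds}, for which the paper offers no further proof. Your additional care in checking that the folklore bounds need only uniformity of $U$ on the regular cardinal $\kappa$ (reading $j_U(\kappa)$ as the linear order $\kappa^\kappa/U$ when $M_U$ may be ill-founded) is a reasonable and correct way to reconcile the proposition's $\sigma$-complete hypothesis with the corollary's ``any uniform ultrafilter'' phrasing.
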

 In particular, this situation occurs if $\mfb_h(\in^*) = 2^\kappa$. In \cite{TomGabeMeasures}, 
 the cardinal invariants $\mathfrak{b}_\kappa,\mathfrak{d}_\kappa$ were used to show that there can be at most one $\lambda$ for which there is a simple $P_\lambda$-point. 
\begin{definition}[Benhamou-Goldberg]
    An ultrafilter $U$ over $\kappa$ is called a simple $\pi P_\lambda$-point if $\pi\mathfrak{p}(U)=\pi\mathfrak{ch}(U)$.
\end{definition}
\begin{corollary}
    If there is a simple $\pi P_\lambda$-point, then for every uniform ultrafilter $U$ on $\kappa$, $cf^V(j_U(\kappa))=\lambda$.
\end{corollary}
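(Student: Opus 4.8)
The plan is to reduce this to the corollary immediately preceding it, which already asserts that $\mathfrak{b}_\kappa = \mathfrak{d}_\kappa = \lambda$ implies $cf^V(j_U(\kappa)) = \lambda$ for every uniform ultrafilter $U$ on $\kappa$. Thus it suffices to show that the mere existence of a simple $\pi P_\lambda$-point $U_0$ forces $\mathfrak{b}_\kappa = \mathfrak{d}_\kappa = \lambda$; since $\mathfrak{b}_\kappa \le \mathfrak{d}_\kappa$ always holds, all that is needed are the two bounds $\lambda \le \mathfrak{b}_\kappa$ and $\mathfrak{d}_\kappa \le \lambda$.

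For this I would invoke (or reprove) the following two facts, valid for every uniform ultrafilter $U$ on $\kappa$ and established by Benhamou--Goldberg in \cite{TomGabeMeasures}: $\pi\mathfrak{p}(U) \le \mathfrak{b}_\kappa$ and $\mathfrak{d}_\kappa \le \pi\mathfrak{ch}(U)$. The second is the substantive one. Fixing a $\pi$-base $\mathcal{B} \subseteq U$ of size $\pi\mathfrak{ch}(U)$, one assigns to each $B \in \mathcal{B}$ its increasing enumeration $e_B : \kappa \to \kappa$; the family $\{e_B : B \in \mathcal{B}\}$ is then $\le^*$-dominating, since for any $g : \kappa \to \kappa$ one passes to the club of closure points of $\alpha \mapsto \max(g(\alpha),\alpha)+1$ and uses the $\pi$-base property to find some $B \in \mathcal{B}$ sitting inside it, whose enumeration eventually outgrows $g$. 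The first inequality is proved dually, extracting from a $\le^*$-unbounded family of size $\mathfrak{b}_\kappa$ a subfamily of $\mathcal{B}$ of that size with no pseudo-intersection in $U$.

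Granting these, one applies them to $U = U_0$: by the definition of a simple $\pi P_\lambda$-point, $\pi\mathfrak{p}(U_0) = \pi\mathfrak{ch}(U_0) = \lambda$, so $\lambda = \pi\mathfrak{p}(U_0) \le \mathfrak{b}_\kappa \le \mathfrak{d}_\kappa \le \pi\mathfrak{ch}(U_0) = \lambda$, whence $\mathfrak{b}_\kappa = \mathfrak{d}_\kappa = \lambda$. The preceding corollary now yields $cf^V(j_U(\kappa)) = \lambda$ for every uniform ultrafilter $U$ on $\kappa$, which is exactly the claim.

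The point requiring the most care is the inequality $\mathfrak{d}_\kappa \le \pi\mathfrak{ch}(U)$ together with its dual $\pi\mathfrak{p}(U) \le \mathfrak{b}_\kappa$: the enumeration functions $e_B$ only witness domination once one checks that the relevant closure-point clubs interact correctly with the chosen $\pi$-base, which is where the uniformity of $U$ enters. This is handled in \cite{TomGabeMeasures} and is orthogonal to the ultrapower considerations of this section, so in the writeup I would simply cite it; everything else is the short computation above plus the appeal to the previous corollary.
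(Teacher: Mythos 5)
Your proof is correct and is exactly the paper's (implicit) argument: the corollary is obtained by combining the Benhamou--Goldberg theorem from \cite{TomGabeMeasures} that a simple $\pi P_\lambda$-point forces $\mathfrak{b}_\kappa=\mathfrak{d}_\kappa=\lambda$ with the immediately preceding corollary. Your sketched derivation of $\pi\mathfrak{p}(U)\leq\mathfrak{b}_\kappa\leq\mathfrak{d}_\kappa\leq\pi\mathfrak{ch}(U)$ elides why the closure-point club must meet the $\pi$-base (a club need not belong to an arbitrary uniform ultrafilter), but since you ultimately defer to the cited reference for those inequalities, just as the paper does, nothing is lost.
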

The above applies to several models of importance: 
\begin{corollary}
    In the following models, $cf^V(j_U(\kappa))$ is unique and does not depend on the choice of $U$:
    \begin{enumerate}
        \item The extender-based Magidor-Radin models  \cite{TomGabeMeasures}.
        \item The iteration of generalized Mathias forcing \cite{BROOKETAYLOR201737}.
    \end{enumerate}
\end{corollary}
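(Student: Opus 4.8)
The plan is to deduce both items directly from the two corollaries that precede the statement, so that no new forcing is needed. The point is that $cf^{V}(j_U(\kappa))$ is completely determined, and in particular rendered independent of $U$, under either of two hypotheses on the model: first, if $\mathfrak b_\kappa=\mathfrak d_\kappa$, then by Proposition~\ref{prop: b and d bounds} (and the corollary drawn from it) every $\sigma$-complete uniform $U$ has $cf^{V}(j_U(\kappa))$ equal to that common value; second, if the model carries a single simple $\pi P_\lambda$-point, then by the Benhamou--Goldberg corollary above $cf^{V}(j_U(\kappa))=\lambda$ for every uniform $U$. So in each of the two models it suffices to check one of these hypotheses, and this is exactly what the cited papers provide.

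For item (1): in the extender-based Magidor--Radin model with length parameter $\lambda$, the analysis of \cite{TomGabeMeasures} produces a simple $P_\lambda$-point $W$; such a $W$ is in particular a simple $\pi P_\lambda$-point, since $\pi\mathfrak p(W)\le\pi\mathfrak{ch}(W)$ always, while for a simple $P_\lambda$-point both quantities equal $\lambda$ (this is part of the $\pi$-character computation there). Equivalently, the same analysis yields $\mathfrak b_\kappa=\mathfrak d_\kappa=\lambda$ in that model. Either way the preceding corollaries give $cf^{V}(j_U(\kappa))=\lambda$ for every uniform ultrafilter $U$ on $\kappa$, which is the claim.

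For item (2): in the ${<}\kappa$-support iteration of generalized Mathias forcing of \cite{BROOKETAYLOR201737}, the Mathias reals added cofinally along the iteration form, modulo bounded sets, a $\subseteq^{*}$-decreasing tower of length $\lambda$ (the length of the iteration) whose upward closure is a simple $P_\lambda$-point, and correspondingly $\mathfrak b_\kappa=\mathfrak d_\kappa=\lambda$ holds in that model; both facts are established in \cite{BROOKETAYLOR201737}. As in case (1), the preceding corollaries then give $cf^{V}(j_U(\kappa))=\lambda$ uniformly in $U$.

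Thus the genuine mathematical content of both items is already contained in \cite{TomGabeMeasures,BROOKETAYLOR201737}, and the proof is a short deduction from the two preceding corollaries. The only point requiring care is to match the parameter $\lambda$ consistently across the three incarnations in which it appears (the length of the forcing, the character of the generated ultrafilter, and the common value of $\mathfrak b_\kappa$ and $\mathfrak d_\kappa$), together with verifying that the relevant ultrafilters are genuinely simple $\pi P_\lambda$-points rather than merely $\kappa^+$-generated; I do not expect any substantive obstacle here, only this bookkeeping.
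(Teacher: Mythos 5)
Your proposal is correct and matches the paper's (implicit) argument exactly: the corollary is a direct application of the two preceding corollaries, with the only substantive input being the facts from \cite{TomGabeMeasures} and \cite{BROOKETAYLOR201737} that these models carry a simple $P_\lambda$-point, respectively satisfy $\mathfrak b_\kappa=\mathfrak d_\kappa$. The paper offers no further proof, so your deduction (including the bookkeeping remark about matching $\lambda$ across its various roles) is precisely what is intended.
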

Let us restrict ourselves to model where $\mathfrak{b}_\kappa<\mathfrak{d}_\kappa$. The simplest forcing which does that is the good old Cohen forcing.  M. Canjar \cite{CANJAR19881}, exploited this situation in the Cohen model to obtain ultrafilters on $\omega$ with varying cofinalities. He used the filter extension property  and properties of the Cohen generic reals. If we would like to work below the levels of super and strongly compact cardinals, the filter extension property is not available anymore.  Fortunately, this can be replaced by Woodin's surgery argument, and a later improvement due to Gitik \cite{Gitik1989TheNO}, which stays in the optimal framework of a measurable cardinal with Mitchell order $o(\kappa)=\kappa^{++}$ (see \cite{CummingsHand}):
\begin{theorem}\label{thm:woodin}
    Assume $GCH$ and suppose that there is an elementary embedding $j:V\to M$ such that:
    \begin{enumerate}
        \item $|j(\kappa)|^V=\kappa^{++}$.
        \item there is a function $f:\kappa\to\kappa$ such that $j(f)(\kappa)=\kappa^{++}$.
        \item $M^\kappa\subseteq M$.
    \end{enumerate}
    Then for every $V$-generic\footnote{The forcing $\mathbb{P}_{f}$ is the Easton support iteration of $\text{Add}(\alpha,f(\alpha))$ for every inaccessible $\alpha<\kappa$.}  $G\subseteq\mathbb{P}_{f}*\text{Add}(\kappa,\kappa^{++})$, $j$ lifts to an ultrapower by a normal $\kappa$-complete ultrafilter $U$ over $\kappa$. In particular, $j_U(\kappa)=j(\kappa)$.
\end{theorem}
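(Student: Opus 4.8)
I would run Woodin's surgery argument in the form refined by Gitik, lifting $j$ in two stages --- first through $\mathbb{P}_f$, then through $\text{Add}(\kappa,\kappa^{++})$ --- and then recognize the resulting lift as an ultrapower by a $V[G]$-ultrafilter on $\kappa$. Write $G = G_0 * g$ with $G_0 \subseteq \mathbb{P}_f$ and $g$ generic over $V[G_0]$ for $\text{Add}(\kappa,\kappa^{++})$. Since $\mathbb{P}_f \in V_\kappa$ we have $j\restriction \mathbb{P}_f = \mathrm{id}$ and $j''G_0 = G_0$, and by the shape of the Easton iteration together with hypothesis (2) we get $j(\mathbb{P}_f) = \mathbb{P}_f * \text{Add}(\kappa,\kappa^{++})^M * \dot{\mathbb{R}}$, where $\dot{\mathbb{R}}$ names the Easton tail on the inaccessible stages in $(\kappa, j(\kappa))$ as computed in $M[G_0*g]$; by elementarity $\dot{\mathbb{R}}$ is forced to be ${<}\lambda$-closed for $\lambda$ the least $M$-inaccessible above $\kappa$. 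Re-using $g$ at the $\kappa$-th coordinate, it suffices to build in $V[G]$ an $M[G_0*g]$-generic filter $H$ for $\dot{\mathbb{R}}$; this is a routine diagonalization, since $\mathbb{R}$ is highly closed over $V[G_0*g]$ (closure transfers up from $M$ because $M^\kappa \subseteq M$ and $\mathbb{P}_f * \text{Add}(\kappa,\kappa^{++})$ is $\kappa^+$-c.c.) while $\mathrm{GCH}$ and $|j(\kappa)|^V = \kappa^{++}$ bound the number of its maximal antichains lying in $M[G_0*g]$. By the Silver criterion $j$ lifts to $\bar{j}:V[G_0]\to M[G_0*g*H]$ with critical point $\kappa$.

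\noindent\textbf{The surgery step.} Now $\bar{j}''g$ is a directed subset of $\bar{j}(\text{Add}(\kappa,\kappa^{++})) = \text{Add}(j(\kappa),j(\kappa^{++}))^{M[G_0*g*H]}$ of $V[G]$-size $\kappa^{++} < j(\kappa)$, but it is \emph{not} a member of $M[G_0*g*H]$, so no master condition is available. Instead I would first build in $V[G]$ \emph{some} $M[G_0*g*H]$-generic $g^*$ for $\text{Add}(j(\kappa),j(\kappa^{++}))^{M[G_0*g*H]}$: this forcing is ${<}j(\kappa)$-closed, and via $M^\kappa \subseteq M$ the number of its maximal antichains in $M[G_0*g*H]$ is at most $|j(\kappa^{++})|^V$, which the available closure can absorb --- this count is exactly where $|j(\kappa)|^V = \kappa^{++}$ is essential, and where Gitik's refinement is needed to run the construction from the optimal hypothesis $o(\kappa)=\kappa^{++}$. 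Then perform the surgery: viewing $g^*$ as a function $F^*: j(\kappa^{++}) \times j(\kappa) \to 2$, let $F^{**}$ agree with $F^*$ off the set $S = \{(j(\alpha),\xi) : \alpha < \kappa^{++},\ \xi < \kappa\}$, with $F^{**}(j(\alpha),\xi)$ equal to the value of the generic function coded by $g$ at $(\alpha,\xi)$, and let $g^{**}$ be the corresponding filter. The surgery lemma --- that $g^{**}$ is still $M[G_0*g*H]$-generic --- is the crux: for each maximal antichain $A \in M[G_0*g*H]$ one adjusts a condition of $g^*$ on a bounded part of $S$ so as to meet $A$, using that $S$ meets each $j(\kappa)$-column of the product in a set of size $\kappa < j(\kappa)$ together with the ${<}j(\kappa)$-closure and $j(\kappa)^+$-c.c.\ of the poset in $M[G_0*g*H]$. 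Since $\bar{j}''g \subseteq g^{**}$ by construction, $\bar{j}$ lifts to $j^+ : V[G] \to M[G_0*g*H*g^{**}] = M[j(G)]$ with critical point $\kappa$ and $j^+\restriction V = j$, so $j^+(\kappa) = j(\kappa)$.

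\noindent\textbf{Recognizing the ultrapower.} Set $U = \{X \in P(\kappa)^{V[G]} : \kappa \in j^+(X)\}$. Since $j^+$ is elementary with critical point $\kappa$, $U$ is a normal $\kappa$-complete $V[G]$-ultrafilter, and $\Ult(V[G],U)$ is well-founded because the factor map $k$ (with $k \circ j_U = j^+$ and $k([\mathrm{id}]_U) = \kappa$) embeds it into the well-founded $M[j(G)]$. It then remains to check $k$ is the identity, i.e.\ $M[j(G)] = \{j^+(h)(\kappa) : h \in V[G]\}$: every element of $M[j(G)]$ is $\tau^{j(G)}$ for a name $\tau \in M$, and --- using that $M$ is generated over $j''V$ by ordinals below $\kappa^{++} = j(f)(\kappa)$ (which we may assume, replacing $j$ by the derived length-$\kappa^{++}$ extender ultrapower, as (1) and (2) guarantee this does not change $j(\kappa)$) --- we may write $\tau = j(F)(\gamma)$ with $F \in V$ and $\gamma < \kappa^{++}$; since $2^\kappa = \kappa^{++}$ in $V[G]$, every such $\gamma$ is of the form $j^+(h)(\kappa)$ for some $h \in V[G]$, so $\tau^{j(G)}$ lies in the range of $k$. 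Hence $k$ is onto, so $k = \mathrm{id}$, $M[j(G)] = \Ult(V[G],U)$, and $j^+ = j_U$; in particular $j_U(\kappa) = j^+(\kappa) = j(\kappa)$.

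\noindent\textbf{Main obstacle.} The hard part is the surgery step of the second paragraph: both the construction of the auxiliary $M[G_0*g*H]$-generic $g^*$ (balancing the number of antichains of $\text{Add}(j(\kappa),j(\kappa^{++}))^{M[G_0*g*H]}$ in $M[G_0*g*H]$ against its closure, which is precisely where hypothesis (1) and Gitik's optimization enter) and, above all, the surgery lemma showing that the modified filter $g^{**}$ remains $M[G_0*g*H]$-generic --- the technical core of Woodin's method. The rest (lifting through the preparation, deriving $U$, verifying it is normal and $\kappa$-complete, and identifying the target with the ultrapower) is standard bookkeeping once these are in place.
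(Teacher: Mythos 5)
This theorem appears in the paper only as a citation (Woodin's surgery argument as refined by Gitik, with a pointer to Cummings' Handbook chapter) and no proof is given there, so the comparison is against the standard literature argument --- which your sketch reproduces faithfully: the two-stage lift (re-using $g$ at stage $\kappa$ of $j(\mathbb{P}_f)$, transferring a generic for the Easton tail, then surgically modifying an auxiliary $M$-generic for $\text{Add}(j(\kappa),j(\kappa^{++}))$ on the set $S=j''\kappa^{++}\times\kappa$), followed by deriving the normal measure and checking the factor map is onto, is exactly the intended proof, and you correctly isolate the surgery lemma and the auxiliary generic as the technical cores. The only caution is that two steps you call routine conceal the usual subtleties: meeting the $\kappa^{++}$-many antichains of the tail with only $\kappa$-closure of $M[G_0*g]$ inside $V[G_0*g]$ requires first absorbing them into $\kappa^+$-many ``master'' dense sets via their $j(F)(a)$-representations (a na\"ive $\kappa^{++}$-length descending chain breaks at limits of cofinality $\kappa^+$), and the surjectivity of $k$ rests on $\mathrm{crit}(k)\geq\kappa^{++}$, which uses that $M$ (hence $M[j(G)]$) computes $\kappa^{++}$ correctly --- a property of the embeddings the paper actually feeds into this theorem rather than a formal consequence of hypotheses (1)--(3) alone.
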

These kinds of embeddings were used in \cite{CichonMaximum} and in \cite{GitikCich}
\begin{theorem}[Gitik {\cite{Gitik1989TheNO}}]\label{thm:Gitik}
Assume that there is a coherent sequence $\vec{U}$ of length $\kappa^{++}$. Then it is consistent that there is a forcing extension where GCH holds and there is a commutative\footnote{A Rudin-Keisler increasing sequence is commutative, if the Rudin-Keisler projections $\pi_{\alpha,\beta}$ commute. Namely for every $\alpha<\beta<\gamma$, $\pi_{\alpha,\gamma}=\pi_{\beta,\gamma}\circ \pi_{\alpha,\beta}$.} Rudin-Keisler increasing sequence $\l U_i\mid i<\kappa^{++}\r$.
\end{theorem}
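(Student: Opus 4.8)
The result is Gitik's, so the plan is to recover the construction of \cite{Gitik1989TheNO}. Without loss of generality (after a mild reverse-Easton forcing of GCH, which preserves the Mitchell structure) we may assume $\GCH$ holds, and fix the given coherent sequence $\vec U=\langle U_\alpha\mid\alpha<\kappa^{++}\rangle$ on $\kappa$: $U_\alpha\triangleleft U_\beta$ for $\alpha<\beta<\kappa^{++}$ and $j_{U_\beta}(\vec U)\restriction\beta=\vec U\restriction\beta$, so each $U_\alpha$ lies in $M_{U_\beta}$, the iterated ultrapowers along initial segments of $\vec U$ make sense, and the sequence carries a canonical linear structure. The goal is a forcing extension satisfying $\GCH$ in which there is a system $\langle W_i,\pi_{i,j}\mid i<j<\kappa^{++}\rangle$ of normal $\kappa$-complete ultrafilters on $\kappa$ with $(\pi_{i,j})_*W_j=W_i$ and $\pi_{i,k}=\pi_{j,k}\circ\pi_{i,j}$; equivalently, via the correspondence recalled in the preliminaries, a commuting system of factor maps $k_{i,j}\colon M_{W_i}\to M_{W_j}$ with $j_{W_j}=k_{i,j}\circ j_{W_i}$. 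Since $\GCH$ will hold, $\kappa^{++}=2^{2^\kappa}$, so this is a chain of maximal possible length.

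The forcing is built in the spirit of Theorem~\ref{thm:woodin}. First comes a reverse Easton preparation $\P_\kappa$ below $\kappa$ (an Easton-support iteration adding Cohen subsets to inaccessibles $\alpha<\kappa$, guided by a bookkeeping/Laver-type function adapted to $\vec U$) that makes the tail forcing at $\kappa$ able to lift every $j_{U_\alpha}$. Unlike the extender case, one wants the stage-$\kappa$ forcing $\Q$ to have size only $\kappa^+$, so that GCH is preserved; $\Q$ is designed so that its generic, together with Woodin's surgery on each $j_{U_\alpha}$, both lifts the embedding to $j_\alpha\colon V[G]\to M_\alpha[G_\alpha]$ and attaches to the derived ultrafilter $W_\alpha=\{X\subseteq\kappa\mid\kappa\in j_\alpha(X)\}$ a piece of generic data encoding $\alpha$. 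The crucial requirement, and where coherence of $\vec U$ is used in full, is that the surgeries and the encodings be chosen \emph{uniformly in $\alpha$}: for $\alpha<\beta$, the data used to lift $j_{U_\beta}$ must project, along the structure relating $U_\alpha$ and $U_\beta$ inside $M_{U_\beta}$, to the data used to lift $j_{U_\alpha}$. Coherence is precisely what lets one propagate a single compatible family of such choices all the way up the $\kappa^{++}$-sequence.

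Granting the uniform lift, each $W_\alpha$ is a normal $\kappa$-complete ultrafilter in $V[G]$, and the compatibility of the choices yields factor maps $k_{\alpha\beta}\colon M_{W_\alpha}\to M_{W_\beta}$ with $j_{W_\beta}=k_{\alpha\beta}\circ j_{W_\alpha}$ (the map ``reads off'' $W_\alpha$ from the data attached to $W_\beta$). From $k_{\alpha\beta}$ one extracts a projection $\pi_{\alpha\beta}\colon\kappa\to\kappa$ with $(\pi_{\alpha\beta})_*W_\beta=W_\alpha$; distinctness of the encodings (ultimately, $U_\alpha\ne U_\beta$) makes each $\pi_{\alpha\beta}$ non-trivial, so $\langle W_i\rangle$ is $\le_{RK}$-increasing. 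Commutativity $\pi_{\alpha\gamma}=\pi_{\beta\gamma}\circ\pi_{\alpha\beta}$ follows from $k_{\alpha\gamma}=k_{\beta\gamma}\circ k_{\alpha\beta}$, which holds because the uniform construction makes the encodings, hence the ``read-off'' maps, cohere along the sequence. Finally $\GCH$ holds in $V[G]$: below $\kappa$ by the standard reverse-Easton analysis of $\P_\kappa$, at $\kappa$ because $|\Q|=\kappa^+$ and $\Q$ is sufficiently closed, and above $\kappa$ because the forcing is trivial there.

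The main obstacle is exactly this uniformity. Applying Woodin's surgery to the $j_{U_\alpha}$ in isolation gives lifts with no a priori factor maps among them, and hence no RK-relations; the Mitchell order on $\vec U$ does not by itself induce an RK-order. Arranging the preparation and the family of surgeries so that the entire commuting diagram is lifted \emph{simultaneously}, respecting the coherence of $\vec U$ at every stage while keeping $2^\kappa=\kappa^+$, is the technical core of Gitik's argument.
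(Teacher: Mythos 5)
The paper does not prove this statement at all: it is quoted verbatim as a theorem of Gitik, cited to \cite{Gitik1989TheNO}, and used as a black box (the authors only invoke the conclusion --- the existence of the commutative RK-increasing sequence $\langle U_i\mid i<\kappa^{++}\rangle$ in a GCH model --- in the corollary that follows). So there is no in-paper proof to compare against, and citing Gitik, as you do at the outset, is exactly the paper's own treatment.

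Judged as a standalone proof, however, your proposal has a genuine gap, and it is the one you yourself identify in your final paragraph. Everything before that paragraph is a description of what a proof would have to accomplish --- a preparation forcing, a stage-$\kappa$ poset of size $\kappa^+$, surgery-style lifts of each $j_{U_\alpha}$, and ``uniformly chosen'' encodings whose compatibility yields commuting factor maps $k_{\alpha\beta}$ and hence commuting projections $\pi_{\alpha\beta}$. But the existence of such a uniform, simultaneously compatible family of lifts is precisely the content of the theorem; asserting that ``coherence is precisely what lets one propagate a single compatible family of such choices'' is not an argument, and the claim that the factor maps one ``reads off'' automatically satisfy $k_{\alpha\gamma}=k_{\beta\gamma}\circ k_{\alpha\beta}$ is exactly what needs to be constructed and verified. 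In particular, nothing in the sketch explains how the Mitchell-order structure of $\vec U$ (which, as you correctly note, does not by itself induce RK-relations) is converted into honest RK-projections $\pi_{\alpha\beta}:\kappa\to\kappa$ in the extension. Since the paper treats this as an external citation, deferring to \cite{Gitik1989TheNO} is acceptable here; but the proposal should then be presented as a citation plus motivational sketch, not as a proof.
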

\begin{corollary}
    Relative to a measurable cardinal $\kappa$ with $o(\kappa)=\kappa^{++}$, it is consistent that there are two $\kappa$-complete ultrafilters $U,W$ such that $cf^V(j_U(\kappa))\neq cf^V(j_W(\kappa))$. 
\end{corollary}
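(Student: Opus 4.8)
The plan is to realize, inside one forcing extension, two $\kappa$-complete ultrafilters whose ultrapowers see $cf(j(\kappa))$ differently, by arranging a model in which $\mathfrak{b}_\kappa=\kappa^+<\kappa^{++}=\mathfrak{d}_\kappa$, so that by Proposition~\ref{prop: b and d bounds} the two possible values $\kappa^+$ and $\kappa^{++}$ are each attained. Starting from $o(\kappa)=\kappa^{++}$, which gives a coherent sequence of length $\kappa^{++}$, apply Theorem~\ref{thm:Gitik} to pass to a model $V_1\models\GCH$ carrying a commutative Rudin-Keisler increasing sequence $\langle U_i\mid i<\kappa^{++}\rangle$ of $\kappa$-complete ultrafilters on $\kappa$, with $U_0$ normal. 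Since $\mathfrak{b}_\kappa=\mathfrak{d}_\kappa=\kappa^+$ under $\GCH$, in $V_1$ every $\sigma$-complete ultrapower has $cf^{V_1}(j_U(\kappa))=\kappa^+$, so the separation must be manufactured by a further forcing.

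Next I would extract from the tower its direct limit $j=j_\infty:V_1\to M_\infty=\varinjlim_{i<\kappa^{++}}(M_{U_i},k_{i,i'})$ and verify the hypotheses of Theorem~\ref{thm:woodin}. Here $M_\infty$ is well-founded because $cf(\kappa^{++})>\omega$; $M_\infty^\kappa\subseteq M_\infty$ because $cf(\kappa^{++})>\kappa$, so any $\kappa$-sequence from $M_\infty$ already lives at a single stage $M_{U_i}$, together with the observation that the ultrapower of $V_1$ by any $\kappa$-complete ultrafilter on $\kappa$ is automatically closed under $\kappa$-sequences. The delicate point is that, because the $U_i$ are strictly Rudin-Keisler increasing and the tower is commutative, each stage contributes new ordinals cofinally below $j_\infty(\kappa)$, so that $|j_\infty(\kappa)|^{V_1}=\kappa^{++}$ and moreover $cf^{V_1}(j_\infty(\kappa))=\kappa^{++}$ (there being no shorter cofinal subsequence) --- note that Proposition~\ref{prop: b and d bounds} is not violated since $j_\infty$ is not an ultrapower by an ultrafilter on $\kappa$, and this is precisely what separates $j_\infty$ from the ``thin'' $(\kappa,\kappa^{++})$-extender of Example~\ref{example}. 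Finally, taking $f=\id^{++}$ one checks $j_\infty(f)(\kappa)=(\kappa^{++})^{M_\infty}=\kappa^{++}$, so Theorem~\ref{thm:woodin} applies.

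Now force over $V_1$ with $\mathbb{P}_f*\text{Add}(\kappa,\kappa^{++})$ to obtain $V_2$. By Theorem~\ref{thm:woodin}, $j_\infty$ lifts to the ultrapower by a normal $\kappa$-complete ultrafilter $U\in V_2$ with $j_U(\kappa)=j_\infty(\kappa)$. This forcing is $\kappa^+$-c.c.\ over $V_1$, and $\text{Add}(\kappa,\kappa^{++})$ is forced to be $<\kappa$-closed, so it preserves all cardinals and cofinalities; hence $cf^{V_2}(j_U(\kappa))=cf^{V_1}(j_\infty(\kappa))=\kappa^{++}$, while $V_2\models\mathfrak{b}_\kappa=\kappa^+$ (the generic Cohen subsets of $\kappa$ are unbounded but add no dominating function) and $\mathfrak{d}_\kappa=2^\kappa=\kappa^{++}$. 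For the second ultrafilter, take $W$ to be the Rudin-Keisler factor of $U$ corresponding to the image of the first generator of $j_\infty$, i.e.\ $W=\{X\subseteq\kappa\mid s\in j_U(X)\}$ for that generator $s$; as $W$ is essentially a lift of the short normal measure $U_0$, one should have $j_W(\kappa)<\kappa^{++}$ (being, up to collapse, the image of $\kappa$ under a normal ultrapower of a $\GCH$ model), whence $cf^{V_2}(j_W(\kappa))=\kappa^+$. Then $U$ and $W$ are $\kappa$-complete ultrafilters with $cf^{V_2}(j_U(\kappa))=\kappa^{++}\neq\kappa^+=cf^{V_2}(j_W(\kappa))$, as desired.

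The main obstacle is the middle step: one must show that the direct-limit embedding arising from Gitik's commutative tower genuinely has $cf^V(j(\kappa))=\kappa^{++}$ --- this requires a close reading of the construction of \cite{Gitik1989TheNO} to see that the tower is ``discrete at $\kappa$'', i.e.\ that the generators added along the way are cofinal in $j_\infty(\kappa)$ with no shorter cofinal subsequence --- all while retaining the closure $M^\kappa\subseteq M$ and the representability $j(f)(\kappa)=\kappa^{++}$ needed to run Woodin's surgery. A secondary point that must be checked is that the surgically produced $U$ indeed factors, in $V_2$, through a ``short'' normal ultrapower, so that $W$ above really has $j_W(\kappa)<\kappa^{++}$, which is what makes the two realized cofinalities distinct.
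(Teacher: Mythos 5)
Your outline follows the paper's general strategy (Gitik's commutative tower, its direct limit, and Woodin-style lifting), but there is a genuine gap at the central step: Theorem~\ref{thm:woodin} does \emph{not} apply directly to $j_\infty$. Under $\GCH$ the direct limit of the $\kappa^{++}$-length tower satisfies $j_\infty(\kappa)=\kappa^{++}$ \emph{as an ordinal}, so for every $f:\kappa\to\kappa$ one has $j_\infty(f)(\kappa)<j_\infty(\kappa)=\kappa^{++}$; in particular your asserted equality $j_\infty(\id^{++})(\kappa)=(\kappa^{++})^{M_\infty}=\kappa^{++}$ is false --- the first equality is fine but $(\kappa^{++})^{M_\infty}<\kappa^{++}$, since $M_\infty$ (like any $\GCH$ ultrapower/direct limit of this kind) computes $\kappa^{++}$ incorrectly. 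So hypothesis (2) of Theorem~\ref{thm:woodin}, that $\kappa^{++}$ be representable as $j(f)(\kappa)$, fails outright for $j=j_\infty$. This is exactly the obstruction the paper's proof flags explicitly, and it is overcome there by iterating further: one composes $j_\infty$ with either the direct limit by $j_\infty(\vec U)$ (to get $j^{(1)}$ with $cf^V(j^{(1)}(\kappa))=\kappa^{++}$) or a single ultrapower by $j_\infty(U_0)$ (to get $j^{(2)}$ with $cf^V(j^{(2)}(\kappa))=\kappa^{+}$), so that in each case $\kappa^{++}<j^{(i)}(\kappa)$, and then performs Gitik/Woodin surgery on the top Cohen function $f_\kappa$, redefining $j^{(i)}(f_\kappa)(\kappa)=\kappa^{++}$ while preserving genericity, to restore the representability hypothesis. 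Also note that the ``delicate point'' you isolate ($cf^{V_1}(j_\infty(\kappa))=\kappa^{++}$) is actually trivial once one knows $j_\infty(\kappa)=\kappa^{++}$; the real delicacy is the representability issue you passed over.

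A secondary problem is your construction of the second ultrafilter. You lift only one embedding and then take $W$ to be a Rudin--Keisler factor of $U$, asserting that $W$ is ``essentially a lift of the short normal measure $U_0$'' with $j_W(\kappa)<\kappa^{++}$. Nothing in the surgery construction guarantees that the lifted $U$ factors through such a short normal ultrapower in $V_2$, and you do not verify $j_W(\kappa)<\kappa^{++}$ (note that a \emph{normal} $W$ in $V_2$ automatically has $j_W(\kappa)>2^\kappa=\kappa^{++}$, so the factor you want cannot be normal and its ultrapower must be analyzed by hand). The paper sidesteps this entirely by building two separate ground-model embeddings $j^{(1)},j^{(2)}$ with $j^{(1)}(\kappa)$ of $V$-cofinality $\kappa^{++}$ and $j^{(2)}(\kappa)$ of $V$-cofinality $\kappa^{+}$, applying the surgery and the lifting theorem to each, and reading off the two cofinalities from $j_{U^{(i)}}(\kappa)=j^{(i)}(\kappa)$ together with cofinality preservation of the forcing.
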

\begin{proof}
    Work in the generic extension $V_0$ of Theorem~\ref{thm:Gitik}. Let us produce two  embeddings $j^{1}:V_0\to M^{(1)}$ and $j^{(2)}:V_0\to M^{(2)}$ as in Theorem~\ref{thm:woodin} each witness a different cofinality of $j^{(i)}(\kappa)$.
    Indeed, we let $j_{U_i}:V_0\to M_{U_i}$ be the ultrapower embedding of $V_0$ by $U_i$. 
    For $i<j$ there is a factor map $k_{i,j}:M_{U_i}\to M_{U_j}$ defined using the Ruding Keisler projection $\pi_{i,j}$ of $U_j$ to $U_i$ by $k_{i,j}([f]_{U_i})=[f\circ \pi_{i,j}]_U$. It follows that $j_{U_j}=k_{i,j}\circ j_{U_i}$. It is not hard to check that the commutativity of the system $\vec{U}=\l U_i\mid i<\kappa^{++}\r$ ensures that the functions $k_{i,j}$ commute and therefore we may take a direct limit $M_{\infty}=\text{dir}\lim \l M_{U_i},k_{i,j}\mid i<j<\kappa^{++}\r$.
    Let $j_{\infty}:V_0\to M_{\infty}$ be the direct limit embedding of the system $\l j_{U_i},k_{i,j}\mid i<j<\kappa^{++}\r$. By the GCH in $V_0$, it is not hard to see that $\kappa_1:=j_{\infty}(\kappa)=\kappa^{++}$, and that $M_{\infty}^{\kappa}\subseteq M_{\infty}$, but there cannot be a function $f:\kappa\to\kappa$ such that $j_{\infty}(f)(\kappa)=\kappa^{++}=j_{\infty}(\kappa)$. To overcome this problem, we follow the proof from \cite[\S2, p.212]{Gitik1989TheNO}, we may assume that $V_0$ is a generic extension of a ground model $V$ by an Easton support iteration of adding one Cohen function $f_\alpha:\alpha\to\alpha$ for every inaccessible $\alpha\leq\kappa$. Suppose $V_0=V[H][f_\kappa]$, where $H$ is $V$-generic for the iteration up to (not including) $\kappa$ and $f_\kappa$ is $V[H]$-generic for $\text{Add}(\kappa,a)$.  
to obtain the embedding $j^{(1)}$, take again the direct limit by $j_{\infty}(\vec{U})$ starting from $M_{\infty}$. We obtain $j':M_{\infty}\to M^{(1)}$ with $\kappa_2:=j'(\kappa_1)=(\kappa_1^{++})^{M_{\infty}}>\kappa^{++}$. By elementarity of $j'\circ j$, note that $M^{(1)}$ is a generic extension of a ground model $M_*=j'(j_{\infty}(V))$, namely  $M^{(1)}=M_*[j'(j_{\infty}(H))][f'_{\kappa_2}]$. Let us change one value in $f'_{\kappa_2}:\kappa_2\to \kappa_2$, by setting $f_{\kappa_2}(\kappa)=\kappa^{++}$ and $f_{\kappa_2}(\alpha)=f'
_{\kappa_2}(\alpha)$ elsewhere. Then clearly, $f_{\kappa_2}$ remains $M_*[j'(j_{\infty}(H))]$-generic, and  $M^{(1)}=M_*[j'(j_{\infty}(H))][f'_{\kappa_2}]=M_*[j'(j_{\infty}(H))][f_{\kappa_2}]$.

Finally, we let $j^{(1)}:V_0\to M^{(1)}$ be defined by $j\restriction V=j'\circ j_{\infty}\restriction V$ and $j(H)=j'(j_{\infty}(H))$ and $j^{(1)}(f_\kappa)=f_{\kappa_2}$. To summarize, we have that:
\begin{enumerate}
    \item $cf^V(j^{(1)}(\kappa))=cf^V(j_{\infty}(\kappa^{++}))=\kappa^{++}$.
    \item $j^{(1)}(f_\kappa)(\kappa)=\kappa^{++}$.
    \item $(M^{(1)})^\kappa\subseteq M^{(1)}$.
\end{enumerate}
For the embedding $j^{(2)}$, we do something similar to the above which is exactly as in \cite{Gitik1989TheNO},  applying the ultrapower by $j_{\infty}(U_0)$ from $j_{j_{\infty}(U_0)}:M_{\infty}\to M^{(2)}$ and changing the value of $j_{j_{\infty}(U_0)}(j_{\infty}(f_\kappa))(\kappa)$ to be $\kappa^{++}$. This produces an embedding $j^{(2)}:V\to M^{(2)}$ such that $j^{(2)}(\kappa)=j_{j_{\infty}(U_0)}(j_{\infty}(\kappa))$.
We conclude that $j^{(2)}$ satisfy:
\begin{enumerate}
    \item $cf^{V}(j^{(2)}(\kappa))=cf^V(j_{\infty}(\kappa^+))=\kappa^+$.
    \item $j^{(2)}(f_\kappa)(\kappa)=\kappa^{++}$.
    \item $(M^{(2)})^\kappa\subseteq M^{(2)}$.
\end{enumerate}
We may now apply Theorem~\ref{thm:woodin} to obtain that in the generic extension by $\mathbb{P}_{f_\kappa}*\text{Add}(\kappa,\kappa^{++})$, we may lift $j^{(1)},j^{(2)}$ to normal ultrapower embedding $j_{U^{(1)}},j_{U^{(2)}}$ respectively, and $cf^V(j_{U^{(1)}}(\kappa))=\kappa^{++}$ while $cf^V(j_{U^{(2)}}(\kappa))=\kappa^{+}$, as wanted.
\end{proof}
\bibliographystyle{amsplain}
\bibliography{ref}
\end{document}